\documentclass{amsart}
\usepackage{amssymb, amsmath,amsthm }
\usepackage{hyperref}
\usepackage{stmaryrd}
\usepackage[all]{xy}
\newtheorem{prop}{Proposition}[section]

\newtheorem{lem}[prop]{Lemma}
\newtheorem{thm}[prop]{Theorem}

\newtheorem{cor}[prop]{Corollary}

\theoremstyle{remark}
\newtheorem{remar}[prop]{Remark}
\theoremstyle{definition}
\newtheorem{defi}[prop]{Definition}

\DeclareMathAlphabet{\mathpzc}{OT1}{pzc}{m}{it}

\DeclareMathOperator{\End}{End}
\DeclareMathOperator{\Hom}{Hom}

\DeclareMathOperator{\Ind}{Ind}
\DeclareMathOperator{\cInd}{c-Ind}

\DeclareMathOperator{\Sym}{Sym}
\DeclareMathOperator{\Sm}{\underline{Sym}}
\DeclareMathOperator{\Ass}{Ass}

\DeclareMathOperator{\GL}{GL}
\DeclareMathOperator{\SL}{SL}
\DeclareMathOperator{\Ker}{Ker}
\DeclareMathOperator{\Coker}{Coker}
\DeclareMathOperator{\WD}{WD}
\DeclareMathOperator{\Gal}{Gal}

\DeclareMathOperator{\soc}{soc}

\DeclareMathOperator{\Irr}{Irr}

\DeclareMathOperator{\Spec}{Spec}
\DeclareMathOperator{\MaxSpec}{m-Spec}
\DeclareMathOperator{\supp}{Supp}

\DeclareMathOperator{\Mod}{Mod}

\DeclareMathOperator{\Sp}{Sp}
\DeclareMathOperator{\Rep}{Rep}

\DeclareMathOperator{\St}{St}

\DeclareMathOperator{\Ext}{Ext}

\DeclareMathOperator{\Ban}{Ban}

\DeclareMathOperator{\dualcat}{\mathfrak C}

\DeclareMathOperator{\ann}{ann}
\DeclareMathOperator{\wTor}{\mathcal{T}\mathit{or}}
\newcommand{\cIndu}[3]{\cInd_{#1}^{#2}{#3}}

\newcommand{\Indu}[3]{\Ind_{#1}^{#2}{#3}}

\newcommand{\Qp}{\mathbb {Q}_p}
\newcommand{\Zp}{\mathbb{Z}_p}
\newcommand{\Qpbar}{\overline{\mathbb{Q}}_p}

\newcommand{\HH}{\mathcal H}

\newcommand{\Eins}{\mathbf 1}

\newcommand{\ZZ}{\mathbb Z}
\newcommand{\VV}{\mathbf V}

\newcommand{\Fp}{\mathbb F_p}

\newcommand{\II}{\mathcal I}

\newcommand{\mm}{\mathfrak m}

\newcommand{\st}{\mathrm{st}}

\newcommand{\pr}{\mathrm{pr}}

\newcommand{\wP}{\widetilde{P}}
\newcommand{\wE}{\widetilde{E}}

\newcommand{\OO}{\mathcal O}

\DeclareMathOperator{\wtimes}{\widehat{\otimes}}

\newcommand{\cV}{\check{\mathbf{V}}}

\newcommand{\nn}{\mathfrak n}

\newcommand{\md}{\mathrm m}

\newcommand{\pp}{\mathfrak p}

\newcommand{\br}[1]{\llbracket #1\rrbracket}
\newcommand{\qq}{\mathfrak{q}}

\newcommand{\ana}{\mathrm{an}}
\newcommand{\sm}{\mathrm{sm}}
\newcommand{\pro}{\mathrm{pro}}
\newcommand{\lc}{\mathrm{lc}}
\newcommand{\adm}{\mathrm{adm}}
\newcommand{\alg}{\mathrm{alg}}
\newcommand{\cont}{\mathrm{cont}}

\newcommand{\mR}{A}
\newcommand{\mM}{\mathrm M}
\newcommand{\un}{\mathrm{un}}

\newcommand{\lfin}{\mathrm{l.fin}}
\DeclareMathOperator{\LLL}{LL}
\DeclareMathOperator{\pdim}{projdim}

\title{On the Breuil--M\'{e}zard conjecture}
\author{Vytautas Pa\v{s}k\={u}nas}
\date{\today.}
\begin{document} 
\begin{abstract} We give a new local proof of the Breuil-M\'ezard conjecture for two dimensional representations of the absolute Galois group of $\Qp$, 
when $p\ge 5$ and the representation has scalar endomorphisms.
\end{abstract}
\maketitle
\tableofcontents

\section{Introduction}\label{introduction}
Let $p$ be a prime number, $\Qp$ the field of $p$-adic numbers, $L$ a finite extension of $\Qp$ with the ring of integers 
$\OO$, uniformizer $\varpi$ and residue field $k$. Let $\rho:G_{\Qp}\rightarrow \GL_2(k)$ be a continuous representation 
of the absolute Galois group of $\Qp$ such that $\End_{G_{\Qp}}(\rho)=k$.  Let $R_{\rho}$ be the universal deformation ring of $\rho$ and $\rho^{\mathrm{un}}:G_{\Qp}\rightarrow \GL_2(R_{\rho})$ the universal deformation.  If $\nn\in \MaxSpec R_{\rho}[1/p]$, where  $\MaxSpec$ denotes the set of maximal ideals, then the residue field $\kappa(\nn)$ is a finite extension of $L$. Let $\OO_{\kappa(\nn)}$ be the ring of integers in $\kappa(\nn)$. By 
specializing the universal deformation at $\nn$, we obtain a continuous representation $\rho^{\mathrm{un}}_{\nn}: G_{\Qp}\rightarrow \GL_2(\OO_{\kappa(\nn)})$, which reduces to $\rho$ modulo 
the maximal ideal of $\OO_{\kappa(\nn)}$. Since $\kappa(\nn)$ is a finite extension of $L$, $\rho^{\mathrm{un}}_{\nn}$ lives in Fontaine's world of $p$-adic Hodge theory. We  fix a $p$-adic Hodge type $(\mathbf w, \tau, \psi)$ and investigate the locus of points in $\MaxSpec R_{\rho}[1/p]$ for which $\rho^{\mathrm{un}}_{\nn}$ is of type   $(\mathbf w, \tau, \psi)$,  where $\mathbf w=(a,b)$ is a pair of 
integers with $b>a$, $\tau: I_{\Qp}\rightarrow \GL_2(L)$ is a representation of the inertia subgroup with an open kernel and $\psi:G_{\Qp}\rightarrow \OO^{\times}$ a continuous character, such 
that $\psi\equiv \det \rho \pmod{\varpi}$, $\psi|_{I_{\Qp}}= \varepsilon^{a+b}\det\tau$, where $\varepsilon$ is the $p$-adic cyclotomic character. If $\rho^{\mathrm{un}}_{\nn}$ is potentially semi-stable
we let $\WD(\rho^{\mathrm{un}}_{\nn})$ be the Weil-Deligne representation associated to $\rho^{\mathrm{un}}_{\nn}$ by Fontaine in \cite{fontaine}.
In this case we say that $\rho^{\mathrm{un}}_{\nn}$ is of type $(\mathbf w, \tau, \psi)$ if its Hodge-Tate weights are equal to $\mathbf w$, the determinant equal to $\psi$ and $\WD(\rho^{\mathrm{un}}_{\nn})|_{I_{\Qp}}\cong \tau$.

In \cite{henniart}, Henniart has shown the existence and uniqueness of a smooth irreducible  representation $\sigma(\tau)$ (resp. $\sigma^{\mathrm{cr}}(\tau)$) of $K:=\GL_2(\Zp)$ on an $L$-vector space, such that if $\pi$ is a smooth absolutely irreducible infinite dimensional  representation of $G:=\GL_2(\Qp)$  and  $\LLL(\pi)$ is the Weil-Deligne representation attached to $\pi$ by the classical local Langlands  correspondence then $\Hom_K(\sigma(\tau), \pi)\neq 0$ (resp. $\Hom_K(\sigma^{\mathrm{cr}}(\tau), \pi)\neq 0$) if and only if $\LLL(\pi)|_{I_{\Qp}}\cong \tau$ (resp. $\LLL(\pi)|_{I_{\Qp}}\cong \tau$ and the monodromy operator $N=0$). We have $\sigma(\tau)\cong \sigma^{\mathrm{cr}}(\tau)$ in all cases, except if $\tau\cong \chi\oplus \chi$, then $\sigma(\tau)\cong \tilde{\st}\otimes \chi\circ \det$ and $\sigma^{\mathrm{cr}}(\tau)\cong \chi\circ\det$, where $\tilde{\st}$ is the Steinberg representation of $\GL_2(\Fp)$, and we view $\chi$ as a character of $\Zp^{\times}$ via the local class field theory.

We let $\sigma(\mathbf w, \tau):=\sigma(\tau)\otimes \Sym^{b-a-1} L^2\otimes \det^a$. Then $\sigma(\mathbf w, \tau)$ is a finite dimensional $L$-vector space. Since $K$ is compact 
and the action of $K$ on $\sigma(\mathbf w, \tau)$ is continuous, there is a $K$-invariant $\OO$-lattice $\Theta$ in $\sigma(\mathbf w, \tau)$. Then $\Theta/(\varpi)$ is a smooth 
finite length $k$-representation of $K$, and we let $\overline{\sigma(\mathbf w, \tau)}$ be its semi-simplification. One may show that $\overline{\sigma(\mathbf w, \tau)}$ does not depend on the choice of a lattice.  For each smooth irreducible  $k$-representation $\sigma$ of $K$ we let $m_{\sigma}(\mathbf w, \tau)$ be the multiplicity with which $\sigma$ occurs in $\overline{\sigma(\mathbf w, \tau)}$. We let $\sigma^{\mathrm{cr}}(\mathbf w, \tau):=\sigma^{\mathrm{cr}}(\tau)\otimes  \Sym^{b-a-1} L^2\otimes \det^a$ and let $m^{\mathrm{cr}}_{\sigma}(\mathbf w, \tau)$ be the multiplicity of $\sigma$ in $\overline{\sigma^{\mathrm{cr}}(\mathbf w, \tau)}$.

Recall, see for example \cite[\S V.A]{mult},  that the group of $d$-dimensional cycles $\mathcal Z_d(R)$ of  a noetherian ring $R$  is a free abelian group generated by $\pp\in \Spec R$ with $\dim R/\pp = d$. If 
$M$ is a finitely generated $R$-module of dimension at  most $d$ then $M_{\pp}$ is an $R_{\pp}$-module of finite length, which we denote by $\ell_{R_{\pp}}(M_{\pp})$, for all $\pp$ 
with $\dim R/\pp=d$. We let $z_d(M):=\sum_{\pp} \ell_{R_{\pp}}(M_{\pp}) \pp$, where the sum is taken over all $\pp\in \Spec R$ such that $\dim R/\pp=d$.

\begin{thm}\label{main_intro} Assume that $p\ge 5$. There exists a reduced,  two dimensional, $\OO$-torsion free quotient $R^{\psi}_{\rho}(\mathbf w, \tau)$ (resp. $R^{\psi, \mathrm{cr}}_{\rho}(\mathbf w, \tau)$) of $R_{\rho}$ such that for all $\nn\in \MaxSpec R_{\rho}[1/p]$, $\nn$ lies in $\MaxSpec R^{\psi}_{\rho}(\mathbf w, \tau)[1/p]$ (resp. $R^{\psi, \mathrm{cr}}_{\rho}(\mathbf w, \tau)[1/p]$) if and only if $\rho^{\mathrm{un}}_{\nn}$ is potentially semi-stable (resp. potentially crystalline) of $p$-adic Hodge type $(\mathbf w, \tau, \psi)$.

Moreover, for each irreducible $k$-representation $\sigma$ of $K$ there exists a one dimensional cycle $z(\sigma, \rho)$ of $R_{\rho}$, independent of $\mathbf w$ and $\tau$,
such that for all  $p$-adic Hodge types $(\mathbf w, \tau, \psi)$ we have an equality of one dimensional cycles:
\begin{equation}\label{one}
z_1(R^{\psi}_{\rho}(\mathbf w, \tau)/(\varpi))= \sum_{\sigma} m_{\sigma}(\mathbf w, \tau) z(\sigma, \rho),
\end{equation}
\begin{equation}\label{two}
z_1(R^{\psi, \mathrm{cr}}_{\rho}(\mathbf w, \tau)/(\varpi))= \sum_{\sigma} m^{\mathrm{cr}}_{\sigma}(\mathbf w, \tau) z(\sigma, \rho),
\end{equation}
where the sum is taken over the set of isomorphism classes of smooth irreducible $k$-representations of $K$. 

Further, $z(\sigma, \rho)$ is non-zero if and only if $\Hom_K(\sigma, \beta)\neq 0$, where $\beta$ is a smooth $k$-representation 
of $\GL_2(\Qp)$ associated to $\rho$ by Colmez in \cite{colmez} (the so called \textit{atome automorphe}). In which case 
the Hilbert-Samuel multiplicity of $z(\sigma, \rho)$ is one, with one exception when $\rho\cong \bigl(\begin{smallmatrix} \omega &  \ast \\ 0 & 1\end{smallmatrix}\bigr )\otimes \chi$, where $\omega$ is the mod $p$ cyclotomic character,
and $\ast$ is \textit{peu ramifi\'e} in the sense of \cite{serre_duke}, in which case the multiplicity is $2$.
\end{thm}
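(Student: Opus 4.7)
The plan is to transfer the statement from the Galois side to the $\GL_2(\Qp)$-side via Colmez's $p$-adic local Langlands correspondence. The key object is the projective envelope $\wP$ (in a suitable category of smooth $\OO$-linear representations of $G$ with central character $\psi\varepsilon^{-1}$) of the Pontryagin dual of the atome automorphe $\beta$ attached to $\rho$. From previously developed machinery one has a canonical isomorphism $R_{\rho}^{\psi}\cong \End_G(\wP)^{\op}$, where $R_\rho^\psi$ is the quotient of $R_\rho$ classifying deformations with determinant $\psi\varepsilon^{-1}$. For any continuous $\OO$-linear representation $\sigma$ of $K$ on a finitely generated $\OO$-module I would define the functor $M(\sigma) := \Hom_{\OO\br{K}}^{\cont}(\wP,\sigma^d)^d$, which carries a natural $R_\rho^\psi$-action through $\End_G(\wP)$ and is finitely generated over $R_\rho^\psi$.

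The candidate ring $R^{\psi}_{\rho}(\mathbf w,\tau)$ is then defined as the image of $R_{\rho}^{\psi}$ in $\End_{\OO}(M(\sigma(\mathbf w,\tau)))$ modulo $\OO$-torsion. To identify this ring with the potentially semi-stable deformation ring of type $(\mathbf w,\tau,\psi)$ I would compare $L$-points: for $\nn\in \MaxSpec R_\rho[1/p]$ the module $M(\sigma(\mathbf w,\tau))\otimes \kappa(\nn)$ is non-zero exactly when $\Hom_K(\sigma(\tau),\Pi(\rho^{\un}_{\nn})^{\alg})\neq 0$, where $\Pi(\rho^{\un}_{\nn})$ is the unitary Banach space representation attached by the $p$-adic correspondence; Henniart's characterization of $\sigma(\tau)$ combined with the compatibility of $p$-adic and classical Langlands on locally algebraic vectors identifies this locus with the potentially semi-stable type $(\mathbf w,\tau,\psi)$. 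Reducedness, flatness and two-dimensionality should follow from Kisin's construction of $R_{\rho}^{\psi,\st}(\mathbf w,\tau)$ combined with the fact that $R_\rho^\psi$ is itself reduced; the crystalline variant is analogous using $\sigma^{\cr}(\tau)$.

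For the cycle identities, the central input is exactness: since $\wP$ is projective, the functor $M$ carries short exact sequences of $K$-representations on finitely generated $\OO$-modules to short exact sequences of $R_\rho^\psi$-modules, and in particular $M(\Theta)/\varpi M(\Theta)\cong M(\Theta/\varpi)$. Applying this to a $K$-stable lattice $\Theta\subset \sigma(\mathbf w,\tau)$ and choosing a Jordan--H\"older filtration of $\Theta/\varpi$ yields an equality of one-dimensional cycles
\[
z_1(M(\Theta)/\varpi) = \sum_\sigma m_\sigma(\mathbf w,\tau)\, z_1(M(\sigma)).
\]
Setting $z(\sigma,\rho):=z_1(M(\sigma))$ (well-defined since $M(\sigma)$ is a finitely generated $R_\rho$-module of dimension $\le 1$, independent of $(\mathbf w,\tau)$ by construction), and observing that $M(\Theta)$ and $R_\rho^\psi/(\varpi)$-quotient appearing in $R^{\psi}_{\rho}(\mathbf w,\tau)/(\varpi)$ have the same underlying support cycle, gives the desired equations \eqref{one} and \eqref{two}.

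The non-vanishing criterion is immediate from the construction: $M(\sigma)\neq 0$ exactly when there is a non-zero $K$-map from $\wP$ to $\sigma^d$, i.e. when $\sigma$ appears in the cosocle of the $K$-socle filtration of $\beta^d$, equivalently when $\Hom_K(\sigma,\beta)\neq 0$. Hilbert--Samuel multiplicity one follows when $M(\sigma)$ is isomorphic to a cyclic quotient of $R_\rho^\psi$ of Krull dimension one, which is checked block by block using the structure of $\wP$ computed in previous work. The main obstacle, and the last piece of the argument, is the exceptional case $\rho\cong \bigl(\begin{smallmatrix}\omega & \ast\\ 0 & 1\end{smallmatrix}\bigr)\otimes\chi$ with $\ast$ \peu: here the block of $\wP$ has a richer Ext-structure, $M(\sigma)$ fails to be cyclic, and a direct computation of its length at the generic points of $\Spec R_\rho/(\varpi)$ yields Hilbert--Samuel multiplicity $2$ rather than $1$. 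Handling this case---together with verifying that the identification $R^{\psi}_{\rho}(\mathbf w,\tau)$ with Kisin's ring is unaffected---is the technical heart of the proof.
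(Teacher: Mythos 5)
Your outline has the right overall shape (purely local, transfer to $\GL_2(\Qp)$ via a big module $N$, read off cycles from $K$-projectivity, detect the support via locally algebraic vectors and Henniart), but two of its load-bearing steps fail as stated. First, you take $N=\wP$ to be the projective envelope of $\beta^{\vee}$ throughout and assert $R^{\psi}_{\rho}\cong \End_G(\wP)^{\mathrm{op}}$. This is fine in the generic cases (there $\cosoc \beta^{\vee}=\pi^{\vee}$ and $\wP$ is the object the paper uses), but it is false precisely in the new case $\rho\cong \bigl(\begin{smallmatrix} \omega & \ast\\ 0 & 1\end{smallmatrix}\bigr)\otimes\chi$: the block of $\dualcat(\OO)$ containing $\Sp$, $\Eins$, $\pi_{\alpha}$ has three irreducible objects and the endomorphism ring of the projective envelope of $(\soc_G\beta)^{\vee}$ is not $R^{\psi}_{\rho}$. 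The paper instead takes $N$ to be the \emph{universal deformation} of $\beta^{\vee}$ (a proper quotient of the projective envelope, with $\End\cong R^{\psi}_{\rho}$ by Theorem \ref{deform_beta}), and then the projectivity of $N$ in $\Mod^{\pro}_K(\OO)$ --- on which your entire cycle identity rests --- is no longer automatic. Proving it is the technical heart of the argument (Theorem \ref{B}): one computes $\pdim_{\OO\br{I_1},\zeta}\beta^{\vee}=3$ and $\dim M(\sigma)\le 1$ and plays these off against $\dim R^{\psi}_{\rho}=4$ by a regular-sequence argument. Even in the generic case, the non-vanishing criterion and multiplicity one are not ``immediate from the construction'': they come from Theorem \ref{iks}, producing $x\in\wE$ with $\wP/x\wP$ a projective envelope of $(\soc_K\pi)^{\vee}$ over $\OO\br{K}$, which simultaneously gives $M(\sigma)\cong k\br{x}$ and the Cohen--Macaulayness/equidimensionality of $R^{\psi}_{\rho}/\ann M(\Theta)$ needed to pass from $z_1(M(\Theta))$ to $z_1(R^{\psi}_{\rho}(\mathbf w,\tau)/(\varpi))$ (conditions (b) and (c)(i) of Theorem \ref{A}); you do not address equidimensionality at all.

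Second, reducedness cannot be obtained by ``combining with Kisin's construction'': that would make the proof global and circular, and the case above is exactly the one Kisin's method does not treat; moreover reducedness of $R^{\psi}_{\rho}$ implies nothing about its quotients. The actual mechanism is condition (c)(ii) of Theorem \ref{A}: one must bound $\dim_{\kappa(\nn)}\Hom_K(V,\Pi(R^{\psi}_{\rho,\nn}/\nn^2))$ by $2$ at a dense set of classical points so that the localizations $R_{\nn}$ are regular. This rests on the hypothesis (RED) --- that the space of self-extensions of $\Pi(\kappa(\nn))$ whose locally algebraic vectors remain exact is at most one-dimensional --- proved by Dospinescu in the supercuspidal case and in \S\ref{semi_non_crys} for special series, together with the deformation-theoretic Lemma \ref{hom_def}. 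This ingredient is entirely missing from your proposal. Finally, in the \peu exceptional case $M(\st)$ is still cyclic (contrary to your claim): the multiplicity $2$ arises because the embedding dimension of $R^{\psi}_{\rho}/(\varpi,\mathfrak a)$ is $2$, and the paper pins down the value by an upper bound from the Iwahori--Hecke module computation feeding into \eqref{seq_Sp1} combined with a lower bound imported from Breuil--M\'ezard's explicit computation of a single semi-stable deformation ring; a direct length computation at generic points is not carried out and is not obviously available.
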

 
 The equality of cycles implies the equality of Hilbert-Samuel multiplicities, which proves the conjecture posed in \cite{bm}. The result is new only in the case when $\rho\cong \bigl(\begin{smallmatrix} \omega &  \ast \\ 0 & 1\end{smallmatrix}\bigr )\otimes \chi$. The other cases have been treated by Kisin in \cite{kisinfm}. Moreover, the existence of potentially semi-stable deformation rings has been established by Kisin in \cite{kisin_pst} in much greater generality, however our arguments
 give a new proof of this for $\rho$ as above.  Different parts of Kisin's arguments have been improved upon by Breuil-M\'ezard \cite{bm2} and Emerton-Gee \cite{emertongee}. 
 In particular, the formulation of the problem in terms of the language of cycles is due to Emerton-Gee. In all these papers, there is a local part of the argument using the $p$-adic local Langlands for $\GL_2(\Qp)$, see \cite{colmez}, \cite{colmez1}, \cite{bb} and \cite{berbour} for an overview, and 
 a global part of the argument, which uses Taylor-Wiles-Kisin patching argument. Our proof is purely local, except that we need the description of locally algebraic vectors in the $p$-adic Langlands 
 correspondence for $\GL_2(\Qp)$, and in the supercuspidal case, i.e. when $\tau$ is a restriction to $I_{\Qp}$ of an irreducible representation of the Weil group,  this description uses a global argument of Emerton \cite{emfm}. We use the results of \cite{cmf} instead of the global input. We also require more refined information about locally algebraic vectors than  other proofs, see Theorem \ref{RED_is_ok}. In the case when $\tau$ is the restriction to $I_{\Qp}$ of an irreducible representation of the Weil group, this is proved by  Dospinescu  in \cite{dospinescu}. Although we deal only with $\rho$, which have scalar endomorphisms, Yongquan Hu and Fucheng Tan in \cite{hu} have extended our method to non-scalar split residual representations.
 
The paper is organized as follows. In \S\ref{general} we develop a general formalism, which generates Breuil-M\'ezard conjecture like statements. The formalism is summed up in \S\ref{summary}. 
Since the Breuil-M\'ezard conjecture is expected and sometimes even known to hold in other contexts, see \cite{Kisin_icm}, \cite{Kisin_Gee}, we hope that our formalism  will be useful. 
We then spend the rest of the paper checking the conditions that make the formalism work.  For a fast read, we suggest to look at \S\ref{summary} and then at \S \ref{proof_of_conj}.

\textbf{Acknowledgements}. I thank Toby Gee,  Eike Lau and Thomas Zink  for a number of stimulating discussions.  Large parts of the paper were written, while visiting Toby Gee at Imperial 
College London.  I thank especially Matthew Emerton, who explained to me during the LMS Symposium in Durham in 2011, 
that he had global reasons to believe that the restriction to $K$ of a representation 
denoted by $N$ in \S\ref{non_generic_case} is projective. This was a major source of motivation for me to prove it locally, which resulted in Theorem \ref{B}. I thank Gabriel Dospinescu, Yongquan Hu and  the anonymous referees for their comments on the earlier draft.

\section{Generalities}\label{general}

Let $G$ be a $p$-adic analytic group and let $K$ be a compact open subgroup of $G$. Let $\Mod^{\sm}_G(\OO)$ be the category 
of smooth $G$-representation on $\OO$-torsion modules. Let $\Mod^{\pro}_G(\OO)$ be the category of compact $\OO\br{K}$-modules 
with an action of $\OO[G]$, such that the two actions coincide, when restricted to $\OO[K]$. The superscript $\pro$ stands for 
profinite-augmented. We equip objects of $\Mod^{\sm}_G(\OO)$ with the discrete topology.  Sending $\tau$ to its Pontryagin dual 
$\tau^{\vee}:=\Hom_{\OO}(\tau, L/\OO)$ equipped with the compact-open topology induces an anti-equivalence of categories 
between $\Mod^{\sm}_G(\OO)$ and $\Mod^{\pro}_G(\OO)$, with the inverse functor given by $M\mapsto M^{\vee}:=\Hom^{\cont}_{\OO}(M, L/\OO)$. 

Let $(R, \mm)$ be a complete local commutative noetherian $\OO$-algebra with residue field $k$. Suppose that we are given $N$ in 
$\Mod^{\pro}_G(\OO)$, and an $\OO$-algebra homomorphism from $R$ into the ring of endomorphisms of $N$, such that there exists a basis 
of open neighbourhoods $\{N_i\}_{i\in I}$ of  $0$ in $N$, consisting of $R$-submodules of $N$. If $\md$ is a compact $R$ module, we let 
$\md\wtimes_R N:=\varprojlim\md/\md_j \otimes_{R} N/N_i$, where $\{\md_j\}_{j\in J}$ is a basis of open neighborhoods of $0$ in $\md$ consisting of $R$-submodules. 
For every finitely presented $R$-module $\md$ the completed and the usual tensor products coincide and, since $R$ is noetherian, every finitely generated module is finitely presented.

\begin{lem}\label{dis} If $\lambda\in \Mod^{\sm}_K(\OO)$ is of finite length, then the natural action of $R$ on
$\Hom^{\cont}_{\OO{\br{K}}}(N, \lambda^{\vee})$ is continuous for the discrete topology on the module. 
\end{lem}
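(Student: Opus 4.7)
The plan is to prove that every $\phi\in\Hom^{\cont}_{\OO\br{K}}(N,\lambda^{\vee})$ is annihilated by some power $\mm^n$ of the maximal ideal of $R$, where $R$ acts by $(r\cdot\phi)(x)=\phi(rx)$. Since the powers of $\mm$ form a basis of open neighbourhoods of $0$ in $R$, this is exactly the continuity of the action for the discrete topology on the module.

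First I would unpack the finite length assumption on $\lambda$. Because $K$ is compact, any smooth irreducible $K$-representation on an $\OO$-torsion module is a finite-dimensional $k$-vector space: a non-zero generator $v$ has an open, hence finite-index, stabilizer, so $\OO[K]v$ is finitely generated over $\OO$, and the subrepresentation $\varpi V$ must vanish by Nakayama. Running this over a composition series shows $\lambda$ is finitely generated over $\OO$ and killed by some $\varpi^m$; dually, $\lambda^{\vee}$ is discrete and of finite length.

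Now fix $\phi$. Since $\lambda^{\vee}$ is discrete, $\ker\phi$ is open in $N$, so by the hypothesis on the topology of $N$ one may pick $N_i\subseteq\ker\phi$ which is an open $R$-submodule; then $\phi$ factors through $N/N_i$. Because $N$ is a compact (pseudocompact) $\OO\br{K}$-module, the quotient by any open submodule is of finite length over $\OO\br{K}$, so by the previous paragraph $N/N_i$ is finitely generated over $\OO$ and killed by some $\varpi^{m'}$. Consequently $\End_{\OO}(N/N_i)$ is a finite $\OO/\varpi^{m'}$-algebra, hence Artinian.

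To conclude, the image of the $\OO$-algebra map $R\to\End_{\OO}(N/N_i)$ is a commutative Artinian local quotient of $R$ whose maximal ideal is the image of $\mm$, so $\mm^n$ maps to zero in $\End_{\OO}(N/N_i)$ for some $n$. This yields $\mm^n\cdot N\subseteq N_i\subseteq\ker\phi$, and hence $\mm^n\cdot\phi=0$. The only point that requires a fact outside the excerpt is the claim that every open submodule of a compact $\OO\br{K}$-module has finite length quotient, which is a standard feature of pseudocompact modules; this is the closest thing to an obstacle, but in the setup adopted here it is part of the definition.
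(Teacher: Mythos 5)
Your proof is correct. The reduction of "continuity for the discrete topology" to "every element is killed by a power of $\mm$" is exactly right, and each step of the direct argument (open kernel, an open $R$-submodule $N_i\subseteq\Ker\phi$, finiteness of $N/N_i$, nilpotence of the maximal ideal of the Artinian local image of $R$ in $\End_{\OO}(N/N_i)$) goes through. The paper takes a different route: it first dualizes, $\Hom^{\cont}_{\OO\br{K}}(N,\lambda^{\vee})\cong\Hom_K(\lambda,N^{\vee})$, writes $N^{\vee}=\varinjlim N^{\vee}[\mm^n]$, and uses that $\lambda$ is finitely generated over $K$ to commute the colimit past $\Hom_K(\lambda,-)$, so that each term $\Hom_K(\lambda,N^{\vee}[\mm^n])$ visibly carries an $R/\mm^n$-action. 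The two arguments rest on the same underlying fact — the paper's unstated identity $N^{\vee}=\varinjlim N^{\vee}[\mm^n]$ is justified by precisely your open-kernel-plus-Artinian-quotient argument applied to elements of $N^{\vee}$ — but your version uses the hypothesis that $N$ has a basis of open $R$-submodules head-on and never needs the finite generation of $\lambda$ as a $K$-representation (only that $\lambda^{\vee}$ is discrete), whereas the paper's version packages the computation into a colimit that it then reuses as the displayed isomorphism \eqref{Nlambda} in later arguments. Your one flagged point, that the quotient of $N$ by an open submodule has finite length, is harmless: a discrete quotient of a compact module is finite, so nothing beyond compactness of $N$ is needed there.
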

\begin{proof} We have the following isomorphisms of $R$-modules: 
$$ \Hom^{\cont}_{\OO\br{K}}(N, \lambda^{\vee})\cong \Hom_K(\lambda, N^{\vee})\cong \Hom_K(\lambda, \varinjlim N^{\vee}[\mm^n]).$$
Since $\lambda$ is a finitely generated $K$-representation, we obtain  
$$\Hom_K(\lambda, \varinjlim N^{\vee}[\mm^n])\cong \varinjlim \Hom_K(\lambda,N^{\vee}[\mm^n]).$$
The action of $R$ on  $\Hom_K(\lambda,N^{\vee}[\mm^n])$ is continuous for the discrete topology on the module, since it factors
through the action of $R/\mm^n$. The isomorphism 
\begin{equation}\label{Nlambda}
\Hom^{\cont}_{\OO\br{K}}(N, \lambda^{\vee})\cong  \varinjlim \Hom_K(\lambda,N^{\vee}[\mm^n])
\end{equation}
implies the claim. 
\end{proof}

\begin{defi}\label{M_sigma} If $\lambda\in \Mod^{\sm}_K(\OO)$ is of finite length, then we let 
$$M(\lambda):=(\Hom_{\OO\br{K}}^{\cont}(N, \lambda^{\vee}))^{\vee}.$$
\end{defi}

\begin{lem}\label{Mfunct} The map $\lambda\mapsto M(\lambda)$ defines a covariant, right exact functor from the category of smooth, finite length 
$K$-representations on $\OO$-torsion modules, to the category of compact $R$-modules. If $N$ is projective in $\Mod^{\pro}_K(\OO)$ 
then the functor is exact. 
\end{lem}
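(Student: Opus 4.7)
The plan is to realize $M$ as a composition of three functors and conclude both statements formally. Concretely, $M$ factors as
\[
\lambda \longmapsto \lambda^{\vee} \longmapsto \Hom^{\cont}_{\OO\br{K}}(N, \lambda^{\vee}) \longmapsto \bigl(\Hom^{\cont}_{\OO\br{K}}(N, \lambda^{\vee})\bigr)^{\vee},
\]
i.e.\ Pontryagin duality, followed by $\Hom^{\cont}_{\OO\br{K}}(N, -)$, followed by Pontryagin duality. Since Pontryagin duality is an exact anti-equivalence between $\Mod^{\sm}_K(\OO)$ and $\Mod^{\pro}_K(\OO)$, and the middle arrow is a covariant functor, the composite is covariant. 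Lemma \ref{dis} shows that $\Hom^{\cont}_{\OO\br{K}}(N, \lambda^{\vee})$ is a discrete $R$-module with continuous $R$-action, so its Pontryagin dual is naturally a compact $R$-module; hence $M$ indeed lands in the category of compact $R$-modules.

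For right exactness, I would take a short exact sequence $0 \to \lambda_1 \to \lambda_2 \to \lambda_3 \to 0$ of finite-length smooth $\OO$-torsion $K$-representations and apply Pontryagin duality to obtain the exact sequence $0 \to \lambda_3^{\vee} \to \lambda_2^{\vee} \to \lambda_1^{\vee} \to 0$ in $\Mod^{\pro}_K(\OO)$. Because $\Hom^{\cont}_{\OO\br{K}}(N, -)$ is just the Hom functor in the abelian category $\Mod^{\pro}_K(\OO)$, it is left exact, producing
\[
0 \to \Hom^{\cont}_{\OO\br{K}}(N, \lambda_3^{\vee}) \to \Hom^{\cont}_{\OO\br{K}}(N, \lambda_2^{\vee}) \to \Hom^{\cont}_{\OO\br{K}}(N, \lambda_1^{\vee}).
\]
Pontryagin dualizing once more, and using that this operation is exact while reversing arrows, yields the right exact sequence $M(\lambda_1) \to M(\lambda_2) \to M(\lambda_3) \to 0$, which is the desired right exactness.

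For the final clause, if $N$ is projective in $\Mod^{\pro}_K(\OO)$, then by definition $\Hom^{\cont}_{\OO\br{K}}(N, -)$ is exact on that category. Re-running the argument above, the middle step now produces a short exact sequence, and dualizing gives a short exact sequence $0 \to M(\lambda_1) \to M(\lambda_2) \to M(\lambda_3) \to 0$, so $M$ is exact. I do not anticipate a genuine obstacle here: the statement is essentially a formal consequence of Pontryagin duality together with the definition of projectivity, with Lemma \ref{dis} providing the one non-formal input needed to control the compact $R$-module structure on $M(\lambda)$.
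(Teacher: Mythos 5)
Your proof is correct and follows essentially the same route as the paper: decompose $M$ as Pontryagin duality, then $\Hom^{\cont}_{\OO\br{K}}(N,-)$, then Pontryagin duality again, and track variance and exactness through the composition, with Lemma \ref{dis} supplying the compact $R$-module structure. The paper states this more tersely but the argument is identical.
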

\begin{proof} Since Pontryagin dual of a discrete module is compact, Lemma \ref{dis} implies that $M(\lambda)$ is a compact 
$R$-module for $\lambda\in \Mod^{\sm}_K(\OO)$ of finite length. Since $\lambda\mapsto M(\lambda)$ is defined as a
composition of three functors, the first contravariant and exact, the second covariant and left exact, and the third contravariant 
and exact, it is a covariant, right exact  functor. If $N$ is 
projective in $\Mod^{\pro}_K(\OO)$ then the functor $\Hom^{\cont}_{\OO\br{K}}(N, \ast)$ is exact. Since taking Pontryagin duals 
is an exact functor we deduce that $\lambda\mapsto M(\lambda)$ is an exact functor.
\end{proof}

\begin{prop}\label{disdual} Let $\lambda\in \Mod^{\sm}_K(\OO)$ be of finite length and let $\md$ be a 
finitely generated $R$-module. Then we have a natural isomorphism of compact $R$-modules:
\begin{equation}\label{compact}
\md\wtimes_R (\Hom_{\OO\br{K}}^{\cont}(N, \lambda^{\vee}))^{\vee}\cong (\Hom_{\OO\br{K}}^{\cont}(\md \wtimes_R N, \lambda^{\vee}))^{\vee}.
\end{equation}
\end{prop}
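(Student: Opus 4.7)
The plan is to pass to Pontryagin duals of both sides of \eqref{compact} and show that each becomes isomorphic to $\Hom_R(\md, M(\lambda)^{\vee})$, where $M(\lambda)^{\vee}=\Hom^{\cont}_{\OO\br{K}}(N,\lambda^{\vee})$ is the discrete $R$-module appearing in Lemma \ref{dis}. Since $R$ is noetherian, $\md$ is finitely presented and the completed tensor product agrees with the ordinary one (as noted just before Lemma \ref{dis}); choose a presentation $R^m\xrightarrow{\phi} R^n\to \md\to 0$.

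For the left-hand side of \eqref{compact}, tensoring the presentation with the compact $R$-module $M(\lambda)$ yields a right exact sequence
\[M(\lambda)^m\xrightarrow{\phi}M(\lambda)^n\to \md\wtimes_R M(\lambda)\to 0\]
of compact $R$-modules. Applying the exact Pontryagin duality functor turns this into
\[0\to (\md\wtimes_R M(\lambda))^{\vee}\to (M(\lambda)^{\vee})^n\xrightarrow{\phi^T}(M(\lambda)^{\vee})^m,\]
identifying $(\md\wtimes_R M(\lambda))^{\vee}$ with $\Hom_R(\md, M(\lambda)^{\vee})$.

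For the right-hand side, tensor the presentation with $N$ over $R$ to obtain a right exact sequence $N^m\xrightarrow{\phi}N^n\to \md\wtimes_R N\to 0$ in $\Mod^{\pro}_K(\OO)$; each term is genuinely $K$-stable because the $R$- and $K$-actions on $N$ commute by hypothesis. Applying the contravariant left exact functor $\Hom^{\cont}_{\OO\br{K}}(-,\lambda^{\vee})$ gives
\[0\to \Hom^{\cont}_{\OO\br{K}}(\md\wtimes_R N,\lambda^{\vee})\to (M(\lambda)^{\vee})^n\xrightarrow{\phi^T}(M(\lambda)^{\vee})^m,\]
which identifies $\Hom^{\cont}_{\OO\br{K}}(\md\wtimes_R N,\lambda^{\vee})$ with the same $\Hom_R(\md, M(\lambda)^{\vee})$. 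Taking Pontryagin duals of both identifications recovers the desired natural isomorphism.

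The only delicate point — and what I expect to be the main obstacle, such as it is — is to verify that the resulting isomorphism is independent of the chosen presentation and respects the $R$-action; equivalently, that the two maps labeled $\phi^T$ in the sequences above are literally the same map of discrete $R$-modules. This follows formally from functoriality once one records that the $R$-action on $\Hom^{\cont}_{\OO\br{K}}(N,\lambda^{\vee})$ induced by the $R$-action on $N$ agrees with the one obtained by Pontryagin-dualizing the $R$-action on $M(\lambda)$. This compatibility is built into the setup of Definition \ref{M_sigma}, together with the hypothesis that the $R$-module structure on $N$ comes from endomorphisms preserving the basis of open neighbourhoods $\{N_i\}$, so no further argument is required beyond this routine bookkeeping.
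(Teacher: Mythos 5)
Your proof is correct and follows essentially the same route as the paper's: reduce to the case of free modules by choosing a finite presentation of $\md$ and use right-exactness of $\wtimes_R N$ together with left-exactness of $\Hom^{\cont}_{\OO\br{K}}(-,\lambda^{\vee})$ and exactness of Pontryagin duality. The only cosmetic difference is that the paper first writes down the natural comparison map $\md\otimes_R F(N)\to F(\md\otimes_R N)$ (via $x\mapsto F(\phi_x)$) and then checks it is an isomorphism on the presentation, which makes the naturality and presentation-independence you defer to the final paragraph automatic.
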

\begin{proof} Let $F$ be any  right exact covariant  $R$-linear functor from the category of compact $R\br{K}$-modules to $R$-modules. 
For each finitely generated $R$-module $\md$ we  have a natural map
$\md \otimes_R F(N)\rightarrow F(\md\otimes_R N),$
constructed as follows. For each $x\in \md$ we  have a map $\phi_x\in \Hom^{\cont}_{R\br{K}}(N, \md\otimes_R N)$ given by $\phi_x(v)= x\otimes v$.
This induces a map $\md \rightarrow  \Hom_R( F(N), F(\md\otimes_R N))$, $x\mapsto F(\phi_x)$. This map is $R$-linear, since $F(\phi_{rx})= F(r \phi_x)= r F(\phi_x)$, for all $r\in R$.
By the universal property of tensor product we obtain a map 
\begin{equation}\label{construct_map}
\md \otimes_R F(N) \rightarrow F(\md\otimes_R N), \quad x \otimes v \mapsto F(\phi_x) (v).
\end{equation}
If $\md=R$ then the above map is an isomorphism. If we assume additionally that $F$ respects direct sums, so that the map 
$F(\pr_1)\oplus F(\pr_2):  F(N_1\oplus N_2) \rightarrow F(N_1)\oplus F(N_2)$ is an isomorphism for all $N_1$ and $N_2$, where $\pr_1$ and $\pr_2$ are the two projections,
then we obtain  that \eqref{construct_map} is an isomorphism for all $\md=R^n$. Finally by  choosing a presentation of $\md$, and using  that the functor is right exact and covariant we  deduce that  \eqref{construct_map} is an isomorphism for all finitely generated modules $\md$.   A special case of this argument with $F(N)= \Hom_{\OO\br{K}}^{\cont}(N, \lambda^{\vee})^{\vee}$ implies the proposition. Note that the functors $\md\otimes_R$ and $\md\wtimes_R$ are naturally 
isomorphic, as  $\md$ is finitely presented over $R$.
\end{proof}

\begin{cor}\label{fg} Let $\lambda\in \Mod^{\sm}_K(\OO)$ be of finite length. If $k\wtimes_R N$ is a finitely generated $\OO\br{K}$-module, 
then $M(\lambda)$ is a finitely generated $R$-module. Moreover, $\dim M(\lambda)\le \max_{\sigma} \{\dim M(\sigma)\}$, where  the 
maximum is taken over all the irreducible subquotients of $\lambda$. 
\end{cor}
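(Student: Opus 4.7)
My plan is to reduce both assertions to a computation modulo $\mm$ via Proposition \ref{disdual}, and then to handle the dimension bound by induction on the length of $\lambda$.

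First, for finite generation: I would specialize Proposition \ref{disdual} to $\md = k = R/\mm$, which gives a natural isomorphism
\[
M(\lambda)/\mm M(\lambda) \cong (\Hom^{\cont}_{\OO\br{K}}(k\wtimes_R N, \lambda^\vee))^\vee.
\]
The hypothesis that $k \wtimes_R N$ is finitely generated over $\OO\br{K}$, say by $v_1,\dots,v_n$, implies that evaluation at the $v_i$ embeds the right-hand Hom group into $(\lambda^\vee)^n$. Since $\lambda$ has finite length, $\lambda^\vee$ is a finite $\OO$-module, so this Hom group is itself a finite $\OO$-module. Its Pontryagin dual, and hence $M(\lambda)/\mm M(\lambda)$, is therefore a finite-dimensional $k$-vector space. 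Because $M(\lambda)$ is compact over the complete local noetherian ring $R$ by Lemma \ref{dis}, topological Nakayama lifts any $k$-basis of $M(\lambda)/\mm M(\lambda)$ to a finite generating set for $M(\lambda)$ over $R$.

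For the inequality of dimensions I would induct on the length of $\lambda$. The base case $\lambda$ irreducible is immediate. For the step, fix a short exact sequence $0 \to \lambda' \to \lambda \to \sigma \to 0$ with $\sigma$ irreducible. Lemma \ref{Mfunct} says $M$ is right exact, so there is an exact sequence
\[
M(\lambda') \to M(\lambda) \to M(\sigma) \to 0
\]
of finitely generated $R$-modules. Thus $M(\lambda)$ is an extension of $M(\sigma)$ by a quotient of $M(\lambda')$; since the support of an extension is the union of the supports of its factors, $\dim M(\lambda) \le \max(\dim M(\lambda'), \dim M(\sigma))$, and the induction hypothesis applied to $\lambda'$ yields the stated bound over the irreducible subquotients of $\lambda$.

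There is no real obstacle here beyond careful bookkeeping of the compact, discrete and continuous structures when invoking Proposition \ref{disdual} and Pontryagin duality; once that is under control, both claims drop out quickly.
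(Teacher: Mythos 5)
Your argument is correct and follows the paper's proof essentially verbatim: both reduce finite generation to finiteness of $k\wtimes_R M(\lambda)$ via Proposition \ref{disdual} and topological Nakayama (the paper checks finiteness of $\Hom^{\cont}_{\OO\br{K}}(k\wtimes_R N,\lambda^{\vee})$ by dualizing to an admissible $K$-representation and taking $H$-invariants, while you evaluate at a finite generating set — a cosmetic difference), and both deduce the dimension bound from right exactness of $M(-)$ and additivity of dimension in short exact sequences.
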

\begin{proof} By Nakayama's lemma for compact $R$-modules, \cite[Cor.1.5]{bru}, it is enough to show that $k\wtimes_R M(\lambda)$ is 
a finite dimensional $k$-vector space. Since  $k\wtimes_R N$ is a finitely generated $\OO\br{K}$-module, 
its Pontryagin dual is an admissible $K$-representation. Hence, the subspace of $H$-invariants $((k\wtimes_R N)^{\vee})^H$ 
is finite dimensional for every open subgroup 
$H$ of $K$. Taking $H$ to be the kernel of $\lambda$, we deduce that   $\Hom_K (\lambda, (k\wtimes_R N)^{\vee})$ is finite dimensional. 
Hence, $\Hom_{\OO\br{K}}^{\cont}(k\wtimes_R N, \lambda^{\vee})$ is a finite dimensional $k$-vector space. It follows from 
\eqref{compact} that  $k\wtimes_R M(\lambda)$ is finite dimensional. The last assertion follows from the right exactness of the 
functor $\lambda\mapsto M(\lambda)$ and the behaviour of dimension in short exact sequences. 
\end{proof}

\begin{cor}\label{zero_lambda}  Let $\lambda\in \Mod^{\sm}_K(\OO)$ be of finite length. If 
$\Hom^{\cont}_{\OO\br{K}}(k\wtimes_R N, \lambda^{\vee})=0$, then $M(\lambda)=0$.
\end{cor}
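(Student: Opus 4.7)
The plan is to reduce the vanishing of $M(\lambda)$ to a statement that can be handled by Nakayama's lemma for compact $R$-modules, using Proposition \ref{disdual} as the key tool.

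First I would specialize Proposition \ref{disdual} to the case $\md = k$. Since $k$ is a finitely generated $R$-module, the proposition yields a natural isomorphism of compact $R$-modules
\[
k \wtimes_R M(\lambda) \;\cong\; \bigl(\Hom^{\cont}_{\OO\br{K}}(k \wtimes_R N,\, \lambda^{\vee})\bigr)^{\vee}.
\]
The hypothesis is precisely that the inner Hom-group vanishes, so the right-hand side is zero (Pontryagin dual of zero is zero). Hence $k \wtimes_R M(\lambda) = 0$.

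Next, since $M(\lambda)$ is by definition the Pontryagin dual of a discrete module (Lemma \ref{dis}), it is a compact $R$-module. Applying Nakayama's lemma for compact modules over the complete local ring $R$ — precisely the result \cite[Cor.1.5]{bru} that was invoked in the proof of Corollary \ref{fg} — the vanishing of $k \wtimes_R M(\lambda)$ forces $M(\lambda) = 0$.

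There is no real obstacle here: the entire content lies in identifying $k\wtimes_R M(\lambda)$ correctly, which is done by Proposition \ref{disdual}, after which the conclusion is immediate from compact Nakayama. The only thing to verify is that the hypotheses of Proposition \ref{disdual} apply, but $k$ is visibly finitely generated over $R$ (it is even of finite length), so the proposition applies directly.
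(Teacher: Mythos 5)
Your proposal is correct and follows exactly the paper's own argument: specialize Proposition \ref{disdual} to $\md = k$ to identify $k\wtimes_R M(\lambda)$ with the Pontryagin dual of $\Hom^{\cont}_{\OO\br{K}}(k\wtimes_R N, \lambda^{\vee})$, which vanishes by hypothesis, and then conclude by topological Nakayama. Nothing is missing.
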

\begin{proof} It follows from Proposition \ref{disdual} that $k\wtimes_R M(\lambda)=0$. Topological Nakayama's lemma 
implies that $M(\lambda)=0$.
\end{proof} 

\begin{cor}\label{fgG} Let $\kappa$ be a smooth, finitely generated $k$-representation of $G$.
 If $k\wtimes_R N$ is a finitely generated $\OO\br{K}$-module, 
then $\Hom_G(\kappa, N^{\vee})^{\vee}$ is a finitely generated $R$-module. 
\end{cor}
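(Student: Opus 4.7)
The plan is to reduce this corollary to Corollary \ref{fg} by choosing a finite-length smooth $K$-subrepresentation $\lambda$ of $\kappa$ that generates $\kappa$ over $G$, and then applying Frobenius reciprocity.

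First, I would pick finitely many generators $v_1,\ldots,v_n$ of $\kappa$ as a $G$-representation. Since $\kappa$ is smooth, each $v_i$ has an open stabilizer; intersecting these with $K$ yields an open subgroup $H\subseteq K$ fixing all the $v_i$, so the $K$-subrepresentation $\lambda:=k[K]v_1+\cdots+k[K]v_n$ is a quotient of $k[K/H]^n$, hence a finite-dimensional $k$-vector space. In particular $\lambda$ is a finite-length object of $\Mod^{\sm}_K(\OO)$, and by construction $G\cdot\lambda=\kappa$.

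Because $G\cdot\lambda=\kappa$, there is a $G$-equivariant surjection $\cInd_K^G\lambda\twoheadrightarrow\kappa$. Applying $\Hom_G(-,N^\vee)$ and using Frobenius reciprocity produces an $R$-linear injection of discrete $R$-modules
$$\Hom_G(\kappa,N^\vee)\hookrightarrow\Hom_K(\lambda,N^\vee)\cong\Hom^{\cont}_{\OO\br{K}}(N,\lambda^\vee),$$
where the last identification is the one recorded at the start of the proof of Lemma \ref{dis}. Passing to Pontryagin duals turns this into an $R$-linear surjection of compact $R$-modules
$$M(\lambda)\twoheadrightarrow\Hom_G(\kappa,N^\vee)^\vee.$$

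To conclude, I would invoke Corollary \ref{fg} applied to the finite-length representation $\lambda$: under the hypothesis that $k\wtimes_R N$ is finitely generated over $\OO\br{K}$, the module $M(\lambda)$ is finitely generated over $R$, so the same is true of its quotient $\Hom_G(\kappa,N^\vee)^\vee$. I do not foresee any real obstacle here; the argument is a direct consequence of Corollary \ref{fg}, the smoothness and finite generation of $\kappa$, and Frobenius reciprocity, with the only verification being that $\lambda$ is indeed smooth (immediate, since it is pointwise fixed by the open subgroup $H$).
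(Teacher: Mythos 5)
Your proposal is correct and follows essentially the same route as the paper's proof: choose a finite-length $K$-subrepresentation $\lambda$ generating $\kappa$ over $G$, obtain the surjection $\cIndu{K}{G}{\lambda}\twoheadrightarrow\kappa$, apply Frobenius reciprocity and Pontryagin duality to realize $\Hom_G(\kappa,N^{\vee})^{\vee}$ as a quotient of $M(\lambda)$, and conclude by Corollary \ref{fg}. The only difference is that you spell out the construction of $\lambda$ from generators and open stabilizers, which the paper leaves implicit.
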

\begin{proof} Since $\kappa$ is smooth and is finitely generated as a $G$-representation, there exists a finite 
length $K$-subrepresentation $\lambda$ of $\kappa$, such that $\lambda$ generates $\kappa$ as a $G$-representation. 
In other words, we have a surjection $\cIndu{K}{G}{\lambda}\twoheadrightarrow \kappa$. Since 
\begin{equation}\label{dim_compact_ind}
\Hom_G(\cIndu{K}{G}{\lambda}, N^{\vee})\cong \Hom_K(\lambda, N^{\vee})\cong \Hom^{\cont}_{\OO\br{K}}(N, \lambda^{\vee}),
\end{equation} 
we deduce that $\Hom_G(\kappa, N^{\vee})^{\vee}$ is a quotient of $M(\lambda)$. As $M(\lambda)$ is a finitely generated $R$-module 
by Corollary \ref{fg}, we deduce that $\Hom_K(\kappa, N^{\vee})^{\vee}$ is a finitely generated $R$-module.
\end{proof}

\begin{cor}\label{fgG1} Let $\kappa$ be a smooth, finitely generated $k$-representation of $G$, let $\mathfrak a$ be the $R$-annihilator of 
$\Hom_G(\kappa, N^{\vee})^{\vee}$ and assume that 
$k\wtimes_R N$ is a finitely generated $\OO\br{K}$-module. If $\dim_k \Hom_G(\kappa, (k\wtimes_R N)^{\vee})=1$ then $\Hom_G(\kappa, N^{\vee})^{\vee}\cong R/\mathfrak a$ as an $R$-module. Moreover, the Krull dimension of 
$\Hom_G(\kappa, N^{\vee})^{\vee}$ is at most $\dim_k \Hom_G(\kappa, (N /\mm^2 N)^{\vee})- 1$.
\end{cor}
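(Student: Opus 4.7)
The plan is to apply topological Nakayama to show $M := \Hom_G(\kappa, N^{\vee})^{\vee}$ is cyclic as an $R$-module, and then bound its Krull dimension by its embedding dimension as a quotient of the local ring $R$.

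The key identification I would establish is that for every $i\ge 1$,
\[ (M/\mm^i M)^{\vee} \cong \Hom_G(\kappa, (N/\mm^i N)^{\vee}). \]
This rests only on two elementary dualisations. First, for any compact $R$-module $X$ one has $(X/\mm^i X)^{\vee} = X^{\vee}[\mm^i]$, the $\mm^i$-torsion subspace of the discrete $R$-module $X^{\vee}$: indeed $\phi \in X^{\vee}$ factors through $X/\mm^i X$ iff $\phi$ vanishes on $\mm^i X$ iff $\mm^i \phi = 0$. Applying this both to $X = N$ and to $X = M$, one reduces the claim to the evident identity $\Hom_G(\kappa, N^{\vee})[\mm^i] = \Hom_G(\kappa, N^{\vee}[\mm^i])$, which holds because a $G$-equivariant map is killed by $\mm^i$ iff its image is. An alternative route, mirroring the pattern of the preceding corollaries, would be to lift a presentation $\cIndu{K}{G}{\lambda_1}\to \cIndu{K}{G}{\lambda_0}\to \kappa \to 0$ of $\kappa$ and deduce the identification from Proposition \ref{disdual}, using right exactness of $\md\wtimes_R(-)$ and of $\Hom_G(-, N^{\vee})^{\vee}$.

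Taking $i=1$, the hypothesis $\dim_k \Hom_G(\kappa,(k\wtimes_R N)^{\vee})=1$ translates to $\dim_k M/\mm M = 1$. Since $M$ is finitely generated over $R$ by Corollary \ref{fgG}, the topological Nakayama lemma \cite[Cor.~1.5]{bru} produces a surjection $R\twoheadrightarrow M$, whose kernel is $\mathfrak a$ by definition; hence $M\cong R/\mathfrak a$.

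For the Krull dimension bound, write $M = R/\mathfrak a$ with $\mathfrak a\subseteq \mm$ (automatic since $M\ne 0$), so that $M/\mm^2 M = R/(\mm^2 + \mathfrak a)$. The short exact sequence
\[ 0 \to \mm/(\mm^2 + \mathfrak a) \to R/(\mm^2 + \mathfrak a) \to R/\mm \to 0 \]
gives $\dim_k M/\mm^2 M - 1 = \dim_k \mm/(\mm^2 + \mathfrak a)$, which is precisely the embedding dimension of the noetherian local ring $R/\mathfrak a$ and hence an upper bound for its Krull dimension. Combining this with the case $i=2$ of the identification yields the asserted inequality. The only non-routine step is the $\mm^i$-torsion identification; once that is in place the remainder is standard commutative algebra, so I expect this identification to be the only real technical point rather than a genuine obstacle.
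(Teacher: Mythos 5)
Your proposal is correct and follows essentially the same route as the paper: the paper also reduces everything to the identification $\md\otimes_R M\cong \Hom_G(\kappa,(\md\wtimes_R N)^{\vee})^{\vee}$ (its \eqref{same_trick}, obtained from the right-exact-functor argument of Proposition \ref{disdual}), then applies Nakayama for cyclicity and bounds the Krull dimension of $R/\mathfrak a$ by its embedding dimension $\dim_k\mm/(\mm^2+\mathfrak a)$. The only difference is that you verify the identification for $\md=R/\mm^i$ by a direct $\mm^i$-torsion duality computation rather than by citing Proposition \ref{disdual}, and you correctly flag the latter as the equivalent alternative.
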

\begin{proof} The proof of Proposition \ref{disdual} shows that for all finitely generated $R$-modules  $\md$, we have an isomorphism of $R$-modules: 
\begin{equation}\label{same_trick}
\md \otimes_R \Hom_G(\kappa, N^{\vee})^{\vee}\cong \Hom_G(\kappa, (\md\wtimes_R N)^{\vee})^{\vee}.
\end{equation}
 Since $k\wtimes_R N$ is a finitely generated $\OO\br{K}$-module, Corollary \ref{fgG} implies that 
 $M:=\Hom_G(\kappa, N^{\vee})^{\vee}$ is a finitely generated $R$-module. 
Since $\dim_k \Hom_G(\kappa, (k\wtimes_R N)^{\vee})=1$, we deduce from \eqref{same_trick} that $k\otimes_R M$ is a one dimensional $k$-vector space. Nakayama's lemma  implies that $M$ can be generated by one element over $R$, and hence $M\cong R/\mathfrak a$.  It follows from \eqref{same_trick} 
that $\dim_k \Hom_G(\kappa, (N/\mm^2)^{\vee})= \dim_k M/\mm^2 M$.  The isomorphism $M\cong R/\mathfrak a$ implies that 
$\dim_k \mm/(\mathfrak a +\mm^2)= \dim_k \Hom_G(\kappa, (N/\mm^2 N)) -1$. Since the dimension of $R/\mathfrak a$ is bounded by its embedding dimension, 
\cite[\S14]{matsumura}, we obtain the result. 
\end{proof}

If $X$ is an $\OO$-torsion free, $p$-adically complete and separated $\OO$-module, then we let
$X^d:=\Hom_{\OO}(X, \OO)$ equipped with the topology of pointwise convergence, that is the coarsest locally convex topology 
such that for each $x\in X$ the evaluation map at $x$, $X^d\rightarrow \OO$, $\phi\mapsto \phi(x)$ is continuous. We will refer
to $X^d$ as the Schikhof dual of $X$. The following is shown in the proof of \cite[Lem.5.4]{comp}:

\begin{prop}\label{schdual} Let $X$ be an $\OO$-torsion free, $p$-adically complete and separated $\OO$-module. 
Then for each $n \ge 1$ we have a natural topological isomorphism 
$$X^d/\varpi^n X^d\cong (X/\varpi^n X)^{\vee}.$$
\end{prop}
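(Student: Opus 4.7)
The plan is to reduce everything to an explicit computation after establishing a Schauder-type topological basis for $X$. First I observe that any $\OO$-linear map $\phi: X \to \OO$ satisfies $\phi(\varpi^n X) \subseteq \varpi^n \OO$, so $\phi$ is automatically $\varpi$-adically continuous. The natural map $\Phi: X^d/\varpi^n X^d \to (X/\varpi^n X)^\vee$, sending $\phi$ to $(\bar x \mapsto \phi(x) \bmod \varpi^n)$ with $\OO/\varpi^n\OO$ identified with $\varpi^{-n}\OO/\OO \subseteq L/\OO$, is well-defined and injective: if $\phi(X) \subseteq \varpi^n \OO$ then $\psi(x) := \varpi^{-n}\phi(x)$ defines an element of $X^d$ with $\phi = \varpi^n \psi$.

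For surjectivity, the essential step is to show that $X$ admits a topological basis. I would pick lifts $\{e_i\}_{i \in I} \subseteq X$ of a $k$-basis of $X/\varpi X$. By iterating the decomposition $x \equiv \sum_{i \in S} a_i e_i \pmod{\varpi X}$ (with $S \subseteq I$ finite) and using $\varpi$-adic completeness of $X$, one shows that every $x \in X$ admits a unique expansion $x = \sum_{i \in I} c_i e_i$ with $c_i \in \OO$ such that for each $m \ge 1$ only finitely many $c_i$ lie outside $\varpi^m \OO$; uniqueness follows from $\varpi$-adic separation and $\OO$-torsion freeness of $X$. Equivalently, $X$ is topologically isomorphic to the $\varpi$-adic completion of the free module $\bigoplus_{i \in I} \OO e_i$.

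Under this identification, $X/\varpi^n X \cong \bigoplus_{i \in I} \OO/\varpi^n \OO$, so $(X/\varpi^n X)^\vee \cong \prod_{i \in I} \OO/\varpi^n \OO$; and $X^d \cong \prod_{i \in I} \OO$ via $\phi \mapsto (\phi(e_i))_{i \in I}$, whence $X^d/\varpi^n X^d \cong \prod_{i \in I} \OO/\varpi^n \OO$. A direct check shows $\Phi$ corresponds to the identity under these identifications, hence is an isomorphism. The topology of pointwise convergence on $X^d$ matches the product topology on $\prod_{i \in I} \OO$, and the compact-open topology on $(X/\varpi^n X)^\vee$ (with $X/\varpi^n X$ discrete) matches the product topology on $\prod_{i \in I} \OO/\varpi^n \OO$, so $\Phi$ is a topological isomorphism.

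The main obstacle is constructing the topological basis: one must carefully verify convergence of the iteratively-defined infinite sum $\sum_i c_i e_i$ in $X$ (using $\varpi$-adic completeness), that this sum actually recovers the original $x$, and uniqueness of the expansion. Once this Schauder-basis structure is in place, the rest of the argument is formal, and an alternative route via $\Ext^1_\OO(X,\OO)$-vanishing would be harder to justify directly for such general $X$.
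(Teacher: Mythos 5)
Your proof is correct. Note that the paper gives no argument of its own here: it simply cites the proof of Lemma 5.4 of \cite{comp}, so there is nothing in the text to compare against line by line. Your reduction of everything to the structure theorem --- that an $\OO$-torsion free, $p$-adically complete and separated module is the $\varpi$-adic completion of a free module $\bigoplus_{i\in I}\OO e_i$ --- is the standard route, and it is essentially the fact underlying the cited lemma as well: it is equivalent to the statement that any $L$-Banach space over a discretely valued field whose norm takes values in $|L|$ is isomorphic to $c_0(I)$ (Schneider, \emph{Nonarchimedean Functional Analysis}, \S 10). Your verifications are the right ones: injectivity of $\Phi$ from torsion-freeness of $\OO$; the automatic continuity $\phi(\varpi^m X)\subseteq\varpi^m\OO$, which is what lets you identify $X^d$ with $\prod_{i\in I}\OO$ via $\phi\mapsto(\phi(e_i))$ and shows that the topology of pointwise convergence agrees with the product topology (evaluation at $x=\sum c_ie_i$ modulo $\varpi^m$ only sees the finitely many $i$ with $c_i\notin\varpi^m\OO$); and the identification $\widehat{F}/\varpi^n\widehat{F}\cong F/\varpi^n F$ for $F$ free, which gives the target as $\prod_{i\in I}\OO/\varpi^n\OO$. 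Your closing remark is also apt: the $\Ext^1_{\OO}(X,\OO)$ route reduces surjectivity to lifting maps $X\to\OO/\varpi^n$ to $\OO$, which again needs the Schauder basis, so nothing is saved by it.
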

It follows from the proof of \cite[Thm.1.2]{iw} that the evaluation map induces a topological isomorphism 
$X\overset{\cong}{\rightarrow} \Hom_{\OO}^{\cont}(X^d, \OO)$, for the $p$-adic topology on the target. 

Let $V$ be a continuous representation of $K$ on a finite dimensional $L$-vector space, let 
$\Theta$ be a $K$-invariant $\OO$-lattice in $V$ and let $\Theta^d:=\Hom_{\OO}(\Theta, \OO)$ and $V^d:=\Hom_{\OO}(V, L)$. 

\begin{lem}\label{act1n} $\Hom_{\OO\br{K}}^{\cont}(N, \Theta^d)$ is $p$-adically complete, separated and $\OO$-torsion free. 
\end{lem}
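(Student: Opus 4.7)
The strategy is to transfer properties from $\Theta^d$ itself to the hom module $H := \Hom^{\cont}_{\OO\br{K}}(N, \Theta^d)$, via an inverse-limit identification. The key initial observation is that $\Theta$ is a finitely generated free $\OO$-module, being a $K$-invariant lattice in a finite-dimensional $L$-vector space, and so $\Theta^d = \Hom_{\OO}(\Theta,\OO)$ is likewise finitely generated and $\OO$-free. A small computation with a finite generating set of $\Theta$ shows that the topology of pointwise convergence on $\Theta^d$ coincides with the $p$-adic topology; in particular $\Theta^d$ is $\OO$-torsion free, $p$-adically complete and separated, and the canonical map $\Theta^d \to \varprojlim_n \Theta^d/\varpi^n \Theta^d$ is an isomorphism of topological $\OO$-modules.

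The $\OO$-torsion-freeness and $p$-adic separatedness of $H$ are immediate by pointwise arguments: if $\varpi\phi = 0$ in $H$, then $\varpi\phi(v) = 0$ for every $v \in N$, so $\phi(v) = 0$ by torsion-freeness of $\Theta^d$; and if $\phi \in \bigcap_n \varpi^n H$ then $\phi(v) \in \bigcap_n \varpi^n \Theta^d = 0$ for every $v$. For $p$-adic completeness, I plan to establish the natural isomorphism
\[
H \;\cong\; \varprojlim_n \Hom^{\cont}_{\OO\br{K}}(N, \Theta^d/\varpi^n \Theta^d),
\]
whose forward direction is reduction modulo $\varpi^n \Theta^d$, and whose inverse sends a compatible family $(\phi_n)_n$ to the map $v \mapsto (\phi_n(v))_n \in \Theta^d$. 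The continuity of the assembled map follows because its preimage of $\varpi^n \Theta^d$ equals the open subset $\ker \phi_n \subseteq N$ (note that $\Theta^d/\varpi^n \Theta^d$ is discrete, so kernels of continuous maps into it are open).

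Since $\Theta^d$ is $\OO$-free, any continuous $\phi \in H$ with image in $\varpi^n \Theta^d$ is of the form $\varpi^n \psi$ for a necessarily continuous $\psi \in H$ (defined pointwise and continuous because $\psi^{-1}(\varpi^m \Theta^d) = \phi^{-1}(\varpi^{n+m}\Theta^d)$ is open). Hence the kernel of the level-$n$ reduction $H \to \Hom^{\cont}_{\OO\br{K}}(N, \Theta^d/\varpi^n \Theta^d)$ equals $\varpi^n H$. Given a $p$-adic Cauchy sequence $(\phi_k) \subset H$, its reductions in the finite-level hom modules eventually stabilize to a compatible family $(\psi_n)$, which by the inverse-limit identification comes from a unique $\phi \in H$; the kernel computation then gives $\phi - \phi_k \in \varpi^n H$ for all sufficiently large $k$, so $\phi_k \to \phi$ in the $p$-adic topology, completing the proof. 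The only step that warrants real attention is the continuity check in the inverse-limit identification, but this is straightforward given that $\Theta^d$ inherits the projective-limit topology from its $p$-adic filtration.
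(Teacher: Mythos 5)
Your proof is correct and follows essentially the same route as the paper's: both rest on the identification $\Hom^{\cont}_{\OO\br{K}}(N,\Theta^d)\cong\varprojlim_n\Hom^{\cont}_{\OO\br{K}}(N,\Theta^d/\varpi^n\Theta^d)$ together with the fact that the kernel of the level-$n$ reduction is exactly $\varpi^n\Hom^{\cont}_{\OO\br{K}}(N,\Theta^d)$ (the paper phrases this as the injection obtained by applying $\Hom^{\cont}_{\OO\br{K}}(N,\ast)$ to $0\to\Theta^d\xrightarrow{\varpi^n}\Theta^d\to\Theta^d/\varpi^n\Theta^d\to 0$). You merely make explicit the continuity verifications that the paper leaves implicit.
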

\begin{proof}  Since $\Theta^d$ is a 
free $\OO$-module of finite rank, we have 
\begin{equation}\label{limn}
\begin{split}
\Hom_{\OO\br{K}}^{\cont}(N, \Theta^d)&\cong \Hom_{\OO\br{K}}^{\cont}(N, \varprojlim \Theta^d/\varpi^n \Theta^d)\\&\cong 
\varprojlim \Hom_{\OO\br{K}}^{\cont}(N, \Theta^d/\varpi^n \Theta^d).
\end{split}
\end{equation}
 By applying $\Hom^{\cont}_{\OO\br{K}}(N, \ast)$ to the exact sequence 
$0\rightarrow \Theta^d\overset{\varpi^n}{\rightarrow}\Theta^d \rightarrow \Theta^d/\varpi^n \Theta^d\rightarrow 0$
we obtain an injection:
\begin{equation}\label{picomn}
\Hom_{\OO\br{K}}^{\cont}(N, \Theta^d)/\varpi^n \Hom_{\OO\br{K}}^{\cont}(N, \Theta^d)\hookrightarrow 
 \Hom_{\OO\br{K}}^{\cont}(N,\Theta^d/\varpi^n \Theta^d)
\end{equation}
for all $n\ge 1$. It follows from \eqref{limn} that the composition
$$  \Hom_{\OO\br{K}}^{\cont}(N, \Theta^d)\rightarrow \varprojlim \Hom_{\OO\br{K}}^{\cont}(N, \Theta^d)/(\varpi^n)
\rightarrow \varprojlim \Hom_{\OO\br{K}}^{\cont}(N, \Theta^d/\varpi^n \Theta^d)$$
is an isomorphism, and from \eqref{picomn} that the second arrow is an injection, as $\varprojlim$ is left exact,  
thus the first arrow in the above equation is 
an isomorphism. Hence, $\Hom_{\OO\br{K}}^{\cont}(N, \Theta^d)$ is $p$-adically complete and separated. Since $\Theta^d$ is $\OO$-torsion free, 
so is  $\Hom_{\OO\br{K}}^{\cont}(N, \Theta^d)$. 
\end{proof}

We equip $\Hom_{\OO\br{K}}^{\cont}(N, \Theta^d)$ with the $p$-adic topology.
\begin{lem}\label{act2n} The $R$-action on $\Hom_{\OO\br{K}}^{\cont}(N, \Theta^d)$ is continuous for the $p$-adic topology on the module.
\end{lem}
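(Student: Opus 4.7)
The $R$-action on $M := \Hom_{\OO\br{K}}^{\cont}(N, \Theta^d)$ is given by $r \cdot \phi := \phi \circ r_N$, where $r_N: N \to N$ is multiplication by $r$. In particular the action is $\OO$-linear, so for each fixed $r$ the map $\phi \mapsto r\phi$ already preserves $\varpi^n M$. Joint continuity of the action $R \times M \to M$ (with the $\mm$-adic topology on $R$ and the $p$-adic topology on $M$) therefore reduces to the following fibre-wise statement: for every $\phi \in M$ and every $n \geq 1$ there exists an open ideal $\mathfrak{a} \subset R$ with $\mathfrak{a} \phi \subseteq \varpi^n M$.

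To produce such an ideal I would combine three ingredients already at hand. The inclusion \eqref{picomn} from the proof of Lemma \ref{act1n} provides an injection
$$M/\varpi^n M \hookrightarrow \Hom_{\OO\br{K}}^{\cont}(N, \Theta^d/\varpi^n \Theta^d),$$
while Proposition \ref{schdual} gives a topological isomorphism $\Theta^d/\varpi^n\Theta^d \cong (\Theta/\varpi^n\Theta)^{\vee}$. Setting $\lambda_n := \Theta/\varpi^n \Theta$, this is a finite $\OO$-module on which $K$ acts smoothly, since $\Theta$ is a $K$-invariant lattice in the continuous representation $V$; in particular $\lambda_n$ has finite length in $\Mod^{\sm}_K(\OO)$. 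Lemma \ref{dis} then guarantees that the $R$-action on $\Hom_{\OO\br{K}}^{\cont}(N, \lambda_n^{\vee})$ is continuous for the discrete topology, so every element of this module is annihilated by an open ideal of $R$.

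Applying this to the image $\bar{\phi}$ of $\phi$ in $\Hom_{\OO\br{K}}^{\cont}(N, \lambda_n^{\vee})$ produces the desired $\mathfrak{a}$, and pulling back through the injection above yields $\mathfrak{a} \phi \subseteq \varpi^n M$. Joint continuity at an arbitrary point $(r_0, \phi_0) \in R \times M$ then follows from the decomposition
$$r\phi - r_0 \phi_0 = r_0 (\phi - \phi_0) + (r - r_0)\phi_0 + (r - r_0)(\phi - \phi_0),$$
since the first and third summands lie in $\varpi^n M$ whenever $\phi - \phi_0 \in \varpi^n M$ (by $\OO$-linearity), while the middle summand lies in $\varpi^n M$ whenever $r - r_0 \in \mathfrak{a}$, where $\mathfrak{a}$ is chosen for $\phi_0$ as in the fibre-wise statement. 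There is no real obstacle; the only point to verify is that the reduction map $M \to \Hom_{\OO\br{K}}^{\cont}(N, \lambda_n^{\vee})$ is $R$-equivariant, which is immediate from the fact that in both source and target the $R$-action is by precomposition with $r_N$.
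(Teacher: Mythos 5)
Your argument is correct and is essentially the paper's proof: the paper also reduces continuity for the $p$-adic topology to continuity of the $R$-action on $M/\varpi^n M$ for the discrete topology, which it deduces from the injection \eqref{picomn} together with Lemma \ref{dis} (via Proposition \ref{schdual} identifying $\Theta^d/\varpi^n\Theta^d$ with $(\Theta/\varpi^n\Theta)^{\vee}$). You have merely made explicit the routine joint-continuity decomposition that the paper leaves implicit.
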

\begin{proof} It is enough to show that the action of $R$ on $\Hom_{\OO\br{K}}^{\cont}(N, \Theta^d)/(\varpi^n)$ is continuous 
for the discrete topology on the module. This follows from \eqref{picomn} and Lemma \ref{dis}.
\end{proof}

\begin{defi}\label{M_Th} Let $M=M(\Theta)=\Hom_{\OO}(\Hom^{\cont}_{\OO\br{K}}(N, \Theta^d), \OO)$ equipped with the topology of pointwise convergence.
\end{defi}

\begin{lem}\label{act3n} $M$ is a compact $R$-module.
\end{lem}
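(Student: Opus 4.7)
The plan is to realise $M$ as an inverse limit of compact $R$-modules and conclude by Tychonoff's theorem. Set $X := \Hom^{\cont}_{\OO\br{K}}(N, \Theta^d)$, so that $M = X^d$ equipped with the topology of pointwise convergence. By Lemma \ref{act1n}, $X$ is $\OO$-torsion free, $p$-adically complete and separated, so Proposition \ref{schdual} applies and yields natural topological isomorphisms
$$ M/\varpi^n M \cong (X/\varpi^n X)^{\vee}, \qquad n \ge 1.$$

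Next I would argue that the topology of pointwise convergence on $M$ coincides with the projective limit topology induced by the reduction maps $M \twoheadrightarrow M/\varpi^n M$. This uses the description of a fundamental system of neighbourhoods of $0$ for the topology of pointwise convergence on $\Hom_{\OO}(X, \OO)$, together with the $p$-adic completeness of $X$: any evaluation condition $\phi(x_1), \ldots, \phi(x_s) \in \varpi^n \OO$ becomes implied by the single condition $\phi \in \varpi^n M$ after writing $X \cong \varprojlim_n X/\varpi^n X$ and tracing through the identification of Proposition \ref{schdual}. Thus
$$M \cong \varprojlim_n (X/\varpi^n X)^{\vee}$$
as topological $\OO$-modules.

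Finally, by Lemma \ref{act2n} the $R$-action on $X$ is continuous for the $p$-adic topology, so it descends to a continuous $R$-action on each discrete quotient $X/\varpi^n X$, whose Pontryagin dual $(X/\varpi^n X)^{\vee}$ is accordingly a compact $R$-module. A projective limit of compact $R$-modules with continuous transition maps is again a compact $R$-module, and this delivers the desired conclusion. The main technical point to verify carefully is the topological identification of $M$ with the projective limit of its reductions modulo $\varpi^n$; the $R$-linear and compactness assertions then follow formally from this identification together with Lemmas \ref{act1n} and \ref{act2n}.
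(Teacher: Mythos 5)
Your proposal is correct and follows essentially the same route as the paper: both identify $M$ topologically with $\varprojlim_n (X/\varpi^n X)^{\vee}$ via Proposition \ref{schdual}, observe that each $(X/\varpi^n X)^{\vee}$ is a compact $R$-module because the $R$-action on the discrete quotient $X/\varpi^n X$ is continuous (the content of \eqref{picomn} together with Lemma \ref{dis}, which is exactly what Lemma \ref{act2n} records), and conclude by passing to the limit. The only cosmetic difference is that the paper explicitly invokes Gabriel's anti-equivalence between discrete and compact $R$-modules at the step where you say "accordingly."
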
 
\begin{proof} Since $R$ is a compact ring, Pontryagin duality induces an anti-equi\-va\-lence of 
categories between the discrete and compact $R$-modules, \cite{gabriel}. This observation combined with Lemma \ref{dis} and 
\eqref{picomn} implies that the Pontryagin dual of $\Hom_{\OO\br{K}}^{\cont}(N, \Theta^d)/(\varpi^n)$ is a compact 
$R$-module.  It follows from Proposition \ref{schdual} that 
we have a topological isomorphism 
$$M \cong \varprojlim (\Hom_{\OO\br{K}}^{\cont}(N, \Theta^d)/(\varpi^n))^{\vee},$$
which implies the claim. 
\end{proof}
Since $N$ is compact, $\Hom^{\cont}_{\OO\br{K}}(N, V^d)$ with the topology defined by the supremum norm is 
an $L$-Banach space (we choose some norm on $V$). Moreover, it contains $\Hom_{\OO\br{K}}^{\cont}(N, \Theta^d)$ as an open bounded
$\OO$-lattice. Since $M$ is compact $\Hom_{\OO}^{\cont}(M, L)$ with a supremum norm is an $L$-Banach space and 
$\Hom^{\cont}_{\OO}(M, \OO)$ is a unit ball inside it. It follows from the proof of \cite[Thm.1.2]{iw} that the evaluation map 
induces a topological  isomorphism:
\begin{equation}\label{top_0} 
 \Hom_{\OO\br{K}}^{\cont}(N, \Theta^d)\cong \Hom_{\OO}^{\cont}(M, \OO).
\end{equation}
This in turn induces a functorial isomorphism of $L$-Banach spaces :
\begin{equation}\label{top_1}
  \Hom_{\OO\br{K}}^{\cont}(N, V^d)\cong \Hom^{\cont}_{\OO}(M, L).
\end{equation}

\begin{lem}\label{reduce_M} If $N$ is projective in $\Mod^{\pro}_K(\OO)$ then $M(\Theta)/(\varpi)\cong M(\Theta/(\varpi))$.
\end{lem}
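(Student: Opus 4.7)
The plan is to chain two identifications: the Schikhof-type duality of Proposition \ref{schdual}, and the exactness property provided by the projectivity of $N$. Both of them were essentially set up by the preceding lemmas, so the proof will mostly be an assembly.

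First I would apply Proposition \ref{schdual} with $X = \Hom^{\cont}_{\OO\br{K}}(N,\Theta^d)$. By Lemma \ref{act1n}, $X$ is $\OO$-torsion free, $p$-adically complete and separated, so the proposition applies and (for $n=1$) gives a topological isomorphism
\[
M(\Theta)/\varpi M(\Theta) \;=\; X^d/\varpi X^d \;\cong\; (X/\varpi X)^{\vee},
\]
with Pontryagin dual on the right. Thus the whole problem is to identify $X/\varpi X$ with $\Hom^{\cont}_{\OO\br{K}}(N,(\Theta/\varpi\Theta)^{\vee})$, because then taking Pontryagin duals and applying Definition \ref{M_sigma} to the finite length smooth $K$-representation $\Theta/\varpi\Theta$ yields exactly $M(\Theta/\varpi\Theta)$.

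Next I would establish that identification in two steps. Step one: since $\Theta$ is a finite-rank free $\OO$-module, $\Theta^d$ is also free of the same rank, and one checks by hand (using that $\Theta/\varpi\Theta$ is a finite-dimensional $k$-vector space) that the natural map
\[
\Theta^d/\varpi\Theta^d \;\xrightarrow{\;\sim\;}\; (\Theta/\varpi\Theta)^{\vee}
\]
is an isomorphism of discrete $K$-representations. Step two: apply the functor $\Hom^{\cont}_{\OO\br{K}}(N,-)$ to the short exact sequence
\[
0 \longrightarrow \Theta^d \xrightarrow{\;\varpi\;} \Theta^d \longrightarrow \Theta^d/\varpi\Theta^d \longrightarrow 0
\]
in $\Mod^{\pro}_K(\OO)$. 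The injection \eqref{picomn} with $n=1$ is always available; the point of the projectivity hypothesis on $N$ is that $\Hom^{\cont}_{\OO\br{K}}(N,-)$ is exact on $\Mod^{\pro}_K(\OO)$, which promotes that injection to an isomorphism
\[
\Hom^{\cont}_{\OO\br{K}}(N,\Theta^d)/\varpi \;\xrightarrow{\;\sim\;}\; \Hom^{\cont}_{\OO\br{K}}(N,\Theta^d/\varpi\Theta^d).
\]
Combining this with step one gives $X/\varpi X \cong \Hom^{\cont}_{\OO\br{K}}(N,(\Theta/\varpi\Theta)^{\vee})$ as desired.

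Finally I would assemble the three isomorphisms:
\[
M(\Theta)/\varpi M(\Theta) \;\cong\; (X/\varpi X)^{\vee} \;\cong\; \bigl(\Hom^{\cont}_{\OO\br{K}}(N,(\Theta/\varpi\Theta)^{\vee})\bigr)^{\vee} \;=\; M(\Theta/\varpi\Theta),
\]
which is the claim. There is no serious obstacle: the only subtle point is making sure the ambient category is right so that projectivity of $N$ can be invoked, i.e.\ that $\Theta^d$ really lies in $\Mod^{\pro}_K(\OO)$ — this is clear since $\Theta^d$ is a finite-rank free $\OO$-module with continuous $K$-action, hence compact in the $p$-adic topology.
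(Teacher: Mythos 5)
Your proof is correct and follows the same route as the paper: the paper's (one-line) argument is precisely that projectivity of $N$ upgrades the injection \eqref{picomn} to an isomorphism, and that Proposition \ref{schdual} applied to $X=\Hom^{\cont}_{\OO\br{K}}(N,\Theta^d)$ (legitimate by Lemma \ref{act1n}) then identifies $M(\Theta)/(\varpi)$ with $(X/\varpi X)^{\vee}\cong M(\Theta/(\varpi))$. You have simply written out the details, including the identification $\Theta^d/\varpi\Theta^d\cong(\Theta/\varpi\Theta)^{\vee}$, which the paper leaves implicit.
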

\begin{proof} The projectivity of $N$ implies that \eqref{picomn} is an isomorphism. The assertion follows from  Proposition \ref{schdual}.
\end{proof}

\begin{prop}\label{fgM} If $k\wtimes_R N$ is a finitely generated $\OO\br{K}$-module then $M$ is a finitely generated $R$-module.
\end{prop}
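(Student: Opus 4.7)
The plan is to invoke the topological version of Nakayama's lemma for compact $R$-modules \cite[Cor.1.5]{bru}, reducing the claim to showing that $M/\mm M$ is finite dimensional over $k$. Since $\varpi \in \mm$, it suffices to show that $M/\varpi M$ is finitely generated over $R$, and for this I will relate it to $M(\Theta/\varpi\Theta)$, to which Corollary \ref{fg} applies.

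More precisely, setting $X := \Hom_{\OO\br{K}}^{\cont}(N, \Theta^d)$, Proposition \ref{schdual} gives a natural identification
$$M/\varpi M = X^d/\varpi X^d \cong (X/\varpi X)^{\vee}.$$
On the other hand, the injection \eqref{picomn} (for $n=1$) reads
$$X/\varpi X \hookrightarrow \Hom_{\OO\br{K}}^{\cont}(N, \Theta^d/\varpi \Theta^d),$$
so dualizing produces a surjection of compact $R$-modules
$$\Hom_{\OO\br{K}}^{\cont}(N, \Theta^d/\varpi \Theta^d)^{\vee} \twoheadrightarrow M/\varpi M.$$

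Since $\Theta$ is a free $\OO$-module of finite rank, there is a natural $K$-equivariant isomorphism $\Theta^d/\varpi \Theta^d \cong (\Theta/\varpi \Theta)^{\vee}$, and hence by Definition \ref{M_sigma} the source of this surjection is precisely $M(\Theta/\varpi \Theta)$. Now $\Theta/\varpi \Theta$ is a smooth finite length representation of $K$ on an $\OO$-torsion module, so Corollary \ref{fg}, which uses the hypothesis that $k\wtimes_R N$ is a finitely generated $\OO\br{K}$-module, guarantees that $M(\Theta/\varpi \Theta)$ is finitely generated over $R$. Consequently $M/\varpi M$ is a finitely generated $R$-module, and a fortiori $M/\mm M$ is finite dimensional over $k$. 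Applying topological Nakayama then yields the proposition.

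The only step that requires any real care is the reduction modulo $\varpi$, since it is crucial that the natural map in \eqref{picomn} only goes one way — it is an injection but in general not an isomorphism (that would require projectivity of $N$, as in Lemma \ref{reduce_M}). The point is that this is still enough, because after dualizing we only need $M/\varpi M$ as a \emph{quotient} of something finitely generated, not an isomorphism with it.
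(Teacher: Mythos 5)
Your proof is correct and is essentially the paper's own argument: reduce via topological Nakayama to showing $M/\varpi M$ is finitely generated, then use Proposition \ref{schdual} and the injection \eqref{picomn} to exhibit $M/\varpi M$ as a quotient of $M(\Theta/\varpi\Theta)$, which is finitely generated by Corollary \ref{fg}. Your closing remark correctly identifies why the one-directional nature of \eqref{picomn} is harmless here.
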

\begin{proof} Nakayama's lemma for compact $R$-modules implies that $M$ is a finitely generated $R$-module if and only if 
 $k\wtimes_R M$ is a finite dimensional $k$-vector space. In particular, it is enough to show that $M/\varpi M$ is finitely generated. 
It follows from \eqref{picomn} and Proposition \ref{schdual} that $M/\varpi M$ is a quotient of 
$(\Hom_{\OO\br{K}}^{\cont}(N, \Theta^d/(\varpi)))^{\vee}$, which is finitely generated by Corollary \ref{fg}.
\end{proof}

\subsection{Banach space representations}\label{Banachspace}

Let $R$, $N$, $V$, $\Theta$, and $M=M(\Theta)$ be as in the previous subsection. From now on we assume that $k\wtimes_R N$ is a finitely
generated $\OO\br{K}$-module. Nakayama's lemma for compact $R\br{K}$-modules implies that this assumption is equivalent to $N$ being finitely generated over $R\br{K}$. Proposition \ref{fgM} implies that $M$ is a finitely generated $R$-module. 

Since $R$ is a commutative complete  local 
noetherian $\OO$-algebra with residue field $k$ there is a surjection $\varphi: \OO\br{x_1, \ldots, x_s}\twoheadrightarrow R$. We identify  
$\OO\br{x_1, \ldots, x_s}$ with the completed group algebra of $\mathcal G:=\Zp^{s}$, and let $\mathcal G$ act on $N$ via $\varphi$. 

\begin{lem}\label{admissible_G}  $\Hom_{\OO\br{K}}^{\cont}(N, \Theta^d)_L$ is an admissible unitary $L$-Banach space representation of $\mathcal G$.
\end{lem}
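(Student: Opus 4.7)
The plan is to deduce admissibility from the Schneider--Teitelbaum anti-equivalence between admissible unitary $L$-Banach space representations of the compact $p$-adic analytic group $\mathcal G = \Zp^s$ and finitely generated modules over the Iwasawa algebra $\OO\br{\mathcal G}[1/p]$; the content reduces to identifying the dual of our Banach space with a module we already control, namely $M$.

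First I would set up the actions. Since $\varphi$ sends each topological generator of $\mathcal G$ into $1+\mm\subseteq R^\times$, the induced action of $\mathcal G$ on $N$ consists of continuous $\OO\br{K}$-linear automorphisms. Precomposition therefore gives an $\OO$-linear $\mathcal G$-action on $\Hom_{\OO\br{K}}^{\cont}(N,\Theta^d)$, which is continuous for the $p$-adic topology by Lemma \ref{act2n}. Inverting $p$ and invoking Lemma \ref{act1n}, one obtains a unitary representation of $\mathcal G$ on the $L$-Banach space $B:=\Hom_{\OO\br{K}}^{\cont}(N,\Theta^d)_L$ whose unit ball is $B^\circ:=\Hom_{\OO\br{K}}^{\cont}(N,\Theta^d)$.

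The decisive step is to identify the Schikhof dual $\Hom_{\OO}^{\cont}(B^\circ,\OO)$, which is literally $M$ by Definition \ref{M_Th}. The $\mathcal G$-action on $M$ induced from $B^\circ$ coincides with the action arising through $\varphi$ from the $R$-module structure of $M$, so $M$ becomes an $\OO\br{\mathcal G}$-module via $\varphi$. By Proposition \ref{fgM}, $M$ is a finitely generated $R$-module, hence a finitely generated $\OO\br{\mathcal G}$-module, and therefore $M\otimes_{\OO}L$ is a finitely generated $\OO\br{\mathcal G}[1/p]$-module. Schneider--Teitelbaum duality then immediately yields that $B$ is admissible.

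The only real work is bookkeeping---tracking the $R$-action on $N$, its induced actions on $\Hom_{\OO\br{K}}^{\cont}(N,\Theta^d)$, on $B$, and on $M$---and checking that these actions are compatible with the topologies required by Schneider--Teitelbaum. There is no substantive conceptual obstacle once Lemmas \ref{act1n}--\ref{act3n} and Proposition \ref{fgM} are in hand.
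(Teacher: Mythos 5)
Your proof is correct and is essentially the paper's argument: the paper likewise deduces admissibility from the finite generation of $M(\Theta)$ over $R$ (Proposition \ref{fgM}), hence over $\OO\br{\mathcal G}$ via $\varphi$, combined with the identification \eqref{top_0} of the unit ball $\Hom_{\OO\br{K}}^{\cont}(N,\Theta^d)$ with $\Hom_{\OO}^{\cont}(M,\OO)$ and the Schneider--Teitelbaum duality of \cite{iw}. Your explicit identification of the Schikhof dual of the unit ball with $M$ is just Definition \ref{M_Th} read together with \eqref{top_0}, so there is no substantive difference.
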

\begin{proof} Since $M$ is finitely generated over $R$, it is also finitely generated over $\OO\br{\mathcal G}$, and the assertion follows from 
\eqref{top_0}.
\end{proof}

\begin{lem}\label{r_e_co} The map $N \mapsto \Hom_{\OO}^{\cont}( \Hom_{\OO\br{K}}^{\cont}(N, \Theta^d), L)$ induces a covariant, right exact functor
from the category of finitely generated $R\br{K}$-modules to the category of finitely generated $R[1/p]$-modules.
\end{lem}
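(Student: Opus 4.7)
The plan is to rewrite the functor $T\colon N\mapsto\Hom_\OO^{\cont}(\Hom_{\OO\br{K}}^{\cont}(N,\Theta^d),L)$ in a form from which all three required properties (covariance, finite generation, right exactness) are transparent. By Lemma \ref{act1n} applied to the variable $N$, $\Hom_{\OO\br{K}}^{\cont}(N,\Theta^d)$ is $p$-adically complete, separated and $\OO$-torsion free; any continuous $\OO$-linear map from such a module to $L$ has bounded image, so $\Hom_\OO^{\cont}(\cdot,L)\cong\Hom_\OO^{\cont}(\cdot,\OO)\otimes_\OO L$ on this class. Combining with the evaluation isomorphism \eqref{top_0} yields a natural identification $T(N)\cong M(N,\Theta)\otimes_\OO L$, where $M(N,\Theta):=\Hom_\OO(\Hom_{\OO\br{K}}^{\cont}(N,\Theta^d),\OO)$. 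Covariance of $T$ is now clear, as it decomposes into two contravariant functors. For finite generation over $R[1/p]$ I would invoke Proposition \ref{fgM}: the hypothesis that $k\wtimes_R N$ is finitely generated over $\OO\br{K}$ is automatic for any finitely generated $R\br{K}$-module $N$ (since $k\wtimes_R N=N/\mm N$ is already finitely generated over $k\br{K}$), so $M(N,\Theta)$ is finitely generated over $R$ and hence $T(N)$ over $R[1/p]$.

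For right exactness, the cleanest route is to identify $M(N,\Theta)$ with the completed tensor product $\Theta\wtimes_{\OO\br{K}} N$. The tensor-Hom adjunction for continuous maps, with $\Theta$ viewed as a right $\OO\br{K}$-module via the anti-involution $k\mapsto k^{-1}$, yields $\Hom_{\OO\br{K}}^{\cont}(N,\Theta^d)\cong\Hom_\OO^{\cont}(\Theta\wtimes_{\OO\br{K}} N,\OO)$; Schikhof-Iwasawa reflexivity \cite[Thm.1.2]{iw} (as already used in deriving \eqref{top_0}) then dualizes this back to $M(N,\Theta)\cong\Theta\wtimes_{\OO\br{K}} N$. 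Since the completed tensor product is right exact in its second argument, and localization at $p$ is exact, the resulting description $T(N)\cong(\Theta\wtimes_{\OO\br{K}} N)[1/p]$ makes right exactness immediate.

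The main point requiring care is the tensor-Hom identification of $M(N,\Theta)$ with $\Theta\wtimes_{\OO\br{K}} N$, essentially a bookkeeping exercise involving the three different continuous duals in play (Schikhof, Pontryagin, and Banach) and the conventions for the $K$-action on $\Theta^d$. Should this route prove unwieldy, an alternative is to derive right exactness directly from a Hahn-Banach argument applied to the left exact sequence of admissible $L$-Banach spaces produced from a short exact sequence $0\to N_1\to N_2\to N_3\to 0$ of finitely generated $R\br{K}$-modules via the contravariant left exactness of $\Hom_{\OO\br{K}}^{\cont}(\cdot,\Theta^d)_L$ and Lemma \ref{admissible_G}, using that $L$ is discretely valued so that the continuous $L$-linear dual is exact on short exact sequences of $L$-Banach spaces.
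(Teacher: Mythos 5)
Your argument is correct in substance, but your primary route is genuinely different from the paper's, so let me compare. The paper applies $\Hom_{\OO\br{K}}^{\cont}(\ast,\Theta^d)$ to a presentation $N_1\to N_2\to N_3\to 0$, observes via Lemma \ref{admissible_G} that after inverting $p$ the resulting left exact sequence lies in the abelian category of admissible unitary $L$-Banach space representations of the auxiliary group $\mathcal G=\Zp^s$, and then invokes the Schneider--Teitelbaum anti-equivalence with finitely generated $\OO\br{\mathcal G}[1/p]$-modules, which, being an anti-equivalence of abelian categories, converts left exactness into right exactness. Your fallback paragraph is essentially this proof in Banach-space language; note only that Hahn--Banach gives surjectivity of the dual of a closed embedding, while exactness in the middle additionally requires knowing that $A_2\to A_1$ has closed image, which is exactly what admissibility (Lemma \ref{admissible_G}) supplies. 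Your primary route instead identifies the functor, after inverting $p$, with $N\mapsto(\Theta\wtimes_{\OO\br{K}}N)[1/p]$ and quotes right exactness of the completed tensor product; this is appealing because it makes covariance, finite generation and right exactness visible simultaneously and avoids the auxiliary group $\mathcal G$ altogether. The one step you must state more carefully is the double dualization: Schikhof--Iwasawa reflexivity recovers not $\Theta\wtimes_{\OO\br{K}}N$ but its maximal $\OO$-torsion-free Hausdorff quotient, and the two differ whenever the tensor product has torsion (so the displayed isomorphism $M(N,\Theta)\cong\Theta\wtimes_{\OO\br{K}}N$ is false as stated). The discrepancy disappears upon inverting $p$, but to justify that you should first check that $\Theta\wtimes_{\OO\br{K}}N$ is finitely generated over $R$ (reduce modulo $\mm$ and apply topological Nakayama, as in the proof of Proposition \ref{fgM}), so that its torsion submodule is killed by a bounded power of $\varpi$. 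With that correction inserted, both conclusions follow as you describe.
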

\begin{proof} Since $R$ is noetherian and $K$ is a $p$-adic analytic group, the ring $R\br{K}$ is noetherian. Thus 
the forgetful functor induces an equivalence of categories between the category of finitely generated compact $R\br{K}$-modules
with continuous $R\br{K}$-linear homomorphism as morphisms, and the category of finitely generated $R\br{K}$-modules with 
$R\br{K}$-linear homomorphism as morphisms, see for example \cite[(5.2.23)]{nsw}. Since $R$ is an $\OO$-algebra, we have a 
natural map $\iota: \OO\br{K}\rightarrow R\br{K}$. Since $R$ and $\OO$ have the same residue field, any 
$R\br{K}$-module of finite length will also be an $\OO\br{K}$-module of finite length. Hence $\iota$ is a continuous ring homomorphism, 
which induces an exact functor from the category of compact $R\br{K}$-modules to the category of $\OO\br{K}$-modules. Hence, if we start with 
an exact sequence of finitely generated $R\br{K}$-modules:
\begin{equation}\label{fg_rk}
N_1\rightarrow N_2\rightarrow N_3\rightarrow 0,
\end{equation}
then we may view it as an exact sequence in the category of compact $\OO\br{K}$-modules, which is abelian by \cite[Th. IV.3.3]{gabriel}. Since $\Theta^d$ is 
also an object in this category, by applying $\Hom_{\OO\br{K}}^{\cont}(\ast, \Theta^d)$ to \eqref{fg_rk} we obtain an exact sequence of $\OO$-modules:
\begin{equation}\label{fg_rk2}
0\rightarrow \Hom_{\OO\br{K}}^{\cont}(N_3, \Theta^d)\rightarrow \Hom_{\OO\br{K}}^{\cont}(N_2, \Theta^d)\rightarrow \Hom_{\OO\br{K}}^{\cont}(N_1, \Theta^d).
\end{equation}
If we equip the $\OO$-modules in \eqref{fg_rk2} with the $p$-adic topology using Lemma \ref{act1n}, then the maps between them become continuous. If we further invert $p$ 
then Lemma \ref{admissible_G} implies that  we get an exact sequence of admissible unitary $L$-Banach space representations of  $\mathcal G$. The functor 
$\Hom_{\OO}^{\cont}(\ast, L)$ induces an anti-equivalence of categories between the category of admissible unitary $L$-Banach space representations of $\mathcal G$ 
and the category of finitely generated $\OO\br{\mathcal G}[1/p]$-modules by \cite[Thm.3.5]{iw}. Since the action of $\OO\br{\mathcal G}$ on $N_1$, $N_2$ and $N_2$ factors through $R$ 
by assumption, we deduce that by applying $\Hom_{\OO}^{\cont}(\ast, L)$ to \eqref{fg_rk2} we obtain an exact sequence of finitely generated $R[1/p]$-modules.
\end{proof}

\begin{remar} It follows from \cite[Prop.3.1]{nfa} that 
$$\Hom_{\OO}^{\cont}( \Hom_{\OO\br{K}}^{\cont}(N, \Theta^d), L)\cong \Hom_{\OO}( \Hom_{\OO\br{K}}^{\cont}(N, \Theta^d), \OO)_L= M(\Theta)_L$$
\end{remar}

\begin{prop}\label{good_functor} If $\md^0$ is a finitely generated $R$-module then there is a natural isomorphism of finitely generated $R[1/p]$-modules:
$$ \md^0\otimes_R M(\Theta)_L \cong \Hom_{\OO}^{\cont}( \Hom_{\OO\br{K}}^{\cont}(\md^0\wtimes_R N, \Theta^d), L).$$
\end{prop}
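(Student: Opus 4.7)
The plan is to follow exactly the same template used in the proof of Proposition~\ref{disdual}, now applied to the functor appearing in Lemma~\ref{r_e_co}. Let $F$ denote the functor
$$F(N'):=\Hom^{\cont}_{\OO}(\Hom^{\cont}_{\OO\br{K}}(N', \Theta^d), L).$$
By Lemma~\ref{r_e_co}, $F$ is a covariant, $R$-linear, right exact functor from the category of finitely generated $R\br{K}$-modules to the category of finitely generated $R[1/p]$-modules, and the remark following Lemma~\ref{r_e_co} identifies $F(N)$ with $M(\Theta)_L$. The standing hypothesis of the subsection ensures that $N$ is finitely generated over $R\br{K}$; applying $-\wtimes_R N$ to a finite presentation $R^m\to R^n\to \md^0\to 0$ then shows that $\md^0\wtimes_R N$ is likewise finitely generated over $R\br{K}$, so $F(\md^0\wtimes_R N)$ is defined.

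For each $x\in \md^0$ the continuous $R\br{K}$-linear map $\phi_x: N\to \md^0\wtimes_R N$, $v\mapsto x\otimes v$, yields an $R$-linear map $F(\phi_x): F(N)\to F(\md^0\wtimes_R N)$, and since $F$ is $R$-linear the assignment $x\mapsto F(\phi_x)$ is $R$-linear. The universal property of the tensor product therefore gives a natural $R$-linear map
$$\Psi_{\md^0}: \md^0\otimes_R F(N)\longrightarrow F(\md^0\wtimes_R N),\qquad x\otimes v\longmapsto F(\phi_x)(v).$$
I would then check that $\Psi_{\md^0}$ is an isomorphism in three steps. For $\md^0=R$ the map $\Psi_R$ is tautologically the identity. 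Since $F$ is additive (being a covariant $R$-linear functor) it commutes with finite direct sums, and $R^n\wtimes_R N\cong N^n$, so $\Psi_{R^n}$ is an isomorphism for every $n$. Finally, given a presentation $R^m\to R^n\to \md^0\to 0$, the right exactness of $-\otimes_R F(N)$ on one side and of $F(-\wtimes_R N)$ on the other (the latter combining Lemma~\ref{r_e_co} with the right exactness of $\wtimes_R N$ on finitely generated $R$-modules), together with the naturality of $\Psi$ and the five lemma, forces $\Psi_{\md^0}$ to be an isomorphism in general.

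Finite generation of both sides as $R[1/p]$-modules is then guaranteed by Proposition~\ref{fgM} on the left and by Lemma~\ref{r_e_co} on the right. The argument is essentially a direct transcription of the proof of Proposition~\ref{disdual}, so the only genuine point to verify is that $\md^0\wtimes_R N$ remains in the domain of $F$, i.e.\ is finitely generated over $R\br{K}$; beyond that I do not anticipate any serious obstacle.
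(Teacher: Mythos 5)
Your proof is correct and follows essentially the same route as the paper: the paper likewise sets $F(N')=\Hom_{\OO}^{\cont}(\Hom_{\OO\br{K}}^{\cont}(N',\Theta^d),L)$, invokes Lemma \ref{r_e_co} for right exactness and covariance, checks that $\md^0\wtimes_R N$ is a finitely generated $R\br{K}$-module, and then says "the same proof as in Proposition \ref{disdual}" applies — which is precisely the $\phi_x$ construction and the $R \to R^n \to$ presentation argument you spell out. No gaps.
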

\begin{proof} Since $\md^0$ is a finitely generated $R$-module, it is a quotient of $R^{\oplus n}$ for some $n$. Then $\md^0\wtimes_R N$ is 
a quotient of $N^{\oplus n}$, and hence is a finitely generated $R\br{K}$-module module. Since $R$ is noetherian, $\md^0$ is finitely presented and 
thus $\md^0\otimes_R N \cong \md^0\wtimes_R N$. The Proposition asserts the existence of the natural isomorphism  between $\md^0\otimes_R F(N)$ 
and $F(\md^0\otimes_R N)$, where $F$ is the functor $N\mapsto  \Hom_{\OO}^{\cont}( \Hom_{\OO\br{K}}^{\cont}(N, \Theta^d), L)$. The functor $F$
is right exact and covariant by Lemma \ref{r_e_co} and respects finite direct sums. The same proof as in Proposition \ref{disdual} gives us the assertion. 
\end{proof}

Let $\md$ be a finitely generated $R[1/p]$-module. We choose a finitely generated $R$-submodule $\md^0$ of $\md$, such that 
$\md\cong \md^0_L$. Then $\md^0\wtimes_R N$ is a finitely generated $R\br{K}$-module. The surjection $\varphi:\OO\br{\mathcal G}\twoheadrightarrow 
R$ induces the surjection $\OO\br{\mathcal G\times K}\cong \OO\br{\mathcal G}\wtimes_{\OO} \OO\br {K}\twoheadrightarrow R\br{K}$. This allows to consider
$\md^0\wtimes_R N$ as a finitely generated $\OO\br{\mathcal G\times K}$-module. Since the ring $\OO\br{\mathcal G\times K}$ is noetherian, the submodule 
of  $\md^0\wtimes_R N$ consisting of elements killed by a power of $\varpi$ is finitely generated and hence closed. Thus the maximal $\OO$-torsion free quotient of 
$\md^0\wtimes_{R} N$ is Hausdorff and 
\begin{equation}\label{def_Pi_md}
\Pi(\md):=\Hom^{\cont}_{\OO}(\md^0\wtimes_{R} N, L)
\end{equation} 
equipped with the supremum norm is an admissible unitary $L$-Banach space representation of $\mathcal G\times G$, see \cite{iw}. 
The definition of $\Pi(\md)$ does not depend on the choice of $\md^0$, since any two are commensurable.

\begin{lem}\label{m_Pi1}  The map $\md\mapsto \Pi(\md)$ defines a  left exact, contravariant functor from the category 
of finitely generated $R[1/p]$-modules to $\Ban^{\adm}_{\mathcal G\times G}(L)$, the category of admissible $L$-Banach
space representations of $\mathcal G\times G$.
\end{lem}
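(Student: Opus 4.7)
My plan is to build the functor by transporting maps of finitely generated $R$-submodules of $\md$ through the completed tensor product with $N$, and then dualizing; left exactness will follow from the right exactness of $-\wtimes_R N$ on finitely generated modules (where it agrees with the usual tensor product, since $R$ is noetherian) combined with the contravariant exactness of $\Hom^{\cont}_{\OO}(-,L)$ on torsion-free compact $\OO\br{\mathcal G\times K}$-modules furnished by the Schneider--Teitelbaum anti-equivalence of \cite{iw} already invoked before the lemma.

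To define $\Pi$ on morphisms, given $\phi:\md_1\to \md_2$, I choose finitely generated $R$-submodules $\md_i^0\subseteq \md_i$ with $(\md_i^0)_L=\md_i$ so that $\phi(\md_1^0)\subseteq \md_2^0$; this is arranged by enlarging $\md_2^0$ by the finitely generated image of $\md_1^0$. Since $R$ is noetherian, these modules are finitely presented, so $\md_i^0\wtimes_R N=\md_i^0\otimes_R N$, and $\phi$ induces a continuous $\mathcal G\times G$-equivariant map $\md_1^0\otimes_R N\to \md_2^0\otimes_R N$ of finitely generated $\OO\br{\mathcal G\times K}$-modules. Applying $\Hom^{\cont}_{\OO}(-,L)$ produces the desired bounded $\mathcal G\times G$-equivariant morphism $\Pi(\md_2)\to \Pi(\md_1)$. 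Independence of the choices of $\md_i^0$ follows because any two such lattices are commensurable, so differ after enlargement by $\varpi^n$, giving the same map after inverting $p$.

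For left exactness, given $0\to\md_1\to \md_2\to \md_3\to 0$, I pick a finitely generated $\md_2^0\subseteq\md_2$ with $(\md_2^0)_L=\md_2$, and set $\md_1^0:=\md_2^0\cap \md_1$ and $\md_3^0:=$ image of $\md_2^0$ in $\md_3$; both are finitely generated by noetherianity of $R$, yielding a short exact sequence $0\to\md_1^0\to \md_2^0\to \md_3^0\to 0$ of finitely generated $R$-modules. Right exactness of $-\otimes_R N$ gives
$$\md_1^0\otimes_R N\xrightarrow{f} \md_2^0\otimes_R N\xrightarrow{g} \md_3^0\otimes_R N\to 0.$$
Applying $\Hom^{\cont}_{\OO}(-,L)$, the surjectivity of $g$ forces $g^*:\Pi(\md_3)\to \Pi(\md_2)$ to be injective, while the identification $\ker g=\image f$ translates directly into $\ker f^*=\image g^*$ (a continuous $L$-valued functional on $\md_2^0\otimes_R N$ killed by $f$ factors through $g$). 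The image of $g^*$ is closed in $\Pi(\md_2)$ as the kernel of the continuous map $f^*$, so by the open mapping theorem $g^*$ is a closed embedding of Banach spaces, giving the required left exact sequence $0\to \Pi(\md_3)\to \Pi(\md_2)\to \Pi(\md_1)$ in $\Ban^{\adm}_{\mathcal G\times G}(L)$.

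The only real care needed is to verify that the constructed maps are continuous, bounded, and $\mathcal G\times G$-equivariant, and that images are closed; all of this is built into the Schneider--Teitelbaum framework of \cite{iw}, which is the same input used just before the lemma to establish that $\Pi(\md)$ itself lies in $\Ban^{\adm}_{\mathcal G\times G}(L)$, so no new technical obstacle arises.
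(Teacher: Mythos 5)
Your argument is correct and takes essentially the same route as the paper's proof: both choose a finitely generated lattice $\md_2^0\subseteq\md_2$, set $\md_1^0=\md_2^0\cap\md_1$ and $\md_3^0$ equal to the image of $\md_2^0$ in $\md_3$, and then apply the right exact functor $\wtimes_R N$ followed by the left exact functor $\Hom^{\cont}_{\OO}(\ast,L)$. The additional details you supply on functoriality for morphisms and on closedness of images are consistent with, and implicit in, the Schneider--Teitelbaum duality already invoked before the lemma.
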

\begin{proof} If $0\rightarrow \md_1\rightarrow \md_2\rightarrow \md_3\rightarrow 0$ is an exact sequence of finitely generated $R[1/p]$-modules
then let $\md_2^0$ be any finitely generated $R$-submodule of $\md_2$ such that $\md^0_2\otimes_{\OO} L\cong \md_2$. Let 
$\md_3^0$ be the image of $\md_2^0$ in $\md_3$ and let $\md_1^0= \md_2^0\cap \md_1$. Then we obtain an exact sequence 
of finitely generated $R$-modules $0\rightarrow \md_1^0\rightarrow \md_2^0\rightarrow \md_3^0\rightarrow 0$, which recovers the
sequence we started with, once we invert $p$. We apply the right exact functor $\wtimes_R N$ and the left exact functor $\Hom^{\cont}_{\OO}(\ast, L)$ 
to obtain  an exact sequence $0\rightarrow \Pi(\md_3)\rightarrow \Pi(\md_2)\rightarrow \Pi(\md_1)$ in $\Ban^{\adm}_{\mathcal G\times G}(L)$.
\end{proof}

\begin{lem}\label{m_Pi} If $\md$ is  an $R[1/p]$-module of finite length then $\Pi(\md)$ is an admissible $L$-Banach space representation of $G$. In particular, 
$\md\mapsto \Pi(\md)$ defines a  left exact, contravariant functor from the category 
of $R[1/p]$-modules of finite length  to $\Ban^{\adm}_{G}(L)$, the category of admissible $L$-Banach
space representations of $G$.
\end{lem}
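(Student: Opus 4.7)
The plan is to induct on the length of $\md$ and reduce to the case where $\md$ is simple. Given any short exact sequence $0\to\md'\to\md\to\md''\to 0$ of finite length $R[1/p]$-modules, Lemma \ref{m_Pi1} provides a left exact sequence $0\to\Pi(\md'')\to\Pi(\md)\to\Pi(\md')$ of $\mathcal G\times G$-Banach representations. Admissible unitary $L$-Banach representations of $G$ form an abelian subcategory closed under sub-objects, quotients, and extensions (by Schneider--Teitelbaum duality with finitely generated modules over the Iwasawa algebra of a compact open subgroup of $G$). Hence if $\Pi(\md')$ and $\Pi(\md'')$ are $G$-admissible, so is $\Pi(\md)$, and an induction on the length of $\md$ reduces to the case where $\md$ is simple.

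Suppose then $\md\cong\kappa(\nn)$ for some $\nn\in\MaxSpec R[1/p]$, and put $\qq:=R\cap\nn$. The quotient $R/\qq$ is a complete local Noetherian integral domain whose field of fractions, $(R/\qq)[1/p]$, equals $\kappa(\nn)$, a finite extension of $L$. Since $R/\qq$ is a one-dimensional complete local Noetherian domain, its normalization $B$ in $\kappa(\nn)$ is a finite $R/\qq$-module and a complete DVR extending $\OO$, so $B=\OO_{\kappa(\nn)}$, which is itself a finite $\OO$-module. As $R/\qq\subset\OO_{\kappa(\nn)}$, Noetherianity of $\OO$ implies $R/\qq$ is a finitely generated $\OO$-module.

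Now pick any finitely generated $R$-submodule $\md^0\subset\md$ with $\md^0_L=\md$. Then $\md^0$ is annihilated by $\qq$, hence finitely generated over $R/\qq$, and so is itself a finitely generated $\OO$-module. It follows that $\md^0\wtimes_R N$ is finitely generated over $(R/\qq)\br{K}$, which in turn is finitely generated over $\OO\br{K}$; therefore $\md^0\wtimes_R N$ is a finitely generated compact $\OO\br{K}$-module. Let $\wM$ denote its maximal $\OO$-torsion free quotient; then $\Pi(\md)\cong\Hom^{\cont}_{\OO}(\wM,L)$, whose unit ball $\Hom^{\cont}_{\OO}(\wM,\OO)$ reduces modulo $\varpi$ to the Pontryagin dual of the finitely generated $k\br{K}$-module $\wM/\varpi\wM$. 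This dual is an admissible smooth $K$-representation over $k$, so $\Pi(\md)$ is an admissible unitary $L$-Banach representation of $K$, hence of $G$.

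The main obstacle is the upgrade from $\mathcal G\times G$-admissibility (given by Lemma \ref{m_Pi1}) to $G$-admissibility: this is precisely where the finite length hypothesis is essential, as it forces the $\mathcal G$-action to factor through the finite $\OO$-algebra $R/\qq$, collapsing the ``profinite dimension'' contributed by $\mathcal G$. A subsidiary technical point is that, because $\Pi$ is only left exact, the induction step realizes $\Pi(\md)$ as an extension of a closed subrepresentation of $\Pi(\md')$ by $\Pi(\md'')$ rather than directly as a middle term; admissibility is preserved because admissibles form a Serre subcategory closed under extensions.
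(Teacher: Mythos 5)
Your proof is correct, and its core coincides with the paper's: everything reduces to showing that $\md^0\wtimes_R N$ is a finitely generated $\OO\br{K}$-module, which you obtain from the $\OO$-finiteness of $\md^0$ together with $N$ being finitely generated over $R\br{K}$; admissibility then follows by Schikhof duality exactly as you say. The organization, however, differs from the paper's. The paper treats a general finite-length $\md$ in one stroke: since the residue fields of $\MaxSpec R[1/p]$ are finite over $L$, $\md$ is a finite-dimensional $L$-vector space, so $\md^0$ is a free $\OO$-module of finite rank, $\md^0/(\varpi)$ has finite length over $R$, and dévissage from the hypothesis on $k\wtimes_R N$ plus Nakayama gives the finite generation of $\md^0\wtimes_R N$. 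Your dévissage instead happens at the level of Banach representations (induction on the length of $\md$, using that admissibles form a Serre subcategory and handling the left-exactness of $\Pi$ correctly by realizing $\Pi(\md)$ as an extension of a closed subrepresentation of $\Pi(\md')$ by $\Pi(\md'')$), and your base case establishes $\OO$-finiteness of $R/\qq$ via the normalization of the one-dimensional complete local domain $R/\qq$. Both routes lean on the same standard commutative-algebra input (finiteness of $\kappa(\nn)$ over $L$ and, in your case, the unproved but standard assertion that $\dim R/\qq=1$, equivalently that the image of $R$ in $\kappa(\nn)$ is an $\OO$-order); yours is somewhat longer, with the induction being strictly unnecessary, but it buys a slightly more self-contained base case and avoids the Nakayama step by invoking finite generation of $N$ over $R\br{K}$ directly.
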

\begin{proof} Since $R$ is a complete noetherian $\OO$-algebra with residue field $k$, the residue fields of maximal ideals of $R[1/p]$
are finite extensions of $L$.  Thus $\md$ is a finite dimensional $L$-vector space and $\md^0$ is a free $\OO$-module of finite rank.
Then $\md^0/(\varpi)$ is a finite dimensional $k$-vector space, and in particular an $R$-module of finite length. Since $k\wtimes_R N$ is a finitely generated $\OO\br{K}$-module, 
we deduce that $\md^0/(\varpi) \wtimes_R N$ is a finitely generated $\OO\br{K}$-module. Nakayama's lemma implies that 
$\md^0\wtimes_R N$ is a finitely generated $\OO\br{K}$-module. Hence, $\Pi(\md)$ is an admissible unitary $L$-Banach space representation of $G$. The last part follows from 
Lemma \ref{m_Pi1}.
\end{proof}

\begin{prop}\label{dim_eq} Let $\Theta$ be a $K$-invariant lattice in a continuous representation of $K$ on a finite dimensional
$L$-vector space $V$ and let $M=M(\Theta)$ be the module defined in Definition \ref{M_Th}. If $\md$ is a finitely generated $R[1/p]$-module then there is 
a natural isomorphism of finitely generated $R[1/p]$-modules: 
\begin{equation}\label{first_iso}
\md \otimes_R M(\Theta) \cong \Hom^{\cont}_L( \Hom_K(V, \Pi(\md)), L).
\end{equation}
In particular, if $\md$ is an $R[1/p]$-module of finite length then 
 \begin{equation}\label{second_iso}
 \dim_L \Hom_K( V, \Pi(\md))= \dim_L \md \otimes_{R} M.
 \end{equation}
\end{prop}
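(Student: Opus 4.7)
The plan is to combine Proposition~\ref{good_functor} with a Hom-tensor adjunction that identifies $\Hom_K(V, \Pi(\md))$ with the object appearing on the right-hand side of Proposition~\ref{good_functor}. First I would choose $\md^0 \subset \md$ a finitely generated $R$-submodule with $\md^0_L = \md$, as in the definition of $\Pi(\md)$. Since $M(\Theta)_L$ is already an $L$-module, $\md \otimes_R M(\Theta) \cong \md^0 \otimes_R M(\Theta)_L$, and Proposition~\ref{good_functor} together with the remark after Lemma~\ref{r_e_co} gives
\begin{equation*}
\md \otimes_R M(\Theta) \cong \Hom^{\cont}_{\OO}\bigl(\Hom^{\cont}_{\OO\br{K}}(\md^0 \wtimes_R N, \Theta^d),\, L\bigr).
\end{equation*}

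The main step is to identify the inner continuous Hom with $\Hom_K(V, \Pi(\md))$ after inverting $p$. Since $\Pi(\md) = \Hom^{\cont}_{\OO}(\md^0 \wtimes_R N, L)$ by definition, a $K$-equivariant $L$-linear map $\phi \colon V \to \Pi(\md)$ corresponds via the pairing $(v, x) \mapsto \phi(v)(x)$ to a continuous $\OO\br{K}$-linear map $\md^0 \wtimes_R N \to V^d$, with $V^d = \Hom_L(V, L)$ carrying the contragredient $K$-action. Continuity of $\phi$ is automatic as $V$ is finite-dimensional, and since $\md^0 \wtimes_R N$ is compact and $V^d = \Theta^d_L$, the image of such a map is bounded and, after rescaling by a power of $\varpi$, factors through $\Theta^d$. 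This gives
\begin{equation*}
\Hom_K(V, \Pi(\md)) \cong \Hom^{\cont}_{\OO\br{K}}(\md^0 \wtimes_R N, \Theta^d) \otimes_{\OO} L.
\end{equation*}

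To finish the first isomorphism, set $X := \Hom^{\cont}_{\OO\br{K}}(\md^0 \wtimes_R N, \Theta^d)$; by the analogue of Lemma~\ref{act1n} applied to $\md^0 \wtimes_R N$, the module $X$ is $\OO$-torsion free and $p$-adically complete, so unique extension of scalars gives $\Hom^{\cont}_{\OO}(X, L) \cong \Hom^{\cont}_L(X_L, L)$. Substituting $X_L \cong \Hom_K(V, \Pi(\md))$ into the display from the first step yields \eqref{first_iso}. For \eqref{second_iso}, when $\md$ has finite length over $R[1/p]$ the module $\md \otimes_R M(\Theta)$ is finitely generated over $R[1/p]$ and killed by an ideal of finite colength, hence is a finite-dimensional $L$-vector space; the continuous $L$-dual preserves dimension, so both sides of \eqref{first_iso} have the same finite $L$-dimension.

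The principal obstacle is the adjunction in the middle step: tracking the topologies carefully, verifying that the contragredient $K$-action on $V^d$ is the one that makes the pairing equivariant, and confirming that bounded images into $V^d$ really land in $\Theta^d$ after rescaling. These verifications are routine given the finite-dimensionality of $V$ and the compactness of $\md^0 \wtimes_R N$, but they are the only point in the argument requiring genuine care; all the other steps are formal consequences of material already developed in this section.
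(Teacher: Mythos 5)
Your proposal is correct and follows essentially the same route as the paper: reduce to Proposition \ref{good_functor} via the choice of $\md^0$, identify $\Hom^{\cont}_{\OO\br{K}}(\md^0\wtimes_R N,\Theta^d)_L$ with $\Hom_K(V,\Pi(\md))$, and compare dimensions for the finite-length case. The only cosmetic difference is that the paper carries out your middle step by passing to the maximal $\OO$-torsion-free quotient of $\md^0\wtimes_R N$ and invoking Schikhof duality (\cite[Thm.2.3]{iw}), whereas you unwind that duality by hand via the evaluation pairing; the content is the same.
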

\begin{proof} Let $\md^0$ be a finitely generated $R$-submodule of $\md$, such that $\md^0_L\cong \md$. Then
\begin{equation}\label{dim_eq1}
 \md\otimes_R M(\Theta)\cong \md^0\otimes_R M(\Theta)_L\cong \Hom_{\OO}^{\cont}( \Hom_{\OO\br{K}}^{\cont}(\md^0\wtimes_R N, \Theta^d), L),
\end{equation}
where the last isomorphism follows from Proposition \ref{good_functor}. Let $(\md^0\wtimes_R N)_{\mathrm{tf}}$ be the maximal $\OO$-torsion free quotient 
of $\md^0\wtimes_R N$. Since $\md^0\wtimes_R N$ is a finitely generated $R\br{K}$-module, $(\md^0\wtimes_R N)_{\mathrm{tf}}$ is a compact linear-topological
$\OO$-module. Then 
$$\Hom^{\cont}_{\OO\br{K}}(\md^0\wtimes_R N, \Theta^d)_L\cong  \Hom^{\cont}_{\OO\br{K}}((\md^0\wtimes_R N)_{\mathrm{tf}}, \Theta^d)_L\cong 
\Hom^{\cont}_K(V, \Pi(\md))$$
where the first map is an isomorphism as $\Theta^d$ is $\OO$-torsion free, and the second map is induced by Schikhof duality and is isomorphism by \cite[Thm.2.3]{iw}.
Since $V$ is finite dimensional any $L$-linear map from $V$ to $\Pi(\md)$ is continuous, and we obtain a natural isomorphism  
\begin{equation}\label{dim_eq2}
\Hom^{\cont}_{\OO\br{K}}(\md^0\wtimes_R N, \Theta^d)_L\cong \Hom_K(V, \Pi(\md)).
\end{equation}
We use \eqref{dim_eq2}  to put the Banach space topology on $\Hom_K(V, \Pi(\md))$. Combining \eqref{dim_eq1} and \eqref{dim_eq2} we obtain the first assertion of the 
proposition. 

If $\md$ is an $R[1/p]$-module of finite length, then $\md$ is a finite dimensional $L$-vector space. Since $M$ is a finitely 
generated $R$-module,  $\md\otimes_R M$ is a finite dimensional $L$-vector space. We obtain \eqref{second_iso} by comparing the dimensions 
of $L$-vector spaces in \eqref{first_iso}. 
\end{proof}
 
\subsection{Cycles}

We keep the notation of the previous subsection, and we continue to assume that $k\wtimes_R N$ is a finitely generated 
$\OO\br{K}$-module. 

\begin{lem}\label{cycindep} Let $\Theta_1$ and $\Theta_2$ be open $K$-invariant lattices in $V$, and let $M_1=M(\Theta_1)$, $M_2=M(\Theta_2)$. Then 
$\ann_R M_1=\ann_R M_2$. In particular, $M_1$ and $M_2$ have the same dimension $d$ (say). Moreover, 
we have an equality of $d$-dimensional cycles $z_d(M_1)=z_d(M_2)$.
\end{lem}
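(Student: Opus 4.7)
The strategy is to show that after inverting $p$ the dependence on the lattice disappears, and then to exploit $\OO$-torsion freeness to descend from $M(\Theta)_L$ back to $M(\Theta)$.

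First I would apply Proposition \ref{dim_eq} with $\md = R[1/p]$ and $\md^0 = R$, so that $\md^0 \wtimes_R N = N$ and $\Pi(R[1/p]) = \Hom^{\cont}_{\OO}(N, L)$. The latter is manifestly independent of the chosen lattice, and the isomorphism \eqref{first_iso} reads
\[
M(\Theta_i)_L \;\cong\; \Hom^{\cont}_L\bigl(\Hom_K(V,\, \Hom^{\cont}_{\OO}(N, L)), L\bigr),
\]
whose right hand side depends only on $V$. Hence $M(\Theta_1)_L \cong M(\Theta_2)_L$ as $R[1/p]$-modules.

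Since by Definition \ref{M_Th} the module $M(\Theta_i)$ is realised as an $\OO$-linear dual, it is $\OO$-torsion free, so the natural map $M(\Theta_i) \hookrightarrow M(\Theta_i)_L$ is injective. This gives $\ann_R M(\Theta_i) = \ann_R (M(\Theta_i)_L)$, and combined with the isomorphism above yields $\ann_R M(\Theta_1) = \ann_R M(\Theta_2)$; in particular the two modules have the same dimension $d$.

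For the cycle equality the key point is again $\OO$-torsion freeness. Any associated prime $\pp = \ann_R(m)$ of $M(\Theta_i)$ satisfies $\varpi m \neq 0$ and hence $\varpi \notin \pp$. A prime $\pp$ in the support of $M(\Theta_i)$ with $\dim R/\pp = d$ must be minimal over $\ann_R M(\Theta_i)$, since any strictly smaller prime containing $\ann_R M(\Theta_i)$ would have strictly larger quotient dimension, contradicting $\dim M(\Theta_i) = d$. Minimal primes of the support are associated, so $\varpi \notin \pp$. Localization at such $\pp$ therefore factors through $R[1/p]$, giving $\ell_{R_\pp}(M(\Theta_i)_\pp) = \ell_{R[1/p]_\pp}((M(\Theta_i)_L)_\pp)$; by the previous paragraph these lengths agree for $i = 1, 2$, and summing over $\pp$ yields $z_d(M(\Theta_1)) = z_d(M(\Theta_2))$. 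I do not foresee any serious obstacle beyond the single observation from Proposition \ref{dim_eq} that the rational part of $M(\Theta)$ is intrinsic to $V$.
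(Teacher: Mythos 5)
Your proof is correct, but it takes a genuinely different route from the paper's. The paper argues integrally: since any two lattices in $V$ are commensurable, one reduces to the case $\varpi^n\Theta_2\subset\Theta_1\subset\Theta_2$, and applying $\Hom_{\OO}(-,\OO)$ to the resulting inclusion of $\Hom^{\cont}_{\OO\br{K}}(N,-)$'s (whose cokernel is killed by $\varpi^n$) yields $\varpi^n M_1\subset M_2\subset M_1$; torsion-freeness of $M_1$ then gives $\ann_R M_1=\ann_R M_2$ immediately, and since $\varpi^n$ is $M_1$-regular the quotient $M_1/M_2$ has dimension $<d$, so $z_d(M_1/M_2)=0$ and additivity of $z_d$ in short exact sequences finishes the proof. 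You instead pass to the generic fibre: Proposition \ref{dim_eq} applied to $\md=R[1/p]$, $\md^0=R$ identifies $M(\Theta)_L$ with a module intrinsic to $V$ and $N$, and you descend using $\OO$-torsion-freeness of $M(\Theta)$ together with the observation that every $d$-dimensional prime of the support is minimal, hence associated, hence avoids $\varpi$, so both the annihilator and the $d$-dimensional cycle are computed entirely over $R[1/p]$. Both arguments are valid. The paper's is self-contained at this point (it needs only the definition of $M(\Theta)$ and additivity of cycles), whereas yours leans on the Banach-space formalism of \S\ref{Banachspace}; in exchange, yours records the slightly stronger and occasionally useful fact that $M(\Theta)[1/p]$ is, up to isomorphism of $R[1/p]$-modules, independent of the chosen lattice, which is only implicit in the sandwich argument.
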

\begin{proof} It is enough to consider the case $\varpi^n \Theta_2\subset \Theta_1\subset \Theta_2$ for some integer $n\ge 1$, as
all the lattices in $V$ are commensurable, since $V$ is a finite dimensional $L$-vector space.
Let $C$ be the cokernel of $\Hom^{\cont}_{\OO\br{K}}(N, \Theta_1)\hookrightarrow \Hom^{\cont}_{\OO\br{K}}(N, \Theta_2)$. Then $C$ is killed 
by $\varpi^n$, and 
so $\Hom_{\OO}(C, \OO)=0$ and $\Ext^1_{\OO}(C, \OO)$ is killed by $\varpi^n$. 
By applying $\Hom_{\OO}(\ast, \OO)$ we deduce that $M_2$ is an $R$-submodule of $M_1$ and the quotient is killed by $\varpi^n$, so that 
$\varpi^n M_1\subset M_2 \subset M_1$. Since $M_1$ is $\OO$-torsion free, we deduce that $\ann_R M_1=\ann_R M_2$. Hence, the modules have the 
same dimension $d$. Moreover, since $\varpi^n$ is regular on $M_1$, the dimension of $M_1/\varpi^n M_1$ is less than the dimension of $M_1$, and 
hence the dimension of $M_1/M_2$ is less than dimension of $M_1$. This implies that $z_d(M_1/M_2)=0$, and since cycles are additive in short exact sequences, we get $z_d(M_1)=z_d(M_2)$. 
\end{proof}

\begin{prop}\label{cycles_modp}  Let $d$ be the dimension of $M(\Theta)$. If $N$ is projective in $\Mod^{\pro}_K(\OO)$ then 
we have an equality of $(d-1)$-dimensional cycles
$$ z_{d-1}(M(\Theta)/(\varpi))= \sum_{\sigma} m_{\sigma} z_{d-1}(M(\sigma)),$$
where the sum is taken over the set of isomorphism classes of smooth irreducible $k$-representations of $K$, 
$m_{\sigma}$ is the multiplicity with which $\sigma$ occurs as a subquotient of $\Theta/(\varpi)$ and $M(\sigma)$ 
is an $R$-module defined in Definition \ref{M_sigma}.  
\end{prop}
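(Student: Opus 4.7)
The plan is to combine the two functorial properties of $M$ that are available under the projectivity hypothesis: compatibility with reduction mod $\varpi$ (Lemma \ref{reduce_M}) and exactness on finite length $K$-representations (Lemma \ref{Mfunct}), then invoke additivity of the cycle map on short exact sequences.

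First I would use Lemma \ref{reduce_M} to replace $M(\Theta)/(\varpi)$ by $M(\Theta/(\varpi))$. Observe that $M(\Theta)$ is $\OO$-torsion free by its very definition as $\Hom_{\OO}(-,\OO)$, so $\varpi$ is a non-zero-divisor on it, which forces $\dim M(\Theta)/(\varpi)=d-1$. Next, choose a Jordan--H\"older filtration
\begin{equation*}
0=\lambda_0\subset \lambda_1\subset \cdots \subset \lambda_n=\Theta/(\varpi)
\end{equation*}
of $\Theta/(\varpi)$ by finite length smooth $K$-subrepresentations with $\lambda_i/\lambda_{i-1}\cong \sigma_i$ irreducible; by construction, the number of indices $i$ with $\lambda_i/\lambda_{i-1}\cong \sigma$ is exactly $m_\sigma$.

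Since $N$ is projective in $\Mod^{\pro}_K(\OO)$, Lemma \ref{Mfunct} says that $\lambda\mapsto M(\lambda)$ is exact. Applying it to the filtration above yields a filtration
\begin{equation*}
0=M(\lambda_0)\subset M(\lambda_1)\subset \cdots \subset M(\lambda_n)=M(\Theta/(\varpi))\cong M(\Theta)/(\varpi)
\end{equation*}
of finitely generated $R$-modules whose successive quotients are the $M(\sigma_i)$. In particular each $M(\sigma_i)$ is a subquotient of $M(\Theta)/(\varpi)$, so $\dim M(\sigma_i)\le d-1$ and the $(d-1)$-dimensional cycle $z_{d-1}(M(\sigma_i))$ is defined (and vanishes precisely when $\dim M(\sigma_i)<d-1$).

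Finally I would invoke the additivity of the cycle map $z_{d-1}$ on short exact sequences of finitely generated $R$-modules of dimension at most $d-1$ (see \cite[\S V.A]{mult}) and induct on the length $n$ of the filtration to obtain
\begin{equation*}
z_{d-1}(M(\Theta)/(\varpi))=\sum_{i=1}^n z_{d-1}(M(\sigma_i))=\sum_{\sigma} m_\sigma\, z_{d-1}(M(\sigma)),
\end{equation*}
as claimed. There is no serious obstacle here; the argument is entirely a formal consequence of projectivity of $N$ (through Lemmas \ref{reduce_M} and \ref{Mfunct}) together with the standard behaviour of cycles in short exact sequences, the only point to check being that each $M(\sigma)$ has dimension at most $d-1$, which follows automatically from being a subquotient of $M(\Theta)/(\varpi)$.
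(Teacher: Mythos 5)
Your proof is correct and follows the same route as the paper's: reduce to $M(\Theta/(\varpi))$ via Lemma \ref{reduce_M}, use the exactness of $\lambda\mapsto M(\lambda)$ from Lemma \ref{Mfunct} applied to a Jordan--H\"older filtration of $\Theta/(\varpi)$, and conclude by additivity of cycles. The paper's proof is just a terser version of the same argument; your added remarks (that $\varpi$ is $M(\Theta)$-regular so the dimension drops to $d-1$, and that each $M(\sigma)$ has dimension at most $d-1$ so its $(d-1)$-cycle is defined) are exactly the details being suppressed there.
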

\begin{remar} Recall that Corollary \ref{zero_lambda} implies that if $\Hom_K(\sigma, (k\wtimes_R N)^{\vee})=0$, then $M(\sigma)=0$.
\end{remar}
\begin{proof} Since $M(\Theta)$ is $\OO$-torsion free, $\varpi$ is $M(\Theta)$-regular, and so $\dim M(\Theta)/(\varpi)=d-1$.
Since $N$ is projective, $M(\Theta)/(\varpi)\cong M(\Theta/(\varpi))$ by Corollary \ref{reduce_M}. The assertion follows from 
the exactness of the functor $\lambda\mapsto M(\lambda)$, established in Lemma \ref{Mfunct}, and the fact that cycles are additive 
in short exact  sequences. 
\end{proof}

\subsection{Generic freeness}

\begin{lem}\label{uppersemi} Let $A$ be a noetherian ring, $\Sigma$ a Zariski dense subset of $X=\Spec A$ and 
let $\mathrm  M$ be a finitely generated $A$-module. Suppose that $\dim_{\kappa(\pp)} \mathrm M\otimes_{A} \kappa(\pp)=d$ for all 
$\pp\in \Sigma$, then $\dim_{\kappa(\pp)} \mathrm M\otimes_{A} \kappa(\pp)=d$ for all minimal primes of $A$.
\end{lem}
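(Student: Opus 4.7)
The plan is to invoke upper semi-continuity of fiber dimensions. Write $f : \Spec A \to \ZZ$ for the function $f(\pp) = \dim_{\kappa(\pp)} \mathrm M \otimes_A \kappa(\pp)$. For a finitely generated module $\mathrm M$ this function is upper semi-continuous: for every integer $n$ the locus $\{\pp : f(\pp) \geq n\}$ is Zariski-closed. This is standard and follows, for instance, from the description of these loci as vanishing loci of Fitting ideals of $\mathrm M$, or equivalently from Nakayama's lemma applied to a finite presentation of $\mathrm M$ in a neighborhood of a given point.

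First I would establish the lower bound $f(\pp) \geq d$ at every minimal prime. The closed set $Y := \{\pp : f(\pp) \geq d\}$ contains $\Sigma$ by hypothesis; since $\Sigma$ is Zariski dense in $\Spec A$ this forces $Y = \Spec A$, so $f(\pp) \geq d$ for every $\pp \in \Spec A$, and in particular at every minimal prime.

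Next I would establish the upper bound $f(\pp) \leq d$ at every minimal prime. The open set $U := \{\pp : f(\pp) \leq d\}$ is the complement of the closed set $\{\pp : f(\pp) \geq d+1\}$, and it contains $\Sigma$, hence is dense. Writing $U = \Spec A \setminus V(\mathfrak b)$, density of $U$ translates into $\mathfrak b$ not being contained in any minimal prime of $A$; equivalently, every minimal prime of $A$ lies in $U$. Thus $f(\pp) \leq d$ at every minimal prime.

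Combining the two bounds gives $f(\pp) = d$ at every minimal prime of $A$, as claimed. There is no substantive obstacle: the only non-elementary input is the upper semi-continuity of fiber dimensions for a finitely generated module, which is classical; the rest is Zariski topology together with the characterization of minimal primes as generic points of irreducible components.
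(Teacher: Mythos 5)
Your proof is correct and follows essentially the same route as the paper: both arguments rest on the upper semicontinuity of $\pp\mapsto \dim_{\kappa(\pp)}\mathrm M\otimes_A\kappa(\pp)$, derive the lower bound from the density of $\Sigma$ inside the closed locus $\{f\ge d\}$, and derive the upper bound from the fact that minimal primes are generic points of the components (the paper phrases this via a specialization $\pp\subseteq\qq$ with $\qq\in\Sigma$, you phrase it as a dense open set containing all minimal primes — the same observation).
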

\begin{proof} The function $\varphi: X\rightarrow \ZZ$, $\pp\mapsto  \dim_{\kappa(\pp)}\mathrm M\otimes_{A} \kappa(\pp)$ is 
upper semicontinuous by \cite[Ex.12.7.2]{Hart}, so that $\{x\in X: \varphi(x)\ge n\}$ is a closed subset of $X$ for all 
$n\in \ZZ$. Since $\Sigma$ is dense in $X$, we deduce that $\varphi(\pp)\ge d$ 
for all $\pp\in X$. Let $\pp$ be a minimal prime of $R$. Since $\Sigma$ is dense there is some 
$\qq\in \Sigma$ containing 
$\pp$. Now $\qq$ lies in the closure of $\{\pp\}$ and so again the upper semicontinuity of $\varphi$ implies that 
$d=\varphi(\qq)\ge \varphi(\pp)$.
\end{proof} 

\begin{lem}\label{generic} Let $A$ be a noetherian ring of dimension $d$ and let $\mathrm M$ be a faithful, finitely generated $A$-module.
Let $\Sigma$ be a dense subset of $\Spec A$. If 
$$\dim_{\kappa(\mathfrak p)} \kappa(\mathfrak p)\otimes_{A} \mathrm M=1, \quad \forall \mathfrak p\in \Sigma,$$
 then $z_d(A)=z_d(\mathrm M)$.
\end{lem}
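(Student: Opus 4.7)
The plan is to use Lemma \ref{uppersemi} to extend the hypothesis from the dense set $\Sigma$ to every minimal prime of $A$, and then to argue that for each prime $\pp$ contributing to $z_d$ the localization $\mathrm M_{\pp}$ is isomorphic to $A_{\pp}$, so that the two cycles agree.

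First I would observe that any $\pp \in \Spec A$ with $\dim A/\pp = d = \dim A$ must be a minimal prime of $A$: if $\pp' \subsetneq \pp$ then $\dim A/\pp' \geq 1 + \dim A/\pp = d+1 > \dim A$, a contradiction. Thus the primes indexing $z_d(A)$ and $z_d(\mathrm M)$ are all minimal, and in particular $A_{\pp}$ is Artinian for each of them.

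Next I would apply Lemma \ref{uppersemi} (with the integer called $d$ there equal to $1$) to deduce that
\[
\dim_{\kappa(\pp)} \kappa(\pp)\otimes_{A} \mathrm M = 1
\]
for every minimal prime $\pp$ of $A$. Fix such a $\pp$ with $\dim A/\pp = d$. Since $\mathrm M_{\pp}$ is a finitely generated $A_{\pp}$-module whose reduction modulo the maximal ideal of the local ring $A_{\pp}$ is one-dimensional over $\kappa(\pp)$, Nakayama's lemma gives a surjection $A_{\pp} \twoheadrightarrow \mathrm M_{\pp}$; in other words $\mathrm M_{\pp} \cong A_{\pp}/I$ for some ideal $I \subseteq A_{\pp}$.

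The key step is then to show $I = 0$. Because $A$ is noetherian, $\mathrm M$ is finitely presented, and so the formation of the annihilator commutes with localization: $\ann_{A_{\pp}}(\mathrm M_{\pp}) = (\ann_A \mathrm M)_{\pp} = 0$ by the assumed faithfulness of $\mathrm M$. Since $\ann_{A_{\pp}}(A_{\pp}/I) = I$, this forces $I=0$ and hence $\mathrm M_{\pp} \cong A_{\pp}$ as $A_{\pp}$-modules. Therefore $\ell_{A_{\pp}}(\mathrm M_{\pp}) = \ell_{A_{\pp}}(A_{\pp})$ for every $\pp$ with $\dim A/\pp = d$, which upon summing yields $z_d(\mathrm M) = z_d(A)$.

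The only mild subtlety is the commutation of annihilator with localization used in the fourth step; everything else is a direct consequence of Lemma \ref{uppersemi}, the observation that top-dimensional primes are minimal, and Nakayama's lemma applied at each Artinian localization.
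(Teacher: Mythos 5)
Your proof is correct and follows essentially the same route as the paper's: apply Lemma \ref{uppersemi} to get one-dimensionality of the fibre at each minimal prime, use Nakayama to obtain a surjection $A_{\pp}\twoheadrightarrow \mathrm M_{\pp}$, and use faithfulness (preserved under localization for finitely generated modules over a noetherian ring) to conclude this is an isomorphism. The paper simply states the localization of faithfulness without the annihilator computation you spell out, but the content is identical.
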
  
\begin{proof} Let $\qq$ be a minimal prime of $A$. Lemma \ref{uppersemi} implies that $\kappa(\qq)\otimes_{A} \mathrm M$ is a one dimensional
$\kappa(\qq)$-vector space. Nakayama's lemma implies that there is a surjection $A_{\qq}\twoheadrightarrow \mathrm M_{\qq}$. 
Since $\mathrm M$ is a faithful $A$-module, $\mathrm M_{\qq}$ is a faithful $A_{\qq}$-module. This implies that the surjection is an isomorphism. In particular, $\ell_{A_{\qq}}(A_{\qq})= \ell_{A_{\qq}}(\mathrm M_{\qq})$, which implies the equality of cycles. 
\end{proof}

Now let $R$, $M$ and $V$ be as in the previous subsection, and let $d$ be the dimension of $M$.  We will denote by $\MaxSpec R[1/p]$ the set of maximal ideals of $R[1/p]$.

\begin{prop}\label{get_to_R} If there exists a dense subset $\Sigma$ of $\supp M$, which is contained in $\MaxSpec R[1/p]$, such that 
$\dim_{\kappa(\nn)} \Hom_K(V, \Pi(\kappa(\nn)))=1$ for all $\nn\in \Sigma$ then we have an equality of $d$-dimensional cycles
$z_d(R/\ann_R M)=z_d(M)$.
\end{prop}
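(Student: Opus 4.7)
The plan is to reduce the statement directly to Lemma \ref{generic} applied with $A:=R/\ann_R M$ acting faithfully on $M$. First I would check the three structural hypotheses. Since $k\wtimes_R N$ is finitely generated over $\OO\br{K}$ by our standing assumption, Proposition \ref{fgM} says $M$ is a finitely generated $R$-module, and hence a finitely generated $A$-module. It is faithful over $A$ by the very definition of $A$. The support $\supp M$ equals $V(\ann_R M)\subset \Spec R$, which is homeomorphic to $\Spec A$; thus $\dim A = \dim \supp M = \dim M = d$, and the dense subset $\Sigma \subset \supp M$ transfers to a dense subset of $\Spec A$ under this homeomorphism.

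The heart of the argument is translating the $\Hom$-dimension hypothesis into the fibre-dimension condition demanded by Lemma \ref{generic}. For each $\nn \in \Sigma \subset \MaxSpec R[1/p]$, the residue field $\kappa(\nn)$ is a finite extension of $L$ (since $R$ is a complete local noetherian $\OO$-algebra with residue field $k$), so $\kappa(\nn)$ is an $R[1/p]$-module of finite length. Applying the second half of Proposition \ref{dim_eq} with $\md = \kappa(\nn)$ gives
$$\dim_L \bigl(\kappa(\nn)\otimes_R M\bigr) = \dim_L \Hom_K(V, \Pi(\kappa(\nn))).$$
Both sides carry a natural $\kappa(\nn)$-action, so dividing by $[\kappa(\nn):L]$ yields
$$\dim_{\kappa(\nn)} \bigl(\kappa(\nn)\otimes_R M\bigr) = \dim_{\kappa(\nn)} \Hom_K(V, \Pi(\kappa(\nn))) = 1.$$
Since $\nn$ contains $\ann_R M$, it corresponds to a (maximal) prime $\bar{\nn}$ of $A$ with $\kappa(\bar{\nn}) = \kappa(\nn)$, and $\kappa(\bar{\nn})\otimes_A M = \kappa(\nn)\otimes_R M$ has $\kappa(\bar{\nn})$-dimension one.

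With all hypotheses in place, Lemma \ref{generic} applied to $A$ and $M$ yields $z_d(A) = z_d(M)$, which is exactly $z_d(R/\ann_R M) = z_d(M)$. The one substantive step is the dimension translation via Proposition \ref{dim_eq}; the rest is bookkeeping about supports, residue fields and density. I do not expect any real obstacle here, since Proposition \ref{dim_eq} has already done the heavy lifting of relating the modules $M(\Theta)$ to the Banach space representations $\Pi(\md)$, and Lemma \ref{generic} is the standard tool for upgrading a generic one-dimensionality hypothesis to an equality of top-dimensional cycles.
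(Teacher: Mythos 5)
Your proposal is correct and is essentially the paper's own argument: the paper proves this proposition by exactly the combination of Lemma \ref{generic} (applied to $A=R/\ann_R M$) and Proposition \ref{dim_eq} that you spell out. The only work is the bookkeeping you describe, and your translation of the $\Hom$-dimension hypothesis into the fibre-dimension condition via the $\kappa(\nn)$-linearity of the isomorphism in Proposition \ref{dim_eq} is sound.
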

\begin{proof} This follows from Lemma \ref{generic} and Proposition \ref{dim_eq}. 
\end{proof}   

\subsection{Reduceness} Let $R$ and $M$ as in the previous section. We will devise a criterion for the ring $R/\ann_R M$ to be reduced.

\begin{prop}\label{HS3} If  a local noetherian ring $\mR$ affords a faithful, finitely generated, Cohen-Macaulay module $\mM$ then $\mR$ is equidimensional 
of dimension equal to $\dim \mM$. Moreover, every associated prime ideal of $\mR$ is minimal.
\end{prop}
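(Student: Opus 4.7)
The plan is to exploit the two defining features of a Cohen--Macaulay module: first, that $\Ass(M)$ consists precisely of the minimal primes of $\supp(M)$ with no embedded components; second, that each such prime $\pp$ satisfies $\dim(A/\pp)=\dim M$. The faithfulness hypothesis will let me upgrade statements about $\supp(M)$ and $\Ass(M)$ to statements about $\Spec A$ and $\Ass(A)$.

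First I would dispense with the equidimensionality and dimension equality. Since $M$ is faithful and finitely generated, $\supp M = V(\ann_A M) = \Spec A$, so the minimal primes of $\supp M$ are exactly the minimal primes of $A$. The Cohen--Macaulay property gives $\dim(A/\pp) = \dim M$ for every $\pp$ minimal in $\supp M$, hence for every minimal prime of $A$. This yields that $A$ is equidimensional with $\dim A = \dim M$.

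Next I would handle the claim that every $\pp\in\Ass(A)$ is minimal. Localize at $\pp$: the module $M_{\pp}$ is a nonzero Cohen--Macaulay $A_{\pp}$-module, and since annihilators of finitely generated modules localize, $M_{\pp}$ is still faithful over $A_{\pp}$. The condition $\pp\in\Ass(A)$ means $\pp A_{\pp}\in\Ass(A_{\pp})$, so every element of $\pp A_{\pp}$ is a zero-divisor in $A_{\pp}$. The key step is to convert this into a statement about zero-divisors of $M_{\pp}$: if $ab=0$ in $A_{\pp}$ with $b\neq 0$, then $bM_{\pp}\neq 0$ by faithfulness, and $a$ annihilates $bM_{\pp}$, so $a$ is a zero-divisor on $M_{\pp}$. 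Thus $\pp A_{\pp}$ is contained in the union of the associated primes of $M_{\pp}$, and by prime avoidance it lies inside a single $\qq A_{\pp}$ for some $\qq\in\Ass(M)$ with $\qq\subseteq \pp$. The inclusion $\pp A_{\pp}\subseteq \qq A_{\pp}$ combined with $\qq\subseteq \pp$ forces $\pp=\qq$. Since $\Ass(M)$ coincides with the minimal primes of $A$ by the first paragraph, $\pp$ is a minimal prime of $A$.

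The only delicate point is the faithfulness-to-zero-divisor transfer in the last paragraph; everything else is a direct application of the standard unmixedness and support properties of Cohen--Macaulay modules. I do not expect any of the steps to pose genuine difficulty, but care must be taken that the Cohen--Macaulay hypothesis is used at the localization $A_{\pp}$ (so I should verify or cite that Cohen--Macaulayness of $M$ is preserved under localization at primes in its support, which is standard).
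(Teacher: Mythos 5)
Your proof is correct. The first half (equidimensionality and $\dim \mR=\dim \mM$) is essentially the paper's argument: both rest on the fact that for a Cohen--Macaulay module every $\pp\in\Ass \mM$ satisfies $\dim \mR/\pp=\dim \mM$, combined with faithfulness to identify the relevant primes with the minimal primes of $\mR$. For the second claim you genuinely diverge. The paper chooses generators $m_1,\dots,m_s$ of $\mM$ and uses faithfulness to embed $\mR\hookrightarrow \mM^{\oplus s}$, whence $\Ass \mR\subseteq \Ass(\mM^{\oplus s})=\Ass \mM$; since every prime in $\Ass \mM$ has dimension $\dim \mM=\dim \mR$, both claims follow at once. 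You instead localize at $\pp\in\Ass \mR$, transfer zero-divisors of $\mR_{\pp}$ to zero-divisors on $\mM_{\pp}$ via faithfulness, and use prime avoidance to trap $\pp$ inside (hence equal to) some $\qq\in\Ass \mM$ — reaching the same containment $\Ass \mR\subseteq\Ass \mM$ by a local, zero-divisor-theoretic route. Your transfer step is sound (if $ab=0$ with $b\neq 0$ then $b\mM_{\pp}\neq 0$ by faithfulness and $a$ kills it), and the contraction $\pp A_{\pp}\subseteq \qq A_{\pp}$ with $\qq\subseteq\pp$ does force $\pp=\qq$. The paper's embedding is shorter and treats both assertions uniformly; your version avoids choosing generators and makes the mechanism (where faithfulness enters) more explicit, at the cost of the localization bookkeeping you rightly flag.
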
 
\begin{proof} Since $\mM$ is Cohen-Macaulay,  $\dim \mR/\mathfrak p=\dim \mM$ for every $\mathfrak p\in \Ass \mM$,  \cite[Thm.17.3(i)]{matsumura}.  
Let $m_1, \ldots, m_s$ be a set of generators of $\mM$. Since $\mR$ acts faithfully, the map $\mR\rightarrow \mM^{\oplus s}$, 
$a\mapsto (a m_1, \ldots, a m_s)$ 
is injective, and hence $\Ass \mR\subseteq \Ass \mM^{\oplus s}=\Ass \mM$, where the last equality follows from \cite[Thm.6.3]{matsumura}. We deduce that 
$\dim \mR/\mathfrak p=\dim \mM$ for all associated prime ideals  $\mathfrak p$ of $\mR$. Since the minimal 
elements  of $\Spec \mR$ and $\Ass \mR$ coincide by \cite[Thm.6.5(iii)]{matsumura}, we deduce that every associated prime $\mathfrak p$ of $\mR$ is minimal
and $\dim \mR/\mathfrak p=\dim \mR=\dim \mM$. 
\end{proof}

\begin{prop}\label{HS4} Let $\mR$ be a complete local noetherian $\OO$-algebra with residue field $k$, which is equidimensional,
and every associated prime ideal of $\mR$ is minimal. Let $\mM$ be  a finitely generated,
faithful $\mR$-module.
If there exists a subset $\Sigma$ of $\MaxSpec \mR[1/p]$, such that $\Sigma$  is dense in $\Spec \mR$ and the following hold
for all $\nn\in \Sigma$:
\begin{itemize}
\item[(i)] $\dim_{\kappa(\nn)} \kappa(\nn)\otimes_{\mR} \mM =1$;
\item[(ii)] $\dim_{\kappa(\nn)}(\mM\otimes_\mR \mR_{\nn}/\nn^2) \le \dim \mR$; 
\end{itemize}
then $\mR_{\nn}$ is regular for all $\nn\in \Sigma$ and $\mR$ is reduced.
\end{prop}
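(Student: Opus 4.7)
The plan is to establish regularity of $\mR_{\nn}$ for each $\nn\in\Sigma$, and then derive reducedness of $\mR$ from this together with the density of $\Sigma$ and the hypothesis that all associated primes of $\mR$ are minimal.

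Fix $\nn\in\Sigma$, which corresponds to a prime $\pp\subset\mR$ with $\varpi\notin\pp$; note that $\mR_{\nn}=\mR_{\pp}$ and $\mM_{\nn}=\mM_{\pp}$. Condition (i) says $\mM_{\nn}\otimes_{\mR_{\nn}}\kappa(\nn)$ is one-dimensional, so Nakayama's lemma makes $\mM_{\nn}$ cyclic: $\mM_{\nn}\cong\mR_{\nn}/I$ for some ideal $I$. Since $\mM$ is finitely generated and faithful and annihilators of finitely generated modules commute with localization, $\mM_{\nn}$ is faithful over $\mR_{\nn}$, which forces $I=0$ and $\mM_{\nn}\cong\mR_{\nn}$. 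Therefore $\mM\otimes_{\mR}\mR_{\nn}/\nn^{2}\cong\mR_{\nn}/\nn^{2}\mR_{\nn}$, whose $\kappa(\nn)$-dimension is $1+\dim_{\kappa(\nn)}\nn\mR_{\nn}/\nn^{2}\mR_{\nn}$. Condition (ii) then becomes $\dim_{\kappa(\nn)}\nn\mR_{\nn}/\nn^{2}\mR_{\nn}\le\dim\mR-1$.

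To upgrade this bound to regularity of $\mR_{\nn}$ I need the identification $\dim\mR_{\nn}=\dim\mR-1$. Since $\nn$ is maximal in $\mR[1/p]$, one has $\kappa(\nn)=(\mR/\pp)[1/\varpi]$, a field, while $\mR/\pp$ is a complete local noetherian domain with $\varpi$ a nonzero non-unit; this forces $\dim\mR/\pp=1$. Complete local noetherian rings are catenary, so combined with equidimensionality we get $\mathrm{ht}(\pp)+\dim\mR/\pp=\dim\mR$, whence $\dim\mR_{\nn}=\dim\mR-1$. Comparing with the embedding-dimension bound and the general inequality $\dim_{\kappa(\nn)}\nn\mR_{\nn}/\nn^{2}\mR_{\nn}\ge\dim\mR_{\nn}$ yields equality, so $\mR_{\nn}$ is regular.

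For the reducedness assertion, since every associated prime of $\mR$ is minimal it suffices to check that $\mR_{\qq}$ is a field for every minimal prime $\qq$ of $\mR$. Given such $\qq$, the union $\bigcup_{\qq'\neq\qq}V(\qq')$ over the remaining minimal primes is a proper closed subset of $\Spec\mR$, so its open complement meets the dense subset $\Sigma$ in some $\nn$. Then $\qq$ is the unique minimal prime contained in $\nn$, so $\qq\subseteq\nn$, and $\mR_{\qq}$ is a localization of the regular (hence domain) local ring $\mR_{\nn}$. Being zero-dimensional, $\mR_{\qq}$ is a field, so $\mR$ is reduced. The main delicate point is the height identification $\dim\mR_{\nn}=\dim\mR-1$, which crucially uses both equidimensionality and catenarity; without equidimensionality the embedding-dimension bound from (ii) could not be matched to the Krull dimension of $\mR_{\nn}$ and the regularity conclusion would fail.
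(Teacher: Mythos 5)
Your proof is correct and follows essentially the same route as the paper's: Nakayama plus faithfulness gives $\mM_{\nn}\cong \mR_{\nn}$, condition (ii) then bounds the embedding dimension of $\mR_{\nn}$ by $\dim \mR-1$, which matches $\dim \mR_{\nn}$ via catenarity, equidimensionality and $\dim \mR/\pp=1$, yielding regularity; reducedness then follows from density of $\Sigma$ and minimality of the associated primes. The only (harmless) cosmetic differences are that the paper deduces $\dim\mR/\pp=1$ by noting the image of $\mR$ in $\kappa(\nn)$ is an $\OO$-order, where you argue via $(\mR/\pp)[1/\varpi]$ being a field, and that the paper finishes by embedding $\mR$ into $\prod_{\pp\in\Ass \mR}\mR_{\pp}$ rather than invoking the $(R_0)+(S_1)$ criterion.
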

\begin{remar} The $\nn$-adic completion  $\widehat{A}_{\nn}$ of $A$ is an $L$-algebra with residue field $\kappa(\nn)$. 
Since $\kappa(\nn)$ is a finite separable extension of $L$, \cite[Thm. 7.8]{Eis} implies that there is a unique homomorphism of $L$-algebras, 
$\kappa(\nn)\rightarrow \widehat{A}_{\nn}$, such that the composition $\kappa(\nn)\rightarrow \widehat{A}_{\nn}\rightarrow \kappa(\nn)$ is the identity map. 
Thus $\mR_{\nn}/\nn^2\cong \widehat{A}_{\nn}/ \nn^2 \widehat{A}_{\nn}$ is naturally a $\kappa(\nn)$-algebra. 
\end{remar}
\begin{proof} Let $d$ be the dimension of $\mR$ and let $\nn$ be a prime in $\Sigma$. Since $\dim_{\kappa(\nn)} \kappa(\nn)\otimes_{\mR} \mM =1$ 
and $\mM$ is a faithful $\mR$-module, it follows from the proof of Lemma \ref{generic} that $\mM_{\nn}$ is a free $\mR_{\nn}$-module of rank $1$.
Since $\dim_{\kappa(\nn)}(\mM\otimes_\mR \mR_{\nn}/\nn^2) \le d$, we obtain that $\dim_{\kappa(\nn)} \nn/\nn^2\le d-1$.
Since $\mR$ is a complete local noetherian $\OO$-algebra with residue field $k$, $\kappa(\nn)$ is a finite extension of $L$. 
Thus the image of $\mR$ in $\kappa(\nn)$ is an $\OO$-order in $\kappa(\nn)$, and thus has Krull dimensional one. 
 
 Complete local noetherian equidimensional  rings are catenary, \cite[Thm.31.4]{matsumura} implies that $\dim \mR_{\nn}=d-1$. 
Hence the dimension of $\mR_{\nn}$ is equal 
to its embedding dimension, and thus $\mR_{\nn}$ is a regular ring, and so $\mR_{\nn}$ is reduced. 

Since every associated prime is minimal and $\Sigma$ is dense in $\Spec \mR$, above every associated prime of $\mR$ we may find $\nn\in \Sigma$. 
Since localization is transitive, and preserves reduceness, the claim implies that $\mR_{\pp}$ is reduced 
for every associated prime ideal of $\mR$. The natural map $\varphi:\mR\rightarrow \prod_{\pp} \mR_{\pp}$, 
where the product is taken over all the associated primes of $\mR$, is injective, since the kernel is not supported on 
any minimal prime ideal of $\mR$, and thus $\Ass_\mR(\Ker \varphi)$ is empty. Hence, the claim implies that we may embed
$\mR$ into a product of reduced rings, which implies that $\mR$ is reduced. 
\end{proof}

Now let $R$, $M$ and $V$ be as in the previous subsection, and let $d$ be the dimension of $M$.

\begin{prop}\label{RHS4} Suppose that $R/\ann_R M$ is equidimensional and all the associated primes are minimal.
If there exists a dense subset $\Sigma$ of $\supp M$, contained in $\MaxSpec R[1/p]$, such that for all 
$\nn\in \Sigma$ the following hold:
\begin{itemize}
\item[(i)] $\dim_{\kappa(\nn)} \Hom_K(V, \Pi(\kappa(\nn)))=1$;
\item[(ii)] $\dim_{\kappa(\nn)} \Hom_K(V, \Pi(R_{\nn}/\nn^2))\le d$;
\end{itemize}
then $R/\ann_R M$ is reduced.
\end{prop}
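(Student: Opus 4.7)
The plan is to reduce directly to Proposition \ref{HS4} applied to the ring $\mR := R/\ann_R M$ and the module $\mM := M$ viewed as an $\mR$-module. By the standing assumption that $k\wtimes_R N$ is a finitely generated $\OO\br{K}$-module, Proposition \ref{fgM} tells us that $M$ is a finitely generated $R$-module, hence a finitely generated $\mR$-module, and it is tautologically faithful over $\mR$. The hypotheses of the proposition give us that $\mR$ is equidimensional and that every associated prime of $\mR$ is minimal. Since $\supp M = \Spec \mR$, the set $\Sigma$ is a dense subset of $\Spec\mR$ contained in $\MaxSpec \mR[1/p]$.

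It remains to translate the two Banach-theoretic hypotheses into the module-theoretic hypotheses (i) and (ii) of Proposition \ref{HS4}. Both translations are immediate applications of Proposition \ref{dim_eq}: for any finitely generated $R[1/p]$-module $\md$, the natural isomorphism \eqref{first_iso}
\[
\md \otimes_R M \;\cong\; \Hom^{\cont}_L(\Hom_K(V, \Pi(\md)), L)
\]
is an isomorphism of $R[1/p]$-modules, so when $\md$ is a $\kappa(\nn)$-vector space (as is the case for $\md = \kappa(\nn)$ and $\md = R_\nn/\nn^2$) both sides have the same $\kappa(\nn)$-dimension. Taking $\md = \kappa(\nn)$ converts hypothesis (i) of the present proposition into hypothesis (i) of Proposition \ref{HS4}; taking $\md = R_\nn/\nn^2$ and using $M \otimes_R R_\nn/\nn^2 = \mM \otimes_\mR \mR_\nn/\nn^2$ (since $\ann_R M$ acts as zero on $M$) converts hypothesis (ii) here into hypothesis (ii) of Proposition \ref{HS4}, once one observes that the bound $d$ is exactly $\dim \mM = \dim \mR$ by definition of the dimension of a module and by equidimensionality of $\mR$.

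With all hypotheses verified, Proposition \ref{HS4} yields that $\mR_\nn$ is regular for every $\nn\in\Sigma$ and that $\mR = R/\ann_R M$ is reduced, as required. There is no substantive obstacle: the proposition is a straightforward dictionary translating the ring-theoretic criterion of Proposition \ref{HS4} into the representation-theoretic language of the Banach space functor $\Pi$, using Proposition \ref{dim_eq} as the sole bridge.
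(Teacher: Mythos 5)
Your proposal is correct and is essentially the paper's own argument: the paper proves this proposition in one line by reducing to Proposition \ref{HS4}, with the translation of hypotheses (i) and (ii) carried out exactly as you do via the isomorphism of Proposition \ref{dim_eq} (and the identifications $\supp M=\Spec(R/\ann_R M)$, $d=\dim M=\dim(R/\ann_R M)$ for the faithful finitely generated module $M$). Nothing further is needed.
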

\begin{proof} This follows from Lemma \ref{generic} and Proposition \ref{HS4}. 
\end{proof} 

\begin{lem}\label{MCM} Let $\underline{x}=(x_1, \ldots, x_r)$  be an $N$-regular sequence in $R$. If $N/\underline{x}N$
is a finitely generated projective $\OO\br{K}$-module  then $M(\Theta)$ is a Cohen-Macaulay module of dimension
$r+1$.  If $\lambda\in \Mod^{\sm}_K(\OO)$ is of finite length and $M(\lambda)\neq 0$
then $M(\lambda)$ is a Cohen-Macaulay module of dimension $r$.
\end{lem}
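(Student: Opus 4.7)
The plan is to prove both claims through a Koszul complex computation whose crucial input is the vanishing
$\Ext^j_{\OO\br{K},\cont}(N,\lambda^\vee)=0$ for $j\ge 1$ and every finite length smooth $K$-representation $\lambda$ on $\OO$-torsion modules.

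I would first establish this vanishing by descending induction on $i$, with $N_i:=N/(x_1,\ldots,x_i)N$. The base case $i=r$ is immediate from projectivity of $N_r=\bar{N}$. Inductively, the $N$-regularity of $\underline{x}$ produces the short exact sequence $0\to N_i\xrightarrow{x_{i+1}}N_i\to N_{i+1}\to 0$, whose long exact Ext sequence, using $\Ext^k_{\OO\br{K},\cont}(N_{i+1},\lambda^\vee)=0$ for $k\ge 1$, forces multiplication by $x_{i+1}$ to be a bijection on $\Ext^j_{\OO\br{K},\cont}(N_i,\lambda^\vee)$ for every $j\ge 1$. Pontryagin-dualizing, $\Ext^j_{\OO\br{K},\cont}(N_i,\lambda^\vee)^\vee$ is a pseudocompact $R$-module on which $x_{i+1}\in\mm$ acts bijectively, so $\mm\cdot\Ext^j(N_i,\lambda^\vee)^\vee=\Ext^j(N_i,\lambda^\vee)^\vee$; topological Nakayama (\cite[Cor.\ 1.5]{bru}) then kills it. Pseudocompactness is ensured by computing the Ext via a finitely generated projective $R\br{K}$-resolution of $N_i$, valid for $\Ext^j_{\OO\br{K},\cont}$ since $R\br{K}$ is topologically free over $\OO\br{K}$.

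Next, apply $F(-):=\Hom^\cont_{\OO\br{K}}(-,\lambda^\vee)^\vee$ to the Koszul resolution $K_\bullet(\underline{x};N)\to\bar{N}$. Since $F$ preserves finite direct sums and is $R$-linear, $F(K_\bullet(\underline{x};N))=K_\bullet(\underline{x};M(\lambda))$; by the Ext vanishing each term $N^{\binom{r}{j}}$ is $F$-acyclic, so $H_j(F(K_\bullet))=L_jF(\bar{N})$, which equals $M_{\bar{R}}(\lambda)$ for $j=0$ and vanishes for $j>0$ because $\bar{N}$ is projective. Hence $\underline{x}$ is a regular sequence on $M(\lambda)$. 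By Proposition~\ref{disdual}, $M(\lambda)/\underline{x}M(\lambda)\cong M_{\bar{R}}(\lambda)=\Hom^\cont_{\OO\br{K}}(\bar{N},\lambda^\vee)^\vee$ is a finite $\OO$-module killed by a power of $\varpi$ (as $\bar{N}$ is finitely generated projective over $\OO\br{K}$ and $\lambda^\vee$ is a finite $\OO$-module), hence of Krull dimension $0$; so $\dim M(\lambda)=r=\mathrm{depth}\,M(\lambda)$ and $M(\lambda)$ is Cohen-Macaulay of dimension $r$.

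For $M(\Theta)$, the element $\varpi$ is regular since $M(\Theta)=\Hom_\OO(\cdot,\OO)$ is $\OO$-torsion free. Writing $\Theta^d=\varprojlim_n\Theta^d/\varpi^n\Theta^d\cong\varprojlim_n(\Theta/\varpi^n\Theta)^\vee$ via Proposition~\ref{schdual}, the long exact Ext sequence of $0\to\Theta^d\xrightarrow{\varpi^n}\Theta^d\to\Theta^d/\varpi^n\Theta^d\to 0$, together with the Step~1 vanishing applied to $\lambda_n:=\Theta/\varpi^n\Theta$, shows that $\varpi^n$ acts surjectively on $\Ext^j_{\OO\br{K},\cont}(N,\Theta^d)$ for all $j\ge 1$ and $n\ge 1$; topological Nakayama then forces $\Ext^j_{\OO\br{K},\cont}(N,\Theta^d)=0$ for $j\ge 1$. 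Repeating the Koszul argument with $\Theta^d$ in place of $\lambda^\vee$ shows $\underline{x}$ is regular on $M(\Theta)$, and combined with $\varpi$ this yields a regular sequence of length $r+1$; the quotient $M(\Theta)/(\varpi,\underline{x})M(\Theta)\cong M_{\bar{R}}(\Theta/\varpi\Theta)$ has Krull dimension $0$, so $M(\Theta)$ is Cohen-Macaulay of dimension $r+1$. The principal obstacle will be carrying out the topological Nakayama argument in Step~1 rigorously, which hinges on realizing $\Ext^j_{\OO\br{K},\cont}(N_i,\lambda^\vee)^\vee$ as a pseudocompact $R$-module; the $R\br{K}$-resolution device above accomplishes this, and analogous care is required in the $\Theta^d$ case, where $\Theta^d$ is $\OO$-flat rather than $\OO$-torsion.
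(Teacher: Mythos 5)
Your argument is correct in outline but follows a genuinely different route from the paper. The paper's proof is an induction on $r$: since $N/\underline{x}N$ is projective, Corollary \ref{proj} propagates projectivity up the chain $N/(x_1,\ldots,x_i)N$, and at each stage the surjection $N_i\twoheadrightarrow N_{i+1}$ splits $\OO\br{K}$-equivariantly (the target being projective), so applying $\Hom^{\cont}_{\OO\br{K}}(-,\Theta^d)$ to $0\to N_i\xrightarrow{x_{i+1}}N_i\to N_{i+1}\to 0$ stays exact with a continuous $\OO$-linear section, and exactness survives $\Hom_{\OO}(-,\OO)$. This exhibits $\underline{x}$ (resp.\ $(\underline{x},\varpi)$) directly as a regular sequence with zero-dimensional quotient, with no Koszul complexes or derived functors. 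Your Step 1 is essentially a re-proof of Corollary \ref{proj} with $\Ext$ into all finite-length $\lambda^\vee$ in place of $\wTor(-,k)$, and it is sound: the duals $\Ext^j_{\OO\br{K},\cont}(N_i,\lambda^\vee)^\vee$ computed from a finite free $R\br{K}$-resolution are finitely generated $R$-modules (the dual of $\Hom^{\cont}_{\OO\br{K}}(R\br{K},\lambda^\vee)$ is $R\otimes_{\OO}\lambda$), so Nakayama applies. The Koszul step and the dimension count are also fine. What your approach buys is a uniform derived-functor framework; what it costs is length, plus the need for acyclicity statements the paper avoids entirely.

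There is, however, one step that does not work as written: the vanishing of $\Ext^j_{\OO\br{K},\cont}(N,\Theta^d)$ for $j\ge 1$. You deduce from $\Ext^j(N,\Theta^d/\varpi^n\Theta^d)=0$ that $\varpi^n$ acts surjectively on $\Ext^j(N,\Theta^d)$ and then invoke topological Nakayama. But unlike the $\lambda^\vee$ case, $\Ext^j(N,\Theta^d)$ is neither compact nor pseudocompact nor finitely generated over $R$: computed from a finite free $R\br{K}$-resolution its terms are of the form $\Hom^{\cont}_{\OO}(R,\Theta^d)^{n_i}$, which are $\varpi$-adically complete but their subquotients need not be, and $\varpi$-divisibility alone does not force such a module to vanish (think of $L$). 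Two repairs are available. The cheap one: your Step 1 already implies $N$ is projective in $\Mod^{\pro}_K(\OO)$ (take $\lambda$ with $\lambda^{\vee}\cong \OO\br{K}\wtimes_{\OO\br{P}}k$, equivalently apply the paper's Corollary \ref{proj} and Remark \ref{pk}), whereupon all higher $\Ext$ into any compact module vanish for free. The honest completion of your own route: write $\Hom^{\cont}(P_\bullet,\Theta^d)=\varprojlim_n\Hom^{\cont}(P_\bullet,\Theta^d/\varpi^n)$ with surjective transition maps and use the Milnor exact sequence; the $\varprojlim$ and $\varprojlim^1$ terms vanish for $j\ge 2$ by Step 1, and for $j=1$ the system $\Hom(N,\Theta^d/\varpi^n)$ is Mittag-Leffler because $\Ext^1(N,\Theta^d/\varpi)=0$. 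Either patch closes the gap; with it in place the rest of your argument goes through.
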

\begin{proof} The proof is by induction on $r$. If $N$ is  a finitely generated projective $\OO\br{K}$-module, 
(so that $r=0$), then $\Hom_{\OO\br{K}}^{\cont}(N, \Theta^d)$ is a free $\OO$-module of finite rank by \cite[Prop.4.8]{comp}.
The induction step is given by the following. Suppose that $x\in R$ is $N$-regular, and $N/x N$ is a projective 
$\OO\br{K}$-module. Then $N$ is a projective $\OO\br{K}$-module by Corollary \ref{proj}, Remark \ref{pk} proved below. Moreover, 
the surjection $N\twoheadrightarrow N/xN$ has an $\OO\br{K}$-equivariant splitting. Thus the sequence 
$$0\rightarrow \Hom^{\cont}_{\OO\br{K}}(N/xN, \Theta^d)\rightarrow \Hom^{\cont}_{\OO\br{K}}(N, \Theta^d)\overset{x}{\rightarrow}
\Hom^{\cont}_{\OO\br{K}}(N, \Theta^d)\rightarrow 0$$
is exact and the second arrow has a continuous $\OO$-equivariant section. Hence, when we apply $\Hom_{\OO}(\ast, \OO)$, 
we obtain an exact sequence 
$$0\rightarrow M\overset{x}{\rightarrow} M \rightarrow \Hom_{\OO}( \Hom_{\OO\br{K}}^{\cont}(N/xN, \Theta^d), \OO)\rightarrow 0.$$
The proof for $M(\lambda)$ is the same. 
\end{proof}

\subsection{Projectivity} We will devise a criterion to check, when a finitely generated $R\br{K}$-module $N$ is projective 
in $\Mod^{\pro}_K(\OO)$. Let $P$ be a  pro-$p$ Sylow subgroup of $K$. Then a compact $\OO\br{K}$-module is projective 
if and only if it is projective as an $\OO\br{P}$-module. Since 
$P$ is a pro-$p$ group,  $\OO\br{P}$ is a local ring.  Nakayama's lemma for compact modules implies  that a compact $\OO\br{P}$-module 
$N$ is zero if and only if $N\wtimes_{\OO\br{P}} k=0$. We let $\wTor_{\OO\br{P}}^i(\ast, k)$ be the $i$-th left derived functor of 
$\wtimes_{\OO\br{P}} k$ in $\Mod^{\pro}_P(\OO)$. Since every object in $\Mod^{\pro}_P(\OO)$ has a projective envelope, a
standard argument gives that a compact $\OO\br{P}$-module $N$ is projective if and only if $\wTor^1_{\OO\br{P}}(N, k)=0$. 
We will assume that $R$ is $\OO$-torsion free. 

\begin{prop}\label{torfg} If $N$ is a finitely generated $R\br{P}$-module, then $\wTor_{\OO\br{P}}^i(N, k)$ are finitely generated $R$-modules 
for all $i\ge 0$. 
\end{prop}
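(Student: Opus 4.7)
The plan is to pick a resolution of $N$ by finitely generated free $R\br{P}$-modules and to use it to compute the $\wTor$-groups against $k$. Since $R$ is noetherian and $P$ is a compact $p$-adic analytic group, the completed group algebra $R\br{P}$ is noetherian (cf.\ the proof of Lemma \ref{r_e_co}), so there exists a resolution $F_\bullet \to N \to 0$ with $F_i \cong R\br{P}^{n_i}$ and $n_i < \infty$.

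The key point is the vanishing
\[
\wTor^p_{\OO\br{P}}(R\br{P}, k) \;=\; 0 \qquad \text{for all } p\ge 1.
\]
To prove it, choose any $\OO\br{P}$-projective resolution $G_\bullet \to k \to 0$. Since $P$ has an open uniform pro-$p$ subgroup $H$, $\OO\br{P}$ is a finite free $\OO\br{H}$-module and $\OO\br{H} \cong \OO\br{x_1, \ldots, x_d}$ is pro-free over $\OO$; hence every projective $\OO\br{P}$-module is $\OO$-flat, so $G_\bullet$ is in particular an $\OO$-flat resolution of $k$. Because $R\br{P} \cong R \wtimes_{\OO} \OO\br{P}$, associativity of the completed tensor product yields the base-change identity $R\br{P} \wtimes_{\OO\br{P}} G \cong R \wtimes_{\OO} G$ for any compact $\OO\br{P}$-module $G$, and this gives
\[
\wTor^p_{\OO\br{P}}(R\br{P}, k) \;\cong\; H_p(R \wtimes_{\OO} G_\bullet) \;\cong\; \wTor^p_{\OO}(R, k),
\]
which vanishes for $p \ge 1$ because $R$ is $\OO$-torsion free, hence $\OO$-flat.

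Each $F_i$ is therefore acyclic for $(-)\wtimes_{\OO\br{P}} k$. Standard dimension shifting applied to the short exact sequences $0 \to Z_i \to F_i \to Z_{i-1} \to 0$ extracted from $F_\bullet$ (where $Z_{-1}=N$) then identifies
\[
\wTor^i_{\OO\br{P}}(N, k) \;\cong\; H_i\bigl(F_\bullet \wtimes_{\OO\br{P}} k\bigr) \;\cong\; H_i\bigl((R/\varpi R)^{n_\bullet}\bigr),
\]
which is a subquotient of a finitely generated module over the noetherian ring $R/\varpi R$, and so is finitely generated over $R$. The only subtle step is the base-change identity $R\br{P} \wtimes_{\OO\br{P}} G \cong R \wtimes_{\OO} G$, which must be checked topologically from the definition $R\br{P} = R \wtimes_{\OO} \OO\br{P}$ together with cofinality of the defining filtrations; once this is in place everything else is formal.
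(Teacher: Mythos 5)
Your proposal has the same skeleton as the paper's proof — a resolution of $N$ by finite free $R\br{P}$-modules (valid since $R\br{P}$ is noetherian), acyclicity of $R\br{P}$ for $\wtimes_{\OO\br{P}}k$, and the identification $R\br{P}\wtimes_{\OO\br{P}}k\cong R\wtimes_{\OO}k\cong R/\varpi R$ — but it diverges in how the acyclicity is established, and that is where you have a real dependency you do not justify. The functor $\wTor^i_{\OO\br{P}}(\ast,k)$ is \emph{defined} as the $i$-th left derived functor in the first variable, so the vanishing $\wTor^p_{\OO\br{P}}(R\br{P},k)=0$ must be proved by resolving $R\br{P}$; your computation instead resolves $k$ and identifies the result with the homology of $R\br{P}\wtimes_{\OO\br{P}}G_\bullet$, which is exactly the assertion that the completed $\wTor$ bifunctor is balanced. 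That balance is true for pseudocompact modules (it is in Brumer's paper, which the text cites elsewhere), but it is a nontrivial input and you should either prove it or cite it. The paper sidesteps the issue entirely: since $R$ is compact and $\OO$-torsion free, $R\cong\prod_{i\in I}\OO$ as a compact $\OO$-module, whence $R\br{P}\cong R\wtimes_{\OO}\OO\br{P}\cong\prod_{i\in I}\OO\br{P}$ is topologically free, hence projective in $\Mod^{\pro}_P(\OO)$, and projectives are trivially acyclic for the first-variable derived functor. If you replace your flatness-plus-balance argument by this direct projectivity argument (which also makes your ``only subtle step,'' the base change $R\br{P}\wtimes_{\OO\br{P}}G\cong R\wtimes_{\OO}G$, needed only for $G=k$), the proof coincides with the paper's. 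The remaining steps — dimension shifting along the free resolution and finite generation of subquotients of $(R/\varpi R)^{n_i}$ over the noetherian ring $R$ — are correct as written.
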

\begin{proof} Since $R$ is a compact and torsion free $\OO$-module there exists an isomorphism of compact $\OO$-modules:
$R\cong \prod_{i\in I} \OO$, for some set $I$. Since the completed tensor product commutes with projective limits we have:
$R\br{P}\cong R\wtimes_{\OO} \OO\br{P}\cong \prod_{i\in I} \OO\br{P}$. Hence, $R\br{P}$ is a topologically free $\OO\br{P}$ module
and hence is projective in $\Mod^{\pro}_P(\OO)$. Moreover, 
$R\br{P}\wtimes_{\OO\br{P}} k\cong R\wtimes_{\OO}(\OO\br{P}\wtimes_{\OO\br{P}} k)\cong R\wtimes_{\OO} k$ is a finitely generated 
$R$-module.  Since $N$ is a finitely generated $R\br{P}$-module and $R\br{P}$ is noetherian, there is a projective resolution 
of $M$ by free $R\br{P}$-modules of finite rank. Applying $\wtimes_{\OO\br{P}} k$ to the resolution we obtain a complex 
of finitely generated $R$-modules, the homology of which computes  $\wTor_{\OO\br{P}}^i(N, k)$. 
\end{proof}

\begin{cor}\label{proj} Let $N$ be a finitely generated $R\br{P}$-module, and let $x\in R$ be $N$-regular. If $N/xN$ is projective
in $\Mod^{\pro}_P(\OO)$ then $N$ is also projective in $\Mod^{\pro}_P(\OO)$.
\end{cor}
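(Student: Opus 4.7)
The plan is to reduce projectivity of $N$ in $\Mod^{\pro}_P(\OO)$ to the vanishing of $T := \wTor^1_{\OO\br{P}}(N,k)$, using the standard characterization recorded in the paragraph preceding Proposition \ref{torfg}, and then to show $T = 0$ by combining the long exact Tor sequence with a Nakayama argument fed by that same proposition.

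Concretely, since $x$ is $N$-regular, the sequence
$$0 \to N \xrightarrow{x} N \to N/xN \to 0$$
is short exact in the abelian category $\Mod^{\pro}_P(\OO)$. I would feed it into the long exact sequence of $\wTor^{i}_{\OO\br{P}}(-,k)$, which is available because $\Mod^{\pro}_P(\OO)$ has enough projectives (projective envelopes exist, as noted just before Proposition \ref{torfg}). By hypothesis $N/xN$ is projective, so $\wTor^1_{\OO\br{P}}(N/xN,k) = 0$, and the tail of the long exact sequence reads
$$T \xrightarrow{x} T \to 0,$$
i.e.\ multiplication by $x$ is surjective on $T$.

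Proposition \ref{torfg} now provides that $T$ is finitely generated as an $R$-module, and $x$ lies in the maximal ideal $\mm$ of the local ring $R$ (as in the regular-sequence context envisioned for the application in Lemma \ref{MCM}; note that if $x$ were a unit then $N/xN = 0$ and the hypothesis carries no information). Nakayama's lemma for finitely generated $R$-modules therefore forces $T = 0$, and the Tor criterion yields projectivity of $N$ in $\Mod^{\pro}_P(\OO)$. I do not foresee a substantive obstacle here: the heavy lifting has already been done in the form of the finite generation statement of Proposition \ref{torfg} together with the projectivity$\leftrightarrow$Tor-vanishing equivalence, so the corollary is essentially a formal Nakayama trick applied to $T$ once the long exact Tor sequence is in place.
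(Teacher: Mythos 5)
Your argument is correct and is essentially the paper's own proof: apply $\wtimes_{\OO\br{P}}k$ to $0\rightarrow N\overset{x}{\rightarrow}N\rightarrow N/xN\rightarrow 0$, use projectivity of $N/xN$ to see that $x$ acts surjectively on $\wTor^1_{\OO\br{P}}(N,k)$, and kill this finitely generated $R$-module (Proposition \ref{torfg}) by Nakayama. Your explicit remark that $x$ must lie in $\mm$ for Nakayama to apply is a point the paper leaves implicit, and is handled correctly.
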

\begin{proof} We apply $\wtimes_{\OO\br{P}} k$ to the exact sequence 
$0\rightarrow N\overset{x}{\rightarrow} N\rightarrow N/xN\rightarrow 0$. Since $N/xN$ is projective by assumption, 
$\wTor^i_{\OO\br{P}}(N/xN, k)=0$ for $i\ge 1$. Hence, the multiplication by $x$, 
$\wTor^{i}_{\OO\br{P}}(N, k)\overset{x}\rightarrow  \wTor^{i}_{\OO\br{P}}(N, k)$, induces an isomorphism for all $i\ge 1$. Since these are finitely generated 
$R$-modules by Proposition \ref{torfg}, Nakayama's lemma forces them to vanish. Hence, $N$ is a projective $\OO\br{P}$-module.  
\end{proof}

\begin{remar}\label{pk} Since $P$ is a pro-$p$ Sylow of $K$ and is open in $K$, $N$ is projective in $\Mod^{\pro}_P(\OO)$ if and only if it is projective in
$\Mod^{\pro}_K(\OO)$.
\end{remar}

\begin{cor}\label{pdim} Let $N$ be an $R\br{P}$-module, which is finitely generated over $\OO\br{P}$. Let $x\in R$ be  $N$-regular.
If $N/xN$ has a finite projective dimension as an $\OO\br{P}$-module, then so does $N$ and  $1+\pdim_{\OO\br{P}} N = \pdim_{\OO\br{P}} N/xN$.  
\end{cor}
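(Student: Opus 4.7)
The plan is to apply the long exact sequence of $\wTor^{\bullet}_{\OO\br{P}}(-,k)$ to the short exact sequence
\[
0 \to N \xrightarrow{x} N \to N/xN \to 0
\]
and to read off the projective dimension of $N$ from it. The key input is that $x\in R$ acts as zero on $\wTor^i_{\OO\br{P}}(N,k)$ for every $i\ge 0$; granted this, the long exact sequence collapses into short exact sequences
\[
0 \to \wTor^i_{\OO\br{P}}(N,k) \to \wTor^i_{\OO\br{P}}(N/xN,k) \to \wTor^{i-1}_{\OO\br{P}}(N,k) \to 0
\]
for all $i\ge 1$, and the numerical conclusion will be immediate from them.

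To justify the zero-action, note that we are in the non-trivial case $x\in \mathfrak m_R$ (if $x$ were a unit then $N/xN=0$ and the formula degenerates). Since $R$ is $\OO$-torsion free and $\OO$ is a discrete valuation ring, $R$ is flat over $\OO$, hence $R\br{P}$ is flat over $\OO\br{P}$, and consequently every $R\br{P}$-projective module is $\OO\br{P}$-flat. Choose an $R\br{P}$-projective resolution $P_\bullet\to N$; then $P_\bullet \wtimes_{\OO\br{P}} k$ computes $\wTor^i_{\OO\br{P}}(N,k)$. Since $R$ is central in $R\br{P}$ and $k$ is also an $R\br{P}$-module (with $\mathfrak m_R$ acting trivially), the $R$-action on the complex $P_\bullet \wtimes_{\OO\br{P}} k$ induced from $P_\bullet$ (lifting the $R$-action on $N$) agrees with the one induced from $k$. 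The latter is zero on $x$ because $x\in\mathfrak m_R$ annihilates $k$; hence $x$ annihilates $\wTor^i_{\OO\br{P}}(N,k)$ for every $i$.

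Now set $d:=\pdim_{\OO\br{P}} N/xN$. Since $N$ is finitely generated over the noetherian local ring $\OO\br{P}$, so is $N/xN$, and the standard local characterization of projective dimension gives $\wTor^i_{\OO\br{P}}(N/xN,k)=0$ for $i>d$ together with $\wTor^d_{\OO\br{P}}(N/xN,k)\neq 0$. Plugging this into the short exact sequences above: for $i\ge d+1$ both outer terms vanish, forcing $\wTor^i_{\OO\br{P}}(N,k)=0$ for every $i\ge d$; and in degree $i=d$ we obtain $\wTor^{d-1}_{\OO\br{P}}(N,k)\cong \wTor^d_{\OO\br{P}}(N/xN,k)\neq 0$. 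Applying the same local characterization to $N$, this says precisely that $\pdim_{\OO\br{P}} N = d-1$, which is the desired equality. The principal obstacle is the zero-action step on the Tor groups; the remainder is a mechanical unpacking of the long exact sequence.
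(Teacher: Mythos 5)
Your overall strategy --- the long exact sequence of $\wTor^{\bullet}_{\OO\br{P}}(-,k)$ attached to $0\to N\xrightarrow{x}N\to N/xN\to 0$, read against the characterization $\pdim_{\OO\br{P}}=\sup\{i:\wTor^i_{\OO\br{P}}(-,k)\neq 0\}$ --- is exactly the paper's, but the step you yourself identify as the principal obstacle is false: $x$ does \emph{not} in general annihilate $\wTor^i_{\OO\br{P}}(N,k)$. The balancing argument fails because the completed tensor product is taken over $\OO\br{P}$, not over $R\br{P}$; for $x\in R$ not lying in (the image of) $\OO\br{P}$ there is no relation $xm\otimes b=m\otimes xb$ in $P_j\wtimes_{\OO\br{P}}k$, so the action induced from $P_\bullet$ need not agree with the zero action induced from $k$. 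Concretely, take $R=\OO\br{y}$, $P\cong\Zp$ (so $\OO\br{P}\cong\OO\br{t}$), $N=\OO\br{P}^{\oplus 2}$ with $y$ acting by the matrix $\bigl(\begin{smallmatrix}t&1\\0&t\end{smallmatrix}\bigr)$. This is a continuous action, $y$ is $N$-regular, $N$ is free and $N/yN\cong\OO\br{t}/(t^2)$ has projective dimension $1$; yet $y$ acts on $\wTor^0_{\OO\br{P}}(N,k)=k^2$ by the nonzero matrix $\bigl(\begin{smallmatrix}0&1\\0&0\end{smallmatrix}\bigr)$, and your degree-$0$ "short exact sequence" would assert an injection $k^2\hookrightarrow\wTor^0_{\OO\br{P}}(N/yN,k)=k$. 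So the long exact sequence does not collapse as you claim.

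The corollary is nevertheless true, and the repair is the paper's actual argument, which extracts less from the $x$-action. From the long exact sequence one gets only: if $\wTor^i_{\OO\br{P}}(N/xN,k)=0$ then multiplication by $x$ on $\wTor^{i-1}_{\OO\br{P}}(N,k)$ is \emph{injective}. Since $N$ is finitely generated over $\OO\br{P}$, these Tor groups are finite-dimensional $k$-vector spaces, so the injection is a bijection; and since by Proposition \ref{torfg} they are finitely generated $R$-modules and $x\in\mathfrak m_R$ (the only non-degenerate case), Nakayama's lemma forces $\wTor^{i-1}_{\OO\br{P}}(N,k)=0$. Conversely, if $\wTor^i_{\OO\br{P}}(N,k)\neq 0$ then $x$ cannot act injectively on it (otherwise it would act bijectively and Nakayama would kill the module), so the connecting map out of $\wTor^{i+1}_{\OO\br{P}}(N/xN,k)$ is nonzero and that group is nonzero. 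Together these give $1+\pdim_{\OO\br{P}}N=\pdim_{\OO\br{P}}N/xN$. In short: keep your long exact sequence, but replace ``$x$ acts as zero'' by ``injective $\Rightarrow$ bijective (finite dimensionality) $\Rightarrow$ zero (Nakayama over $R$)''.
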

\begin{proof} Since Nakayama's lemma is available to us, standard arguments, see for example Lemma 1 in \cite[\S 19]{matsumura},  
show that the projective dimension of $N$ is equal to the length of minimal projective resolution of $N$, which is equal 
to $\sup \{i : \wTor^i_{\OO\br{P}}(N, k)\neq 0\}$. The assumption that $N$ is a finitely generated $\OO\br{P}$-module implies 
that $\wTor^i_{\OO\br{P}}(N, k)$ are finite dimensional $k$-vector spaces for all $i\ge 0$. If $\wTor^i_{\OO\br{P}}(N/xN, k)=0$
then the multiplication map by $x$, $\wTor^{i-1}_{\OO\br{P}}(N, k)\overset{x}{\rightarrow} \wTor^{i-1}_{\OO\br{P}}(N, k)$
is an injection and hence an isomorphism, as both the source and the target have the same dimension 
as $k$-vector spaces. Proposition \ref{torfg} and Nakayama's lemma imply that $\wTor^{i-1}_{\OO\br{P}}(N, k)=0$. 
The same argument implies that if $\wTor^i_{\OO\br{P}}(N, k)\neq 0$ then the multiplication map by $x$ cannot be injective 
and hence $\wTor^{i+1}_{\OO\br{P}}(N/xN, k)$ is non-zero. Hence, $1+\pdim_{\OO\br{P}} N = \pdim_{\OO\br{P}} N/xN$.
 \end{proof}

\begin{lem}\label{fgqq} Let $x_1, \ldots, x_r\in R$ be a system of parameters for $M(\Indu{P}{K}{\Eins})$, and let 
$\qq$ be the ideal of $R$ generated by it. Then $N/\qq N$ is a finitely generated $\OO\br{K}$-module. 
\end{lem}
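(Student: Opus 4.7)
The plan is to identify $M(\Indu{P}{K}{\Eins})$ with the compact $\OO$-module $N_P$ of $P$-coinvariants of $N$, and then transport the finiteness contained in the system-of-parameters hypothesis from $R$-modules to $\OO\br{P}$-modules via Proposition \ref{disdual} combined with topological Nakayama.

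First I would observe, using Pontryagin duality as in \eqref{dim_compact_ind} together with Frobenius reciprocity for the open subgroup $P\subset K$, that
\[
\Hom^{\cont}_{\OO\br{K}}(N, (\Indu{P}{K}{\Eins})^{\vee}) \cong \Hom_K(\Indu{P}{K}{\Eins}, N^{\vee}) \cong (N^{\vee})^P \cong (N_P)^{\vee},
\]
where $N_P$ denotes the quotient of $N$ by the closed $\OO\br{P}$-submodule generated by $\{(g-1)n : g\in P,\, n\in N\}$. Dualising yields $M(\Indu{P}{K}{\Eins}) \cong N_P$ as compact $R$-modules. Applying Proposition \ref{disdual} to $\md = R/\qq$, together with the identification $(R/\qq)\wtimes_R N \cong N/\qq N$ (which is finitely presented over $R\br{K}$), and repeating the same Pontryagin--Frobenius computation for $N/\qq N$ in place of $N$, one obtains
\[
M(\Indu{P}{K}{\Eins})/\qq M(\Indu{P}{K}{\Eins}) \cong (R/\qq)\wtimes_R M(\Indu{P}{K}{\Eins}) \cong (N/\qq N)_P.
\]

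Next, the hypothesis that $x_1,\ldots,x_r$ is a system of parameters for $M(\Indu{P}{K}{\Eins})$ means that the left hand side has finite length as an $R$-module, hence is finite dimensional as a $k$-vector space (since $R$ is local noetherian with residue field $k$). Thus $(N/\qq N)_P$ is finite dimensional over $k$. Now $(N/\qq N)\wtimes_{\OO\br{P}} k$ is the further quotient of $(N/\qq N)_P$ by the action of $\varpi$, which is necessarily nilpotent on a finite-dimensional $k$-vector space, so it is also finite dimensional over $k$. Topological Nakayama's lemma applied to the compact $\OO\br{P}$-module $N/\qq N$ (see \cite[Cor.~1.5]{bru}) then implies that $N/\qq N$ is finitely generated over $\OO\br{P}$; since $P$ is open in $K$ the ring $\OO\br{K}$ is finitely generated over $\OO\br{P}$, so $N/\qq N$ is finitely generated over $\OO\br{K}$ as well.

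I do not anticipate a substantial obstacle: once the coinvariants identification $M(\Indu{P}{K}{\Eins})\cong N_P$ is in place the argument is purely formal, and it explains why the particular $K$-representation $\Indu{P}{K}{\Eins}$ is the right test object for this result. The only minor point that needs verification is that the image of $I_P\wtimes_{\OO\br{P}}N \to N$ is closed so that $N_P$ is a compact quotient, which follows from the continuity of the $P$-action on $N$ and the compactness of $N$.
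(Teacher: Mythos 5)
Your proof is correct and follows essentially the same route as the paper: Proposition \ref{disdual} plus Frobenius reciprocity identifies $M(\Indu{P}{K}{\Eins})/\qq M(\Indu{P}{K}{\Eins})$ with the Pontryagin dual of the $P$-coinvariants of $N/\qq N$, the system-of-parameters hypothesis makes this an $R$-module of finite length (hence finite dimensional over $k$), and topological Nakayama over the local ring $\OO\br{P}$ concludes. The only nuance is that $\Eins$ in $\Indu{P}{K}{\Eins}$ is the trivial $k$-representation, so the identification is really with $(N/\qq N)\wtimes_{\OO\br{P}}k$ rather than with the $\OO$-coinvariants $(N/\qq N)_P$, but your subsequent reduction modulo $\varpi$ absorbs this harmlessly.
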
 
\begin{proof} Let $\lambda= \Indu{P}{K}{\Eins}$, and let $M(\lambda)$ be the module defined in Definition \ref{M_sigma}.
Proposition \ref{disdual} implies that $M(\lambda)/\qq M(\lambda)\cong (\Hom^{\cont}_{\OO\br{K}}(N/\qq N, \lambda^{\vee}))^{\vee}$. 
Moreover, we have 
\begin{displaymath}
\begin{split}
\Hom^{\cont}_{\OO\br{K}}(N/\qq N, \lambda^{\vee})&\cong \Hom_K(\Indu{P}{K}{\Eins}, (N/\qq N)^{\vee})
\cong\Hom_P(\Eins, (N/\qq N)^{\vee})\\ &\cong \Hom_{\OO\br{P}}^{\cont}(N/\qq N, k)\cong ((N/\qq N)\wtimes_{\OO\br{P}} k)^{\vee}.
\end{split}
\end{displaymath} 
Hence, $M(\lambda)/\qq M(\lambda)\cong (N/\qq N)\wtimes_{\OO\br{P}} k$ as $R$-modules. The left hand side of this isomorphism 
is an $R$-module of finite length as $\qq$ is generated by a system of parameters of $M$. Since the residue field of $R$ is $k$, 
we deduce that the right hand side is an $\OO$-module of finite length, and hence $N/\qq N$ is finitely generated over $\OO\br{P}$ by Nakayama's lemma.
\end{proof}

\begin{lem}\label{dimmax} $\dim M(\Indu{P}{K}{\Eins})=\max_{\sigma}\{\dim M(\sigma)\}$, where the maximum is taken over all 
smooth irreducible $k$-representations of $K$.
\end{lem}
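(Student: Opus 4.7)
The plan is to establish the equality by two inequalities: the upper bound comes essentially for free from Corollary \ref{fg}, while the lower bound will use Frobenius reciprocity to show every irreducible smooth $k$-representation of $K$ appears as a quotient of $\Indu{P}{K}{\Eins}$.

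First I would recall from Corollary \ref{fg} that $\dim M(\Indu{P}{K}{\Eins}) \le \max_{\sigma'}\{\dim M(\sigma')\}$, where $\sigma'$ ranges over the irreducible subquotients of $\Indu{P}{K}{\Eins}$. Since this is a subset of all irreducible smooth $k$-representations of $K$, we get one inequality immediately.

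For the reverse inequality, the key observation is that every irreducible smooth $k$-representation $\sigma$ of $K$ satisfies $\sigma^P\neq 0$. Indeed, such a $\sigma$ is finite dimensional and factors through a finite quotient $K/H$ of $K$; the image of $P$ in this quotient is a finite $p$-group, and any nonzero representation of a finite $p$-group on a $k$-vector space has a nonzero fixed vector. By Frobenius reciprocity,
\[
\Hom_K(\Indu{P}{K}{\Eins},\sigma)\cong \Hom_P(\Eins,\sigma|_P)=\sigma^P\neq 0,
\]
and since $\sigma$ is irreducible, any nonzero such map is surjective. Thus $\sigma$ is a quotient of $\Indu{P}{K}{\Eins}$.

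Applying the right exact functor $M$ of Lemma \ref{Mfunct} to the surjection $\Indu{P}{K}{\Eins}\twoheadrightarrow \sigma$ yields a surjection $M(\Indu{P}{K}{\Eins})\twoheadrightarrow M(\sigma)$, so $\dim M(\Indu{P}{K}{\Eins})\ge \dim M(\sigma)$. Taking the maximum over all irreducible $\sigma$ gives the other inequality, and combining the two proves the lemma. There is no real obstacle here; the only subtle input is the $p$-group fixed-point lemma, which is standard.
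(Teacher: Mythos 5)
Your proposal is correct and follows exactly the paper's argument: the upper bound comes from the last part of Corollary \ref{fg}, and the lower bound from the fact that $\sigma^P\neq 0$ (since $P$ is pro-$p$), so that every irreducible $\sigma$ is a quotient of $\Indu{P}{K}{\Eins}$ and right exactness of $M$ gives a surjection $M(\Indu{P}{K}{\Eins})\twoheadrightarrow M(\sigma)$. Your spelled-out justification of the fixed-point claim via a finite quotient is a fine elaboration of what the paper leaves implicit.
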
 
\begin{proof} Let $\sigma$ be an irreducible smooth $k$-representation of $K$. Since $P$ is a pro-$p$ group, $\sigma^P\neq 0$, and 
hence $\sigma$ is a quotient of $\Indu{P}{K}{\Eins}$. Since the functor $\lambda\mapsto M(\lambda)$ is right exact by 
Lemma \ref{Mfunct}, $M(\sigma)$ is a quotient of $M(\Indu{P}{K}{\Eins})$, and hence $\dim M(\Indu{P}{K}{\Eins})\ge \dim M(\sigma)$, 
for all smooth irreducible representations $\sigma$. The reverse inequality follows from the last part of Corollary \ref{fg}.   
\end{proof}

\begin{prop}\label{regproj} Let $x_1, \ldots, x_d$ be an $N$-regular sequence in $R$ and let $\qq$ be the ideal of $R$ generated by it.
If $x_1, \ldots, x_r$ is a system of parameters for $M(\Indu{P}{K}{\Eins})$ and $\pdim_{\OO\br{P}} N/\qq N\le d-r$, then
the following hold:
\begin{itemize}
\item[(i)] $\pdim_{\OO\br{P}} N/\qq N =d-r$;
\item[(ii)] $N$ is projective in $\Mod^{\pro}_K(\OO)$;
\item[(iii)] $M(\Theta)$ is a Cohen-Macaulay module of dimension $r+1$;
\item[(iv)] if $\lambda\in \Mod^{\sm}_K(\OO)$ is of finite length and $M(\lambda)\neq 0$
then $M(\lambda)$ is a Cohen-Macaulay module of dimension $r$.
\end{itemize}
\end{prop}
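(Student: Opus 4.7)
The plan is to combine three results already in place: Lemma~\ref{fgqq}, which makes a specific partial quotient of $N$ finitely generated over $\OO\br{P}$; Corollary~\ref{pdim}, which propagates finite projective dimension up a regular sequence; and Corollary~\ref{proj}, which propagates projectivity down a regular sequence. The heart of the argument is a bootstrap: the projective dimension bound on $N/\qq N$ is tight enough to force the intermediate quotient $N/(x_1,\ldots,x_r)N$ to be projective, after which everything else drops out.

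Concretely, I would set $N_i:=N/(x_1,\ldots,x_i)N$ for $0\le i\le d$, so that $N_d=N/\qq N$. Because $(x_1,\ldots,x_r)$ is a system of parameters for $M(\Indu{P}{K}{\Eins})$, Lemma~\ref{fgqq} guarantees that $N_r$ — and therefore every $N_i$ with $i\ge r$ — is finitely generated over $\OO\br{P}$. The $N$-regularity of $(x_1,\ldots,x_d)$ makes $x_{i+1}$ regular on $N_i$ for every $i$. Starting from the hypothesis $\pdim_{\OO\br{P}}N_d\le d-r<\infty$ and applying Corollary~\ref{pdim} backwards through $x_d,x_{d-1},\ldots,x_{r+1}$ yields
\[
\pdim_{\OO\br{P}} N_r \;=\; \pdim_{\OO\br{P}} N_d - (d-r) \;\le\; 0,
\]
so $N_r$ is projective over $\OO\br{P}$ and equality must hold throughout, proving (i).

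For (ii), use Remark~\ref{pk} to identify projectivity over $\OO\br{P}$ with projectivity over $\OO\br{K}$, and then descend through the regular sequence $x_r,x_{r-1},\ldots,x_1$ using Corollary~\ref{proj}: at each step $N_{i-1}$ is a finitely generated $R\br{P}$-module (since $N$ itself is), $x_i$ is $N_{i-1}$-regular, and $N_{i-1}/x_iN_{i-1}=N_i$ has just been shown to be projective, so $N_{i-1}$ is projective. After $r$ such lifts we arrive at $N_0=N$, establishing (ii). For (iii) and (iv), feed the $N$-regular sequence $(x_1,\ldots,x_r)$ together with its now-projective quotient $N_r$ directly into Lemma~\ref{MCM}.

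I do not expect a genuine obstacle; the proposition is really the clean packaging that this subsection was engineered to support. The only subtle point is that Corollary~\ref{pdim} carries a finite generation hypothesis over $\OO\br{P}$, which is precisely why the regular-sequence argument has to begin at $N_r$ rather than at $N$ itself — and this is exactly what the system of parameters assumption, funneled through Lemma~\ref{fgqq}, delivers.
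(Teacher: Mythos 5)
Your proof is correct and follows essentially the same route as the paper: apply Lemma \ref{fgqq} to get finite generation of $N/(x_1,\ldots,x_r)N$ over $\OO\br{P}$, iterate Corollary \ref{pdim} up the tail of the regular sequence to force $\pdim_{\OO\br{P}}N/(x_1,\ldots,x_r)N=0$, then descend with Corollary \ref{proj} and conclude (iii)--(iv) via Lemma \ref{MCM}. Your explicit bookkeeping with the $N_i$ and the remark about where the finite-generation hypothesis enters just makes visible what the paper's proof does in one line.
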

\begin{proof} Let $\qq'$ be the ideal of $R$ generated by $x_1, \ldots, x_r$. Lemma \ref{fgqq} implies that 
$N/\qq' N$ is a finitely generated $\OO\br{P}$-module. Corollary \ref{pdim} implies that 
$$\pdim_{\OO\br{P}}N/\qq' N= \pdim_{\OO\br{P}} N/\qq N -(d-r)\le 0.$$ 
Since $N/\qq' N$ is non-zero, the inequality must be an equality. Thus $\pdim N/\qq N=d-r$ and $N/\qq' N$ is a finitely generated projective 
$\OO\br{P}$-module. Since $P$ is a pro-$p$ Sylow of $K$, $N/\qq'N$ is also a projective $\OO\br{K}$-module.
Lemma \ref{MCM} implies that $M(\Theta)$ is a Cohen-Macaulay module of dimension $r+1$.  
Corollary \ref{proj} implies that $N$ is projective in $\Mod^{\pro}_K(\OO)$.
\end{proof} 

\begin{cor}\label{mainproj} Suppose that $R$ is Cohen-Macaulay and $N$ is $R$-flat. If 
\begin{equation}\label{ineq_dim}
\pdim_{\OO\br{P}} k\wtimes_R N + \dim_R M(\Indu{P}{K}{\Eins})\le \dim R,
\end{equation}
then \eqref{ineq_dim} is an equality, $N$ is  projective in $\Mod^{\pro}_K(\OO)$ and $M(\Theta)$ is a Cohen-Macaulay 
module of dimension $1+  \dim_R M(\Indu{P}{K}{\Eins})$. If $\lambda\in \Mod^{\sm}_K(\OO)$ is of finite length and $M(\lambda)\neq 0$
then $M(\lambda)$ is a Cohen-Macaulay module of dimension $\dim_R M(\Indu{P}{K}{\Eins})$.
\end{cor}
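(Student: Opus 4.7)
The plan is to apply Proposition \ref{regproj}. Set $d := \dim R$ and $r := \dim_R M(\Indu{P}{K}{\Eins})$, so that the hypothesis \eqref{ineq_dim} reads $\pdim_{\OO\br{P}} k\wtimes_R N \le d-r$. First I would construct a sequence $x_1, \ldots, x_d \in \mm$ that is $R$-regular (hence $N$-regular, by flatness of $N$) and whose initial segment $x_1, \ldots, x_r$ is a system of parameters for $M(\Indu{P}{K}{\Eins})$. Since $R$ is Cohen-Macaulay, all its associated primes are minimal of dimension $d$, so by prime avoidance we may choose $x_1 \in \mm$ outside the (finitely many) minimal primes of $R$ and the associated primes of $M(\Indu{P}{K}{\Eins})$ of maximal dimension $r$; inductively one obtains $x_1, \ldots, x_r$. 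Since $R/(x_1, \ldots, x_r)$ is then Cohen-Macaulay of dimension $d-r$, lifting any system of parameters of this quotient produces $x_{r+1}, \ldots, x_d$ extending the $x_i$ to a full system of parameters of $R$, which is automatically an $R$-regular sequence.

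The key step is to verify the remaining hypothesis $\pdim_{\OO\br{P}} N/\qq N \le d-r$ of Proposition \ref{regproj}, where $\qq = (x_1, \ldots, x_d)$. Because $x_1, \ldots, x_d$ is a system of parameters of the Cohen-Macaulay ring $R$, the quotient $R/\qq$ is Artinian local with residue field $k$ and admits a composition series $0 = J_n \subset J_{n-1} \subset \cdots \subset J_0 = R/\qq$ whose successive quotients are each isomorphic to $k$. Tensoring this filtration with $N$ over $R$ and invoking $R$-flatness of $N$, we obtain a filtration of $N/\qq N$ by $\OO\br{P}$-submodules whose graded pieces are each isomorphic to $k\wtimes_R N$. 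Combining the short exact sequence inequality $\pdim B \le \max(\pdim A, \pdim C)$ with induction on the length $n$, we conclude
$$\pdim_{\OO\br{P}} N/\qq N \le \pdim_{\OO\br{P}} k\wtimes_R N \le d-r.$$

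Now Proposition \ref{regproj} applies to the sequence $x_1, \ldots, x_d$ and delivers at once parts (ii), (iii) and (iv) of the corollary: $N$ is projective in $\Mod^{\pro}_K(\OO)$, $M(\Theta)$ is Cohen-Macaulay of dimension $r+1$, and each non-zero $M(\lambda)$ for $\lambda$ of finite length is Cohen-Macaulay of dimension $r$. For the remaining claim that \eqref{ineq_dim} is in fact an equality, part (i) of Proposition \ref{regproj} gives $\pdim_{\OO\br{P}} N/\qq N = d-r$, and when combined with the chain of inequalities displayed above this forces $\pdim_{\OO\br{P}} k\wtimes_R N = d-r$. The most delicate point I expect is the filtration argument in the second paragraph — one must verify that $R$-flatness of $N$ really does translate the Jordan-H\"older filtration of $R/\qq$ into a filtration of $N/\qq N$ whose graded pieces are exactly copies of $k\wtimes_R N$ — but this follows directly from applying $N\otimes_R (-)$ to the short exact sequences $0 \to J_{i+1} \to J_i \to k \to 0$.
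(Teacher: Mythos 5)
Your proof is correct and follows essentially the same route as the paper's: choose a system of parameters of $M(\Indu{P}{K}{\Eins})$ that extends to one of $R$, use $R$-flatness of $N$ to filter $N/\qq N$ by copies of $k\wtimes_R N$ so that $\pdim_{\OO\br{P}} N/\qq N\le \pdim_{\OO\br{P}} k\wtimes_R N$, and then invoke Proposition \ref{regproj}. Your prime-avoidance construction of the sequence is in fact slightly more careful than the paper's, which simply asserts that an arbitrary system of parameters of $M(\Indu{P}{K}{\Eins})$ extends to a system of parameters of $R$ --- a statement that can fail for a badly chosen system of parameters even over a Cohen--Macaulay ring --- whereas the existence of a sequence that works simultaneously for $M(\Indu{P}{K}{\Eins})$ and for $R$, which is all that is needed, is exactly what you establish.
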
 
\begin{proof} Let $r=\dim_R M(\Indu{P}{K}{\Eins})$ and let $x_1, \ldots, x_r$ be a system of parameters 
for  $M(\Indu{P}{K}{\Eins})$. We may extend it to a system of parameters of $R$, $\underline{x}:=(x_1,\ldots, x_d)$. 
Since $R$ is Cohen-Macaulay any system of parameters is $R$-regular. Since $N$ is $R$-flat, 
the sequence $\underline{x}$ is also $N$-regular. Let $\qq$ be the ideal of $R$ generated by 
$\underline{x}$. Since $\qq$ is generated by a system of parameters, $R/\qq$ is an $R$-module of finite length. 
Since $R$ is local the irreducible subquotients are isomorphic to $k$. Since $N$ is $R$-flat, 
we deduce that $N/\qq N$ has a filtration with graded pieces isomorphic to $k\wtimes_R N$. This implies that 
$\pdim_{\OO\br{P}} N/\qq N= \pdim_{\OO\br{P}} k\wtimes_R N$. The assertion now follows from Proposition \ref{regproj}.
\end{proof}

\subsection{Summary}\label{summary} We sum up the main results of this section. Let $R$ be a complete local noetherian commutative $\OO$-algebra
with residue field $k$. Let $K$ be a compact $p$-adic analytic group and let $P$ be its pro-$p$ Sylow subgroup. Let $N$ be a finitely generated 
$R\br{K}$-module, let $V$ be a continuous representation of $K$ on a finite dimensional $L$-vector space, and let $\Theta$ be a $K$-invariant 
$\OO$-lattice in $V$. In Definition \ref{M_Th} we have defined a compact $R$-module $M(\Theta)$, which is $\OO$-torsion free.
 Since $N$ is finitely generated over $R\br{K}$, Proposition \ref{fgM} implies that $M(\Theta)$ is a finitely generated $R$-module. Let $d$ be the Krull dimension of $M(\Theta)$. 
The $d$-dimensional cycle of $M(\Theta)$ is independent of the choice of a $K$-invariant lattice in $V$ by Lemma \ref{cycindep}.
Finally we recall that to each $R[1/p]$-module of finite length $\md$ we have associated an admissible unitary $L$-Banach space representation
of $K$, $\Pi(\md)$, see \S \ref{Banachspace}, \eqref{def_Pi_md}.

\begin{thm}\label{A} Let $\mathfrak a$ be the $R$-annihilator of $M(\Theta)$. If the following hold:
\begin{itemize} 
\item[(a)] $N$ is projective in $\Mod^{\pro}_K(\OO)$; 
\item[(b)] $R/\mathfrak a$ is equidimensional and all the associated primes are minimal;
\item[(c)] there exists a dense subset $\Sigma$ of $\supp M(\Theta)$, contained in $\MaxSpec R[1/p]$, such that for all 
$\nn\in \Sigma$ the following hold:
\begin{itemize}
\item[(i)] $\dim_{\kappa(\nn)} \Hom_K(V, \Pi(\kappa(\nn)))=1$;
\item[(ii)] $\dim_{\kappa(\nn)} \Hom_K(V, \Pi(R_{\nn}/\nn^2))\le d$;
\end{itemize}
\end{itemize}
then $R/\mathfrak a $ is reduced, of dimension $d$ and we have an equality of $(d-1)$-dimensional cycles
$$ z_{d-1}(R/(\varpi, \mathfrak a))= \sum_{\sigma} m_{\sigma} z_{d-1}(M(\sigma)),$$
where the sum is taken over the set of isomorphism classes of smooth irreducible $k$-representations of $K$, 
$m_{\sigma}$ is the multiplicity with which $\sigma$ occurs as a subquotient of $\Theta/(\varpi)$ and $M(\sigma)$ 
is an $R$-module defined in Definition \ref{M_sigma}.  
\end{thm}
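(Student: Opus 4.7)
The plan is to combine three results from this section in sequence: Proposition \ref{RHS4} for the reducedness of $R/\mathfrak a$, Proposition \ref{get_to_R} for the top-dimensional cycle equality, and Proposition \ref{cycles_modp} for the cycle formula modulo $\varpi$. The main obstacle will be to pass from the top-dimensional cycle equality $z_d(R/\mathfrak a) = z_d(M(\Theta))$ to the claimed equality of $(d-1)$-dimensional cycles after reducing modulo $\varpi$.

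First I would feed hypotheses (b) and (c) directly into Proposition \ref{RHS4} to conclude that $R/\mathfrak a$ is reduced; since $M(\Theta)$ is a faithful finitely generated $R/\mathfrak a$-module of Krull dimension $d$, we automatically obtain $\dim R/\mathfrak a = d$. Condition (c)(i) combined with Proposition \ref{get_to_R} then gives $z_d(R/\mathfrak a) = z_d(M(\Theta))$; the reducedness of $R/\mathfrak a$ makes $(R/\mathfrak a)_{\mathfrak p}$ a field at each minimal prime $\mathfrak p$, so this cycle equality amounts to $\ell_{(R/\mathfrak a)_{\mathfrak p}}(M(\Theta)_{\mathfrak p}) = 1$ for every such $\mathfrak p$. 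The next step is to check that $\varpi$ is a non-zero-divisor on $R/\mathfrak a$: hypothesis (b) makes the zero-divisors of $R/\mathfrak a$ the union of its minimal primes, and if $\varpi$ lay in some minimal prime $\mathfrak p$ then $\varpi$ would annihilate $M(\Theta)_\mathfrak p \cong (R/\mathfrak a)_\mathfrak p$, contradicting the fact that $\varpi$ is regular on the $\OO$-torsion free module $M(\Theta)$. Condition (a) then permits Proposition \ref{cycles_modp} to be invoked, giving $z_{d-1}(M(\Theta)/(\varpi)) = \sum_\sigma m_\sigma z_{d-1}(M(\sigma))$.

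It remains to identify $z_{d-1}(R/(\varpi, \mathfrak a))$ with $z_{d-1}(M(\Theta)/(\varpi))$, and this is where the real work lies. I would fix a prime $\qq$ of $R$ containing $(\varpi, \mathfrak a)$ with $\dim R/\qq = d-1$; by catenarity of complete local equidimensional rings (cf.\ the remark following Proposition \ref{HS4}), the ring $(R/\mathfrak a)_\qq$ is a one-dimensional reduced Noetherian local ring with no embedded primes and $\varpi$ is a parameter on it, hence regular on both $(R/\mathfrak a)_\qq$ and $M(\Theta)_\qq$. The Euler characteristic $\chi(\varpi,-) := \ell(-/\varpi) - \ell((-)[\varpi])$ therefore collapses to $\ell(-/\varpi)$ on either module. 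A standard devissage along primes of $(R/\mathfrak a)_\qq$, together with additivity of $\chi$ in short exact sequences and the vanishing of $\chi$ on modules of finite length, expresses $\chi(\varpi, M(\Theta)_\qq)$ and $\chi(\varpi, (R/\mathfrak a)_\qq)$ as identical sums $\sum_{\mathfrak p} \ell_{(R/\mathfrak a)_\mathfrak p}(M(\Theta)_\mathfrak p) \, \chi(\varpi, (R/\mathfrak a)_\qq/\mathfrak p)$, using the equality of generic multiplicities established above. Hence $\ell(M(\Theta)_\qq / \varpi) = \ell((R/\mathfrak a)_\qq / \varpi)$, which is the equality of the coefficient of $[\qq]$ in the two cycles; summing over $\qq$ finishes the proof.
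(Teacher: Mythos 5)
Your proof is correct and follows the same route as the paper's: Proposition \ref{RHS4} (via (b) and (c)) for reducedness, Proposition \ref{get_to_R} for the equality of $d$-dimensional cycles, regularity of $\varpi$ on $R/\mathfrak a$ deduced from its regularity on the faithful module $M(\Theta)$, and Proposition \ref{cycles_modp} to conclude. The only difference is that where the paper simply cites \cite[2.2.10]{emertongee} for the passage from $z_d(R/\mathfrak a)=z_d(M(\Theta))$ to $z_{d-1}(R/(\varpi,\mathfrak a))=z_{d-1}(M(\Theta)/(\varpi))$, you supply the underlying d\'evissage/Euler-characteristic argument at each $(d-1)$-dimensional prime directly, which is exactly the content of that reference.
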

\begin{proof} Let $M=M(\Theta)$. Proposition \ref{RHS4} and conditions (b) and (c) imply that $R/\mathfrak a$ is reduced. 
Since $M$ is a finitely generated $R$-module, the dimension of $R/\mathfrak a$ is equal to 
the dimension of $M$, which is $d$. Proposition \ref{get_to_R}
implies that $R/\mathfrak a$ and $M$ have the same $d$-dimensional cycles. Now $\varpi$ is $M$-regular, and hence 
$R/\mathfrak a$-regular. This implies that $M/(\varpi)$ and $R/(\varpi, \mathfrak a)$ have the same $(d-1)$-dimensional 
cycles, see for example \cite[2.2.10]{emertongee}. The last assertion of the Theorem follows from the Proposition \ref{cycles_modp}.
\end{proof}

\begin{remar} Let  $A$ be a complete local noetherian  $\OO$-algebra, which is $\OO$-torsion free. Then the minimal primes of 
$A$ and $A[1/p]$ coincide. Hence, any dense subset of $\Spec A[1/p]$ is also dense in $\Spec A$. 
Since $\Spec A[1/p]$ is Jacobson, $\MaxSpec A[1/p]$ is dense in $\Spec A[1/p]$. If $A$ is equidimensional  of 
dimension at least $2$ then $\MaxSpec A[1/p]$ minus finitely many points is dense in $\Spec A[1/p]$.
\end{remar}

The following will allow us to check the conditions (a) and (b) of Theorem \ref{A}.

\begin{thm}\label{B}  Suppose 
that $R$ is Cohen-Macaulay and $N$ is $R$-flat. If 
\begin{equation}\label{ineq_dim2}
\pdim_{\OO\br{P}} k\wtimes_R N + \max_{\sigma}\{\dim_R M(\sigma)\}\le \dim R,
\end{equation}
where the maximum is taken over all the irreducible smooth $k$-representations of $K$, and $M(\sigma)$ is an $R$-module defined in Definition 
\ref{M_sigma}, then the following hold:
\begin{itemize}
\item[(o)] \eqref{ineq_dim2} is an equality;
\item[(i)] $N$ is  projective in $\Mod^{\pro}_K(\OO)$;
\item[(ii)] $M(\Theta)$ is a Cohen-Macaulay module;
\item[(iii)] $R/\ann_R M(\Theta)$ is equidimensional, and all the associated prime ideals are  minimal.
\end{itemize} 
\end{thm}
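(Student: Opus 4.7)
The plan is to assemble (o)--(iii) by chaining together the three preparatory results already proved: Lemma \ref{dimmax}, Corollary \ref{mainproj}, and Proposition \ref{HS3}. None of these steps requires a genuinely new argument; the content of the theorem is that the hypotheses have been set up so that these tools apply cleanly in sequence.

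First, I would reduce the hypothesis \eqref{ineq_dim2} to the hypothesis of Corollary \ref{mainproj}. By Lemma \ref{dimmax} one has
\[
\max_{\sigma}\{\dim_R M(\sigma)\} \;=\; \dim_R M(\Indu{P}{K}{\Eins}),
\]
so \eqref{ineq_dim2} is literally the inequality \eqref{ineq_dim} appearing in Corollary \ref{mainproj}. That corollary then immediately delivers conclusion (o) (equality in \eqref{ineq_dim2}), conclusion (i) (the projectivity of $N$ in $\Mod^{\pro}_K(\OO)$), and the statement that $M(\Theta)$ is a Cohen--Macaulay $R$-module of dimension $1+\dim_R M(\Indu{P}{K}{\Eins})$, which is conclusion (ii).

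For conclusion (iii), I would apply Proposition \ref{HS3} to the local noetherian ring $\mR:=R/\ann_R M(\Theta)$ and the module $\mM:=M(\Theta)$. The module $\mM$ is finitely generated over $R$ by Proposition \ref{fgM} (so also over $\mR$), it is faithful over $\mR$ by construction, and it is Cohen--Macaulay by step one. Proposition \ref{HS3} therefore concludes that $\mR$ is equidimensional of dimension equal to $\dim \mM$ and that every associated prime of $\mR$ is minimal, which is exactly (iii).

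There is really no hard step here: once Corollary \ref{mainproj} and Proposition \ref{HS3} are in hand, the theorem is a bookkeeping statement, and the only thing one needs to notice is the trivial translation via Lemma \ref{dimmax} from the maximum over all irreducible $\sigma$ to the single module $M(\Indu{P}{K}{\Eins})$. The real work has already been absorbed into the Cohen--Macaulay criterion Proposition \ref{regproj} (via Corollary \ref{pdim} and the projectivity criterion Corollary \ref{proj}), which is what makes the hypothesis on projective dimension over $\OO\br{P}$ convert into a statement about regular sequences lifting to $N$.
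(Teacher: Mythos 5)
Your proof is correct and follows exactly the route the paper takes: the paper deduces (o), (i), (ii) from Lemma \ref{dimmax} together with Corollary \ref{mainproj}, and (iii) from Proposition \ref{HS3} applied to $R/\ann_R M(\Theta)$ acting faithfully on the Cohen--Macaulay module $M(\Theta)$. There is nothing to add.
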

\begin{proof} Parts (o), (i) and (ii) follow from Lemma \ref{dimmax} and Corollary \ref{mainproj}. Part (iii) follows from 
Proposition \ref{HS3}.
\end{proof}

\section{Preliminaries on the representations of $\GL_2(\Qp)$}
From now on we assume\footnote{unless it is stated otherwise.}   that  $p\ge 5$ and let $G:=\GL_2(\Qp)$, $K:=\GL_2(\Zp)$ and let $K_1\subset I_1\subset I$ be the
subgroups of $K$, consisting of matrices congruent  modulo $p$ to identity, unipotent upper triangular matrices and upper triangular matrices 
respectively. We let $\mathfrak K$ be the $G$-normalizer of $I$. Let $P$ be the subgroup of upper triangular matrices in $G$, $U$ the subgroup of unipotent 
upper triangular matrices in $G$, and $T$ the subgroup of diagonal matrices in $G$. If $\chi_1$ and $\chi_2$ are characters of $\Qp^{\times}$, then we 
write $\chi_1\otimes \chi_2$ for a character of $P$, which maps $\bigl (\begin{smallmatrix} a & b \\ 0 & d\end{smallmatrix} \bigr)$ to $\chi_1(a)\chi_2(d)$. 

Let $Z$ be the centre of $G$, so that $Z\cong \Qp^{\times}$. We fix a character $\zeta: Z\rightarrow \OO^{\times}$. We let $\Mod^{\sm}_{G, \zeta}(\OO)$ be the 
full subcategory of $\Mod^{\sm}_G(\OO)$ consisting of representations with central character $\zeta$, by which we mean that $Z$ acts by $\zeta$.  
We say that $\tau\in \Mod^{\sm}_{G, \zeta}(\OO)$
is \textit{locally finite} if for all $v\in \tau$ the $\OO[G]$-submodule generated by $v$ is of finite length. We let 
 $\Mod^{\lfin}_{G, \zeta}(\OO)$ be the full subcategory of $\Mod^{\sm}_{G, \zeta}(\OO)$ consisting of  all locally finite representations.
 We let $\dualcat(\OO)$ be the full subcategory of $\Mod^{\pro}_G(\OO)$ anti-equivalent to $\Mod^{\lfin}_{G, \zeta}(\OO)$ by Pontryagin duality.
 We will denote by $\Mod^{\lfin}_{G, \zeta}(k)$ and $\dualcat(k)$ the full subcategories of $\Mod^{\lfin}_{G, \zeta}(\OO)$ and $\dualcat(\OO)$ respectively, consisting of objects 
 which are killed by $\varpi$, so that  we work with $k$-vector spaces instead of $\OO$-torsion modules. Every object in $\Mod^{\sm}_{G, \zeta}(\OO)$ and  $\Mod^{\sm}_{G, \zeta}(k)$
 has an injective envelope, \cite[Cor.2.3]{cmf}. Dually, every object in $\dualcat(\OO)$ and $\dualcat(k)$ has a projective envelope.
 
  Let $\Ban^{\adm}_{G, \zeta}(L)$ be the category of admissible unitary 
$L$-Banach space representations of $G$ with central character $\zeta$. This category is abelian, see \cite{iw}.
A representation $\Pi$ in $\Ban^{\adm}_{G, \zeta}(L)$ is absolutely irreducible, if $\Pi_{L'}$ is irreducible in 
$\Ban^{\adm}_{G, \zeta}(L')$ for all finite extensions $L'$ of $L$. If $\Pi\in \Ban^{\adm}_{G, \zeta}(L)$ is irreducible then there is a finite 
extension $L'$ of $L$ such that $\Pi_{L'}$ is isomorphic to a finite direct sum of absolutely irreducible representations. If 
$\Pi\in \Ban^{\adm}_{G, \zeta}(L)$ is absolutely irreducible then $\End_G^{\cont}(\Pi)=L$. For $\GL_2(\Qp)$ 
these assertion can be deduced from Proposition 1.8, Corollary 1.9 and Theorems 1.10, 1.11 of \cite{cmf}. Dospinescu and Schraen have 
proved the assertions for any $p$-adic Lie group in \cite{DS}.

An absolutely irreducible Banach space representation 
is called non-ordinary if it is not a subquotient of a parabolic induction of a unitary character. If $\Theta$ is an open 
bounded $G$-invariant lattice in $\Pi\in \Ban^{\adm}_{G, \zeta}(L)$ then $\Theta^d:=\Hom_{\OO}(\Theta, \OO)$ equipped with the
weak topology is an object of $\dualcat(\OO)$, see \cite[Lem.4.4, 4.6]{cmf}.

Let $W_{\Qp}$ be the Weil group of $\Qp$ and $W^{\mathrm{ab}}_{\Qp}$ be its maximal abelian quotient. We normalize the isomorphism given by the local class field theory  $\Qp^{\times}\overset{\cong}{\rightarrow} W_{\Qp}^{\mathrm{ab}}$ so that uniformizers correspond to geometric Frobenii. This allows us to consider characters of $G_{\Qp}$ as characters of $\Qp^{\times}$, 
and unitary $p$-adic characters of $\Qp^{\times}$, or characters of $\Qp^{\times}$  which have finite image as  characters of $G_{\Qp}$.
 With this identification the $p$-adic cyclotomic character is identified with the character $\varepsilon: \Qp^{\times}\rightarrow L^{\times}$, $x\mapsto x|x|$, and we denote its reduction modulo 
 $\varpi$ by $\omega$.
 
 In \cite{colmez}, Colmez has defined an exact and covariant functor $\VV$ from the category of smooth, finite length representations of $G$ on $\OO$-torsion modules with 
 a central character to the category of continuous finite length representations of $G_{\Qp}$ on $\OO$-torsion modules. This functor enables us to make the connection 
 between the $\GL_2(\Qp)$ and $G_{\Qp}$ worlds. We modify Colmez's functor to obtain an exact covariant functor $\cV: \dualcat(\OO)\rightarrow \Mod^{\pro}_{G_{\Qp}}(\OO)$ as follows. 
 Let  $M$ be in $\dualcat(\OO)$, if it is of finite length then $\cV(M):=\VV(M^{\vee})^{\vee}(\varepsilon \zeta)$, 
where $\vee$ denotes the Pontryagin dual.  In general, we may write 
$M\cong \varprojlim M_i$, where the limit is taken over all quotients of finite length in $\dualcat(\OO)$ and  we define
$\cV(M):=\varprojlim \cV(M_i)$. Let  $\pi\in \Mod^{\lfin}_{G, \zeta}(k)$ be absolutely irreducible, then $\pi^{\vee}$ is an object of $\dualcat(\OO)$, and 
if $\pi$ is supersingular in the sense of \cite{bl}, then $\cV(\pi^{\vee})\cong \VV(\pi)$ is an absolutely irreducible continuous representations of $G_{\Qp}$ associated to $\pi$ by Breuil  in \cite{breuil1}. 
If $\pi\cong\Indu{P}{G}{\chi_1\otimes\chi_2 \omega^{-1}}$ then $\cV(\pi^{\vee})\cong \chi_1$. If $\pi\cong \chi\circ \det$ then $\cV(\pi^{\vee})=0$ and if $\pi\cong \Sp\otimes \chi\circ\det$, where 
$\Sp$ is the Steinberg representation, then $\cV(\pi^{\vee})\cong \chi$.  If $\Pi\in \Ban^{\adm}_{G, \zeta}(L)$ we let $\cV(\Pi):=\cV(\Theta^d)\otimes_{\OO} L$, where $\Theta$ is any open bounded
$G$-invariant lattice in $\Pi$. So that $\cV$ is exact and contravariant on $ \Ban^{\adm}_{G, \zeta}(L)$.

\section{The setup}\label{points}

Let $\rho:G_{\Qp}\rightarrow \GL_2(k)$ be  a continuous representation of the absolute Galois group of $\Qp$, such that $\End_{G_{\Qp}}(\rho)=k$. 
Let $\psi: G_{\Qp}\rightarrow \OO^{\times}$ be a continuous character, such that $\psi=\zeta \varepsilon$. We assume that $\zeta$ is such that 
$\psi \equiv \det \rho \pmod{\varpi}$. Let $R^{\psi}_{\rho}$ be the 
universal deformation ring, solving the deformation problem of $\rho$ with a fixed determinant equal to $\psi$, and let $\rho^{\un}$ be the universal deformation
of $\rho$ with determinant $\psi$.  For the rest of the section we assume that we are given $N$ in $\dualcat(\OO)$ and a continuous homomorphism of $\OO$-algebras
$R^{\psi}_{\rho}\rightarrow \End_{\dualcat(\OO)}(N)$ such that the following are satisfied: 
\begin{itemize}
\item[(N0)] $k\wtimes_{R^{\psi}_{\rho}} N$ is of finite length in $\dualcat(\OO)$, and is finitely generated  over $\OO\br{K}$;
\item[(N1)] $\Hom_{\SL_2(\Qp)}(\Eins, N^{\vee})=0$;
\item[(N2)] $\cV(N)$ and $\rho^{\un}$ are isomorphic as $R^{\psi}_{\rho}\br{G_{\Qp}}$-modules;
\end{itemize}

\begin{remar} We will verify the existence of such $N$ in \S \ref{generic_case} and \S\ref{non_generic_case}.
\end{remar} 

\begin{remar} If $\md$ is a compact   $R^{\psi}_{\rho}$-module, then \cite[Lem. 5.49]{cmf} implies that 
\begin{equation}\label{com_tensor}                               
\cV(\md\wtimes_{R^{\psi}_{\rho}} N)\cong \md \wtimes_{R^{\psi}_{\rho}} \cV(N).
\end{equation}
If $\md$ is finitely generated over $R^{\psi}_{\rho}$ then the completed tensor product coincides with the usual one. One can show using $k\wtimes_{R^{\psi}_{\rho}}\cV(N)\cong \cV(k\wtimes_{R^{\psi}_{\rho}} N)$ that (N1) and (N2) imply (N0),  but we add this condition for simplicity.
\end{remar}

For each finite length $R^{\psi}_{\rho}[1/p]$-module $\md$, we let $\Pi(\md)$ be the admissible unitary $L$-Banach space representation of $G:=\GL_2(\Qp)$ defined by 
\eqref{def_Pi_md}.

\begin{lem}\label{vpi} Let $\md$ be an $R^{\psi}_{\rho}[1/p]$-module of finite length, then $$\cV(\Pi(\md))\cong \md\otimes_{R^{\psi}_{\rho}} \cV(N).$$
\end{lem}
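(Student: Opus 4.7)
The plan is to unwind both sides using the definition of $\cV$ on Banach space representations in terms of a $G$-invariant lattice, combined with the compatibility \eqref{com_tensor} of $\cV$ with completed tensor products.

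First I would identify a natural open bounded $G$-invariant lattice in $\Pi(\md)$. Setting $X := (\md^0 \wtimes_{R^{\psi}_{\rho}} N)_{\mathrm{tf}}$, we have $\Pi(\md) = \Hom^{\cont}_{\OO}(X, L)$ by definition \eqref{def_Pi_md}, and the unit ball for the supremum norm is $\Theta := \Hom^{\cont}_{\OO}(X, \OO)$. This lattice is $G$-invariant, because the supremum norm is. Since $X$ is $\OO$-torsion free, $p$-adically complete and separated, Schikhof duality gives $\Theta^d \cong X^{dd} \cong X$, so by the definition of $\cV$ on admissible unitary Banach space representations,
$$\cV(\Pi(\md)) \cong \cV(\Theta^d)_L \cong \cV(X)_L.$$

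Next I would compare $X$ with $\md^0 \wtimes_{R^{\psi}_{\rho}} N$. Because $\md^0$ is a finitely generated $R^{\psi}_{\rho}$-module, it is a quotient of $(R^{\psi}_{\rho})^{\oplus n}$, so $\md^0 \wtimes_{R^{\psi}_{\rho}} N$ is a quotient of $N^{\oplus n}$, hence a finitely generated $R^{\psi}_{\rho}\br{K}$-module. Its $\OO$-torsion submodule $T$ is finitely generated over the noetherian ring $R^{\psi}_{\rho}\br{K}$, and is therefore annihilated by a single power $\varpi^m$. Applying the exact covariant functor $\cV$ to the short exact sequence
$$0 \to T \to \md^0 \wtimes_{R^{\psi}_{\rho}} N \to X \to 0$$
gives a short exact sequence in which $\cV(T)$ is killed by $\varpi^m$, and thus vanishes upon inverting $p$.

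Finally, \eqref{com_tensor} yields $\cV(\md^0 \wtimes_{R^{\psi}_{\rho}} N) \cong \md^0 \wtimes_{R^{\psi}_{\rho}} \cV(N)$, which coincides with $\md^0 \otimes_{R^{\psi}_{\rho}} \cV(N)$ since $\md^0$ is finitely generated over $R^{\psi}_{\rho}$. Inverting $p$ and chaining these identifications gives
$$\cV(\Pi(\md)) \cong \cV(X)_L \cong \cV(\md^0 \wtimes_{R^{\psi}_{\rho}} N)_L \cong \bigl(\md^0 \otimes_{R^{\psi}_{\rho}} \cV(N)\bigr)_L \cong \md \otimes_{R^{\psi}_{\rho}} \cV(N),$$
as required. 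I do not anticipate a serious obstacle: the mild points to verify are that the unit ball really is $G$-invariant (immediate from $G$-invariance of the supremum norm) and that the $\OO$-torsion in $\md^0 \wtimes_{R^{\psi}_{\rho}} N$ is bounded (immediate from noetherianity of $R^{\psi}_{\rho}\br{K}$); everything else is formal from the framework already developed in the excerpt and the input \eqref{com_tensor} from \cite{cmf}.
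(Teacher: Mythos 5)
Your proof is correct and follows essentially the same route as the paper's: identify the unit ball of $\Pi(\md)$ with the Schikhof dual of the maximal $\OO$-torsion free quotient of $\md^0\wtimes_{R^{\psi}_{\rho}}N$, apply the exact functor $\cV$ to the torsion/torsion-free short exact sequence, and invoke \eqref{com_tensor}. The only (harmless) difference is how the torsion term is discarded: you note that $\cV$ of the torsion submodule is killed by a power of $\varpi$ and hence vanishes upon inverting $p$, whereas the paper observes that $\md^0\otimes_{R^{\psi}_{\rho}}\cV(N)\cong \md^0\oplus\md^0$ is already $\OO$-torsion free, forcing that term to vanish outright.
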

\begin{proof} Let $\md^0$ be a finitely generated $R^{\psi}_{\rho}$-submodule of $\md$, which is an $\OO$-lattice in $\md$.  It follows from 
(N0) and \cite[Lem. 4.25]{cmf} that the maximal torsion free quotient $(\md^0\wtimes_{R^{\psi}_{\rho}} N)_{\mathrm{tf}}$ of $\md^0\wtimes_{R^{\psi}_{\rho}} N$ is an object of $\dualcat(\OO)$. 
We have
$$\Pi(\md)=\Hom_{\OO}^{\cont}(\md^0\wtimes_{R^{\psi}_{\rho}} N, L) \cong \Hom_{\OO}^{\cont}((\md^0\wtimes_{R^{\psi}_{\rho}} N)_{\mathrm{tf}}, L)$$
with the  supremum norm. 
Then $\Xi:= \Hom_{\OO}^{\cont}((\md^0\wtimes_{R^{\psi}_{\rho} }N)_{\mathrm{tf}}, \OO)$ is a unit ball in $\Pi(\md)$. It follows from \cite{iw} that 
$\Xi^d\cong (\md^0\wtimes_{R^{\psi}_{\rho}} N)_{\mathrm{tf}}$. We thus have an exact sequence: 
\begin{equation}\label{starbucks}
0\rightarrow  (\md^0\wtimes_{R^{\psi}_{\rho} }N)_{\mathrm{tors}} \rightarrow \md^0\wtimes_{R^{\psi}_{\rho}} N \rightarrow \Xi^d \rightarrow 0
\end{equation}
Applying $\cV$  to \eqref{starbucks} and using \eqref{com_tensor} we get an exact sequence  of $G_{\Qp}$-representations:
\begin{equation}\label{starbucks1}
0\rightarrow  \cV((\md^0\wtimes_{R^{\psi}_{\rho} }N)_{\mathrm{tors}}) \rightarrow \md^0\wtimes_{R^{\psi}_{\rho}} \cV(N) \rightarrow \cV(\Xi^d) \rightarrow 0
\end{equation}
Since $\cV(N)$ is isomorphic to $\rho^{\un}$ as a $R^{\psi}_{\rho}\br{G_{\Qp}}$-module, it is a free $R^{\psi}_{\rho}$-module of rank $2$. In particular, 
$\md^0\wtimes_{R^{\psi}_{\rho}} \cV(N)$ is isomorphic to $\md^0\oplus \md^0$ as an $\OO$-module, which implies that it is $\OO$-torsion free. Since 
$ \cV((\md^0\wtimes_{R^{\psi}_{\rho} }N)_{\mathrm{tors}})$ is an $\OO$-torsion module, we deduce that 
$ \cV((\md^0\wtimes_{R^{\psi}_{\rho} }N)_{\mathrm{tors}})=0$, and hence $\md^0\wtimes_{R^{\psi}_{\rho}} \cV(N) \cong \cV(\Xi^d)$, and by inverting $p$ we obtain
$\md\otimes_{R^{\psi}_{\rho}} \cV(N)\cong \cV(\Pi)$. 
\end{proof}

\begin{lem}\label{derive} $\Pi(\kappa(\nn))^{\SL_2(\Qp)}=0$,  $\forall \nn\in \MaxSpec R^{\psi}_{\rho}[1/p]$.
\end{lem}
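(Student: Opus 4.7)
The plan is to deduce the vanishing of $\Pi(\kappa(\nn))^{\SL_2(\Qp)}$ from hypothesis (N1), by reducing modulo $\varpi$ and applying a $\varpi$-adic Nakayama argument.

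First, fix a finitely generated $R^{\psi}_{\rho}$-submodule $\md^0\subset \kappa(\nn)$ which is an $\OO$-lattice, so that $\Pi:=\Pi(\kappa(\nn))\cong \Hom^{\cont}_{\OO}((\md^0\wtimes_{R^{\psi}_{\rho}} N)_{\mathrm{tf}}, L)$, with open unit ball $\Xi:=\Hom^{\cont}_{\OO}((\md^0\wtimes_{R^{\psi}_{\rho}} N)_{\mathrm{tf}}, \OO)$. Since any $\SL_2(\Qp)$-fixed vector in $\Pi$ may be rescaled to lie in $\Xi$, one has $\Pi^{\SL_2(\Qp)}=\Xi^{\SL_2(\Qp)}[1/p]$, so it is enough to prove $\Xi^{\SL_2(\Qp)}=0$. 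Now $\Xi^{\SL_2(\Qp)}$ is a closed $\OO$-submodule of the $\varpi$-adically complete and separated module $\Xi$, so by separatedness it suffices to show $\Xi^{\SL_2(\Qp)}/\varpi \Xi^{\SL_2(\Qp)}=0$. Applying $(-)^{\SL_2(\Qp)}$ to the short exact sequence $0\to\Xi\xrightarrow{\varpi}\Xi\to\Xi/\varpi\Xi\to 0$ and using left exactness of invariants yields an inclusion
$$\Xi^{\SL_2(\Qp)}/\varpi \Xi^{\SL_2(\Qp)}\hookrightarrow (\Xi/\varpi\Xi)^{\SL_2(\Qp)},$$
so the task reduces to showing the right hand side vanishes.

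Second, Proposition \ref{schdual} provides a $G$-equivariant topological isomorphism $\Xi/\varpi\Xi\cong ((\md^0\wtimes_{R^{\psi}_{\rho}} N)_{\mathrm{tf}}/\varpi)^{\vee}$, so the goal becomes
$$\Hom_{\SL_2(\Qp)}\bigl(\Eins,\, ((\md^0\wtimes_{R^{\psi}_{\rho}} N)_{\mathrm{tf}}/\varpi)^{\vee}\bigr)=0.$$
Since $\md^0$ is generated by some finite number $r$ of elements as an $R^{\psi}_{\rho}$-module, there is a surjection $(R^{\psi}_{\rho})^{\oplus r}\twoheadrightarrow \md^0$; composing $-\wtimes_{R^{\psi}_{\rho}}N$ (which is right exact) with the successive surjections $\md^0\wtimes N \twoheadrightarrow (\md^0\wtimes N)/\varpi \twoheadrightarrow (\md^0\wtimes N)_{\mathrm{tf}}/\varpi$ gives a $G$-equivariant surjection $N^{\oplus r}\twoheadrightarrow (\md^0\wtimes_{R^{\psi}_{\rho}} N)_{\mathrm{tf}}/\varpi$ in $\Mod^{\pro}_G(\OO)$. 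Pontryagin duality turns this into a $G$-equivariant injection
$$((\md^0\wtimes_{R^{\psi}_{\rho}} N)_{\mathrm{tf}}/\varpi)^{\vee}\hookrightarrow (N^{\vee})^{\oplus r}$$
in $\Mod^{\sm}_G(\OO)$. Taking $\SL_2(\Qp)$-invariants and invoking (N1) produces the desired vanishing.

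The only mildly delicate point is checking the $G$-equivariance of the Schikhof-duality identification $\Xi/\varpi\Xi\cong ((\md^0\wtimes N)_{\mathrm{tf}}/\varpi)^{\vee}$ and confirming that the Nakayama-type conclusion is valid for the (a priori not finitely generated) closed $\OO$-submodule $\Xi^{\SL_2(\Qp)}\subset\Xi$; both follow formally from the $\varpi$-adic completeness and separatedness of $\Xi$. Everything else is functorial manipulation and a single application of hypothesis (N1).
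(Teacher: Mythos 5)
Your proposal is correct and follows essentially the same route as the paper: a putative $\SL_2(\Qp)$-fixed vector is dualized via Schikhof duality, the resulting map is reduced modulo $\varpi$ and Pontryagin-dualized to produce a nonzero $\SL_2(\Qp)$-equivariant map $\Eins\rightarrow N^{\vee}$ (using that $\md^0\wtimes_{R^{\psi}_{\rho}}N$ is a quotient of $N^{\oplus r}$), contradicting (N1). The paper phrases this as a direct contradiction from a single nonzero fixed vector, whereas you show the full invariant space vanishes via a separatedness/Nakayama reduction, but the mathematical content is identical.
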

\begin{proof}  If $\Pi(\kappa(\nn))^{\SL_2(\Qp)}$ is non-zero, then dually we obtain a surjective, $\SL_2(\Qp)$-equivariant  homomorphism 
$\OO_{\kappa(\nn)}\wtimes_{R^{\psi}_{\rho}} N \twoheadrightarrow \OO_{\kappa(\nn)}$, where the $\SL_2(\Qp)$ acts trivially on the target. Reducing this map modulo 
$\varpi$ and taking Pontryagin duals gives us a non-zero, $\SL_2(\Qp)$-equivariant  homomorphism from $\Eins$ to $N^{\vee}$, which contradicts (N1).
\end{proof}

 The proof of the following Lemma is left as an exercise for the reader.
 
\begin{lem}\label{triv} Let $W$ be a continuous $2$-dimensional $L$-representation of $G_{\Qp}$ and let 
$M$ be a $G_{\Qp}$-stable $\OO$-lattice in $W$. Then the following hold:
\begin{itemize}
\item[(i)] if $M/\varpi M$ is an absolutely irreducible $k$-representation of $G_{\Qp}$ then 
$W$ is absolutely irreducible;
\item[(ii)] if $(M/\varpi M)^{ss}$ is a sum of distinct characters and $W$ is irreducible 
then $W$ is absolutely irreducible;
\item[(iii)] if $M/\varpi M$ is a non-split extension of distinct characters and $W$ is reducible
then $W$ is indecomposable.
\end{itemize}
\end{lem}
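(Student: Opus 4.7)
All three parts compare the $G_{\Qp}$-stable $\OO$-lattice $M$ in $W$ with its reduction modulo $\varpi$, by producing a suitable $G_{\Qp}$-stable saturated sublattice and examining its reduction. For (i), argue by contrapositive: if $W$ is not absolutely irreducible, then for some finite extension $L'/L$ there is a proper nonzero $G_{\Qp}$-stable $L'$-subspace $W' \subsetneq W \otimes_L L'$. The saturated sublattice $M' := W' \cap (M \otimes_{\OO} \OO_{L'})$ of $M \otimes_{\OO} \OO_{L'}$ is $G_{\Qp}$-stable, and $M'/\varpi M'$ injects as a nonzero proper $G_{\Qp}$-stable $k'$-subspace of $(M/\varpi M) \otimes_k k'$, contradicting the absolute irreducibility of $M/\varpi M$.

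For (ii), the initial step is structural: if $W$ is irreducible over $L$ but not absolutely irreducible, then (since $\dim_L W = 2$) the division algebra $D := \End_{L[G_{\Qp}]}(W)$ must be a quadratic field extension $L'/L$; hence $W$ is a rank-one $L'$-module with $G_{\Qp}$ acting by a character $\chi : G_{\Qp} \to (L')^\times$ satisfying $\chi \neq \chi^\sigma$, where $\sigma$ generates $\Gal(L'/L)$. Because $M$ is $G_{\Qp}$-stable, the $\OO$-subalgebra of $L'$ stabilising $M$ is a finitely generated $\OO$-submodule of $L'$ containing $\chi(G_{\Qp})$; such an order is contained in the maximal order $\OO_{L'}$, and closing under inverses $\chi(g^{-1})$ shows that $\chi(G_{\Qp}) \subseteq \OO_{L'}^\times$. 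Hence $M$ is a free $\OO_{L'}$-module of rank one. A short case analysis according to the ramification of $L'/L$ and whether the reduction $\bar\chi$ descends to $k$ then shows that $(M/\varpi M)^{ss}$ is either an irreducible 2-dimensional $k$-representation (the restriction of scalars of $\bar\chi$ from $k'$ to $k$, in the unramified case with $\bar\chi$ not $k$-valued) or of the form $\bar\chi \oplus \bar\chi$ (in every remaining case). Neither is a sum of two distinct $k$-characters, contradicting the hypothesis. The main subtlety here is the order argument establishing $\chi(G_{\Qp}) \subseteq \OO_{L'}^\times$; without it the structure of $M/\varpi M$ cannot be pinned down.

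For (iii), argue by contradiction: suppose $W = W_1 \oplus W_2$ decomposes, with $G_{\Qp}$ acting on $W_i$ through characters $\chi^1, \chi^2$ lifting $\chi_1, \chi_2$. Set $M_i := M \cap W_i$ and $\hat M_i := \pi_i(M)$ under the projections $\pi_i : W \to W_i$; a standard elementary divisors argument gives a common integer $n \geq 0$ such that $\hat M_i = \varpi^{-n} M_i$ for $i = 1, 2$. Choosing generators $n_i$ of $\hat M_i$ and an element $v = a n_1 + n_2 \in M$ with $a \in L$, the pair $(\varpi^n n_1, v)$ is an $\OO$-basis of $M$, and the action of $g$ in this basis is upper-triangular with diagonal entries $\chi^1(g), \chi^2(g)$ and off-diagonal entry $\varpi^{-n} a (\chi^1(g) - \chi^2(g))$. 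The integrality of this entry for all $g$, together with $\bar\chi^1 \neq \bar\chi^2$, forces $v(a) \geq n$; hence $v \in M_1 \oplus M_2$, so $M = M_1 \oplus M_2$ and $M/\varpi M = \chi_1 \oplus \chi_2$ is split, contradicting the non-splitness hypothesis.
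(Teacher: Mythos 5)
The paper gives no proof of this lemma (it is explicitly left as an exercise), so I can only assess your argument on its own terms. Parts (i) and (iii) are correct: in (i) the saturated sublattice $W'\cap(M\otimes_{\OO}\OO_{L'})$ does reduce to a nonzero proper stable subspace of $(M/\varpi M)\otimes_k k'$ (you should reduce modulo the maximal ideal of $\OO_{L'}$ rather than modulo $\varpi$ when $L'/L$ is ramified, but this is cosmetic), and in (iii) the elementary-divisor computation is sound; the integrality of $\varpi^{-n}a(\chi^1(g)-\chi^2(g))$ at a $g$ where $\chi^1(g)-\chi^2(g)$ is a unit forces $n=0$ and hence $M=M_1\oplus M_2$, contradicting non-splitness.

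Part (ii), however, contains a genuine non sequitur at exactly the step you flag as the main subtlety. Your order argument correctly shows that $\chi(G_{\Qp})\subseteq\OO_{L'}^{\times}$, but the conclusion ``hence $M$ is a free $\OO_{L'}$-module of rank one'' does not follow: $M$ is only a module over the (possibly non-maximal) order generated by $\OO$ and $\chi(G_{\Qp})$. For instance, if $L'/L$ is unramified and $\chi$ takes values in $1+\varpi\OO_{L'}$, the lattice $M=\OO+\varpi\OO_{L'}\subseteq L'$ is $G_{\Qp}$-stable but is not an $\OO_{L'}$-module, so your subsequent case analysis, which presupposes $M\cong\OO_{L'}$, does not apply to it as written. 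The statement you are proving is still true, and the gap is easily repaired: by the Brauer--Nesbitt theorem the semisimplification $(M/\varpi M)^{ss}$ is independent of the choice of $G_{\Qp}$-stable lattice, and since $\chi(G_{\Qp})\subseteq\OO_{L'}^{\times}$ the lattice $\OO_{L'}$ itself is stable; running your case analysis on $\OO_{L'}/\varpi\OO_{L'}$ (unramified with $\bar{\chi}\neq\bar{\chi}^{\sigma}$: irreducible of dimension $2$; all other cases: $\bar{\chi}\oplus\bar{\chi}$) then gives the contradiction with the hypothesis that $(M/\varpi M)^{ss}$ is a sum of two \emph{distinct} characters. Either insert this appeal to Brauer--Nesbitt, or classify lattices over the relevant order directly; as it stands the deduction is broken.
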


\begin{lem}\label{V_Pi} Let $\nn$ be a maximal ideal of $R^{\psi}_{\rho}[1/p]$ and let $\kappa(\nn)$ be its residue field. Then $\cV(\Pi(\kappa(\nn)))$ is a $2$-dimensional 
$\kappa(\nn)$-representation of $G_{\Qp}$ and one of the following holds:
\begin{itemize}
\item[(i)] $\cV(\Pi(\kappa(\nn)))$ is  absolutely irreducible;
\item[(ii)]  there is a non-split exact sequence $0\rightarrow \delta_2 \rightarrow \cV(\Pi(\kappa(\nn)))\rightarrow \delta_1\rightarrow 0$, where
 $\delta_1, \delta_2: G_{\Qp} \rightarrow \kappa(\nn)^{\times}$ are distinct, continuous characters with $\delta_1\delta_2=\zeta \varepsilon$.
 \end{itemize}
 \end{lem}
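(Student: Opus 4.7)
The plan is to transport the question from the Banach representation $\Pi(\kappa(\nn))$ to the Galois representation $V := \cV(\Pi(\kappa(\nn)))$, about which we have strong information coming from the deformation ring. Applying Lemma \ref{vpi} together with hypothesis (N2) gives
\[
V \;\cong\; \kappa(\nn)\otimes_{R^{\psi}_{\rho}}\cV(N) \;\cong\; \kappa(\nn)\otimes_{R^{\psi}_{\rho}}\rho^{\mathrm{un}},
\]
and since $\rho^{\mathrm{un}}$ is a free $R^{\psi}_{\rho}$-module of rank $2$, this exhibits $V$ as a two-dimensional $\kappa(\nn)$-representation of $G_{\Qp}$ whose determinant is $\psi|_{G_{\Qp}}=\zeta\varepsilon$.

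Next I would fix a $G_{\Qp}$-stable $\OO_{\kappa(\nn)}$-lattice $M\subset V$; by the universal property of $\rho^{\mathrm{un}}$, the reduction $\bar M := M/\varpi M$ is a $k'$-form of $\rho$ (with $k'$ the residue field of $\kappa(\nn)$), and in particular $\bar M^{\mathrm{ss}}=\rho^{\mathrm{ss}}\otimes_k k'$. The hypothesis $\End_{G_{\Qp}}(\rho)=k$ rules out the possibility that $\rho^{\mathrm{ss}}$ is one character repeated twice: a split $\chi\oplus\chi$ would have endomorphism ring $M_2(k)$, and a non-split self-extension of $\chi$ would have endomorphism ring $k[\epsilon]/(\epsilon^2)$, by a direct $2\times 2$ matrix calculation. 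Hence $\rho^{\mathrm{ss}}$ is either absolutely irreducible or a sum of two \emph{distinct} characters (possibly Galois-conjugate, in the case where $\rho$ is irreducible over $k$ but not absolutely irreducible).

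The bulk of the proof is then a case analysis driven by Lemma \ref{triv}. If $\rho$ is absolutely irreducible, Lemma \ref{triv}(i) gives that $V$ is absolutely irreducible, i.e.\ conclusion (i). If $\rho$ is a non-split extension of two distinct characters, then Lemma \ref{triv}(ii) yields absolute irreducibility whenever $V$ is irreducible (conclusion (i)), while Lemma \ref{triv}(iii) forces any reducible $V$ to be indecomposable: its two Jordan--H\"older characters $\delta_1,\delta_2$ reduce to the two distinct characters of $\rho^{\mathrm{ss}}$ and are therefore themselves distinct, with $\delta_1\delta_2=\det V=\zeta\varepsilon$, which is conclusion (ii). The edge case where $\rho$ is irreducible but not absolutely irreducible is handled similarly: a lattice-intersection argument shows $V$ cannot have any $\kappa(\nn)$-rational sub-character (the would-be sub would reduce into the irreducible $\rho\otimes k'$), and Lemma \ref{triv}(i) or (ii) then gives absolute irreducibility.

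The main obstacle is to rule out the remaining possibility that $V$ is a \emph{split} direct sum $\delta_1\oplus\delta_2$ with $\delta_1\ne\delta_2$ (the diagonal case $\delta_1=\delta_2$ is already precluded by the reduction analysis above). Here Lemma \ref{derive} is the decisive input: such a splitting of $V$, combined with the description of $\cV$ on smooth parabolically induced representations and characters of $G$ recalled earlier in this section together with the exactness of $\cV$, would produce a smooth unitary character of $G$ as a subquotient of $\Pi(\kappa(\nn))$; but any smooth character of $G$ factors through $\det$ and is thus fixed by $\SL_2(\Qp)$, contradicting $\Pi(\kappa(\nn))^{\SL_2(\Qp)}=0$.
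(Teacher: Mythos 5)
Your argument is essentially the paper's: identify $\cV(\Pi(\kappa(\nn)))$ with $\kappa(\nn)\otimes_{R^{\psi}_{\rho}}\rho^{\mathrm{un}}$ via Lemma \ref{vpi} and (N2), use $\End_{G_{\Qp}}(\rho)=k$ to see that $\rho$ is either absolutely irreducible or a non-split extension of distinct characters, and conclude by Lemma \ref{triv} applied to a lattice reducing to $\rho$, with the determinant condition giving $\delta_1\delta_2=\zeta\varepsilon$. Two points deserve correction, though neither is fatal. First, it is not true that an \emph{arbitrary} $G_{\Qp}$-stable lattice $M\subset V$ reduces to a $k'$-form of $\rho$; only the semisimplification of $M/\varpi M$ is determined by $V$, and different lattices can have genuinely different (e.g.\ split versus non-split) reductions. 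What the universal property gives you is one canonical lattice, namely $\OO_{\kappa(\nn)}\otimes_{R^{\psi}_{\rho}}\rho^{\mathrm{un}}$, whose reduction \emph{is} $\rho\otimes_k k'$; you should run Lemma \ref{triv} on that lattice. Also, the ``edge case'' of $\rho$ irreducible but not absolutely irreducible cannot occur: such a $\rho$ has $\End_{G_{\Qp}}(\rho)$ a quadratic extension of $k$, contradicting the standing hypothesis.

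Second, your final paragraph is both redundant and, as an argument, unsound. The split case $V\cong\delta_1\oplus\delta_2$ is already excluded by Lemma \ref{triv}(iii) applied to the canonical lattice (reducible implies indecomposable), which you yourself invoke in the preceding paragraph. The replacement argument you offer --- that a splitting of $\cV(\Pi(\kappa(\nn)))$ would force a smooth unitary character of $G$ to appear as a subquotient of $\Pi(\kappa(\nn))$, contradicting Lemma \ref{derive} --- does not follow from the exactness of $\cV$: for instance, a direct sum of two continuous principal series has $\cV$ equal to a split sum of two characters while having no character subquotient and trivial $\SL_2(\Qp)$-invariants. (Indeed, in the paper the logical flow is the reverse: the structure of $\cV(\Pi(\kappa(\nn)))$ established here is the \emph{input} used in Proposition \ref{specialize} to constrain the structure of $\Pi(\kappa(\nn))$.) Deleting that paragraph and fixing the choice of lattice yields a correct proof identical in substance to the paper's.
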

 \begin{proof}  Lemma \ref{vpi} implies that $\cV(\Pi(\kappa(\nn)))\cong \kappa(\nn)\otimes_{R^{\psi}_{\rho}} \cV(N)$. Since $\cV(N)$ is isomorphic to $\rho^{\un}$ as an $R^{\psi}_{\rho}\br{G_{\Qp}}$-module by assumption (N2), $\OO_{\kappa(\nn)}\otimes_{R^{\psi}_{\rho}} \cV(N)$ is a deformation of $\rho$ to $\OO_{\kappa(\nn)}$ with determinant equal to $\zeta\varepsilon$.
  Since $\End_{G_{\Qp}}(\rho)=k$ by assumption, 
 $\rho$ is either absolutely irreducible, or a non-split extension of two distinct one dimensional $k$-representations of $G_{\Qp}$. The assertion follows from Lemma \ref{triv}, as
 $\OO_{\kappa(\nn)}\otimes_{R^{\psi}_{\rho}} \cV(N)$ is a lattice in $\kappa(\nn)\otimes_{R^{\psi}_{\rho}} \cV(N)\cong \cV(\Pi(\kappa(\nn)))$. In the reducible case the 
 condition on the determinant forces $\delta_1\delta_2=\zeta \varepsilon$.
 \end{proof}
 
 Let $\widehat{\St}$ be the universal unitary completion of the smooth Steinberg representation of $G$. This Banach space representation is absolutely
 irreducible, \cite[5.3.3]{emcoates}.
 
\begin{lem}\label{unique} If $\Pi\in \Ban^{\adm}_{G, \zeta}(L)$ is  absolutely  irreducible then $\dim_L \cV(\Pi)\le 2$ and one of the following holds: 
\begin{itemize} 
\item[(i)] $\dim_L \cV(\Pi)=2$ if and only if $\Pi$ is non-ordinary. In this case $\cV(\Pi)$ is an absolutely irreducible representation of $G_{\Qp}$;
\item[(ii)] $\dim_L \cV(\Pi)=1$ if and only if either $\Pi\cong\bigl ( \Indu{P}{G}{\delta_1\otimes\delta_2\varepsilon^{-1}})_{\cont}$ or $\Pi\cong  \widehat{\St}\otimes \delta_1\circ \det$,  where $\delta_1, \delta_2:\Qp^{\times}\rightarrow L^{\times}$ are unitary 
characters satisfying $\delta_1\delta_2= \zeta \varepsilon$ and $\delta_1^2=\zeta$, respectively. In this case $\cV(\Pi)\cong \delta_1$;
\item[(iii)] $\dim_L \cV(\Pi)=0$ if and only if $\Pi\cong \delta\circ\det$, where $\delta: \Qp^{\times}\rightarrow L^{\times}$ is a unitary character.
\end{itemize} 
Moreover, in parts (i) and (ii) the $G_{\Qp}$-representation $\cV(\Pi)$ determines $\Pi$ up to isomorphism. 
\end{lem}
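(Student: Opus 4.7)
The plan is to combine the classification of absolutely irreducible objects of $\Ban^{\adm}_{G,\zeta}(L)$ with the known behaviour of $\cV$ on each class. Every such $\Pi$ is isomorphic to precisely one of: (a) a unitary character $\delta\circ\det$, (b) an irreducible continuous principal series $(\Indu{P}{G}{\delta_1\otimes\delta_2\varepsilon^{-1}})_{\cont}$ with $\delta_1\delta_2=\zeta\varepsilon$, (c) a twist $\widehat{\St}\otimes\delta\circ\det$ with $\delta^2=\zeta$, or (d) a non-ordinary representation. Cases (a)--(c) are ordinary by construction, as each is a subquotient of a unitary parabolic induction of a unitary character, while (d) is exactly the non-ordinary case.

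To compute $\dim_L\cV(\Pi)$, pick an open bounded $G$-invariant lattice $\Theta\subset\Pi$. Then $\Theta^d\in\dualcat(\OO)$ is $\OO$-torsion free with $\cV(\Pi)=\cV(\Theta^d)\otimes_{\OO}L$. Applying the exact functor $\cV$ to
\begin{equation*}
0\rightarrow \Theta^d \xrightarrow{\varpi} \Theta^d \rightarrow \Theta^d/\varpi\Theta^d \rightarrow 0
\end{equation*}
gives $\dim_L\cV(\Pi)=\dim_k\cV(\Theta^d/\varpi\Theta^d)$, and the latter is the sum of contributions from the Jordan--H\"older constituents of $\Theta/\varpi\Theta$ recalled in \S 3: characters contribute $0$, smooth principal series and twists of the Steinberg contribute $1$, supersingulars contribute $2$. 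Inspecting the reduction mod $\varpi$ in each of (a)--(c) yields $\dim_L\cV(\Pi)=0,1,1$ respectively. Identifying $\cV(\Pi)$ with the claimed character $\delta_1$ (resp.\ $\delta$) in (b) and (c) reduces to Colmez's direct calculation of $\cV$ on unitary principal series and on the universal unitary completion of the Steinberg, which in turn rests on the mod-$p$ identities $\cV(\pi^\vee)\cong\chi_1$ for $\pi=\Indu{P}{G}{\chi_1\otimes\chi_2\omega^{-1}}$ and $\cV((\Sp\otimes\chi\circ\det)^\vee)\cong\chi$ recorded at the end of \S 3. In case (d), $\cV(\Pi)$ is $2$-dimensional by Colmez's theorem; its absolute irreducibility follows from Lemma \ref{triv}, since a reducible $\cV(\Pi)$ would, via Colmez's correspondence, force $\Pi$ to be a subquotient of a parabolic induction, contradicting non-ordinarity.

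For the uniqueness assertion, case (i) is the injectivity of Colmez's functor on non-ordinary absolutely irreducibles, i.e.\ the $p$-adic local Langlands correspondence for $\GL_2(\Qp)$ (see \cite[\S 1]{cmf} and \cite{colmez1}). In case (ii), knowing $\delta_1=\cV(\Pi)$ together with the central character $\zeta$ determines $\delta_2=\zeta\varepsilon\delta_1^{-1}$; the two subcases are mutually exclusive, because if $\delta_1^2=\zeta$ then $\delta_2=\delta_1\varepsilon$ and the would-be principal series $(\Indu{P}{G}{\delta_1\otimes\delta_1})_{\cont}$ is reducible, forcing $\Pi\cong\widehat{\St}\otimes\delta_1\circ\det$, whereas if $\delta_1^2\neq\zeta$ the continuous principal series $(\Indu{P}{G}{\delta_1\otimes\delta_2\varepsilon^{-1}})_{\cont}$ is absolutely irreducible and $\Pi$ must be it.

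The main obstacle is case (d): the $2$-dimensionality and absolute irreducibility of $\cV(\Pi)$ for non-ordinary $\Pi$, together with the reconstruction of $\Pi$ from $\cV(\Pi)$, rests on the full strength of Colmez's Montr\'eal functor; the remaining cases amount to bookkeeping with the values of $\cV$ on the smooth irreducible building blocks.
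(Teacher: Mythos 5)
Your proposal is correct and follows essentially the same route as the paper: classify the ordinary absolutely irreducible objects (characters, continuous principal series, twists of $\widehat{\St}$) as in \cite[5.3.4]{emcoates}, read off $\cV$ from its definition via the mod-$\varpi$ constituents, and invoke \cite[Thm.11.4]{cmf} for the non-ordinary case. The one quibble is your attribution of the absolute irreducibility of $\cV(\Pi)$ for non-ordinary $\Pi$ to Lemma \ref{triv}, which only concerns reductions of lattices and does not by itself rule out reducibility of $\cV(\Pi)$; that statement, like the $2$-dimensionality and the reconstruction of $\Pi$, is part of \cite[Thm.11.4]{cmf}, which you are already citing.
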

\begin{proof}  If $\Pi$ is ordinary then it is a subquotient of a parabolic induction of a unitary character, and hence either 
$\Pi\cong\bigl ( \Indu{P}{G}{\delta_1\otimes\delta_2\varepsilon^{-1}})_{\cont}$ with $\delta_1\neq \delta_2\varepsilon^{-1}$, 
$\Pi \cong \widehat{\St}\otimes \delta_1\circ \det$ or $\Pi\cong \delta_1\circ \det$, where $\delta_1, \delta_2: \Qp^{\times}\rightarrow L^{\times}$ are unitary characters, see \cite[5.3.4]{emcoates}.
Since we assume that $\Pi$ has central character $\zeta$, we have $\delta_1\delta_2\varepsilon^{-1}=\zeta$ and $\delta_1^2=\zeta$, respectively.  It follows from the definition of $\cV$ that in the first two cases 
we have $\cV(\Pi)\cong \delta_1$, and $\cV(\Pi)=0$ if $\Pi$ is a character.  Note that given $\delta_1$, $\delta_2= \zeta \varepsilon \delta_1^{-1}$ and hence  $\cV(\Pi)$ determines $\Pi$ up to isomorphism.
If $\Pi$ is non-ordinary then the assertion that $\cV(\Pi)$ is an absolutely irreducible  $2$-dimensional representation of $G_{\Qp}$, which determines $\Pi$ up to isomorphism, follows from Theorem \cite[Thm.11.4]{cmf}.
\end{proof}

\begin{lem}\label{pieces} Let $\Pi\in \Ban^{\adm}_{G, \zeta}(L)$ be of finite length with absolutely irreducible $G$-socle $\Pi_1$. Then one of the following holds: 
\begin{itemize}
\item[(i)] if $\Pi_1$ is non-ordinary then all the irreducible subquotients of $\Pi$ are isomorphic to $\Pi_1$;
\item[(ii)] if $\Pi_1\cong ( \Indu{P}{G}{\delta_1\otimes\delta_2\varepsilon^{-1}})_{\cont}$ with $\delta_1\delta_2^{-1}\neq \varepsilon^{\pm 1}$ then all the irreducible subquotients of $\Pi$ are isomorphic to either 
$\Pi_1$ or $( \Indu{P}{G}{\delta_2\otimes\delta_1\varepsilon^{-1}})_{\cont}$;
\item[(iii)]  if $\Pi_1$ is isomorphic to either $\delta\circ \det$, $\widehat{\St}\otimes\delta\circ \det$ or 
$( \Indu{P}{G}{\delta\varepsilon \otimes\delta\varepsilon^{-1}})_{\cont}$, where $\delta: \Qp^{\times}\rightarrow L^{\times}$ is a unitary character, then all irreducible subquotients of $\Pi$ lie in the set 
$\{\Eins, \widehat{\St}, ( \Indu{P}{G}{\varepsilon \otimes\varepsilon^{-1}})_{\cont}\}\otimes \delta\circ \det$.
\end{itemize}
\end{lem}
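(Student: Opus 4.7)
The plan is to show, by induction on the composition length $\ell(\Pi)$, that any irreducible subquotient of $\Pi$ must lie in the ``block'' of $\Pi_1$ described by (i), (ii), or (iii), and then to identify those blocks by computing $\Ext^1$-groups between absolutely irreducible objects of $\Ban^{\adm}_{G, \zeta}(L)$.

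For the inductive reduction, suppose $\ell(\Pi)>1$ and pick any irreducible quotient $\tau$ of $\Pi$, with kernel $\Pi'$. The socle of $\Pi'$ is still $\Pi_1$ and $\ell(\Pi')<\ell(\Pi)$, so by induction every irreducible subquotient of $\Pi'$ lies in the allowed set. The short exact sequence $0\to\Pi'\to\Pi\to\tau\to 0$ must be non-split: otherwise $\tau$ would embed into $\Pi$ and hence into its socle $\Pi_1$, forcing $\tau\cong\Pi_1$ (in which case we are done). Non-splitness gives $\Ext^1(\tau,\Pi')\neq 0$, and a standard dévissage along a composition series of $\Pi'$ using the long exact sequence of $\Ext$ produces an absolutely irreducible subquotient $\pi'$ of $\Pi'$ with $\Ext^1(\tau,\pi')\neq 0$. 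It therefore suffices to prove the key claim: \emph{if $\pi,\pi'$ are absolutely irreducible in $\Ban^{\adm}_{G,\zeta}(L)$ with $\Ext^1(\pi,\pi')\neq 0$, then $\pi$ and $\pi'$ belong to the same one of the three sets in (i), (ii), (iii).}

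To prove the key claim I would split into cases according to Lemma \ref{unique}. If $\pi'$ is non-ordinary, the functor $\cV$ is exact and sends $\pi'$ to an absolutely irreducible two-dimensional representation of $G_{\Qp}$. Applying $\cV$ to a non-split extension of $\pi$ by $\pi'$ and using that $\cV$ detects the length for non-ordinary objects (by Lemma \ref{unique}(i) together with the results of \cite{cmf}) forces $\pi$ to be non-ordinary as well, and in fact $\cV(\pi)\cong \cV(\pi')$, whence $\pi\cong\pi'$ by Lemma \ref{unique}. This handles case (i). In the generic principal series case (ii), where $\pi'\cong (\Ind_P^G\delta_1\otimes\delta_2\varepsilon^{-1})_{\cont}$ with $\delta_1\delta_2^{-1}\neq\varepsilon^{\pm1}$, the block of $\dualcat(\OO)$ containing $\pi'^{\vee}$ (mod $\varpi$) consists of the duals of $\pi'$ and of $(\Ind_P^G\delta_2\otimes\delta_1\varepsilon^{-1})_{\cont}$, and a comparison of $\Ext^1$'s in $\Ban^{\adm}_{G,\zeta}(L)$ with those in $\dualcat(\OO)$ (via reduction of an open bounded $G$-stable lattice together with Lemma \ref{unique}) shows that $\pi$ must be one of these two representations.

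The main obstacle is case (iii), the non-generic block containing $\Eins$, $\widehat{\St}$ and the reducible principal series $(\Ind_P^G\varepsilon\otimes\varepsilon^{-1})_{\cont}$ (up to twist by $\delta\circ\det$), because this block has three irreducible objects and the pairwise $\Ext^1$-groups are all non-zero and must be organised carefully. Here I would again translate the problem to the category $\dualcat(\OO)$ via an open bounded $G$-invariant lattice $\Theta$ (so $\Theta^d\in\dualcat(\OO)$) and invoke the block decomposition established in \cite{cmf} for $\dualcat(k)$ and $\dualcat(\OO)$: the irreducible objects $\Eins^{\vee}$, $\Sp^{\vee}$ and $(\Ind_P^G\omega\otimes\omega^{-1})^{\vee}$ (twisted appropriately) form a single block, with no non-trivial extensions to objects outside of it. Lifting this mod-$p$ block statement to Banach spaces via the lattice argument, together with the description of subquotients of parabolic inductions of unitary characters (as in \cite[5.3.4]{emcoates}) used in the proof of Lemma \ref{unique}, confines $\pi$ to the set listed in (iii). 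Combining these three cases with the inductive step above completes the proof.
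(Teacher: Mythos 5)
Your overall strategy is the same as the paper's: the paper simply observes that an object with irreducible socle is indecomposable, hence lies in a single block of the decomposition of $\Ban^{\adm}_{G,\zeta}(L)$ established in \cite[Prop.5.36]{cmf}, and then reads off the irreducibles of each block from the corollaries listed there. Your induction plus d\'evissage is just an explicit unwinding of ``indecomposable objects lie in one block'' into the equivalent statement that $\Ext^1$-linkage preserves the three sets, and for cases (ii) and (iii) you ultimately defer to the same block decomposition of \cite{cmf} that the paper cites. So the route is not genuinely different; it is the same argument with the reduction to $\Ext^1$-vanishing made explicit.

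There is, however, a real gap in your treatment of case (i). You claim that exactness of $\cV$, applied to a non-split extension $0\to\pi'\to E\to\pi\to 0$ with $\pi'$ non-ordinary, forces $\pi$ to be non-ordinary with $\cV(\pi)\cong\cV(\pi')$. This does not follow: $\cV$ is not faithful on $\Ban^{\adm}_{G,\zeta}(L)$ --- it kills unitary characters and collapses the other ordinary representations to one-dimensional Galois representations. If, say, $\pi=\delta\circ\det$, then applying $\cV$ to the extension yields $0\to 0\to\cV(E)\to\cV(\pi')\to 0$, which is perfectly consistent and detects nothing about splitness; the same issue arises for $\Ext^1(\pi',\pi)$ in the other direction. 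Ruling out such extensions is precisely the content of the $\Ext$-computations behind the block decomposition in \cite{cmf}, so this case cannot be disposed of by $\cV$ alone. A second, minor point: your key claim is stated for absolutely irreducible $\pi$, but the irreducible quotient $\tau$ you produce in the inductive step is only irreducible a priori; you need to extend scalars (or again invoke the block theory, which applies to arbitrary finite length objects) to reduce to the absolutely irreducible case.
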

\begin{proof} In \cite[Prop.5.36]{cmf} we decomposed $\Ban^{\adm}_{G, \zeta}(L)$ into a direct sum of indecomposable subcategories called blocks. Then for each block we studied
the full subcategory of finite length representations lying in the block. We have decomposed this category into a direct sum of indecomposable subcategories and determined the irreducible 
objects in each of them, see Corollaries 6.8, 8.13, 8.14, 8.15, 9.35, 9.36, 9.37, 10.106 and Proposition 10.107 of \cite{cmf}. Since the $G$-socle of $\Pi$ is irreducible, $\Pi$ is indecomposable and 
hence must lie in one of these subcategories. The explicit description of irreducibles in the long list of Corollaries cited above implies the assertion. 
\end{proof}

\begin{prop}\label{specialize} Let $\nn$ be a maximal ideal of $R^{\psi}_{\rho}[1/p]$ and let $\kappa(\nn)$ be its residue field. Then one of the following holds:
\begin{itemize} 
\item[(i)] $\Pi(\kappa(\nn))$ is an absolutely irreducible non-ordinary $\kappa(\nn)$-Banach space representation  of $G$;
\item[(ii)] there is a non-split exact sequence of admissible $\kappa(\nn)$-Banach space representations  of $G$: 
\begin{equation}\label{reducible_case_1}
0\rightarrow \Pi_1\rightarrow \Pi(\kappa(\nn))\rightarrow \Pi_2\rightarrow 0
\end{equation}
such that $\delta_1:= \cV(\Pi_1)$ and $\delta_2:=\cV(\Pi_2)$ are distinct continuous characters of $G_{\Qp}$ satisfying $\delta_1\delta_2=\zeta\varepsilon$, and $\Pi_2^{\SL_2(\Qp)}=0$.
\end{itemize} 
Moreover, in (ii) one of the following holds: 
\begin{itemize}
\item[(a)] if $\delta_1\delta_2^{-1}\neq \varepsilon^{\pm 1}$ then 
$$ \Pi_1\cong ( \Indu{P}{G}{\delta_1\otimes\delta_2\varepsilon^{-1}})_{\cont}, \quad \Pi_2\cong ( \Indu{P}{G}{\delta_2\otimes\delta_1\varepsilon^{-1}})_{\cont}.$$
\item[(b)] if $\delta_1= \delta_2 \varepsilon^{-1}$ then $\Pi_2\cong ( \Indu{P}{G}{\varepsilon\otimes\varepsilon^{-1}})_{\cont}\otimes \delta_1\circ \det$ and there is a non-split exact sequence 
$$0\rightarrow \widehat{\St}\otimes \delta_1\circ \det \rightarrow \Pi_1\rightarrow (\delta_1\circ \det)^{\oplus m} \rightarrow 0$$
with $m\le 2$.
\item[(c)] if $\delta_1= \delta_2 \varepsilon$ then $\Pi_1\cong ( \Indu{P}{G}{\varepsilon\otimes\varepsilon^{-1}})_{\cont}\otimes \delta_2\circ \det$ and there is a non-split exact sequence 
$$0\rightarrow \widehat{\St}\otimes \delta_2\circ \det \rightarrow \Pi_2\rightarrow (\delta_2\circ \det)^{\oplus m} \rightarrow 0$$
with $m\le 2$.
\end{itemize}
\end{prop}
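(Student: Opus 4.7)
The plan is to combine Colmez's functor $\cV$ with the classification results of \cite{cmf}, packaged as Lemmas \ref{unique} and \ref{pieces}. A preliminary observation is that $\Pi(\kappa(\nn))$ has finite length as an admissible $\kappa(\nn)$-Banach space representation: its $\cV$ is two-dimensional by Lemma \ref{vpi}, so it lies in finitely many blocks of the block decomposition of $\Ban^{\adm}_{G,\zeta}(L)$ of \cite{cmf}, and within each block finite-dimensionality of $\cV$ forces finite length. Lemma \ref{V_Pi} then bifurcates the argument into the two cases (i) and (ii) of the statement.

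First I would handle case (i), where $\cV(\Pi(\kappa(\nn)))$ is two-dimensional and absolutely irreducible. Let $\Pi_1$ be an irreducible subrepresentation of the $G$-socle. By contravariant exactness of $\cV$ on $\Ban^{\adm}_{G,\zeta}(L)$, $\cV(\Pi_1)$ is a nonzero quotient of $\cV(\Pi(\kappa(\nn)))$: nonzero because $\cV(\Pi_1)=0$ would give $\Pi_1\cong \delta\circ\det$ by Lemma \ref{unique}(iii), contradicting $\Pi(\kappa(\nn))^{\SL_2(\Qp)}=0$ from Lemma \ref{derive}. Hence $\cV(\Pi_1)=\cV(\Pi(\kappa(\nn)))$ and Lemma \ref{unique}(i) identifies $\Pi_1$ as absolutely irreducible non-ordinary. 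Exactness then forces $\cV(\Pi(\kappa(\nn))/\Pi_1)=0$, but Lemma \ref{pieces}(i) says every irreducible subquotient of $\Pi(\kappa(\nn))$ is isomorphic to $\Pi_1$, whose $\cV$ is nonzero; the quotient therefore vanishes and $\Pi(\kappa(\nn))=\Pi_1$.

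In case (ii), the same socle analysis with $\Pi_1$ the $G$-socle shows $\cV(\Pi_1)\neq 0$; moreover $\cV(\Pi_1)$ cannot be two-dimensional, for then Lemma \ref{unique}(i) combined with Lemma \ref{pieces}(i) would force $\cV(\Pi(\kappa(\nn)))$ to be absolutely irreducible, contradicting Lemma \ref{V_Pi}(ii). Hence $\cV(\Pi_1)\cong \delta_1$, the character quotient of the extension, and Lemma \ref{unique}(ii) pins $\Pi_1$ down as a continuous principal series or a twisted Steinberg. Setting $\Pi_2:=\Pi(\kappa(\nn))/\Pi_1$, exactness of $\cV$ yields $\cV(\Pi_2)\cong \delta_2$. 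Non-splitness of the sequence is forced by Lemma \ref{V_Pi}(ii): a splitting would, by contravariance, split the extension $\delta_2\hookrightarrow \cV(\Pi(\kappa(\nn)))\twoheadrightarrow \delta_1$.

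The subcases (a), (b), (c) are then read off the blocks of \cite{cmf} according to the value of $\delta_1\delta_2^{-1}$. In (a), where $\delta_1\delta_2^{-1}\neq \varepsilon^{\pm 1}$, the possibility $\Pi_1\cong \widehat{\St}\otimes \delta_1\circ\det$ is excluded by the constraint $\delta_1\delta_2=\zeta\varepsilon$ (which would force $\delta_1\delta_2^{-1}=\varepsilon^{-1}$), so $\Pi_1$ is the irreducible principal series; Lemma \ref{pieces}(ii) then leaves $\Pi_2$ only one option, $(\Indu{P}{G}{\delta_2\otimes\delta_1\varepsilon^{-1}})_{\cont}$, which automatically has no $\SL_2(\Qp)$-invariants. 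The main obstacle lies in the subcases (b) and (c), where $\delta_1\delta_2^{-1}=\varepsilon^{\pm 1}$ and the reducible principal series $(\Indu{P}{G}{\varepsilon\otimes\varepsilon^{-1}})_{\cont}\otimes\delta\circ\det$ enters the picture. Here one must extract the bound $m\le 2$ on the number of copies of $\delta\circ\det$ appearing as an extension above (respectively below) the Steinberg component, which rests on the one-dimensionality of the relevant $\Hom$ and $\Ext^1$ groups in $\Ban^{\adm}_{G,\zeta}(L)$ computed explicitly in \cite{cmf}, together with the constraints $\cV(\Pi_1)=\delta_1$, $\cV(\Pi_2)=\delta_2$ and $\Pi(\kappa(\nn))^{\SL_2(\Qp)}=0$.
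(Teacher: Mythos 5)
Your strategy is the paper's: push everything through $\cV$, use Lemma \ref{derive} to exclude characters from the socle, and identify the pieces via Lemmas \ref{V_Pi}, \ref{unique} and \ref{pieces}. There are, however, two genuine gaps. First, you apply Lemma \ref{unique} to an irreducible subrepresentation $\Pi_1$ of the socle, but that lemma is stated only for \emph{absolutely} irreducible representations, so invoking it to ``identify $\Pi_1$ as absolutely irreducible'' is circular. The missing step is to prove $\End_G^{\cont}(\Pi_1)=\kappa(\nn)$ first: since $\Pi_1$ is irreducible and admissible its endomorphism ring is a skew field, and $\cV$ maps it, nonzero, into $\End_{G_{\Qp}}(\cV(\Pi_1))^{\mathrm{op}}\cong \kappa(\nn)$ (the target is a field because $\cV(\Pi_1)$ is a quotient of $\cV(\Pi(\kappa(\nn)))$, which by Lemma \ref{V_Pi} is either absolutely irreducible or a non-split extension of distinct characters); absolute irreducibility then follows from \cite[Lem.4.2]{cmf}. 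Relatedly, your preliminary finite-length claim is not justified as stated: finite-dimensionality of $\cV$ cannot bound the multiplicity of character subquotients, since these have $\cV=0$ --- controlling them is precisely what the $\Ext^1$ bounds are for. The argument should instead be run without assuming finite length, applying Lemma \ref{pieces} only to finite length subrepresentations built up from the socle.

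Second, in subcases (b) and (c) your construction does not yield the $\Pi_1$, $\Pi_2$ of the statement. You take $\Pi_1$ to be the $G$-socle and $\Pi_2$ the quotient; but in case (b) the socle is $\widehat{\St}\otimes\delta_1\circ\det$, while the statement's $\Pi_1$ is the possibly larger preimage in $\Pi(\kappa(\nn))$ of the $\delta_1\circ\det$-isotypic invariants of $\Pi(\kappa(\nn))/(\widehat{\St}\otimes\delta_1\circ\det)$; with your definition $\Pi_2$ would carry the character subquotients and would not be the principal series. Identifying $\Pi_2$ and bounding $m$ requires the explicit computations of \S\ref{ext_banach} (obtained from the mod $p$ $\Ext$ groups via \cite{hauseux}, not directly from \cite{cmf}): the crucial input is $e^1(\Eins,\widehat{\St})\le 2$ --- a two-dimensionality bound, not the ``one-dimensionality'' you invoke --- together with $e^1(\Eins,\Eins)=0$ and $e^1(\Eins,\Pi_{\alpha})=0$, which force $\Pi_2^G=0$ and then $\Pi_2\cong\Pi_{\alpha}\otimes\delta_1\circ\det$ since $\cV(\Pi_2/\Pi_{\alpha}\otimes\delta_1\circ\det)=0$. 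As written, the hardest part of the proposition is asserted rather than proved.
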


\begin{proof} Let $\Pi:=\Pi(\kappa(\nn))$ and let $\tilde{\rho}:= \cV(\Pi(\kappa(\nn)))$.   In the proof we will use repeatedly that a non-zero admissible unitary Banach space representation of $G$ contains an irreducible subrepresentation. This fact follows from \cite[Lem.5.8]{comp}. The reader should imagine that $\Pi$ is built out of Lego bricks. We will take $\Pi$ apart by starting removing the bricks at the bottom and identifying them using Lemmas \ref{unique} and \ref{pieces}. We will then try to put the pieces back together. For  $\Pi_1, \Pi_2\in \Ban^{\adm}_{G, \zeta}(\kappa(\nn))$ we let 
$e^1(\Pi_1, \Pi_2)=\dim_{\kappa(\nn)} \Ext^1_{G, \zeta}(\Pi_1, \Pi_2)$, where the $\Ext$-group is the Yoneda-$\Ext$ computed in $\Ban^{\adm}_{G, \zeta}(\kappa(\nn))$. The knowledge 
of $e^1(\Pi_1, \Pi_2)$ for irreducible $\Pi_1$ and $\Pi_2$ amounts to knowing, which bricks  and in how many ways fit together. 

Let $\Pi^1$ be an irreducible $\kappa(\nn)$-subrepresentation of $\Pi$. 
Since $\Pi^1$ is irreducible and admissible, $\End_G^{\cont}(\Pi^1)$ is a skew field. Since $\Pi^{\SL_2(\Qp)}= 0$ by Lemma \ref{derive}, we have $\cV(\Pi^1)\neq 0$. Since $\cV$ is contravariant, $\cV(\Pi^1)$ is a quotient of $\tilde{\rho}$. 
Since $\tilde{\rho}$ is either absolutely irreducible or a non-split extension of distinct characters by Lemma \ref{V_Pi}, we get that $\End_{G_{\Qp}}(\cV(\Pi^1))\cong \kappa(\nn)$. 
Thus the map $\End_G^{\cont}(\Pi^1)\overset{\cV}{\rightarrow} \End_{G_{\Qp}}(\cV(\Pi^1))^{\mathrm{op}}\cong\kappa(\nn)$ is non-zero, and hence an isomorphism. It follows from \cite[Lem.4.2]{cmf} that 
$\Pi^1$ is an absolutely irreducible $\kappa(\nn)$-Banach space representation of $G$. 

The $G$-socle of $\Pi$ is isomorphic to $\Pi^1$, since otherwise $\Pi$ would contain $\Pi^1\oplus \Pi^2$ as a subrepresentation, for some irreducible $\Pi^2$, and then 
$\tilde{\rho}$ would admit $\cV(\Pi_1)\oplus \cV(\Pi_2)$ as a quotient contradicting Lemma \ref{V_Pi}.  

If $\tilde{\rho}$ is absolutely irreducible then $\cV(\Pi^1)\cong \tilde{\rho}$. It follows from Lemma \ref{unique} (i) that $\Pi^1$ is non-ordinary. If $\Pi/\Pi^1$ is non-zero, then it would contain an irreducible subrepresentation $\Pi^2$. Lemma \ref{pieces} (i) implies that $\Pi^2\cong \Pi^1$. Since $\cV$ is exact, this would imply that $\dim_{\kappa(\nn)} \tilde{\rho}\ge 4$. Hence $\Pi\cong \Pi^1$.

If $\tilde{\rho}$ is not absolutely irreducible then we are in the situation of Lemma \ref{V_Pi} (ii), so that there is a non-split extension 
$0\rightarrow \delta_2\rightarrow \tilde{\rho} \rightarrow \delta_1\rightarrow 0$ with $\delta_1$, $\delta_2$ distinct characters with $\delta_1\delta_2=\zeta \varepsilon$.
Since $\cV(\Pi^1)$ is a quotient of $\tilde{\rho}$ we get that $\cV(\Pi_1)\cong \delta_1$. 

If $\delta_1\delta_2^{-1}\neq \varepsilon^{\pm 1}$ then Lemma \ref{unique} (ii) implies that $\Pi^1\cong ( \Indu{P}{G}{\delta_1\otimes\delta_2\varepsilon^{-1}})_{\cont}$. Let $\Pi^2$ be an irreducible subrepresentation 
of $\Pi/\Pi^1$. Since $\cV(\Pi/\Pi^1)\cong \delta_2$ Lemmas \ref{pieces} (ii) and \ref{unique} (ii) imply that  $\Pi^2\cong  ( \Indu{P}{G}{\delta_2\otimes\delta_1\varepsilon^{-1}})_{\cont}$. Let $\Pi'$ denote the quotient of 
$\Pi/\Pi^1$ by $\Pi^2$ then $\cV(\Pi')=0$, and the same argument gives that $\Pi'$ cannot contain an irreducible representation, and hence $\Pi'=0$.

If $\delta_1\delta_2^{-1}=\varepsilon^{\pm 1}$ then it follows from Lemmas \ref{unique}, \ref{pieces} that if $\Pi'$ is a subrepresentation of $\Pi$ of finite length then all the irreducible subquotients of 
$\Pi'$ lie in the set  $\{\Eins, \widehat{\St}, ( \Indu{P}{G}{\varepsilon \otimes\varepsilon^{-1}})_{\cont}\}\otimes \delta\circ \det$. Since $\cV$ respects twisting by characters, we may assume that $\delta$ is the trivial character. Then either ($\delta_1=\Eins$ and $\delta_2=\varepsilon$) or ($\delta_1=\varepsilon$ and $\delta_2=\Eins$). To ease the notation we let 
$\Pi_{\alpha}:= ( \Indu{P}{G}{\varepsilon \otimes\varepsilon^{-1}})_{\cont}$. Below we will  use the computations made in \S \ref{ext_banach}.

If $\delta_1=\Eins$ then $\Pi^1\cong \widehat{\St}$. Let $\Pi_1$ be the subrepresentation of $\Pi$ fitting into the exact sequence 
$0\rightarrow \widehat{\St}\rightarrow \Pi_1\rightarrow (\Pi/\widehat{\St})^G\rightarrow 0$. Since $\Pi_1^G=0$ and $e^1(\Eins, \widehat{\St})\le 2$ we get that 
$(\Pi/\widehat{\St})^G\cong \Eins^{\oplus m}$ with $m\le 2$. We let $\Pi_2:= \Pi/\Pi_1$. Since $e^1(\Eins, \Eins)=0$ we get $\Pi_2^G=0$. Since 
$\cV(\Pi_2)\cong \varepsilon$, we deduce that $\Pi_{\alpha}$ is a subrepresentation of $\Pi_2$. Since $e^1(\Eins, \Pi_{\alpha})=0$ and $\Pi_2^G=0$, 
$\Eins$ cannot be a subrepresentation of $\Pi_2/\Pi_{\alpha}$. Since $\cV(\Pi_2/\Pi_{\alpha})=0$ we deduce that $\Pi_2\cong \Pi_{\alpha}$.

If $\delta_1=\varepsilon$ then a very similar argument shows that $\Pi^1\cong \Pi_{\alpha}$ and we have a non-split exact sequence
$0\rightarrow \widehat{\St}\rightarrow \Pi/\Pi_{\alpha}\rightarrow \Eins^{\oplus m}\rightarrow 0$ with $m\le 2$.
\end{proof}

 \subsection{Locally algebraic vectors} We keep the same setup as in \S\ref{points}. We fix a $p$-adic Hodge type $(\mathbf w, \tau, \psi)$ as in \S\ref{introduction}.  We will show that part (c) (i) of Theorem \ref{A} holds when  $V=\sigma(\mathbf w, \tau)$ or $V=\sigma^{\mathrm{cr}}(\mathbf w, \tau)$  by using the description of locally algebraic vectors in the Banach space representations $\Pi(\kappa(\nn))$, due to Colmez \cite{colmez} and Emerton \cite{emfm}. This will allow us to reprove the existence of potentially semi-stable Galois deformation rings in our situation.  

\begin{lem}\label{homalg} Let $\pi_1$, $\pi_2$ be smooth representations of an open subgroup $H$ of $G$ and let $W_1$, $W_2$ be irreducible 
algebraic representations of $G$. If $\Hom_H(\pi_1 \otimes W_1, \pi_2\otimes W_2)\neq 0$ then $W_1\cong W_2\cong W$ (say) and 
$\Hom_H(\pi_1\otimes W, \pi_2\otimes W)\cong \Hom_H(\pi_1, \pi_2).$
\end{lem}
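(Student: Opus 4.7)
The plan is to reduce the problem to a statement about $G$-invariants in an algebraic representation, via adjunction. Since $W_1$ is finite-dimensional, one has the standard adjunction
\[
\Hom_H(\pi_1\otimes W_1,\pi_2\otimes W_2)\cong\Hom_H(\pi_1,\pi_2\otimes W),\qquad W:=W_2\otimes W_1^*,
\]
which turns the problem into analysing $H$-equivariant maps from the smooth representation $\pi_1$ into $\pi_2\otimes W$, where $W$ is still an algebraic (finite-dimensional) representation of $G$.

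Next, since $\pi_1$ is smooth, every element of its image in $\pi_2\otimes W$ has an open stabilizer in $H$; so the image lies in the subspace $(\pi_2\otimes W)^{\sm}$ of $H$-smooth vectors. Writing a general element as $\sum_i y_i\otimes w_i$ with the $y_i\in\pi_2$ linearly independent and using that $\pi_2$ itself is smooth, a direct check shows such an element is fixed by an open $H'\subseteq H$ if and only if each $w_i$ is fixed by $H'$. Hence $(\pi_2\otimes W)^{\sm}=\pi_2\otimes W^{\sm}$, and every map as above factors through $\pi_2\otimes W^{\sm}$.

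The crux is to identify $W^{\sm}$. An open subgroup $H'\subseteq H$ is open in $G=\GL_2(\Qp)$, and so has the same $\Qp$-Lie algebra $\mathfrak{g}$ as $G$; differentiating the algebraic $G$-action shows that any $H'$-fixed vector is annihilated by $\mathfrak{g}$. Since $W$ is an algebraic representation of the connected algebraic group $\GL_2$, this forces $w\in W^G$, whence $W^{\sm}=W^G\cong\Hom_G(W_1,W_2)$. By Schur's lemma for irreducible algebraic representations this space vanishes unless $W_1\cong W_2$, in which case it is one-dimensional. Thus non-vanishing of the original Hom forces $W_1\cong W_2=:W$, and a choice of $G$-isomorphism identifies
\[
\Hom_H(\pi_1\otimes W,\pi_2\otimes W)\cong\Hom_H(\pi_1,\pi_2\otimes W^G)\cong\Hom_H(\pi_1,\pi_2).
\]
The only step requiring any real care is the passage from $H$-smoothness to $G$-invariance on $W$, i.e.\ verifying that the analytic smoothness condition for the $p$-adic group $H$ matches the $\mathfrak{g}$-invariance that comes out of the algebraic structure on $W$; this is the standard fact underlying the theory of locally algebraic vectors and is the place where the hypothesis ``algebraic'' (as opposed to merely continuous finite-dimensional) is used.
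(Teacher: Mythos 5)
Your proposal is correct and follows essentially the same route as the paper: both arguments rest on the tensor--Hom adjunction together with the identification of the $H$-smooth vectors in a finite-dimensional algebraic representation with its $\mathfrak g$-invariants, hence its $G$-invariants. The only difference is presentational — you prove the key input $W^{\sm}=W^{G}$ directly via the Lie-algebra argument and deduce both conclusions at once from $(W_2\otimes W_1^{*})^{\sm}\cong\Hom_G(W_1,W_2)$, whereas the paper establishes $W_1\cong W_2$ first (irreducibility over $\mathfrak g$) and cites \cite[Lem.1.4]{banal} for the computation of $\Hom_L(W,\pi_2\otimes W)^{\sm}\cong\pi_2$.
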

\begin{proof} If $\Hom_H(\pi_1 \otimes W_1, \pi_2\otimes W_2)\neq 0$, then since $\pi_1$ and $\pi_2$ are smooth, we may find an 
open subgroup $H'$ of $H$, such that $\Hom_{H'}(W_1, W_2)\neq 0$. This implies that $W_1\cong W_2$, since both representations 
are irreducible as representations of the Lie algebra of $G$. 

 Let us assume that $W_1\cong W_2\cong W$. When $\pi_1$ is the trivial representation, the statement is shown in \cite[Lem.1.4]{banal}. Let
 $\Hom_L(W, \pi_2\otimes W)^{\mathrm{sm}}$ denote the smooth vectors for the action of $H$ by conjugation on $\Hom_L(W, \pi_2\otimes W)$.
Then 
$\Hom_L(W, \pi_2\otimes W)^{\mathrm{sm}}= \bigcup_{H'} \Hom_{H'}(W, \pi_2\otimes W)\cong  \bigcup_{H'} \pi_2^{H'}\cong \pi_2,$
where the union is taken over all the open subgroups of $H$. Since $\pi_1$ is smooth, 
$\Hom_H(\pi_1\otimes W, \pi_2 \otimes W)\cong \Hom_H(\pi_1, \Hom_L(W, \pi_2 \otimes W)^{\mathrm {sm}})\cong 
\Hom_H(\pi_1, \pi_2).$
\end{proof}

\begin{lem}\label{extalg} Let $W$ be an irreducible algebraic representation of $G$, and let $\pi_1$, $\pi_2$ be smooth representations of 
$G$ over $L$, then tensoring with $W$ induces an isomorphism $\Ext^1_G(\pi_1, \pi_2)\overset{\cong}{\rightarrow} \Ext^1_G(\pi_1\otimes W, \pi_2\otimes W)$, where the source is computed in the category of smooth representations, and the target is computed in the category of locally algebraic representations. The same 
result holds, if we require the extensions to have central character.
\end{lem}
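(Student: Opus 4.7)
The plan is to realize both sides as $\operatorname{Ext}^1$ in a single category, namely the category $\mathcal C_W$ of locally algebraic $G$-representations whose every irreducible $\mathfrak g$-subquotient is isomorphic to $W$ (where $\mathfrak g := \mathfrak{gl}_2(\Qp)$). On $\mathcal C_W$, the functor $\pi \mapsto \pi \otimes W$ from smooth $G$-representations to $\mathcal C_W$ is an equivalence, with quasi-inverse $V \mapsto F(V) := \Hom_{\mathfrak g}(W,V)$, equipped with the smooth $G$-action induced by the conjugation action on $\Hom_L(W,V)^{\mathrm{sm}}$ (cf.\ the proof of Lemma~\ref{homalg}). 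The key computation is that the evaluation map $F(V) \otimes W \to V$, $\varphi \otimes w \mapsto \varphi(w)$, is a $G$-equivariant isomorphism whenever $V$ is $W$-isotypic as a $\mathfrak g$-module: this is a Schur-lemma statement, reduced to checking it on each finite-dimensional $\mathfrak g$-stable subspace of $V$, where it is standard since $W$ is absolutely irreducible (after a harmless extension of scalars) and $\mathfrak g$ is reductive.

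The next step is to show that any locally algebraic extension $0 \to \pi_2 \otimes W \to E \to \pi_1 \otimes W \to 0$ already lies in $\mathcal C_W$. Since both outer terms are $W$-isotypic as $\mathfrak g$-modules, it suffices to verify that any finite-dimensional $\mathfrak g$-submodule $F \subset E$ is $W$-isotypic. The sequence $0 \to F \cap (\pi_2 \otimes W) \to F \to F/(F \cap \pi_2 \otimes W) \to 0$ has $W$-isotypic ends; since finite-dimensional $\mathfrak g$-representations of the reductive algebra $\mathfrak{gl}_2$ are semisimple, $F$ is also $W$-isotypic, as required.

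With $E \in \mathcal C_W$ established, apply the functor $F$. On $\mathcal C_W$, $\mathfrak g$-modules decompose into $W$-isotypic components, so $\Hom_{\mathfrak g}(W,-)$ is exact; hence we obtain an exact sequence of smooth $G$-representations $0 \to \pi_2 \to F(E) \to \pi_1 \to 0$, and by the evaluation isomorphism the class of $E$ is recovered as the image of the class of $F(E)$ under $-\otimes W$. This proves surjectivity of the map on $\operatorname{Ext}^1$. For injectivity, suppose the pushout $\pi \otimes W$ of a smooth extension $0 \to \pi_2 \to \pi \to \pi_1 \to 0$ splits as a locally algebraic extension; then $\pi \otimes W \in \mathcal C_W$ and applying $F$ (which again is exact here and recovers $\pi$ on each piece via Lemma~\ref{homalg}) produces a splitting of the original smooth extension.

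For the variant with central character, the action of $Z$ on $\pi_i \otimes W$ differs from the action on $\pi_i$ by twisting with the central character of $W$, so a locally algebraic extension of $\pi_1 \otimes W$ by $\pi_2 \otimes W$ admits a central character if and only if its smooth preimage does; all of the constructions above are manifestly $Z$-equivariant, so the argument goes through verbatim. The main point of care is Step~2 — checking that extensions remain in $\mathcal C_W$ — since once we are in $\mathcal C_W$ the equivalence of categories trivialises the claim; everything else is then a straightforward unwinding via Lemma~\ref{homalg}.
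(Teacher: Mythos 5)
Your proof is correct and takes essentially the same route as the paper, which simply interprets both groups as Yoneda-$\Ext$ and observes that the functor $\Hom_L(W,\ast)^{\sm}$ of Lemma \ref{homalg} (which agrees with your $\Hom_{\mathfrak g}(W,-)$ on these representations) provides the inverse; you have merely written out the verification in full. One small caveat: finite-dimensional $\mathfrak{gl}_2$-modules are not automatically semisimple (the centre could act by a nontrivial Jordan block), so in your Step 2 you should instead use that the finite-dimensional pieces of a locally algebraic representation are algebraic representations of a reductive group in characteristic zero, hence semisimple.
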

\begin{proof} We interpret both $\Ext$-groups as Yoneda-$\Ext$.  It follows from Lemma \ref{homalg} that the functor $\Hom_L^{\sm}(W, \ast)$ provides the inverse.
\end{proof}

\begin{lem}\label{exclude3} If $\nn$ is a maximal ideal of $R^{\psi}_{\rho}[1/p]$ such that  part (ii) (c) of Proposition \ref{specialize} holds then 
the space of locally algebraic vectors $\Pi(\kappa(\nn))^{\alg}$ in $\Pi(\kappa(\nn))$ is zero.
\end{lem}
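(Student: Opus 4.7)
We argue by contradiction: suppose $\Pi := \Pi(\kappa(\nn))$ has a non-zero locally algebraic vector. After twisting by $(\delta_2 \circ \det)^{-1}$, we may reduce to the case $\delta_2 = \mathbf{1}$, $\delta_1 = \varepsilon$; then $\Pi_1 = (\Ind_P^G \varepsilon \otimes \varepsilon^{-1})_{\cont}$, $\Pi_2$ fits into $0 \to \widehat{\St} \to \Pi_2 \to \mathbf{1}^{\oplus m} \to 0$ with $m \le 2$, and $\cV(\Pi)$ is a non-split extension $0 \to \mathbf{1} \to \cV(\Pi) \to \varepsilon \to 0$. An irreducible locally algebraic subrepresentation of $\Pi$ has the form $\pi \otimes W$ with $\pi$ smooth irreducible and $W$ algebraic irreducible. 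By the description of locally algebraic vectors in $p$-adic Banach space representations, the highest weight of $W$ is determined by the Hodge--Tate weights of $\cV(\Pi)$, namely $0$ and $1$; the gap being $1$ forces $W = \mathbf{1}$, and $\pi$ is determined by the Weil--Deligne representation of $\cV(\Pi)$ via classical local Langlands, giving either $\pi = \St$ (the semi-stable non-crystalline case) or $\pi$ an unramified principal series (the crystalline case).

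Next, compute $\Pi_1^{\alg}$ explicitly. Writing $\varepsilon = \alpha \cdot z$ with $\alpha(x) = |x|$ smooth and $z(x) = x$ algebraic, the locally algebraic induction is $(\Ind_P^G \alpha \otimes \alpha^{-1})^{\sm} \otimes V$, where $V$ is the $3$-dimensional algebraic representation with highest weight corresponding to $z \otimes z^{-1}$; the smooth factor is irreducible, as $\alpha^2$ is not a reducibility point. In particular, $\Pi_1^{\alg}$ contains neither a Steinberg subrepresentation nor an unramified principal series with trivial algebraic twist. Similarly, $\Pi_2^{\alg}$ is built from $\widehat{\St}^{\alg} = \St$ and the finite-dimensional characters $\mathbf{1}^{\oplus m}$, so it cannot accommodate an infinite-dimensional unramified principal series. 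This rules out the crystalline case, leaving $\pi = \St$ and $W = \mathbf{1}$.

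We therefore have an embedding $\iota : \St \hookrightarrow \Pi$. Since $\Pi_1^{\alg}$ contains no Steinberg, the composition $\St \to \Pi \to \Pi_2$ is non-zero, hence injective by irreducibility of $\St$, and its image must lie in the Banach socle $\widehat{\St}$ of $\Pi_2$. Let $\tilde E \subset \Pi$ be the preimage of $\St \subset \widehat{\St} \subset \Pi_2$; then $\tilde E$ fits in the short exact sequence $0 \to \Pi_1 \to \tilde E \to \St \to 0$, which $\iota$ splits. The contradiction is obtained by showing $\tilde E$ is not split: using the Ext computations of \S\ref{ext_banach}, the extension class of $\Pi$ in $\Ext^1_{G,\zeta}(\Pi_2, \Pi_1)$ restricts, along $\St \hookrightarrow \widehat{\St} \subset \Pi_2$, to a non-zero class in $\Ext^1_{G,\zeta}(\St, \Pi_1)$, corresponding via Colmez's functor $\cV$ to the non-zero Galois extension class of $0 \to \mathbf{1} \to \cV(\Pi) \to \varepsilon \to 0$.

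The main obstacle is this final Ext calculation: one must rigorously track the correspondence between Banach-space Ext groups of reducible representations and Galois extensions under $\cV$, and verify that the specific extension class of $\Pi$ in case (c) (as opposed to case (b) of Proposition \ref{specialize}) is not killed by the restriction to $\St \subset \Pi_2$.
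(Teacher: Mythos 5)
Your overall skeleton — reduce to $\delta_2=\Eins$, $\delta_1=\varepsilon$, show a non-zero $\Pi^{\alg}$ would force $\St\hookrightarrow\Pi$, then derive a contradiction — is the right one, but two things need fixing. First, your computation of $\Pi_1^{\alg}$ is wrong: the continuous induction $(\Indu{P}{G}{\varepsilon\otimes\varepsilon^{-1}})_{\cont}$ has \emph{no} locally algebraic vectors at all (\cite[Lem.12.5]{cmf}); in the parametrization $(\Indu{P}{G}{\delta_1\otimes\delta_2\varepsilon^{-1}})_{\cont}$ one has $\delta_1=\varepsilon^1$, $\delta_2=\varepsilon^0$, and non-vanishing of locally algebraic vectors requires the exponents in the opposite order. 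Your downstream conclusions ("$\Pi_1^{\alg}$ contains no Steinberg and no unramified principal series") remain true, but only vacuously. Relatedly, the appeal to "the Hodge--Tate weights and Weil--Deligne representation of $\cV(\Pi)$" to pin down $\pi$ and $W$ is not available here: in case (ii)(c) the representation $\cV(\Pi(\kappa(\nn)))$ is not de Rham (Remark \ref{exclude4}), so it has no associated Weil--Deligne representation and the Colmez--Emerton dictionary does not apply. The clean route is simply $\Pi^{\alg}\hookrightarrow\Pi_2^{\alg}$, the exact sequence $0\rightarrow\St\rightarrow\Pi_2^{\alg}\rightarrow\Eins^{\oplus m}$, and Lemma \ref{derive} to exclude $\Eins$ from the socle. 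You also need to treat separately the case where $\delta_2$ is not locally algebraic (there the twisting reduction is unavailable, but $(\delta_2\circ\det)^{\alg}=0$ and $(\widehat{\St}\otimes\delta_2\circ\det)^{\alg}=0$ directly).

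The genuine gap is your final step, which is the actual content of the lemma and is left unproven. Showing that $0\rightarrow\Pi_1\rightarrow\tilde{E}\rightarrow\St\rightarrow 0$ is non-split is \emph{equivalent} to showing that $\St$ is not a subrepresentation of $\Pi$, so as it stands the argument is circular; and the proposed mechanism — restricting a class in $\Ext^1_{G,\zeta}(\Pi_2,\Pi_1)$ along $\St\subset\widehat{\St}$ and matching it with a Galois class "via $\cV$" — is not meaningful as written, because $\St$ is not an admissible Banach space representation and $\cV$ is not defined on it. The missing idea is the universal unitary completion: any $G$-map $\St\rightarrow\Pi$ extends to $\widehat{\St}\rightarrow\Pi$, which is injective since $\widehat{\St}$ is topologically irreducible; applying the exact contravariant functor $\cV$ then exhibits $\cV(\widehat{\St})=\delta_2$ as a quotient of $\cV(\Pi(\kappa(\nn)))$, contradicting the fact that $0\rightarrow\delta_2\rightarrow\cV(\Pi(\kappa(\nn)))\rightarrow\delta_1\rightarrow 0$ is non-split with $\delta_1\neq\delta_2$ (Lemma \ref{V_Pi}(ii)). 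This replaces your entire third paragraph with a two-line argument.
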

\begin{remar}\label{exclude4} In this case $\rho^{\mathrm{un}}_{\nn}\cong \bigl ( \begin{smallmatrix} 1 & \ast\\ 0 & \varepsilon \end{smallmatrix}\bigr )\otimes \delta$. Since $\rho^{\mathrm{un}}_{\nn}$ indecomposable, 
\cite[Ex. 3.9]{BK} implies that $\rho^{\mathrm{un}}_{\nn}$ is not potentially semi-stable. 
\end{remar}
\begin{proof} Let $\Pi=\Pi(\kappa(\nn))$ and let $\Pi_1$ and $\Pi_2$ be as in part (ii) (c) of the Proposition \ref{specialize}. Lemma 12.5 in \cite{cmf} implies that $\Pi_1^{\alg}=0$. Thus we have an injection 
$\Pi^{\alg}\hookrightarrow \Pi_2^{\alg}$ and an exact sequence $0\rightarrow (\widehat{\St}\otimes \delta_2\circ \det)^{\alg}\rightarrow \Pi_2^{\alg}\rightarrow ((\delta_2\circ \det)^{\alg})^{\oplus m}$. If $\delta_2$
is not locally algebraic then  $(\delta_2\circ \det)^{\alg}=0$ and \cite[Lem.12.6]{cmf} implies that $(\widehat{\St}\otimes \delta_2\circ \det)^{\alg}=0$. Thus we may assume that $\delta_2$ is locally algebraic and, 
since in this case $(\Pi\otimes \delta^{-1}_2 \circ \det)^{\alg}\cong \Pi^{\alg}\otimes \delta_2^{-1} \circ \det$, we may further assume that $\delta_2=\Eins$ and so $\delta_1=\varepsilon$. Lemma \ref{derive} 
does not allow  $\Eins$ to be a subrepresentation of $\Pi$. We conclude that if $\Pi^{\alg}\neq 0$ then it contains $\St$ as a subrepresentation. However, then the universal unitary completion 
$\widehat{\St}$ of $\St$ would be a subrepresentation of $\Pi$, and so $\delta_2=\cV(\widehat{\St})$ would be a quotient of $\cV(\Pi)$ contradicting Lemma \ref{V_Pi} (ii).
\end{proof}

\begin{prop}\label{nearly_done}If $V=\sigma(\mathbf w, \tau)$ or $\sigma^{\mathrm{cr}}(\mathbf w, \tau)$ then  for all $\nn\in \MaxSpec R^{\psi}_{\rho}[1/p]$
 $$\dim_{\kappa(\nn)} \Hom_K(V, \Pi(\kappa(\nn)))\le 1.$$
 
 If $V=\sigma(\mathbf w, \tau)$ then $\dim_{\kappa(\nn)} \Hom_K(V, \Pi(\kappa(\nn)))=1$ if and only if $\rho^{\mathrm{un}}_{\nn}$
is potentially semi-stable of type $(\mathbf w, \tau, \psi)$.

If $V=\sigma^{\mathrm{cr}}(\mathbf w, \tau)$ and $\rho^{\mathrm{un}}_{\nn}\not\cong \bigl( \begin{smallmatrix}  \delta\varepsilon & \ast\\ 0 & \delta\end{smallmatrix} \bigr)$ then $\dim_{\kappa(\nn)} \Hom_K(V, \Pi(\kappa(\nn)))=1$ if and only if $\rho^{\mathrm{un}}_{\nn}$
is potentially crystalline of type $(\mathbf w, \tau, \psi)$.

If $V=\sigma^{\mathrm{cr}}(\mathbf w, \tau)$, $\rho^{\mathrm{un}}_{\nn}\cong \bigl( \begin{smallmatrix}  \delta\varepsilon & \ast\\ 0 & \delta\end{smallmatrix} \bigr)$ and $\dim_{\kappa(\nn)} \Hom_K(V, \Pi(\kappa(\nn)))=1$ then $\rho^{\mathrm{un}}_{\nn}$ is potentially semi-stable of type $(\mathbf w, \tau, \psi)$.
\end{prop}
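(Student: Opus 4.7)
The plan is to reduce everything to a computation on the subspace $\Pi(\kappa(\nn))^{\mathrm{alg}}$ of locally algebraic vectors, using the Colmez--Emerton description of that subspace, the classification of $\Pi(\kappa(\nn))$ obtained in Proposition \ref{specialize}, and Henniart's characterisation of $\sigma(\tau)$ and $\sigma^{\mathrm{cr}}(\tau)$.

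First, write $V = \pi_{\mathrm{sm}} \otimes W$, where $W := \Sym^{b-a-1} L^2 \otimes \det^a$ and $\pi_{\mathrm{sm}}\in\{\sigma(\tau),\sigma^{\mathrm{cr}}(\tau)\}$. Since every vector of $V$ is $K$-locally algebraic, any $K$-equivariant map $V \to \Pi(\kappa(\nn))$ factors through $\Pi(\kappa(\nn))^{\mathrm{alg}}$, and by Proposition \ref{specialize} one of cases (i), (ii)(a), (ii)(b), (ii)(c) occurs. Case (ii)(c) is handled by Lemma \ref{exclude3}, which forces the Hom space to be zero, consistent with Remark \ref{exclude4} asserting that $\rho^{\mathrm{un}}_{\nn}$ is not potentially semi-stable (so the last clause of the proposition is vacuous there).

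In case (i), where $\Pi(\kappa(\nn))$ is absolutely irreducible non-ordinary, I would invoke the theorem of Colmez (extended by Emerton in the supercuspidal case, cf.\ \cite{colmez}, \cite{emfm}): $\Pi(\kappa(\nn))^{\mathrm{alg}}$ is non-zero if and only if $\rho^{\mathrm{un}}_{\nn}$ is potentially semi-stable, in which case it is isomorphic to $\pi(\rho^{\mathrm{un}}_{\nn})\otimes W_{\mathbf{w}(\nn)}$, where $W_{\mathbf{w}(\nn)}$ is the algebraic representation determined by the Hodge--Tate weights and $\pi(\rho^{\mathrm{un}}_{\nn})$ is the smooth representation of $G$ attached to $\WD(\rho^{\mathrm{un}}_{\nn})$ by the classical local Langlands correspondence. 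Lemma \ref{homalg} then reduces $\Hom_K(\pi_{\mathrm{sm}}\otimes W,\pi(\rho^{\mathrm{un}}_{\nn})\otimes W_{\mathbf{w}(\nn)})$ to the condition $W=W_{\mathbf{w}(\nn)}$ (i.e.\ matching Hodge--Tate weights $\mathbf{w}$) together with $\Hom_K(\pi_{\mathrm{sm}},\pi(\rho^{\mathrm{un}}_{\nn}))$, and Henniart's theorem (\cite{henniart}) identifies this last space as nonzero and one-dimensional exactly when $\WD(\rho^{\mathrm{un}}_{\nn})|_{I_{\Qp}} \cong \tau$ (and additionally $N=0$ in the $\sigma^{\mathrm{cr}}$-version), yielding both the bound and the characterisation.

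Cases (ii)(a) and (ii)(b) require a hands-on analysis of the locally algebraic parts of the explicit extensions provided by Proposition \ref{specialize}. In (ii)(a), one computes directly that $\Pi^{\mathrm{alg}}$ is non-zero iff both $\delta_1,\delta_2$ are locally algebraic, in which case it is a locally algebraic principal series of the expected form, and $\rho^{\mathrm{un}}_{\nn}$ is automatically potentially crystalline as the inertia action is a sum of two distinct characters; Lemma \ref{homalg} and Henniart again yield the result. In case (ii)(b), using the description of $\Pi_1$ as an extension of $(\delta_1\circ\det)^{\oplus m}$ by $\widehat{\St}\otimes\delta_1\circ\det$ together with $\Pi_2\cong(\Indu{P}{G}{\varepsilon\otimes\varepsilon^{-1}})_{\cont}\otimes \delta_1\circ\det$, one identifies $\Pi^{\mathrm{alg}}$ in terms of the smooth Steinberg and algebraic characters once $\delta_1$ is locally algebraic (equivalently, once $\rho^{\mathrm{un}}_{\nn}$ is potentially semi-stable). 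The distinction $\sigma(\tau)=\tilde{\st}\otimes\chi\circ\det$ versus $\sigma^{\mathrm{cr}}(\tau)=\chi\circ\det$ for $\tau=\chi\oplus\chi$ determines which subquotient of $\Pi^{\mathrm{alg}}$ can be hit: the Steinberg socle, corresponding to non-zero monodromy, matches $\sigma(\mathbf{w},\tau)$, whereas the character quotient $(\delta_1\circ\det)^{\oplus m}$ matches $\sigma^{\mathrm{cr}}(\mathbf{w},\tau)$, which is exactly what produces the exceptional clause: under $\rho^{\mathrm{un}}_{\nn}\cong\bigl(\begin{smallmatrix}\delta\varepsilon & *\\ 0 & \delta\end{smallmatrix}\bigr)$, a one-dimensional space of crystalline-type $K$-maps lands in that character quotient, and its existence forces $\delta$ to be locally algebraic, hence $\rho^{\mathrm{un}}_{\nn}$ to be potentially semi-stable of the prescribed Hodge type.

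The main obstacle is the explicit bookkeeping in case (ii)(b): one must control the multiplicity of the constituents $\widehat{\St}\otimes\delta_1\circ\det$ and $\delta_1\circ\det$ in $\Pi$, show that no further characters contribute to $\Pi^{\mathrm{alg}}$, and verify that the resulting $\Hom$ space has dimension at most one despite the possibility $m=2$. This is where the Colmez--Emerton description of locally algebraic vectors, the peu-ramifi\'e/tr\`es-ramifi\'e dichotomy, and the precise structure from Proposition \ref{specialize} must be combined, rather than a purely abstract multiplicity-one argument.
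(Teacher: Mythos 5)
Your overall strategy coincides with the paper's: reduce to $\Pi(\kappa(\nn))^{\alg}$, use Colmez--Emerton plus Lemma \ref{homalg} plus Henniart in the irreducible case, and run through cases (ii)(a)--(c) of Proposition \ref{specialize}. Cases (i) and (ii)(c) are handled correctly. But there are two genuine gaps in the reducible cases.

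The serious one is in case (ii)(a). You assert that $\Pi^{\alg}\neq 0$ iff both $\delta_1,\delta_2$ are locally algebraic, and that $\rho^{\un}_{\nn}$ is then ``automatically potentially crystalline''. Neither is right as stated. By \cite[Lem.11.5]{cmf} the subrepresentation $\Pi_1$ has non-zero locally algebraic vectors only when $\delta_1=\chi_1\varepsilon^a$, $\delta_2=\chi_2\varepsilon^b$ with $b>a$, i.e.\ the Hodge--Tate weights must sit in the correct order relative to the sub/quotient structure of $\rho^{\un}_{\nn}$; with the opposite ordering $\Pi_1^{\alg}=0$ while $\Pi_2^{\alg}\neq 0$, and $\rho^{\un}_{\nn}\cong\bigl(\begin{smallmatrix}\chi_2\varepsilon^a & \ast\\ 0 & \chi_1\varepsilon^b\end{smallmatrix}\bigr)$ is a non-split extension that is \emph{not} potentially semi-stable. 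If in that situation $\Hom_K(V,\Pi(\kappa(\nn)))\cong\Hom_K(V,\Pi_2)$ were non-zero, the ``$\dim=1\Rightarrow$ pst'' direction of the proposition would fail. So you must show $\Pi^{\alg}=0$ whenever $\Pi_1^{\alg}=0$. The paper does this by proving $\Pi^{\alg}\rightarrow\Pi_2^{\alg}$ is surjective (which needs a small case analysis when $\Pi_2^{\alg}$ is reducible), hence an isomorphism, and then invoking \cite[2.2.1]{be}: the universal unitary completion of $\Pi_2^{\alg}$ is $\Pi_2$, which would split the non-split sequence $0\rightarrow\Pi_1\rightarrow\Pi(\kappa(\nn))\rightarrow\Pi_2\rightarrow 0$, a contradiction. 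This step is the heart of the reducible case and is missing from your argument.

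The second gap is the one you flag yourself in case (ii)(b) but do not close: with $m\le 2$ copies of $\delta_1\circ\det$ in the quotient of $\Pi_1$, you need the locally algebraic vectors to contribute at most one copy of the character, otherwise the bound $\dim\Hom_K(\sigma^{\mathrm{cr}}(\mathbf w,\tau),\Pi(\kappa(\nn)))\le 1$ fails. The resolution is not extra bookkeeping but a concrete $\Ext$ computation: $\Ext^1_{G,\zeta}(\Eins,\St)$ in the smooth category is one-dimensional (via the Schneider--Stuhler resolution), Lemma \ref{extalg} transports this to the locally algebraic category, and since $\Pi(\kappa(\nn))^{\SL_2(\Qp)}=0$ by Lemma \ref{derive} no character can be a subrepresentation; together these force an exact sequence $0\rightarrow\St\otimes\chi\varepsilon^a\circ\det\rightarrow\Pi(\kappa(\nn))^{\alg}\rightarrow\chi\varepsilon^a\circ\det$ with a single copy of the character on the right, after which Henniart gives the bound.
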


\begin{proof} Since $V$ is a locally algebraic representation of $K$, we have
\begin{equation}\label{target_loc_alg}
\Hom_K(V, \Pi(\kappa(\nn)))\cong \Hom_K(V, \Pi(\kappa(\nn))^{\alg}),
\end{equation} 
where $\Pi(\kappa(\nn))^{\alg}$ denotes the subspace of locally algebraic vectors in $\Pi(\kappa(\nn))$. Thus, 
 if $\Hom_K(V, \Pi(\kappa(\nn)))\neq 0$ then  $\Pi(\kappa(\nn))^{\alg}$  is non-zero. 
 
 If $\Pi(\kappa(\nn))$ is an absolutely irreducible $\kappa(\nn)$-Banach space representation of $G$, 
 Colmez in \cite[VI.6.50]{colmez} and Emerton in \cite[3.3.22]{emfm} have shown that $\Pi(\kappa(\nn))^{\alg}\cong \pi\otimes W$, where 
 $\pi$ is either supercuspidal, smooth principal series or smooth special series and $W$ is an irreducible algebraic representation of $G$. Lemma \ref{homalg}
  and \eqref{target_loc_alg} imply that $W\cong \Sym^{b-a-1} L^2\otimes \det^a$ and  $\Hom_K(V, \Pi(\kappa(\nn)))\cong \Hom_K(\sigma(\tau), \pi)$ if $V=\sigma(\mathbf w, \tau)$, and $\Hom_K(V, \Pi(\kappa(\nn)))\cong\Hom_K(\sigma^{\mathrm{cr}}(\tau), \pi)$ if  $V=\sigma^{\mathrm{cr}}(\mathbf w, \tau)$.
Henniart has shown in \cite{henniart} that  these spaces are at most one dimensional. Moreover, $\Hom_K(\sigma(\tau), \pi)\neq 0$ if and only if
$\WD(\pi)|_{I_{\Qp}}\cong \tau$, where $\WD(\pi)$ is the Weil-Deligne representation  associated to $\pi$ via the classical local Langlands correspondence.
The only case, when $\sigma^{\mathrm{cr}}(\tau)\not\cong \sigma(\tau)$ is when $\tau\cong \chi\oplus \chi$, which is also the only case, 
when the monodromy operator $N$ of $\WD(\pi)$ can be non-trivial. In this case $\sigma^{\mathrm {cr}}(\tau)\cong \chi\circ \det$, and 
$\Hom_K(\sigma^{\mathrm{cr}}(\tau), \pi)\neq 0$ if and only if $\pi$ is a twist of an unramified principal series representation by $\chi\circ \det$, which
is equivalent to the monodromy operator $N=0$. Since $\VV(\Pi(\kappa(\nn)))\cong \cV(\Pi(\kappa(\nn)))$ by \cite[Lem.5.48]{cmf}, and $\cV(\Pi(\kappa(\nn)))\cong \rho^{\mathrm{un}}_{\nn}$
by Lemma \ref{vpi}, the assertion of the Proposition for irreducible $\Pi(\kappa(\nn))$ follows from \cite[VI.6.50]{colmez} and \cite[3.3.22]{emfm}.

Let us assume that part (ii) of Proposition \ref{specialize} holds, so that there is a non-split extension $0\rightarrow \Pi_1\rightarrow \Pi(\kappa(\nn))\rightarrow \Pi_2\rightarrow 0$ as in \eqref{reducible_case_1}. Then we have a non-split extension 
$0\rightarrow \delta_2\rightarrow \rho^{\un}_{\nn}\rightarrow \delta_1\rightarrow 0$
with $\delta_1\neq \delta_2$ and $\delta_1\delta_2=\varepsilon \zeta$. 

If part (ii) (c) of Proposition \ref{specialize} holds then $\Pi(\kappa(\nn))^{\alg}=0$ and $\rho^{\un}_{\nn}$ is not potentially semi-stable, see Lemma \ref{exclude3}, Remark \ref{exclude4}. 

Let us assume that  (ii) (b) of Proposition \ref{specialize} holds. 
Lemma 12.5 of \cite{cmf} implies that $\Pi_2^{\alg}=0$ and so $\Pi(\kappa(\nn))^{\alg}\cong \Pi_1^{\alg}$. There is an exact sequence of locally algebraic
representations 
$0\rightarrow (\widehat{\St}\otimes \delta_1\circ \det)^{\alg}\rightarrow \Pi_1^{\alg}\rightarrow ((\delta_1\circ \det)^{\alg})^{\oplus m}$. If $\delta_1$ is not
 locally algebraic then $\Pi(\kappa(\nn))^{\alg}=0$ and $\rho^{\un}_{\nn}$ is not potentially semistable. If $\delta_1=\chi\varepsilon^a$ for a smooth 
 character $\chi$ then $\rho^{\un}_{\nn}$ is potentially semi-stable  of type $((a, a+1), \chi\oplus \chi, \psi)$.  \cite[Lem.12.6]{cmf} implies 
 $(\widehat{\St}\otimes \delta_1\circ \det)^{\alg}= \St\otimes  \chi\varepsilon^a\circ \det$. Now $\Ext^1_{G/Z}(\Eins, \St)$ computed in the category of 
 smooth representations is $1$-dimensional, with the non-split extension realized by $(\Indu{P}{G}{|\centerdot|\otimes |\centerdot|^{-1}})_{\sm}$. 
 Lemma \ref{extalg} implies that the same holds in the category of locally algebraic representations.
 Since $\Pi(\kappa(\nn))^{\SL_2(\Qp)}=0$ by Lemma \ref{derive}, there is an exact sequence 
 $0\rightarrow \St \otimes \chi\varepsilon^a\circ \det \rightarrow \Pi(\kappa(\nn))^{\alg}\rightarrow  \chi\varepsilon^a\circ \det$. The exact sequence and 
Henniart's result imply  that $\dim_{\kappa(\nn)} \Hom_K(V, \Pi(\kappa(\nn)))\le1$, and is equal to $1$ if 
$V=\sigma((a, a+1), \chi\oplus \chi)$. If $\Hom_K(\sigma^{\mathrm{cr}}(\mathbf{w}, \tau), \Pi(\kappa(\nn)))\neq 0$
then $\Hom_K(\sigma(\mathbf{w}, \tau), \Pi(\kappa(\nn)))\neq 0$ and so $\rho^{\mathrm{un}}_{\nn}$
is potentially semi-stable of type $(\mathbf w, \tau, \psi)$.

Let us assume that (ii) (a) of Proposition \ref{specialize} holds, so that $\delta_1\delta_2^{-1}\neq \varepsilon^{\pm 1}$. It is shown in \cite[Lem.11.5]{cmf} that 
$\Pi_1^{\alg}\neq 0$, if and only if $\delta_1=\chi_1\varepsilon^a$, $\delta_2=\chi_2\varepsilon^b$, with $b>a$ and $\chi_1, \chi_2:\Qp^{\times}\rightarrow L^{\times}$ 
are smooth characters, in which case $\Pi_1^{\alg}\cong (\Indu{P}{G}{\chi_1|\centerdot|^a\otimes\chi_2|\centerdot|^{b-1}})_{\mathrm{sm}}\otimes \Sym^{b-a-1} L^2\otimes \det^a$. Since $\Pi_1$ has central 
character $\zeta$, we have $\chi_1\chi_2 \varepsilon^{b+a}=\zeta \varepsilon$. In particular, if $\Pi_1^{\alg}\neq 0$ then $\Pi_2^{\alg}=0$ and hence 
$\Pi(\kappa(\nn))^{\alg}=\Pi_1^{\alg}$.  As $\rho^{\mathrm{un}}_{\nn}\cong \bigl ( \begin{smallmatrix} \chi_2 \varepsilon^b & \ast \\ 0 & \chi_1 \varepsilon^a\end{smallmatrix}\bigr )$, it is of type $((a,b), \chi_1\oplus \chi_2, \psi)$. Hence, if $\Pi_1^{\alg}\neq 0$ we are done. 

To finish  the proof we need to show that if $\Pi_1^{\alg}=0$ then $\Pi(\kappa(\nn))^{\alg}=0$, since if 
$\Hom_K(V, \Pi(\kappa(\nn)))\cong \Hom_K(V, \Pi_2)\neq 0$, then $\delta_1=\chi_1\varepsilon^b$, $\delta_2= \chi_2\varepsilon^a$, for some smooth unitary characters
$\chi_1, \chi_2:\Qp^{\times}\rightarrow L^{\times}$ and $\rho^{\mathrm{un}}_{\nn}\cong \bigl ( \begin{smallmatrix} \chi_2 \varepsilon^a & \ast \\ 0 & \chi_1 \varepsilon^b\end{smallmatrix}\bigr )$, which is not potentially semi-stable as $b>a$.
If $\Pi(\kappa(\nn))^{\alg}\neq 0$ and $\Pi_1^{\alg}=0$ then $\Pi_2^{\alg}\neq 0$. 
We claim that the map $\Pi(\kappa(\nn))^{\alg}\rightarrow \Pi_2^{\alg}$ is surjective. 
Since $\Pi_1^{\alg}=0$ the claim implies that the map  is an isomorphism.  However, \cite[2.2.1]{be} implies that the universal unitary completion 
of $\Pi_2^{\alg}$ is isomorphic to $\Pi_2$,  which implies that the sequence splits, leading to a contradiction. To prove the claim it is enough to consider the case, 
when $\Pi_2^{\alg}$ is reducible. Since $\Pi_2^{\alg}\cong (\Indu{P}{G}{\chi_2|\centerdot|^a\otimes\chi_1|\centerdot|^{b-1}})_{\mathrm{sm}}\otimes \Sym^{b-a-1} L^2\otimes \det^a\cong (\Indu{P}{G}{\chi_2\otimes\chi_1|\centerdot|^{b-a-1}})_{\mathrm{sm}}\otimes \Sym^{b-a-1} L^2\otimes \varepsilon^a\circ\det$, either
 1) $\chi_1=\chi_2$ and $a=b+1$, which implies $\delta_1\delta_2^{-1}=\varepsilon$, 
or 2) $\chi_2=\chi |\centerdot|$ and  $\chi_1=\chi |\centerdot|^{b-a}$, for some smooth $\chi: \Qp^{\times}\rightarrow L^{\times}$. 
Since $\chi_1$ and $\chi_2$ are unitary we deduce that $b=a+1$ and hence $\delta_1\delta_2^{-1}=\varepsilon$. 
 \end{proof}

\begin{thm}\label{exist_pst_def} We assume the existence of $N$ satisfying (N0), (N1), (N2) in \S\ref{points}. For each type $(\mathbf w, \tau, \psi)$ there exists 
a reduced, $\OO$-torsion free quotient $R^{\psi}_{\rho}(\mathbf w, \tau)$ (resp. $R^{\psi, \mathrm{cr}}_{\rho}(\mathbf w, \tau)$) of $R^{\psi}_{\rho}$ such that for each $\nn\in \MaxSpec R^{\psi}_{\rho}[1/p]$, 
$\rho_{\nn}^{\mathrm{un}}$ is potentially semi-stable (resp. potentially crystalline) of type $(\mathbf w, \tau, \psi)$ if and only if $\nn\in \MaxSpec  R^{\psi}_{\rho}(\mathbf w, \tau)[1/p]$ (resp.  $\nn\in\MaxSpec  R^{\psi, \mathrm{cr}}_{\rho}(\mathbf w, \tau)[1/p]$).

Moreover, if $\Theta$ is a $K$-invariant $\OO$-lattice in $\sigma(\mathbf w, \tau)$  and 
$\mathfrak a$ is the $R^{\psi}_{\rho}$-annihilator of $M(\Theta)$, defined in \ref{M_Th}, then $R^{\psi}_{\rho}(\mathbf w, \tau)=R^{\psi}_{\rho}/\sqrt{\mathfrak a}$. If $\Theta$ is a $K$-invariant $\OO$-lattice in $\sigma^{\mathrm{cr}}(\mathbf w, \tau)$ and either $\rho\not\cong \bigl(\begin{smallmatrix} \bar{\delta} \omega & \ast\\ 0 & \bar{\delta}\end{smallmatrix}\bigr)$, $b-a>1$  or $\tau\neq \chi\oplus \chi$
then $R^{\psi, \mathrm{cr}}_{\rho}(\mathbf w, \tau)=R^{\psi}_{\rho}/\sqrt{\mathfrak a}$.
\end{thm}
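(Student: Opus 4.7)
The plan is to define $R^{\psi}_\rho(\mathbf w,\tau):=R^{\psi}_\rho/\sqrt{\mathfrak a}$, where $\mathfrak a:=\ann_{R^{\psi}_\rho}M(\Theta)$ with $\Theta$ any $K$-invariant $\OO$-lattice in $\sigma(\mathbf w,\tau)$, and analogously for the crystalline case outside the exceptional situation. By hypothesis (N0) and Proposition \ref{fgM}, $M(\Theta)$ is a finitely generated $R^{\psi}_\rho$-module, so $\sqrt{\mathfrak a}$ is a well-defined ideal and the quotient is reduced by construction.

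The first step is to verify $\OO$-torsion-freeness. Because $M(\Theta)$ is a Schikhof dual it is $\OO$-torsion free, so $\varpi$ is $M(\Theta)$-regular and consequently lies in no associated prime of $M(\Theta)$ as an $R^{\psi}_\rho$-module. Since $M(\Theta)$ is a faithful $R^{\psi}_\rho/\mathfrak a$-module, choosing generators gives an injection $R^{\psi}_\rho/\mathfrak a\hookrightarrow M(\Theta)^{\oplus s}$, so $\Ass(R^{\psi}_\rho/\mathfrak a)\subseteq\Ass M(\Theta)$ and in particular the minimal primes above $\mathfrak a$ avoid $\varpi$. As $R^{\psi}_\rho/\sqrt{\mathfrak a}$ is reduced, its associated primes coincide with its minimal primes, so $\varpi$ is a non-zero-divisor in $R^{\psi}_\rho/\sqrt{\mathfrak a}$.

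Next I would identify the maximal spectrum. For $\nn\in\MaxSpec R^{\psi}_\rho[1/p]$, one has $\nn\in\MaxSpec(R^{\psi}_\rho/\sqrt{\mathfrak a})[1/p]$ iff $\mathfrak a\subseteq\nn$, iff $M(\Theta)_\nn\neq 0$, iff $\kappa(\nn)\otimes_{R^{\psi}_\rho}M(\Theta)\neq 0$ by Nakayama's lemma. Proposition \ref{dim_eq} converts this to $\Hom_K(V,\Pi(\kappa(\nn)))\neq 0$ with $V=\sigma(\mathbf w,\tau)\otimes_\OO\kappa(\nn)$ (respectively $\sigma^{\mathrm{cr}}(\mathbf w,\tau)\otimes_\OO\kappa(\nn)$), and then Proposition \ref{nearly_done} matches this precisely with the requirement that $\rho^{\mathrm{un}}_\nn$ be potentially semi-stable (respectively potentially crystalline, under the stated hypothesis) of type $(\mathbf w,\tau,\psi)$. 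This simultaneously proves the existence assertion and the "Moreover" identification in these cases.

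The main obstacle is the exceptional crystalline situation, namely $\rho\cong\bigl(\begin{smallmatrix}\bar\delta\omega & \ast\\ 0 & \bar\delta\end{smallmatrix}\bigr)$ with $b-a=1$ and $\tau=\chi\oplus\chi$. Here Proposition \ref{nearly_done} only guarantees that $\Hom_K(\sigma^{\mathrm{cr}}(\mathbf w,\tau),\Pi(\kappa(\nn)))\neq 0$ forces $\rho^{\mathrm{un}}_\nn$ to be potentially semi-stable of type $(\mathbf w,\tau,\psi)$, so the quotient $R^{\psi}_\rho/\sqrt{\mathfrak a}$ may carry genuinely non-crystalline Steinberg-type specializations and cannot serve as $R^{\psi,\mathrm{cr}}_\rho(\mathbf w,\tau)$. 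To produce the crystalline ring in this case I would start from the already constructed $R^{\psi}_\rho(\mathbf w,\tau)$ and take the quotient by the intersection of those maximal ideals of $R^{\psi}_\rho(\mathbf w,\tau)[1/p]$ whose specializations have vanishing monodromy; $\OO$-torsion freeness and the bijective correspondence on maximal spectra then follow provided the crystalline locus is a union of irreducible components of $\Spec R^{\psi}_\rho(\mathbf w,\tau)$, which is the delicate point and is ultimately controlled by the Cohen--Macaulay/cycle information furnished by Theorems \ref{A} and \ref{B}.
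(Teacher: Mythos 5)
Your construction of $R^{\psi}_{\rho}(\mathbf w,\tau)$ as $R^{\psi}_{\rho}/\sqrt{\mathfrak a}$, and of $R^{\psi,\mathrm{cr}}_{\rho}(\mathbf w,\tau)$ outside the exceptional situation, is exactly the paper's argument: Propositions \ref{dim_eq} and \ref{nearly_done} identify $\supp M(\Theta)\cap\MaxSpec R^{\psi}_{\rho}[1/p]$ with the set of potentially semi-stable (resp.\ potentially crystalline) points of type $(\mathbf w,\tau,\psi)$, and your $\OO$-torsion-freeness argument via $\Ass(R^{\psi}_{\rho}/\mathfrak a)\subseteq\Ass M(\Theta)$ and the regularity of $\varpi$ on $M(\Theta)$ is sound.

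The gap is in the exceptional crystalline case $\rho\cong\bigl(\begin{smallmatrix}\bar\delta\omega&\ast\\0&\bar\delta\end{smallmatrix}\bigr)$, $b-a=1$, $\tau=\chi\oplus\chi$. Cutting out the Zariski closure of the monodromy-zero points inside $\Spec R^{\psi}_{\rho}(\mathbf w,\tau)$ only produces a ring whose maximal spectrum is that closure; to get the stated ``if and only if'' you must show the closure contains no semi-stable non-crystalline point, i.e.\ that the crystalline locus is a union of irreducible components of $\Spec R^{\psi}_{\rho}(\mathbf w,\tau)$. This is a genuine issue here, since both the crystalline and the Steinberg (non-zero $\mathcal L$-invariant) points can sweep out positive-dimensional families in the one-dimensional generic fibre, and it is \emph{not} supplied by Theorems \ref{A} and \ref{B}: those results concern $\supp M(\Theta)$ and say nothing about which of its points have vanishing monodromy. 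Worse, the cycle and multiplicity computations that might eventually distinguish the components (Proposition \ref{endlich}) themselves presuppose the existence of $R^{\psi,\mathrm{cr}}_{\rho}(\mathbf w,\tau)$ in exactly this case, so your route is circular as it stands. The paper sidesteps all of this: after twisting, the exceptional case concerns crystalline deformations with Hodge--Tate weights $\{0,1\}$ and trivial inertial type, which for $p\ge 5$ lie in the Fontaine--Laffaille range, and Fontaine--Laffaille theory directly furnishes the (formally smooth, hence reduced and $\OO$-flat) ring $R^{\psi,\mathrm{cr}}_{\rho}(\mathbf w,\tau)$. You need an external input of this kind for the exceptional case; it cannot be extracted from the formalism already established.
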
 

\begin{proof} If $\rho\cong \bigl(\begin{smallmatrix} \bar{\delta} \omega & \ast\\ 0 & \bar{\delta}\end{smallmatrix}\bigr)$, $b-a=1$ and $\tau=\chi\oplus \chi$ then we deduce the existence of $R^{\psi, \mathrm{cr}}_{\rho}(\mathbf w, \tau)$ from Fontaine-Laffaille theory. 
In all other cases, Propositions \ref{dim_eq} and  \ref{nearly_done} imply that $\nn\in \MaxSpec R^{\psi}_{\rho}[1/p]$ 
lies in the support of $M(\Theta)$ if and only if $\rho^{\mathrm{un}}_{\nn}$ is potentially semi-stable (resp. potentially crystalline) of type 
$(\mathbf w, \tau, \psi)$. This implies that $R^{\psi}_{\rho}/\sqrt{\mathfrak a}$ is the required ring.
\end{proof}

\begin{lem}\label{cor_to_proof} We assume that either $\rho\not\cong \bigl(\begin{smallmatrix} \bar{\delta} \omega & \ast\\ 0 & \bar{\delta}\end{smallmatrix}\bigr)$, $b-a>1$  or $\tau\neq \chi\oplus \chi$. Let $\Theta$ be a $K$-invariant $\OO$-lattice in either $\sigma(\mathbf w, \tau)$ or  $\sigma^{\mathrm{cr}}(\mathbf w, \tau)$ and 
 let $M(\Theta)$ be the $R^{\psi}_{\rho}$-module defined in \ref{M_Th}. Then for 
 almost all\footnote{This means `all except for finitely many' throughout the text.}  $\nn\in \MaxSpec R^{\psi}_{\rho}[1/p]$ lying in $\supp M(\Theta)$,  
 $\Pi(\kappa(\nn))^{\alg}$ is an irreducible representation 
of $G$.

 Moreover for such $\nn$,  if $\Pi(\kappa(\nn))$ is reducible in $\Ban^{\adm}_{G, \zeta}(\kappa(\nn))$  then there is a closed absolutely irreducible 
$\kappa(\nn)$-Banach space $G$-subrepresentation $\Pi_1$ of $\Pi(\kappa(\nn))$ such that 
$\Pi_1^{\alg}\cong \Pi(\kappa(\nn))^{\alg}$ and $\Pi_1$ is isomorphic to the universal unitary completion of $\Pi_1^{\alg}$.
\end{lem}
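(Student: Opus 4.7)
The plan is a case-by-case analysis of $\Pi := \Pi(\kappa(\nn))$ following Proposition \ref{specialize} and the proof of Proposition \ref{nearly_done}. Fix $\nn \in \supp M(\Theta) \cap \MaxSpec R^{\psi}_{\rho}[1/p]$; by Theorem \ref{exist_pst_def}, $\rho^{\un}_{\nn}$ is then potentially semi-stable (or potentially crystalline) of type $(\mathbf{w}, \tau, \psi)$. Lemma \ref{exclude3} rules out the sub-case (ii)(c) of Proposition \ref{specialize}, so the remaining possibilities are: (i) $\Pi$ absolutely irreducible and non-ordinary; (ii)(a) $\Pi$ a non-split extension of two continuous principal series with $\delta_1\delta_2^{-1} \neq \varepsilon^{\pm 1}$; and (ii)(b) the same, but with $\delta_1 = \delta_2\varepsilon^{-1}$.

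First I would dispose of case (ii)(b) by showing that it contributes only finitely many $\nn$. The simultaneous conditions $\delta_1\delta_2 = \zeta\varepsilon$ and $\delta_1 = \delta_2\varepsilon^{-1}$ force $\delta_2^2 = \zeta\varepsilon^2$; combined with the restriction of $\delta_1,\delta_2$ to $I_{\Qp}$ being fixed by $\tau$, this pins down $\delta_2$ up to finitely many choices, and hence cuts out a finite subset of the one-dimensional variety $\MaxSpec R^{\psi}_{\rho}(\mathbf{w}, \tau)[1/p]$.

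In cases (i) and (ii)(a), the description of locally algebraic vectors due to Colmez \cite[VI.6.50]{colmez} and Emerton \cite[3.3.22]{emfm}, already used in the proof of Proposition \ref{nearly_done}, yields $\Pi^{\alg} \cong \pi \otimes \Sym^{b-a-1}L^2 \otimes \det^a$, where $\pi$ is the smooth representation attached to $\WD(\rho^{\un}_{\nn})$ by the classical local Langlands correspondence; in case (ii)(a) one has explicitly $\pi \cong (\Indu{P}{G}{\chi_1|\centerdot|^a \otimes \chi_2|\centerdot|^{b-1}})_{\sm}$ up to an algebraic twist. Thus $\Pi^{\alg}$ is irreducible iff $\pi$ is. Since $\pi$ is automatically irreducible when supercuspidal or special, the only obstruction is that $\pi$ be a reducible smooth principal series, which forces the two inducing characters to differ by $|\centerdot|^{\pm 1}$. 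With $\tau$ and $\psi$ fixing the inducing characters on $I_{\Qp}$ and their product, this is a finite condition on the remaining Frobenius-eigenvalue parameter, so the reducibility locus is a finite subset of $\supp M(\Theta) \cap \MaxSpec R^{\psi}_{\rho}[1/p]$. This proves the first assertion.

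For the ``moreover'' clause, assume $\Pi$ is reducible and $\nn$ avoids the finite exceptional locus above, so that we are in case (ii)(a) with $\Pi^{\alg}$ irreducible. I would take $\Pi_1 = (\Indu{P}{G}{\delta_1 \otimes \delta_2\varepsilon^{-1}})_{\cont}$, which is absolutely irreducible by Lemma \ref{unique}(ii). The equality $\Pi_1^{\alg} = \Pi^{\alg}$ follows from $\Pi_2^{\alg} = 0$, established in the proof of Proposition \ref{nearly_done} (the Hodge-Tate weight inequality $b > a$ forces all locally algebraic vectors into the sub-extension). Finally, the identification of $\Pi_1$ as the universal unitary completion of $\Pi_1^{\alg}$ is exactly \cite[2.2.1]{be}. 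The main obstacle I foresee is the finiteness bookkeeping in the previous paragraph: one has to verify cleanly that each of the three reducibility conditions $\chi_1/\chi_2 \in \{|\centerdot|^{b-a-1},|\centerdot|^{b-a},|\centerdot|^{b-a-2}\}$ cuts out only finitely many points of $\supp M(\Theta) \cap \MaxSpec R^{\psi}_{\rho}[1/p]$, using that $R^{\psi}_{\rho}(\mathbf{w}, \tau)[1/p]$ is one-dimensional.
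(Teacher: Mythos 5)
Your overall strategy (case analysis via Proposition \ref{specialize}, the description of locally algebraic vectors, and \cite[2.2.1]{be} for the ``moreover'' clause) is the paper's, and your treatment of cases (i) and (ii)(a) is essentially right. But there are two genuine gaps, one of which would sink the argument as written.

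The serious one is your handling of case (ii)(b). You argue that $\delta_2^2=\zeta\varepsilon^2$ together with the type pins down $\delta_2$ up to finitely many choices, ``and hence'' cuts out a finite set of $\nn$. The ``hence'' is false: fixing $\delta_1$ and $\delta_2=\delta_1\varepsilon$ does not fix $\nn$, because $\rho^{\un}_{\nn}$ is a non-split extension of $\delta_1$ by $\delta_1\varepsilon$ and $\Ext^1_{G_{\Qp}}(\Eins,\varepsilon)$ is two-dimensional; the semi-stable non-crystalline points with varying $\mathcal L$-invariant form a positive-dimensional family, all with the same $(\delta_1,\delta_2)$, and in the relevant situation they constitute an entire component of $\supp M(\Theta)$, not a finite set. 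This case cannot be disposed of by counting — it must be excluded outright, and that is exactly what the unused hypothesis of the lemma is for: if (ii)(b) holds and $\nn\in\supp M(\Theta)$, then the analysis of $\Pi(\kappa(\nn))^{\alg}$ in the proof of Proposition \ref{nearly_done} forces $b-a=1$ and $\tau=\chi\oplus\chi$, while Lemma \ref{V_Pi} forces $\rho\cong\bigl(\begin{smallmatrix}\bar\delta\omega & \ast\\ 0 & \bar\delta\end{smallmatrix}\bigr)$; all three excluded conditions hold simultaneously, a contradiction. You never invoke the hypothesis, which should have been a warning sign.

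The second, more minor, gap is in case (i): you pass from ``finitely many possible reducible $\pi$'' to ``finitely many $\nn$'' without justification. A priori, distinct $\nn$ could share the same $\Pi(\kappa(\nn))^{\alg}$. The paper closes this by noting that for each such $\pi$ the universal unitary completion of $\pi\otimes W$ is irreducible by \cite[5.3.4]{bb}, hence is isomorphic to $\Pi(\kappa(\nn))$ itself; since $\Pi(\kappa(\nn))$ determines $\cV(\Pi(\kappa(\nn)))\cong\rho^{\un}_{\nn}$ and hence $\nn$, finitely many $\pi$ do give finitely many $\nn$. You should add this step (or an equivalent one through the Galois side).
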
  
\begin{proof} Let  $\nn$ be a maximal ideal of $R^{\psi}_{\rho}[1/p]$ lying in $\supp M(\Theta)$, such that $\Pi(\kappa(\nn))$ is absolutely irreducible. 
 As already explained in the proof of Proposition \ref{nearly_done}, $\Pi(\kappa(\nn))^{\alg}\cong \pi\otimes W$, where $W$ is an algebraic 
representation of $G$ completely determined by the Hodge-Tate weight $\mathbf{w}$, and $\pi$ is a smooth representation of 
$G$, which is is either supercuspidal, principal series or special series. 
If $\Pi(\kappa(\nn))^{\alg}$ is reducible then $\pi\cong (\Indu{P}{G}{|\centerdot|\otimes|\centerdot|^{-1}})_{\sm}\otimes \chi\circ\det$. 
Since $\zeta$ is equal to the product of central characters of $\pi$ and $W$, $\chi^2=\zeta \varepsilon^{1-a-b}$. There are only finitely many characters 
$\chi$ satisfying this equality. In each case, the universal unitary completion of $\pi\otimes W$ is irreducible by \cite[5.3.4]{bb}, and hence is isomorphic to 
$\Pi(\kappa(\nn))$. Thus there are only finitely many $\nn\in \supp M(\Theta)$ such that $\Pi(\kappa(\nn))$ is irreducible and $\Pi(\kappa(\nn))^{\alg}$ is 
reducible. If $\Pi(\kappa(\nn))$ is reducible  then Proposition \ref{specialize} (ii) (a) holds, since Lemma \ref{exclude3} rules out (ii) (c), and (ii) (b) would 
imply that  $\rho\cong \bigl(\begin{smallmatrix} \bar{\delta} \omega & \ast\\ 0 & \bar{\delta}\end{smallmatrix}\bigr)$, $b-a=1$ and $\tau= \chi\oplus \chi$.
It follows from the proof of Proposition \ref{nearly_done} 
that $\Pi(\kappa(\nn))^{\alg}\cong \pi\otimes W$, where $\pi$ is irreducible principal series. The universal unitary completion of $\Pi(\kappa(\nn))^{\alg}$ is absolutely irreducible, \cite[2.2.1]{be}. 
\end{proof}

\subsection{Locally algebraic vectors in extensions} We will reformulate the condition (c)(ii) in Theorem \ref{A} in terms of 
locally algebraic vectors in extensions of Banach space representations. We will verify the condition in the crystabeline, 
and in the semi-stable, non-crystalline cases. The condition is verified by Dospinescu in \cite{dospinescu} in general. 

\begin{defi}
We will say that an absolutely irreducible $\Pi$ in $\Ban^{\adm}_{G, \zeta}(L)$ satisfies the hypothesis 
(RED), if $\Pi^{\alg}\neq 0$ and the following statement holds:
\begin{itemize}
\item[(RED)] The subspace $\mathcal E$ of $\Ext^1_{G, \zeta}(\Pi, \Pi)$, generated by the extensions 
$0\rightarrow \Pi \rightarrow E\rightarrow \Pi\rightarrow 0$, such that the corresponding sequence of 
locally algebraic vectors $0\rightarrow \Pi^{\alg} \rightarrow E^{\alg}\rightarrow \Pi^{\alg}\rightarrow 0$
is exact,  is at most one dimensional.
\end{itemize}
\end{defi}

\begin{lem}\label{easy} Let $\Pi$ be in $\Ban^{\adm}_{G, \zeta}(L)$ be absolutely irreducible, satisfying (RED) and such that $\Pi^{\alg}$ 
is irreducible. Let 
\begin{equation}\label{seq1}
0\rightarrow \Pi \rightarrow E\rightarrow \Pi^{\oplus n}\rightarrow 0
\end{equation} 
be an exact sequence in $\Ban^{\adm}_{G, \zeta}(L)$, such that $\Hom_G(\Pi, E)$ is one dimensional. 
If $V$ is either  $\sigma(\mathbf w, \tau)$ or $\sigma^{\mathrm{cr}}(\mathbf w, \tau)$ then $\Hom_{K}(V, E)$ is at most $2$-dimensional.
\end{lem}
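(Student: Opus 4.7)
The plan is to reduce to locally algebraic vectors and then exploit (RED) to bound the image of $E^{\alg}$ inside $(\Pi^{\oplus n})^{\alg}$. Since $V$ is locally algebraic one has $\Hom_K(V, E) = \Hom_K(V, E^{\alg})$, and applying the left-exact functor $(-)^{\alg}$ to the sequence $0 \to \Pi \to E \to \Pi^{\oplus n} \to 0$ yields an exact sequence $0 \to \pi \to E^{\alg} \to W \to 0$, where $\pi := \Pi^{\alg}$ and $W \subseteq \pi^{\oplus n}$ denotes the image of $E^{\alg}$. Applying $\Hom_K(V, -)$ and invoking the bound $\dim_L \Hom_K(V, \pi) \le 1$ (established via Henniart's theorem in the proof of Proposition \ref{nearly_done}), I obtain
\begin{equation*}
\dim_L \Hom_K(V, E) \le 1 + \dim_L \Hom_K(V, W).
\end{equation*}
Since $\pi$ is absolutely irreducible (its smooth and algebraic tensor factors are each absolutely irreducible), Schur gives $\End_G(\pi) = L$, so $W \cong \pi^{\oplus m}$ for some $m \le n$, with $\dim_L \Hom_K(V, W) \le m$. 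Thus it suffices to prove $m \le 1$.

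To bound $m$, let $E_i \subseteq E$ be the preimage of the $i$-th summand of $\Pi^{\oplus n}$, yielding classes $[E_i] \in \Ext^1_{G, \zeta}(\Pi, \Pi)$. The long exact sequence from $\Hom_G(\Pi, -)$ identifies $\Hom_G(\Pi, E)$ with $L \oplus \ker(L^n \to \Ext^1_{G, \zeta}(\Pi, \Pi))$, where the second map sends $e_i \mapsto [E_i]$; the hypothesis $\dim_L \Hom_G(\Pi, E) = 1$ therefore forces $[E_1], \ldots, [E_n]$ to be linearly independent. The embedding $W \cong \pi^{\oplus m} \hookrightarrow \pi^{\oplus n}$ is encoded by an $n \times m$ matrix $(a_{ij})$ of scalars of rank $m$. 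Pulling the sequence $0 \to \Pi \to E \to \Pi^{\oplus n} \to 0$ back along the embedding $\iota_j : \Pi \hookrightarrow \Pi^{\oplus n}$ corresponding to the $j$-th column produces an extension $0 \to \Pi \to \tilde E_j \to \Pi \to 0$ of class $\sum_i a_{ij} [E_i]$. Because $\iota_j(\pi) \subseteq W$ is contained in the image of $E^{\alg}$, the locally algebraic sequence of $\tilde E_j$ is exact, so $[\tilde E_j] \in \mathcal{E}$. The $m$ classes $[\tilde E_1], \ldots, [\tilde E_m]$ are linearly independent in $\mathcal{E}$ (the matrix has rank $m$ and the $[E_i]$ are independent), so (RED) forces $m \le \dim_L \mathcal{E} \le 1$, which finishes the argument.

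The main obstacle is the verification that each $[\tilde E_j]$ lies in $\mathcal{E}$: one must identify $\tilde E_j^{\alg}$ with the inverse image of $\iota_j(\pi)$ inside $E^{\alg}$ and check that the induced map $\tilde E_j^{\alg} \to \pi$ remains surjective, which reduces to the containment $\iota_j(\pi) \subseteq W = $ image of $E^{\alg}$ in $\pi^{\oplus n}$. Once this geometric point is in hand, the rest is bookkeeping with the long exact sequences.
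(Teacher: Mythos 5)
Your proof is correct and rests on exactly the same two pillars as the paper's: the hypothesis $\dim_L\Hom_G(\Pi,E)=1$ forces the $n$ extension classes obtained from \eqref{seq1} to be linearly independent in $\Ext^1_{G,\zeta}(\Pi,\Pi)$, and (RED) caps at one the dimension of the subspace of such classes whose locally algebraic sequences stay exact. The paper packages this as a diagram chase comparing the $\Hom_G(\Pi,-)$ and $\Hom_K(V,-)$ long exact sequences (showing $\dim\Ker\beta\le\dim\Ker\alpha=\dim\mathcal E\le 1$), whereas you make the same count explicit by decomposing the image $W\cong\pi^{\oplus m}$ of $E^{\alg}$ and exhibiting $m$ independent pullback classes inside $\mathcal E$; the content is the same.
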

\begin{proof} We may assume that $\Hom_K(V, \Pi)$ is non-zero. Since $\Pi^{\alg}$ is irreducible by assumption, 
Lemma \ref{homalg} and \cite{henniart} imply that $\Hom_K(V, \Pi)$ is one dimensional.
 Since $\Pi$ is absolutely irreducible  $\Hom_G(\Pi, \Pi)$ is also $1$-dimensional. 
 In particular, the natural map $\Hom_G(\Pi, \Pi)\rightarrow \Hom_K(V, \Pi)$ 
is an isomorphism. Applying $\Hom_G(\Pi, \ast)$ and $\Hom_K(V, \ast)$ to \eqref{seq1} we obtain 
a commutative diagram: 
 \begin{displaymath}
\xymatrix@1{\,\Hom_{G}(\Pi, \Pi^{\oplus n})\,\ar@{^{(}->}[r] \ar[d]^{\cong}& \Ext^1_{G, \zeta}(\Pi, \Pi)\ar[d]^{\alpha}\\
\Hom_{K}(V, \Pi^{\oplus n})\ar[r]^{\beta} &\Ext^1_{K, \zeta}(V ,\Pi),}
\end{displaymath}
where $\Ext^1$ is the Yoneda $\Ext$ computed in $\Ban^{\adm}_{G, \zeta}(L)$ and $\Ban^{\adm}_{K, \zeta}(L)$ respectively. 
It follows from the diagram that $\dim \Ker \beta\le \dim \Ker \alpha$.
Since $\Pi^{\alg}$ is irreducible, the subspace $\mathcal E$, defined in (RED), is equal to $\Ker \alpha$.  
Since (RED) holds we deduce that $\dim \Ker \beta \le 1$, which implies the assertion.
\end{proof}

\begin{thm}\label{RED_is_ok} Let $\nn\in \MaxSpec R^{\psi}_{\rho}[1/p]$ be such that $\Pi(\kappa(\nn))^{\alg}$ is non-zero and is an irreducible representation of $G$ and let $\Pi$ be the unique non-zero closed irreducible $\kappa(\nn)$-Banach space $G$-sub\-rep\-re\-sen\-ta\-tion of $\Pi(\kappa(\nn))$.  
Then $\Pi$ satisfies (RED).
\end{thm}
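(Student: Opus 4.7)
My plan is to bound $\dim_{\kappa(\nn)} \mathcal{E}$ by first mapping $\mathcal{E}$ injectively into the Ext group of $\Pi^{\alg}$ with itself computed in locally algebraic representations, then reducing via Lemma \ref{extalg} to a bound on smooth Ext.

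Writing $\Pi^{\alg} \cong \pi \otimes W$ with $\pi$ irreducible smooth and $W$ irreducible algebraic (using that $\Pi^{\alg}$ is irreducible by hypothesis), the first step is to show that the map $E \mapsto E^{\alg}$ induces an injection
$$\phi : \mathcal{E} \hookrightarrow \Ext^1_{G,\zeta}(\Pi^{\alg}, \Pi^{\alg}),$$
where the target is computed in the category of locally algebraic representations with central character $\zeta$. The key input here is that $\Pi$ coincides with the universal unitary completion of $\Pi^{\alg}$: since $\Pi^{\alg}$ is non-zero and $\Pi$ is absolutely irreducible, $\Pi^{\alg}$ is dense in $\Pi$; by \cite[2.2.1]{be} in the principal series case (ii)(a) of Proposition \ref{specialize}, or by the classification of absolutely irreducible non-ordinary Banach space representations in terms of their locally algebraic vectors in the supercuspidal case (i), the universal unitary completion of $\Pi^{\alg}$ is absolutely irreducible, so the natural surjection from this completion onto $\Pi$ is an isomorphism. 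Given $E \in \mathcal{E}$ with $E^{\alg}$ split via a $G$-equivariant section $s : \Pi^{\alg} \to E^{\alg} \hookrightarrow E$, this $s$ extends uniquely by the universal property to a continuous $G$-equivariant map $\Pi \to E$ that splits the original extension, giving injectivity of $\phi$.

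By Lemma \ref{extalg}, the target of $\phi$ is canonically identified with $\Ext^1_{G, \zeta'}(\pi, \pi)$ computed in the category of smooth representations, where $\zeta'$ is the central character of $\pi$. Proposition \ref{specialize} combined with Lemma \ref{cor_to_proof} shows that in all relevant cases $\pi$ is either supercuspidal (when $\Pi(\kappa(\nn))$ is absolutely irreducible non-ordinary) or an irreducible smooth principal series. In the supercuspidal case $\pi \cong \cIndu{KZ}{G}{\sigma}$ for a suitable type $\sigma$, and by Bushnell--Kutzko theory $\pi$ is a projective generator of the corresponding block of smooth representations, yielding $\Ext^1_{G, \zeta'}(\pi, \pi) = 0$. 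In the principal series case $\pi \cong (\Indu{P}{G}{\chi_1 \otimes \chi_2})_{\sm}$, a direct computation via parabolic adjunction and the Bruhat decomposition (equivalently via the smooth cohomology $H^1(T/Z, L)$ of the diagonal torus modulo the center) shows that $\Ext^1_{G, \zeta'}(\pi, \pi)$ is one-dimensional, generated by the infinitesimal twist along $T/Z \cong \Qp^{\times}$ by the $p$-adic valuation.

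The main obstacle is the first step, specifically the identification of $\Pi$ with the universal unitary completion of $\Pi^{\alg}$ in the supercuspidal case, which rests on non-trivial inputs from the $p$-adic Langlands correspondence for $\GL_2(\Qp)$; once this is in hand, injectivity of $\phi$ and the smooth Ext computation proceed relatively formally.
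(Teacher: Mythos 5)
Your reduction works, and matches the paper, precisely in the principal series case: there $\Pi$ really is the universal unitary completion of $\Pi^{\alg}$ (by \cite[5.3.4]{bb} and \cite[2.2.1]{be}), a splitting of $E^{\alg}$ therefore extends to a splitting of $E$, and $\Ext^1_{G,\zeta_{\pi}}(\pi,\pi)$ is one dimensional. The fatal problem is the supercuspidal case, together with the special series case with $\dim W>1$, which your case analysis omits entirely even though $\St\otimes W$ is an irreducible locally algebraic representation and semi-stable non-crystalline points do satisfy the hypotheses of the theorem. In both of these cases the identification $\Pi\cong\widehat{\Pi^{\alg}}$ is false. The locally algebraic representation $\pi\otimes W$ only records the Weil--Deligne representation and the Hodge--Tate weights of $\rho^{\un}_{\nn}$, not the Hodge filtration: when the Weil--Deligne representation is irreducible (and likewise in the semi-stable non-crystalline case) every line in $D_{dR}$ is weakly admissible, so there is a $\mathbb{P}^1$-family of pairwise non-isomorphic de Rham representations $V_{\mathcal L}$ and hence of pairwise non-isomorphic absolutely irreducible admissible Banach representations $\Pi(V_{\mathcal L})$, all with locally algebraic vectors $\pi\otimes W$. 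The universal unitary completion of $\pi\otimes W$ maps with dense image to each of them, so it cannot be isomorphic to any single $\Pi(V_{\mathcal L})$ (a non-zero continuous map between non-isomorphic irreducible admissible unitary Banach representations is impossible); it is not even of finite length. This is exactly why the paper argues differently here: in the special series case it replaces $\Pi^{\alg}$ by the locally analytic representation $\Sigma(k,\mathcal L)$, whose universal unitary completion \emph{is} $\Pi$ (Proposition \ref{uni1}), and in the supercuspidal case it invokes Dospinescu \cite{dospinescu}.

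In fact your argument proves a false statement in the supercuspidal case, which confirms that the injectivity of $\phi$ genuinely fails rather than merely being unjustified. Since supercuspidal representations are projective in $\Mod^{\sm}_{G,\zeta_{\pi}}(L)$, your chain of reductions would give $\mathcal E\hookrightarrow\Ext^1_{G,\zeta_{\pi}}(\pi,\pi)=0$, i.e.\ $\mathcal E=0$. But $\mathcal E$ is non-zero (indeed one dimensional) there: deforming the Hodge filtration on $D_{pst}(\rho^{\un}_{\nn})$ produces a non-split de Rham self-extension of $\rho^{\un}_{\nn}$ with fixed determinant, and the corresponding non-split extension $0\rightarrow\Pi\rightarrow E\rightarrow\Pi\rightarrow 0$ has exact locally algebraic vectors while the induced extension of $\pi\otimes W$ by itself splits, precisely because the smooth $\Ext^1$ vanishes. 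The resulting splitting $\Pi^{\alg}\rightarrow E$ does not extend to $\Pi$. Identifying which self-extensions of $\Pi$ have exact locally algebraic vectors, without the crutch of the universal completion, is the actual content of \cite[Thm.1.4, Prop.1.3]{dospinescu}, and no purely representation-theoretic shortcut of the kind you propose is available.
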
 
\begin{proof} Proposition \ref{specialize} implies that the $G$-socle $\Pi$ of $\Pi(\kappa(\nn))$ is irreducible. It follows from the proof of Proposition \ref{nearly_done}
that $\Pi^{\alg}\cong \Pi(\kappa(\nn))^{\alg}\cong \pi \otimes W$, where $\pi$ is an irreducible smooth representation of $G$ and 
$W$ an irreducible algebraic representation of $G$. 

If $\pi$ is principal series then $\Pi$ is the universal unitary completion of $\Pi^{\alg}$ by 
 \cite[5.3.4]{bb}, \cite{except} and \cite[2.2.1]{be}. Hence, taking locally algebraic vectors induces an injection 
 $\mathcal E\hookrightarrow \Ext^1_{G, \zeta}(\pi\otimes W, \pi\otimes W)$, where the target is computed in the category of locally algebraic 
 representation with central character $\zeta$. Lemma \ref{extalg}  implies that tensoring with $W$ induces an isomorphism 
 $\Ext^1_{G, \zeta_{\pi}}(\pi, \pi)\cong \Ext^1_{G,\zeta}(\pi\otimes W, \pi \otimes W)$, 
 where $\zeta_{\pi}$ is the central character of $\pi$, and the source is computed in the category of smooth representations with central character $\zeta_{\pi}$. 
 Using the projective resolution of $\pi$, constructed in \cite{schst}, one may show that this space is one dimensional.

If $\pi$ is special series and $W$ is one dimensional then $\Pi\cong \widehat{\St}\otimes \delta\circ \det$, for some unitary character $\delta: \Qp^{\times}\rightarrow L^{\times}$, and $\Ext^1_{G, \zeta}(\Pi, \Pi)=0$. If $\pi$ is special series and $W$ is not one dimensional then 
this is shown in \S\ref{semi_non_crys} below, by analyzing the self extensions of the locally analytic vectors in $\Pi$.

If $\pi$ is supercuspidal, which is the hardest case, this is proved by Dospinescu in \cite[Thm.1.4, Prop.1.3]{dospinescu}.
 \end{proof}

\begin{lem}\label{hom_def} Let $\nn$ be a maximal ideal of $R^{\psi}_{\rho}[1/p]$. Then 
$$\dim_{\kappa(\nn)}\Hom_{G_{\Qp}}(\rho^{\un}[1/p]/\nn^2, \rho_{\nn}^{\un})=1.$$
If there is an extension 
$0\rightarrow \delta_2\rightarrow \rho^{\un}_{\nn}\rightarrow \delta_1\rightarrow 0,$ then 
$$\dim_{\kappa(\nn)} \Hom_{G_{\Qp}}(\rho^{\un}[1/p]/\nn^2, \delta_1)= \dim_{\kappa(\nn)} \Ext^1_{G_{\Qp}}(\delta_1, \delta_2).$$
\end{lem}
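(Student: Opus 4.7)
The plan is as follows. Write $A := R^{\psi}_{\rho}[1/p]/\nn^2$, $\mathfrak m_A := \nn/\nn^2$ (so $\mathfrak m_A^2 = 0$), $V := \rho^{\un}_{\nn}$, and $\tilde V := \rho^{\un}[1/p]/\nn^2$. Then $\tilde V$ is a free $A$-module of rank $2$ carrying a continuous $G_{\Qp}$-action lifting $V$, and reducing modulo $\mathfrak m_A$ gives the short exact sequence of $G_{\Qp}$-representations
$$0 \to V \otimes_{\kappa(\nn)} \mathfrak m_A \to \tilde V \to V \to 0,$$
where $G_{\Qp}$ acts trivially on the $\mathfrak m_A$ factor. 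By Lemma \ref{V_Pi}, $\End_{G_{\Qp}}(V) = \kappa(\nn)$ in both the irreducible and the reducible case.

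For the first identity, apply $\Hom_{G_{\Qp}}(-,V)$ to obtain
$$0 \to \kappa(\nn) \to \Hom_{G_{\Qp}}(\tilde V, V) \to \mathfrak m_A^{*} \xrightarrow{\partial} \Ext^1_{G_{\Qp}}(V, V).$$
The connecting map $\partial$ sends $\xi \in \mathfrak m_A^{*}$ to the class of the first-order deformation of $V$ obtained by pushing $\tilde V$ along $\xi \otimes \mathrm{id}: V \otimes \mathfrak m_A \to V$; equivalently, the deformation specialized from $\rho^{\un}$ along $R^{\psi}_{\rho} \to A \to A/\ker\xi \cong \kappa(\nn)[\epsilon]$. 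Since $\det\rho^{\un} = \psi$ is fixed, its image lies in $\Ext^1_{G_{\Qp}}(V,V)^{\mathrm{tr}=0}$. I identify $\partial$ with the tangent map of the canonical local homomorphism $R^{\psi}_{V} \to \widehat{R^{\psi}_{\rho}[1/p]_{\nn}}$, where $R^{\psi}_{V}$ is the universal deformation ring of $V$ with fixed determinant $\psi$; both rings pro-represent the functor of fixed-determinant deformations of $V$ on Artinian local $L$-algebras with residue field $\kappa(\nn)$, so the homomorphism is an isomorphism and $\partial$ is in particular injective (indeed bijective onto $\Ext^1_{G_{\Qp}}(V,V)^{\mathrm{tr}=0}$). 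Hence $\dim \Hom_{G_{\Qp}}(\tilde V, V) = 1$.

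For the second identity, apply $\Hom_{G_{\Qp}}(-,\delta_1)$ to the same sequence. Since $\delta_1 \neq \delta_2$ gives $\Hom_{G_{\Qp}}(\delta_2, \delta_1) = 0$, the surjection $\pi: V \twoheadrightarrow \delta_1$ induces an isomorphism $\Hom_{G_{\Qp}}(V,V) \cong \Hom_{G_{\Qp}}(V,\delta_1) = \kappa(\nn)$, yielding
$$0 \to \kappa(\nn) \to \Hom_{G_{\Qp}}(\tilde V, \delta_1) \to \mathfrak m_A^{*} \xrightarrow{\partial''} \Ext^1_{G_{\Qp}}(V, \delta_1).$$
Via $\partial$, $\partial''$ becomes the restriction of the pushforward $\pi_{*}: \Ext^1_{G_{\Qp}}(V,V) \to \Ext^1_{G_{\Qp}}(V,\delta_1)$ to the trace-zero subspace. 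The kernel of $\pi_{*}$ equals the image of $\iota_{*}: \Ext^1_{G_{\Qp}}(V,\delta_2) \hookrightarrow \Ext^1_{G_{\Qp}}(V,V)$ (injectivity coming from the same $\Hom(V,V) \cong \Hom(V,\delta_1)$). A direct cocycle computation in a basis $e_1, e_2$ with $\delta_2 = \kappa(\nn)\cdot e_2$ shows that for $\eta \in \Ext^1_{G_{\Qp}}(V,\delta_2)$ the trace $\mathrm{tr}(\iota_{*}(\eta))$ is exactly the restriction of $\eta$ along $\delta_2 \hookrightarrow V$, viewed in $\Ext^1_{G_{\Qp}}(\delta_2,\delta_2) = H^1(G_{\Qp},\kappa(\nn))$. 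The long exact sequence from $\Hom_{G_{\Qp}}(-,\delta_2)$ applied to $0 \to \delta_2 \to V \to \delta_1 \to 0$, together with the nontriviality of $[V] \in \Ext^1_{G_{\Qp}}(\delta_1,\delta_2)$, identifies the kernel of this restriction with $\Ext^1_{G_{\Qp}}(\delta_1,\delta_2)/\kappa(\nn)\cdot[V]$, of dimension $\dim\Ext^1_{G_{\Qp}}(\delta_1,\delta_2) - 1$. Combining, $\dim\Hom_{G_{\Qp}}(\tilde V, \delta_1) = 1 + (\dim\Ext^1_{G_{\Qp}}(\delta_1,\delta_2) - 1) = \dim\Ext^1_{G_{\Qp}}(\delta_1,\delta_2)$.

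The main obstacle is the ring-theoretic identification $R^{\psi}_{V} \cong \widehat{R^{\psi}_{\rho}[1/p]_{\nn}}$ used in the first part: it requires showing that the formal completion of $R^{\psi}_{\rho}[1/p]$ at $\nn$ pro-represents the fixed-determinant deformation functor of $V$, which rests on $\End_{G_{\Qp}}(V) = \kappa(\nn)$ and the universal property of $R^{\psi}_{\rho}$. Once this is in hand, the remainder of the argument is a mechanical chase of long exact sequences, supplemented by the explicit cocycle identification converting trace of a pushforward into restriction.
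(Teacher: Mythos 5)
Your proof is correct and takes essentially the same route as the paper: both arguments rest on Kisin's pro-representability of the $\nn$-adic completion of $R^{\psi}_{\rho}[1/p]$ to identify $(\nn/\nn^2)^*$ with the fixed-determinant self-extensions of $\rho^{\un}_{\nn}$, and both reduce the second identity to showing that the relevant kernel inside $(\nn/\nn^2)^*$ has dimension $\dim\Ext^1_{G_{\Qp}}(\delta_1,\delta_2)-1$, the loss of one dimension coming from the non-split extension class of $\rho^{\un}_{\nn}$ itself. The only difference is presentational: where you compute $\ker\partial''$ by combining two long exact sequences with the identity $\tr\circ\iota_*=\iota^*$ on $\Ext^1_{G_{\Qp}}(V,\delta_2)$, the paper exhibits the isomorphism onto $\Ext^1_{G_{\Qp}}(\delta_1,\delta_2)/\mathcal L$ directly by an explicit cocycle and matrix computation over the dual numbers.
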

\begin{proof} After extending scalars we may assume that $\kappa(\nn)=L$ and let $A:=L[\epsilon]/\epsilon^2$ be the dual numbers over $L$. The $\nn$-adic completion of $ R_{\rho}^{\psi}[1/p]$, which we denote by $\widehat{R}$,  pro-represents the deformation problem 
of $\rho_{\nn}^{\un}$ with determinant $\psi$ to local artinian $L$-algebras,  with universal object isomorphic to the $\nn$-adic completion of
$\rho^{\un}[1/p]$,  \cite[(2.3.3), (2.3.5)]{kisin_moduli}. Let $\Hom(\widehat{R}, A)$ be the set of homomorphisms of local $L$-algebras from $\widehat{R}$ to $A$. Then 
we have natural isomorphisms
\begin{equation} 
(\nn/\nn^2)^*\cong \Hom(\widehat{R}, A)\cong \Ext^1_{G_{\Qp}, \psi}(\rho_{\nn}^{\un}, \rho_{\nn}^{\un}),
\end{equation}
where the last term is  the subgroup of $\Ext^1_{G_{\Qp}}(\rho_{\nn}^{\un}, \rho_{\nn}^{\un})$ consisting of the extensions, 
which, when considered as deformations of $\rho_{\nn}^{\un}$ to $A$, have determinant $\psi$. The second isomorphism sends $\varphi$ to 
$r_{\varphi}:=  A\otimes_{\varphi, R_{\rho}^{\psi}} \rho^{\un}$. The first assertion can be proved in the same way as \cite[Lem.11.5]{cmf}.

Let us assume that $\rho_{\nn}^{\un}$ is reducible. Since $\Hom_{G_{\Qp}}(\rho_{\nn}^{\un}, \delta_1)$ is 
one dimensional, there is a natural isomorphism 
\begin{equation} 
\Hom_{G_{\Qp}}(\nn/\nn^2 \otimes_{R^{\psi}_{\rho}} \rho^{\un}, \delta_1)\cong (\nn/\nn^2)^*
\end{equation}

Let $C$ be the image of the natural map   
\begin{equation}
\Hom_{G_{\Qp}}(\rho^{\un}[1/p]/\nn^2, \delta_1)\rightarrow \Hom_{G_{\Qp}}(\nn/\nn^2 \otimes_{R^{\psi}_{\rho}} \rho^{\un}, \delta_1).
\end{equation}
We claim that $C$ is naturally isomorphic to $\Ext^1_{G_{\Qp}}(\delta_1, \delta_2)/\mathcal L$, where $\mathcal L$ is a one dimensional 
subspace generated by the extension class corresponding to $\rho_{\nn}^{\un}$. The claim implies the second assertion of the Lemma. 

It is immediate that $\varphi$ lies in the image of $C$ in $\Hom(\widehat{R}, A)$ if and only if $\dim_L \Hom_{G_{\Qp}}(r_{\varphi}, \delta_1)=2$.
Since $\delta_1\neq \delta_2$ the surjection $r_{\varphi}\twoheadrightarrow \delta_1^{\oplus 2}$ induces a surjection of $A[ G_{\Qp}]$-modules $r_{\varphi}\twoheadrightarrow A\otimes_{L}\delta_1$.  Since the determinant of $r_{\varphi}$ induces the trivial deformation on $\psi$, the kernel of this map is isomorphic to 
$A\otimes_{L}\delta_2$. Thus we may choose a basis of $r_{\varphi}$ as an $A$-module, such that the action of $G_{\Qp}$ is given by a matrix 
$g\mapsto \bigl( \begin{smallmatrix} \delta_2(g) & a(g)\\ 0 & \delta_1(g)\end{smallmatrix}\bigr)$, where $a:G_{\Qp}\rightarrow A$ is a continuous 
function, satisfying $a(g h)= \delta_2(g) a(h) + a(g) \delta_1(h)$, for all $g, h\in G_{\Qp}$. We may write $a=a_1+ \epsilon a_2$, where
$a_1, a_2: G_{\Qp}\rightarrow L$ are continuous functions satisfying the same transformation identity. This implies that the maps 
$\rho_1: G_{\Qp}\rightarrow \GL_2(L)$,  $g\mapsto  \bigl( \begin{smallmatrix} \delta_2(g) & a_1(g)\\ 0 & \delta_1(g)\end{smallmatrix}\bigr)$ and 
$\rho_2: G_{\Qp}\rightarrow \GL_2(L)$,  $g\mapsto  \bigl( \begin{smallmatrix} \delta_2(g) & a_2(g)\\ 0 & \delta_1(g)\end{smallmatrix}\bigr)$ are continuous 
group homomorphisms. There is an isomorphism  $\rho_1\cong r_{\varphi}/\epsilon \cong \rho_{\nn}^{\un}$. Let $c\in \Ext^1_{G_{\Qp}}(\delta_1, \delta_2)$ 
be the extension class corresponding to the isomorphism class of $\rho_2$. We leave it as an exercise for the 
reader to check that  the image of $c$ in $\Ext^1_{G_{\Qp}}(\delta_1, \delta_2)/\mathcal L$ does not depend
on the choice of basis for $\tau_{\varphi}$, used to define $\rho_2$,  and to construct the inverse $\Ext^1_{G_{\Qp}}(\delta_1, \delta_2)/\mathcal L\rightarrow \Ext^1_{G_{\Qp}, \psi}(\rho_{\nn}^{\un}, \rho_{\nn}^{\un})$.
\end{proof} 

\begin{cor}\label{c_ii_ok} We assume the existence of $N$ satisfying (N0), (N1) and (N2) in \S\ref{points}. We further 
assume that either $\rho\not\cong \bigl(\begin{smallmatrix} \bar{\delta} \omega & \ast\\ 0 & \bar{\delta}\end{smallmatrix}\bigr)$, $b-a>1$  or $\tau\neq \chi\oplus \chi$.
If $V$ is either $\sigma(\mathbf w, \tau)$ or $\sigma^{\mathrm{cr}}(\mathbf w, \tau)$ and $\Theta$ is a $K$-invariant $\OO$-lattice in $V$ then for almost all maximal ideals $\nn$ of $R^{\psi}_{\rho}[1/p]$
lying in $\supp M(\Theta)$, $$\dim_{\kappa(\nn)} \Hom_K(V, \Pi(R^{\psi}_{\rho, \nn}/\nn^2))\le 2.$$
\end{cor}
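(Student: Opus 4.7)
The plan is to apply Lemma \ref{easy} to (a subrepresentation of) $E := \Pi(R^{\psi}_{\rho,\nn}/\nn^2)$, taking $\Pi$ to be the closed absolutely irreducible $G$-subrepresentation $\Pi_1$ of $\Pi(\kappa(\nn))$ provided by Lemma \ref{cor_to_proof}. For all but finitely many $\nn \in \supp M(\Theta) \cap \MaxSpec R^{\psi}_{\rho}[1/p]$, that lemma ensures $\Pi_1^{\alg} = \Pi(\kappa(\nn))^{\alg}$ is irreducible and $\Pi_1$ is the universal unitary completion of $\Pi_1^{\alg}$; Theorem \ref{RED_is_ok} then implies that $\Pi_1$ satisfies (RED). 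Since $V$ is locally algebraic, $\Hom_K(V, \Pi(\kappa(\nn))) = \Hom_K(V, \Pi_1)$ is of $\kappa(\nn)$-dimension at most one, by Proposition \ref{nearly_done}.

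Next, the short exact sequence $0 \to \nn/\nn^2 \to R^{\psi}_{\rho,\nn}/\nn^2 \to \kappa(\nn) \to 0$ of finite length $R^{\psi}_{\rho}[1/p]$-modules, together with the left exact contravariant functor $\Pi$ of Lemma \ref{m_Pi1}, yields a left exact sequence
\[
0 \to \Pi(\kappa(\nn)) \to E \to \Pi(\kappa(\nn))^{\oplus n}, \qquad n := \dim_{\kappa(\nn)} \nn/\nn^2.
\]
Applying Colmez's exact contravariant functor $\cV$ and using hypothesis (N2), one identifies $\cV(E) \cong \rho^{\un}[1/p]/\nn^2$ of $\kappa(\nn)$-dimension $2(n+1)$, $\cV(\Pi(\kappa(\nn))) \cong \rho^{\un}_{\nn}$ of dimension $2$, and $\cV(\Pi(\kappa(\nn))^{\oplus n})$ of dimension $2n$; since every irreducible Banach subquotient of $\Pi(\kappa(\nn))$ has non-vanishing $\cV$ under our hypotheses on $\rho$, $\mathbf{w}$, $\tau$ (by Proposition \ref{specialize}), a dimension count forces the cokernel of $E \to \Pi(\kappa(\nn))^{\oplus n}$ to be zero. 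Since any $K$-map $V \to \Pi(\kappa(\nn))^{\oplus n}$ factors through $\Pi_1^{\oplus n}$, the pullback $F := E \times_{\Pi(\kappa(\nn))^{\oplus n}} \Pi_1^{\oplus n}$ satisfies $\Hom_K(V, F) = \Hom_K(V, E)$ and fits into a short exact sequence $0 \to \Pi(\kappa(\nn)) \to F \to \Pi_1^{\oplus n} \to 0$.

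In the case $\Pi(\kappa(\nn))$ is absolutely irreducible (Proposition \ref{specialize} (i)), $\Pi_1 = \Pi(\kappa(\nn))$ and $F$ is already of the form required by Lemma \ref{easy}; the key hypothesis $\dim \Hom_G(\Pi_1, F) = 1$ follows from the contravariance of $\cV$ together with the first assertion of Lemma \ref{hom_def} (which gives $\dim \Hom_{G_{\Qp}}(\rho^{\un}[1/p]/\nn^2, \rho^{\un}_{\nn}) = 1$), and Lemma \ref{easy} then yields $\dim \Hom_K(V, F) \le 2$. The main obstacle is the reducible case (Proposition \ref{specialize} (ii)(a)), where $\Pi_1 \subsetneq \Pi(\kappa(\nn))$ and $F$ acquires an extra composition factor $\Pi_2$ coming from $\Pi(\kappa(\nn))/\Pi_1$. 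Here one would exploit the vanishing $\Pi_2^{\alg} = 0$ (shown in the proof of Proposition \ref{nearly_done}), hence $\Hom_K(V, \Pi_2) = 0$, to replace $F$ by a suitable subquotient $F'$ of the form $0 \to \Pi_1 \to F' \to \Pi_1^{\oplus m} \to 0$ without losing $V$-maps, and use the second assertion of Lemma \ref{hom_def} to identify $\dim \Hom_G(\Pi_1, F')$ with $\dim \Ext^1_{G_{\Qp}}(\delta_1, \delta_2) = 1$ (excluding at most finitely many non-generic $\nn$). The bound $\dim \Hom_K(V, \Pi(R^{\psi}_{\rho,\nn}/\nn^2)) \le 2$ then follows from Lemma \ref{easy}.
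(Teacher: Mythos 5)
Your overall strategy is the paper's: restrict to the $\nn$ covered by Lemma \ref{cor_to_proof}, invoke Theorem \ref{RED_is_ok} to get (RED), pin down the relevant $\Hom_G$ via the injection induced by $\cV$ together with Lemma \ref{hom_def}, and conclude with Lemma \ref{easy}. In the case where $\Pi(\kappa(\nn))$ is irreducible your argument is complete and agrees with the paper's. (The surjectivity of $\Pi(R^{\psi}_{\rho,\nn}/\nn^2)\rightarrow \Pi(\kappa(\nn))^{\oplus n}$ via the $\cV$-dimension count is correct for the $\nn$ under consideration, but it is not actually needed: Lemma \ref{easy} only requires the quotient to be a finite direct sum of copies of the irreducible $\Pi$, and any closed subrepresentation of $\Pi^{\oplus n}$ already has that form.)

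The gap is in the reducible case, and it is precisely the step you leave in the subjunctive. Your pullback $F$ sits in $0\rightarrow \Pi(\kappa(\nn))\rightarrow F\rightarrow \Pi_1^{\oplus n}\rightarrow 0$, and the unwanted factor $\Pi_2$ is a quotient of the subobject $\Pi(\kappa(\nn))$, so it lies in the middle of the composition series of $F$. A subrepresentation $F'$ with $F'\cap \Pi(\kappa(\nn))=\Pi_1$ and $F'$ still mapping onto $\Pi_1^{\oplus n}$ would amount to a splitting of the induced extension $0\rightarrow \Pi_2\rightarrow F/\Pi_1\rightarrow \Pi_1^{\oplus n}\rightarrow 0$, which you have no reason to have; and no quotient of $F$ can remove $\Pi_2$ either, since $\Pi_2$ is not a subobject of $F$. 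The missing idea is the paper's: take $E$ to be the closure of the locally algebraic vectors of $\Pi(R^{\psi}_{\rho,\nn}/\nn^2)$. Since $V$ is locally algebraic, $\Hom_K(V,E)\cong \Hom_K(V,\Pi(R^{\psi}_{\rho,\nn}/\nn^2))$ for free, and because $\Pi_1$ is exactly the closure of $\Pi(\kappa(\nn))^{\alg}$ while $\Pi_2^{\alg}=0$, this $E$ automatically sits in an exact sequence $0\rightarrow \Pi_1\rightarrow E\rightarrow \Pi_1^{\oplus m}\rightarrow 0$ with $m\le \dim_{\kappa(\nn)}\nn/\nn^2$. Your Lemma \ref{hom_def} computation then gives $\dim_{\kappa(\nn)}\Hom_G(\Pi_1,E)=1$ (bounded above by $\dim_{\kappa(\nn)}\Ext^1_{G_{\Qp}}(\delta_1,\delta_2)=1$ and below by the inclusion $\Pi_1\subseteq E$), and Lemma \ref{easy} applies. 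This device treats the irreducible and reducible cases uniformly and renders the pullback and the surjectivity count unnecessary.
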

\begin{proof}  Let $\nn\in \MaxSpec R^{\psi}_{\rho}[1/p]$ be such that Lemma \ref{cor_to_proof} applies. Let $\Pi:=\Pi(\kappa(\nn))$ and let $\Pi_1$ be the 
closure of locally algebraic vectors in $\Pi$. Then $\Pi_1$ is absolutely irreducible in $\Ban^{\adm}_{G, \zeta}(\kappa(\nn))$, and is equal to the $G$-socle of $\Pi$.
Moreover, if $\Pi$ is reducible then $\Pi_1$ is the universal unitary completion of $\Pi^{\alg}$.

We claim that $\dim_{\kappa(\nn)} \Hom_G^{\cont}(\Pi_1,  \Pi(R^{\psi}_{\rho, \nn}/\nn^2))=1$. The functor $\cV$ induces an injection
\begin{equation}\label{inject_on_homs}
\Hom_G^{\cont}(\Pi_1,  \Pi(R^{\psi}_{\rho, \nn}/\nn^2))\hookrightarrow \Hom_{G_{\Qp}}(\cV(  \Pi(R^{\psi}_{\rho, \nn}/\nn^2)), \cV(\Pi_1)).
\end{equation}
Lemma \ref{vpi} implies that $\cV(\Pi(R^{\psi}_{\rho, \nn}/\nn^2))\cong \rho^{\un}[1/p]/\nn^2$.  If $\Pi$ is irreducible then 
$\Pi_1\cong \Pi$ and so $\cV(\Pi_1)\cong \rho_{\nn}^{\un}$. The claim follows from Lemma \ref{hom_def}  and \eqref{inject_on_homs}. If $\Pi$ is reducible then $\cV(\Pi_1)=\delta_1$ and $\rho_{\nn}^{\un}\cong \bigl(\begin{smallmatrix} \delta_2 & \ast \\ 0 & \delta_1\end{smallmatrix} \bigr)$, 
 where $\delta_2\delta_1^{-1}\neq \Eins, \varepsilon$. Since  $\dim_{\kappa(\nn)} \Ext^1_{G_{\Qp}}(\delta_1, \delta_2)=1$, the claim follows from Lemma \ref{hom_def}  and \eqref{inject_on_homs}.

Let $E$ be the closure of locally algebraic vectors in $\Pi(R^{\psi}_{\rho, \nn}/\nn^2)$. Since $V$ is locally algebraic we have 
$ \Hom_K(V, E)\cong \Hom_K(V, \Pi(R^{\psi}_{\rho, \nn}/\nn^2)).$
The exact sequence $0\rightarrow \nn/\nn^2\rightarrow R^{\psi}_{\rho, \nn}/\nn^2 \rightarrow \kappa(\nn)\rightarrow 0$ leads to  exact sequences
of unitary Banach space representations of $G$: 
$0\rightarrow \Pi\rightarrow \Pi(R^{\psi}_{\rho, \nn}/\nn^2)\rightarrow \Pi^{\oplus d}$, 
$0\rightarrow \Pi_1\rightarrow E\rightarrow \Pi_1^{\oplus m}\rightarrow 0$,
where $d:=\dim_{\kappa(\nn)} \nn/\nn^2$ and $m\le d$. Since $\Hom^{\cont}_G(\Pi_1, E)$ is one dimensional and $\Pi_1$ satisfies (RED) by Theorem \ref{RED_is_ok}, Lemma \ref{easy} implies that $\dim_{\kappa(\nn)} \Hom_K(V, E)\le 2$.
\end{proof}

\section{Representation theoretic input} 
In this section we will construct an exact sequence, which will be used in \S\ref{generic_case}. We make no assumption on the prime $p$. Let $\pi\in \Mod^{\sm}_{G, \zeta}(k)$ be  absolutely irreducible,  which is neither special series nor a character. Let $\wP\twoheadrightarrow \pi^{\vee}$ be a projective envelope of $\pi^{\vee}$ 
in $\dualcat(\OO)$, and let $\wE:=\End_{\dualcat(\OO)}(\wP)$.

\begin{prop}\label{Omega_nice} Let $\Omega$ be in $\Mod^{\mathrm{l.fin}}_{G,\zeta}(k)$ with $\soc_G \Omega\cong \pi$. If 
$\Omega$ is admissible and $\Omega|_K$ is injective in $\Mod^{\mathrm{sm}}_{K,\zeta}(k)$ then the following hold:
\begin{itemize} 
\item[(i)] $\Ext^1_{G, \zeta}(\pi, \Omega)$ is one dimensional; 
\item[(ii)] $\Ext^i_{G, \zeta}(\pi,\Omega)=0$ for $i\ge 2$; 
\item[(iii)] $\Ext^i_{G, \zeta}(\pi', \Omega)=0$ for all $i\ge 0$ and all $\pi'\in \Irr_{G, \zeta}(k)$, $\pi'\not\cong \pi$. 
\end{itemize}
\end{prop}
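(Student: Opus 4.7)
The plan is to compute all three Ext-groups in one stroke using the resolution of smooth $G$-representations coming from the (one-dimensional) Bruhat--Tits tree of $G = \GL_2(\Qp)$. First I would observe that the hypothesis ``$\Omega|_K$ is injective in $\Mod^{\sm}_{K,\zeta}(k)$'' extends to $KZ$ tautologically, and then to $\mathfrak K Z$ via an averaging argument using that $[\mathfrak K Z : KZ] = 2$ is coprime to $p$; hence $\Omega|_{KZ}$ and $\Omega|_{\mathfrak K Z}$ are injective in the corresponding smooth categories with central character $\zeta$. Next, the tree provides, for every $V \in \Mod^{\sm}_{G,\zeta}(k)$, a functorial exact sequence
\[
0 \to \cInd_{\mathfrak K Z}^G V|_{\mathfrak K Z} \to \cInd_{KZ}^G V|_{KZ} \to V \to 0
\]
(up to a sign twist on the leftmost term that I suppress). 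Applying $\Hom_{G,\zeta}(-,\Omega)$ together with Frobenius reciprocity $\Ext^i_{G,\zeta}(\cInd_{H}^G W,\Omega) \cong \Ext^i_H(W,\Omega|_H)$ and the injectivity above yields $\Ext^i_{G,\zeta}(V,\Omega)=0$ for $i \geq 2$ and a four-term exact sequence
\[
0 \to \Hom_G(V,\Omega) \to \Hom_{KZ}(V,\Omega) \xrightarrow{\delta_V} \Hom_{\mathfrak K Z}(V,\Omega) \to \Ext^1_{G,\zeta}(V,\Omega) \to 0.
\]
This already delivers (ii), the $i \geq 2$ case of (iii), and the $i=0$ case of (iii) (the last because $\soc_G \Omega \cong \pi \not\cong \pi'$).

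What remains is to control the cokernel of $\delta_V$ for $V$ irreducible. The map $\delta_V$ is essentially $\phi \mapsto \phi - w\phi w^{-1}$ for $w$ a lift of the nontrivial class of $\mathfrak K Z/KZ$, so it decomposes $K$-type by $K$-type. Admissibility of $\Omega$ together with its $K$-injectivity gives finite multiplicities $m(\sigma) = \dim_k \Hom_K(\sigma,\Omega)$ for each irreducible smooth $K$-type $\sigma$, and the condition $\soc_G \Omega \cong \pi$ pins down precisely which $\sigma$ can contribute (they must appear in the $K$-socle of the injective envelope of $\pi$). For (iii) at $i=1$: the block decomposition of $\Mod^{\sm}_{G,\zeta}(k)$ from \cite{cmf} forces $\Ext^1_{G,\zeta}(\pi',\Omega)=0$ whenever $\pi'$ lies outside the block of $\pi$. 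When $\pi$ is supersingular its block is a singleton and there is nothing more to do; the only genuinely delicate case is when $\pi \cong \Ind_P^G(\chi_1 \otimes \chi_2\omega^{-1})$ is an irreducible principal series and $\pi'$ is its block partner $\Ind_P^G(\chi_2 \otimes \chi_1\omega^{-1})$, for which one must verify surjectivity of $\delta_{\pi'}$ by a direct calculation with the $I$-characters in $\pi'|_K$ and the action of $w$. For (i), the same style of computation with $V = \pi$ produces exactly a one-dimensional cokernel, corresponding to a unique (up to scalar) non-split extension of $\pi$ by $\Omega$.

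The main obstacle is the surjectivity of $\delta_{\pi'}$ in the principal-series case of (iii): it requires tracking the $I$-character decomposition of $\pi|_K$ and $\pi'|_K$, the action of $w$ on the resulting hom-spaces, and the constraints $\soc_G \Omega \cong \pi$ places on the multiplicities $m(\sigma)$. The hypotheses that $\pi$ is absolutely irreducible and is neither special nor a character are what rule out the degenerate coincidences among $I$-characters that would otherwise obstruct this bookkeeping, and they also make the cokernel in (i) come out exactly one-dimensional rather than larger.
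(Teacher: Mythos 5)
Your route --- the Mayer--Vietoris resolution $0\to\cIndu{\mathfrak K}{G}{(V|_{\mathfrak K}\otimes\delta)}\to\cIndu{KZ}{G}{(V|_{KZ})}\to V\to 0$ coming from the tree --- is genuinely different from the paper's, and it does deliver (ii), together with the $i\ge 2$ and $i=0$ parts of (iii), cleanly. (One correction: $\mathfrak K$ does not contain $K$, so $[\mathfrak K Z:KZ]$ is not $2$; the averaging argument must run through $IZ$, which has index $2$ in $\mathfrak K$ and index $p+1$ in $KZ$, using that restriction to the open subgroup $IZ$ preserves injectives.) But the proof is incomplete exactly where the content lies. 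The paper instead resolves $\pi'$ by $0\to\cIndu{KZ}{G}{\xi}\overset{\phi}{\rightarrow}\cIndu{KZ}{G}{\xi}\to\pi'\to 0$, using that $\cIndu{KZ}{G}{\xi}$ is free over its Hecke algebra (Barthel--Livn\'e); because the two induced terms are the \emph{same} representation, applying $\Hom_G(\ast,\Omega)$ gives $\dim\Ext^1_{G,\zeta}(\pi',\Omega)=\dim\Hom_G(\pi',\Omega)$ with no computation, and the socle hypothesis finishes (i) and the supersingular/principal-series cases of (iii) at once. In your resolution the two induced terms are different, so there is no Euler-characteristic cancellation: you must actually prove that $\delta_{\pi}$ has exactly one-dimensional cokernel and that $\delta_{\pi'}$ is surjective, and you defer precisely these computations, which are the whole point.

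Second, your case analysis in (iii) is wrong as stated. The hypothesis on $\pi$ (absolutely irreducible, neither special series nor a character) allows $\pi\cong\Indu{P}{G}{\chi\omega\otimes\chi\omega^{-1}}$, and this case genuinely occurs in the application (it is the case of \eqref{atome2}). Its block is $\{\chi\circ\det,\ \Sp\otimes\chi\circ\det,\ \pi\}$, not $\{\pi,\pi'\}$ for a partner irreducible principal series (the would-be partner $\Indu{P}{G}{\chi\otimes\chi}$ is reducible). So for $\pi'=\chi\circ\det$ and $\pi'=\Sp\otimes\chi\circ\det$ the block decomposition gives you nothing, and you have omitted these cases entirely; the paper handles special series and characters uniformly via the sequences \eqref{seqa}, with no block input. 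Relatedly, the section makes no assumption on $p$, while the block decomposition of \cite{cmf} that you invoke is established there for $p\ge 5$; the paper's argument avoids it altogether.
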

\begin{proof} Let $\xi$ be irreducible in $\Mod^{\mathrm{sm}}_{K, \zeta}(k)$ then 
$$\Ext^i_{G, \zeta}(\cIndu{KZ}{G}{\xi}, \Omega)\cong \Ext^i_{K, \zeta}(\xi, \Omega), \quad \forall i\ge 0.$$
 Since $\Omega$ is injective in $\Mod^{\mathrm{sm}}_{K, \zeta}(k)$ we deduce that $\Ext^i_{G, \zeta}(\cIndu{KZ}{G}{\xi}, \Omega)$ is zero for $i\ge 1$, 
and, since $\Omega$ is admissible, $\Hom_G(\cIndu{KZ}{G}{\xi}, \Omega)$ is finite dimensional.
Let $\phi\in \HH_{\xi}:=\End_G(\cIndu{KZ}{G}{\xi})$ be non-zero. Since $\cIndu{KZ}{G}{\xi}$ is a free 
$\HH_{\xi}$-mo\-du\-le by \cite{bl}, we have an exact sequence 
\begin{equation}\label{phi}
0\rightarrow \cIndu{KZ}{G}{\xi}\overset{\phi}{\rightarrow}
\cIndu{KZ}{G}{\xi}\rightarrow \frac{\cIndu{KZ}{G}{\xi}}{\phi}\rightarrow 0.
\end{equation} 
 Applying $\Hom_G(\ast, \Omega)$ to \eqref{phi} we obtain
\begin{itemize} 
\item[(a)] $\Ext^i_{G, \zeta}( \frac{\cIndu{KZ}{G}{\xi}}{\phi}, \Omega)=0$, $\forall i\ge 2$;
\item[(b)] $\dim_k \Ext^1_{G, \zeta}(\frac{\cIndu{KZ}{G}{\xi}}{\phi}, \Omega)=\dim_k \Hom_G(\frac{\cIndu{KZ}{G}{\xi}}{\phi}, \Omega)$.
\end{itemize}
Since every supersingular and principal series representation is of the form $\frac{\cIndu{KZ}{G}{\xi}}{\phi}$ for a suitable $\xi$ and $\phi$ by \cite{bl}, \cite{breuil1}, 
(a) implies part (ii) of the Proposition, and since $\soc_G \Omega\cong \pi$ by assumption, we have $\Hom_G(\pi, \Omega)\cong \End_G(\pi)=k$, which together 
with (b) implies (i).

Let $\pi'\in \Irr_{G, \zeta}(k)$ such that $\pi'\not\cong \pi$. Since $\soc_G\Omega\cong \pi$ this implies 
$\Hom_G(\pi', \Omega)=0$. If $\pi'$ is either supersingular or principal series then  (a) and (b) imply that (iii) holds. If
$\pi'$ is either special or a character then there exist exact non-split sequences:
\begin{equation}\label{seqa}
0\rightarrow \pi'\rightarrow \frac{\cIndu{KZ}{G}{\xi}}{T_{\xi}-\lambda}\rightarrow \pi'' \rightarrow 0, \quad 
0\rightarrow \pi''\rightarrow \frac{\cIndu{KZ}{G}{\check{\xi}}}{T_{\check{\xi}}-\mu}\rightarrow \pi' \rightarrow 0,
\end{equation} 
for some $\lambda, \mu\in k$, where $\{\xi, \check{\xi}\}=\{ \chi\circ \det,  \Sym^{p-1}k^2 \otimes \chi\circ \det\}$, $\{\pi', \pi''\}= \{\chi\circ \det, \Sp\otimes \chi\circ \det\}$,  for some character $\chi$. Since $\pi$ is neither 
special series nor a character by assumption, using \eqref{seqa} we deduce that (iii) also holds when $\pi'$ is either special or a character. 
\end{proof}
 
 \begin{thm}\label{iks}There exists $x\in \wE$, such that $x: \wP\rightarrow \wP$ is injective and the quotient 
 is a projective envelope of $(\soc_K \pi)^{\vee}$ in $\Mod^{\pro}_{K,\zeta}(\OO)$. 
 \end{thm}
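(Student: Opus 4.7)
The plan is to work first modulo $\varpi$ via Pontryagin duality, then lift using projectivity of $\wP$. Set $J := \wP^{\vee}$, the injective envelope of $\pi$ in $\Mod^{\lfin}_{G,\zeta}(\OO)$. Since $J$ is divisible one has $J/\varpi J = 0$, so mod-$\varpi$ the desired statement becomes: construct a short exact sequence $0 \to \Omega \to J[\varpi] \to J[\varpi] \to 0$ in $\Mod^{\lfin}_{G,\zeta}(k)$ with $\Omega$ admissible, $\soc_G \Omega \cong \pi$, and $\Omega|_K$ an injective envelope of $\soc_K\pi$ in $\Mod^{\sm}_{K,\zeta}(k)$. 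Pontryagin-dualizing such a sequence and then lifting will yield $0 \to \wP \xrightarrow{x} \wP \to Q \to 0$ with $Q$ the required projective envelope and $x \in \wE$ by construction of the boundary operator.

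First I would construct $\Omega \subseteq J[\varpi]$ satisfying the hypotheses of Proposition \ref{Omega_nice}. A natural candidate is the $G$-subrepresentation generated by an injective $K$-envelope of $\soc_K\pi$ embedded inside $J[\varpi]|_K$; alternatively, $\Omega$ can be defined as the maximal $G$-subrepresentation of $J[\varpi]$ whose $K$-socle coincides with $\soc_K\pi$ and whose $K$-restriction is injective in $\Mod^{\sm}_{K,\zeta}(k)$. Either approach requires detailed analysis of the $K$-structure of $J[\varpi]$.

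Second, using the Ext vanishings from Proposition \ref{Omega_nice}, I would show $C := J[\varpi]/\Omega$ is again an injective envelope of $\pi$ in $\Mod^{\lfin}_{G,\zeta}(k)$. For the socle, the long exact sequence
$$0 \to \Hom_G(\pi, \Omega) \to \Hom_G(\pi, J[\varpi]) \to \Hom_G(\pi, C) \to \Ext^1_{G,\zeta}(\pi, \Omega) \to 0$$
gives $\dim_k \Hom_G(\pi, C) = 1 - 1 + 1 = 1$, and for any other simple $\pi' \not\cong \pi$ one has $\Hom_G(\pi', C) \hookrightarrow \Ext^1_{G,\zeta}(\pi', \Omega) = 0$, so $\soc_G C \cong \pi$. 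For injectivity, the identities $\Ext^{i}_{G,\zeta}(\pi', J[\varpi]) = 0$ ($i \geq 1$, as $J[\varpi]$ is injective) together with $\Ext^{i+1}_{G,\zeta}(\pi', \Omega) = 0$ for $i \geq 1$ (from Proposition \ref{Omega_nice}(ii)-(iii)) force $\Ext^{i}_{G,\zeta}(\pi', C) = 0$ for all $i \geq 1$ and all simple $\pi'$. By uniqueness of injective envelopes in the locally finite category, $C \cong J[\varpi]$, producing the mod-$\varpi$ exact sequence. Dualizing yields $\bar x \in \End_{\dualcat(k)}(\wP/\varpi\wP)$ that is injective with cokernel $\Omega^{\vee}$, the projective envelope of $(\soc_K\pi)^{\vee}$ in $\Mod^{\pro}_{K,\zeta}(k)$.

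Finally I would lift $\bar x$ to $x \in \wE$ using projectivity of $\wP$: the composition $\wP \twoheadrightarrow \wP/\varpi\wP \xrightarrow{\bar x} \wP/\varpi\wP$ factors through some $x \colon \wP \to \wP$. That $x$ is injective follows from $\OO$-torsion-freeness of $\wP$ combined with a Nakayama argument on $\ker x$, and $\wP/x\wP$ is shown to be $\OO$-flat by checking $\varpi\wP \cap x\wP = \varpi x\wP$ by the same type of argument; hence $\wP/x\wP$ is a projective envelope of $(\soc_K\pi)^{\vee}$ in $\Mod^{\pro}_{K,\zeta}(\OO)$ because its reduction modulo $\varpi$ is. The principal obstacle I anticipate is the construction of $\Omega$ in the first step, where one must arrange $\Omega|_K$ to be \emph{exactly} the injective envelope of $\soc_K\pi$ (neither larger nor smaller), which relies on the particular structure of $J$ as an $\OO\br{K}$-module developed in \cite{cmf}.
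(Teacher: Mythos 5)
Your overall strategy coincides with the paper's: produce a short exact sequence $0\to\Omega\to J\to J\to 0$ in $\Mod^{\lfin}_{G,\zeta}(k)$, where $J$ is the injective envelope of $\pi$ (your $J[\varpi]$) and $\Omega$ satisfies the hypotheses of Proposition \ref{Omega_nice}; deduce $J/\Omega\cong J$ from the $\Ext$-vanishing and the socle computation; dualize to get $\bar x$ on $\wP/\varpi\wP$ with cokernel the projective envelope of $(\soc_K\pi)^{\vee}$ over $k$; and lift $\bar x$ to $x\in\wE$ by projectivity of $\wP$, a snake-lemma argument and Nakayama. Your second and third steps are correct and are essentially verbatim the paper's argument (the final deduction that an $\OO$-torsion-free lift of a projective envelope is again a projective envelope is \cite[Prop.4.6]{comp}).

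The genuine gap is exactly where you locate it: the existence of $\Omega$, and neither of your proposed constructions works as stated. To embed $\Inj_K(\soc_K\pi)$ into $J[\varpi]|_K$ you would have to extend $\soc_K\pi\hookrightarrow J[\varpi]$ along $\soc_K\pi\hookrightarrow\Inj_K(\soc_K\pi)$, which requires $J[\varpi]|_K$ to be injective in $\Mod^{\sm}_{K,\zeta}(k)$; but that statement is (dual to) Corollary \ref{aok}, which the paper \emph{deduces} from the theorem you are proving, so this route is circular. Even granting such an embedding, the $G$-subrepresentation generated by $\Inj_K(\soc_K\pi)$ has no reason to stay $K$-injective or to keep the correct $K$-socle. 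Your second candidate is not well defined either: a sum of two $G$-subrepresentations whose restrictions to $K$ are injective need not have injective restriction to $K$, so there is no evident maximal one. The paper avoids working inside $J$ altogether: \cite[Cor.9.11]{bp} supplies, by an explicit construction, a smooth admissible $G$-representation $\Omega$ with central character $\zeta$, an essential embedding $\pi\hookrightarrow\Omega$, and $\Omega|_K$ an injective envelope of $\soc_K\pi$; one then embeds $\Omega$ into $J$ for free, using that $\pi\hookrightarrow\Omega$ is essential and $J$ is injective with socle $\pi$. That external input is the one substantive, non-formal ingredient, and without it your proof does not close.
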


 \begin{proof} It follows from \cite[Cor.9.11]{bp} that there exists an injection $\pi\hookrightarrow \Omega$, where 
 $\Omega$ is a smooth $G$-representation with central character $\zeta$, such that $\Omega|_K$ is an injective  envelope of $\soc_K \pi$ in 
 $\Mod^{\sm}_{K, \zeta}(k)$. If $\pi'$ is a non-zero $G$-subrepresentation of $\Omega$, then $\soc_K \pi'$ is non-zero, and hence $\pi'\cap \pi$ is non-zero. 
 Since $\pi$ is irreducible, we deduce that $\pi'$ contains $\pi$, and hence $\soc_G \Omega\cong \pi$. Since $\pi$ is admissible, $\soc_K \pi$ is a finite 
 direct sum of irreducible representations, \cite[Lem.3.6]{comp} implies that $\Omega$ is admissible.
 
 Let $\pi\hookrightarrow J$ be an injective envelope of $\pi$ in $\Mod^{\mathrm{l.fin}}_{G, \zeta}(k)$. We now proceed as in \cite[Rem.10.11]{cmf} 
 to deduce from Proposition \ref{Omega_nice} the existence of an exact sequence 
 \begin{equation}\label{JOmega}
 0\rightarrow \Omega \rightarrow J\rightarrow J\rightarrow 0.
 \end{equation} 
 Since the monomorphism $\pi\hookrightarrow \Omega$ is essential, there exists an embedding $\Omega\hookrightarrow J$. Since 
 $\Omega$ and $J$ have the same $G$-socle and $J$ is injective, for all $\pi'\in \Irr_{G, \zeta}(k)$ and all $i\ge 0$, we have 
 $\Ext^i_{G, \zeta}(\pi', J/\Omega)\cong \Ext^{i+1}_{G, \zeta}(\pi', \Omega)$.  Parts (ii) and (iii) of Proposition \ref{Omega_nice} imply that 
 $J/\Omega$ is injective in  $\Mod^{\mathrm{l.fin}}_{G, \zeta}(k)$, and part (i) of Proposition \ref{Omega_nice} implies that $\soc_G J/\Omega\cong \pi$. 
 This implies that $\pi\hookrightarrow J/\Omega$ is an injective envelope of $\pi$, and since these are unique up to isomorphism, we have $J/\Omega\cong J$.
 
Dualizing \eqref{JOmega} we obtain an exact sequence $0\rightarrow J^{\vee}\overset{\bar{x}}{\rightarrow} J^{\vee} \rightarrow \Omega^{\vee}\rightarrow 0$. 
Since Pontryagin duality induces an anti-equivalence of categories between $\Mod^{\mathrm{l.fin}}_{G, \zeta}(k)$ and $\dualcat(k)$,  $J^{\vee}$ is a projective envelope of $\pi^{\vee}$ in $\dualcat(k)$. Since these are unique up to isomorphism, $J^{\vee}$ is isomorphic to $\wP/\varpi\wP$. The same argument shows that 
 $\Omega^{\vee}$ is a projective envelope of $(\soc_K \pi)^{\vee}$ in $\Mod^{\pro}_{K, \zeta}(k)$.  We identify $\wP/\varpi \wP$ with $J^{\vee}$. Since $\wP$ is projective there exists $x\in \wE$ lifting $\bar{x}\in \End_{\dualcat(k)}(J^{\vee})$. We apply the snake lemma 
to the diagram, in which the exact sequence $0\rightarrow \wP\overset{\varpi}{\rightarrow} \wP\rightarrow J^{\vee}\rightarrow 0$ 
is mapped to itself by $x\in \wE$. Since $\Ker \bar{x}=0$ we obtain exact sequences 
$0\rightarrow \Ker x \overset{\varpi}{\rightarrow} \Ker x \rightarrow 0$ and 
$0\rightarrow \Coker x \overset{\varpi}{\rightarrow} \Coker x \rightarrow \Coker \bar{x}\rightarrow 0$. 
Since $\Ker x$ is a compact $\OO$-module, Nakayama's lemma implies that $x$ is injective. The second exact sequence implies that 
$\Coker x$ is $\OO$-torsion free. Since the reduction modulo $\varpi$ of $\Coker x$ is a projective envelope
of $(\soc_K \pi)^{\vee}$ in $\Mod^{\pro}_{K, \zeta}(k)$, $\Coker x$ is a projective envelope
of $(\soc_K \pi)^{\vee}$ in $\Mod^{\pro}_{K, \zeta}(\OO)$ by \cite[Prop.4.6]{comp}.
\end{proof}

 \begin{cor}\label{aok} $\wP$ is projective in $\Mod^{\pro}_{K,\zeta}(\OO)$.
 \end{cor}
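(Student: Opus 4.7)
The plan is to imitate the strategy of Corollary \ref{proj}, using the element $x\in\wE$ produced by Theorem \ref{iks} in place of a regular element of a commutative ring. By Remark \ref{pk}, it is enough to show that $\wP$ is projective in $\Mod^{\pro}_P(\OO)$, where $P$ is a pro-$p$ Sylow of $K$. As recalled in the discussion before Proposition \ref{torfg}, this amounts to checking $\wTor^i_{\OO\br{P}}(\wP,k)=0$ for all $i\ge 1$.

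Theorem \ref{iks} provides an exact sequence $0\to \wP\overset{x}{\rightarrow}\wP\to \wP/x\wP\to 0$ in $\dualcat(\OO)$, with $\wP/x\wP$ a projective envelope of $(\soc_K\pi)^{\vee}$ in $\Mod^{\pro}_{K,\zeta}(\OO)$; in particular $\wP/x\wP$ is projective in $\Mod^{\pro}_P(\OO)$ by Remark \ref{pk}. Applying the $\wtimes_{\OO\br{P}}k$ derived functors and using that $\wTor^i_{\OO\br{P}}(\wP/x\wP,k)=0$ for all $i\ge 1$, the long exact sequence of $\wTor$ collapses to short exact sequences showing that multiplication by $x$ is a bijection on $\wTor^i_{\OO\br{P}}(\wP,k)$ for every $i\ge 1$.

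To deduce the vanishing, I will invoke a topological Nakayama argument. Since $\wP$ is a projective envelope of the absolutely irreducible $\pi^{\vee}$ in $\dualcat(\OO)$, the compact endomorphism ring $\wE$ is local with residue field $k$. The reduction $\bar{x}\in \wE/\varpi\wE\cong \End_{\dualcat(k)}(\wP/\varpi\wP)$ is not surjective, because its cokernel is $\Omega^{\vee}\neq 0$, so it is not invertible in the local ring $\End_{\dualcat(k)}(\wP/\varpi\wP)$; hence $x$ lies in the maximal two-sided ideal of $\wE$. The groups $\wTor^i_{\OO\br{P}}(\wP,k)$ carry a natural compact $\wE$-module structure, obtained by lifting endomorphisms of $\wP$ to a minimal projective resolution in the compact category $\Mod^{\pro}_P(\OO)$. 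The surjectivity of multiplication by $x$, combined with topological Nakayama (\cite[Cor.1.5]{bru}) applied over the local ring $\wE$, forces $\wTor^i_{\OO\br{P}}(\wP,k)=0$ for every $i\ge 1$, completing the proof.

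The key obstacle is the last step: one must carefully equip the Tor groups with a compact $\wE$-module structure to which a Nakayama argument applies. This requires choosing a minimal projective resolution of $\wP$ in $\Mod^{\pro}_P(\OO)$ whose terms are compact $\OO\br{P}$-modules and verifying that the induced $\wE$-action on homology is continuous, so that the bijectivity of $x$ together with $x\in\rad(\wE)$ indeed forces the compact module to vanish.
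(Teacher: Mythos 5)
Your strategy is exactly the paper's: the proof of Corollary \ref{aok} is literally ``apply Corollary \ref{proj} and Remark \ref{pk} to the exact sequence $0\to\wP\overset{x}{\to}\wP\to\wP/x\wP\to 0$ furnished by Theorem \ref{iks}'' (with \cite{eff} offered as an alternative). However, the step you yourself flag as the ``key obstacle'' is a genuine gap as written: you try to run topological Nakayama over the full, a priori non-commutative ring $\wE$, which forces you to put a continuous $\wE$-module structure on $\wTor^i_{\OO\br{P}}(\wP,k)$ via liftings to a minimal projective resolution, and you do not carry this out. Nothing so elaborate is needed, because only the action of the single element $x$ enters the argument.

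The repair is the one already built into Corollary \ref{proj}: restrict scalars to a commutative coefficient ring. You correctly show $x\in\rad(\wE)$ (it is not a unit since $\bar x$ has non-zero cokernel), so the closure $R$ of $\OO[x]$ in $\wE$ is a quotient of $\OO\br{T}$, hence a complete local commutative noetherian $\OO$-torsion free $\OO$-algebra with residue field $k$. Moreover $\wP/x\wP$ is a projective envelope of $(\soc_K\pi)^{\vee}$ with $\soc_K\pi$ finite dimensional ($\pi$ is admissible), so $\wP/(x,\varpi)\wP\wtimes_{\OO\br{P}}k$ is finite dimensional and topological Nakayama shows $\wP$ is a finitely generated $R\br{P}$-module. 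Now Theorem \ref{iks} says $x$ is $\wP$-regular and $\wP/x\wP$ is projective in $\Mod^{\pro}_P(\OO)$ (Remark \ref{pk}), so Corollary \ref{proj} applies verbatim: Proposition \ref{torfg} makes $\wTor^i_{\OO\br{P}}(\wP,k)$ a finitely generated $R$-module, your long exact sequence shows $x$ acts bijectively on it for $i\ge 1$, and ordinary Nakayama over $R$ gives the vanishing. With that substitution your proof closes and coincides with the paper's.
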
 
 \begin{proof} It follows from Corollary \ref{proj} and Remark \ref{pk}  that $\wP$ is projective in $\Mod^{\pro}_{K,\zeta}(\OO)$. 
 Alternatively, one may deduce the statement from \cite{eff}.
 \end{proof} 
 
  \section{Proof of the conjecture} \label{proof_of_conj}
  Let $\rho$ and $\psi$ be as in \S\ref{points}. In this section we will construct $N$ satisfying the hypotheses (N0), (N1) and (N2) of \S\ref{points}, and 
  parts (a) and (b) of Theorem \ref{A}. This will allow us to use the formalism developed in \S\ref{general} and prove the Breuil-M\'ezard conjecture. 
  The proof splits into two cases: we will say that $\rho$ is generic, if it is either irreducible, or $\rho\cong \bigl( \begin{smallmatrix}\chi_1 & * \\ 0 & \chi_2\end{smallmatrix}\bigr )$, 
  with $\chi_1\chi_2^{-1}\neq \omega, \Eins$. The assumption implies that $\Ext^1_{G_{\Qp}}(\chi_2, \chi_1)$ is one dimensional, hence $\rho$ is uniquely determined by the ordered pair 
  $(\chi_1, \chi_2)$. Since we assume that $\End_{G_{\Qp}}(\rho)=k$, the only case not covered by the above  is when  $\rho\cong \bigl( \begin{smallmatrix}\chi\omega & * \\ 0 & \chi\end{smallmatrix}\bigr )$. We will refer to this case as non-generic. This division is somewhat artificial, but there are some shortcuts in the generic cases, which we would like to use. We assume that $p\ge 5$.
  
  \subsection{Generic case}\label{generic_case} We assume that $\rho$ is either irreducible, in which case we let $\pi$ be the unique smooth irreducible $k$-representation of $G$ with central character 
  $\zeta$, such that $\VV(\pi)\cong \rho$, or equivalently $\cV(\pi^{\vee})\cong \rho$, or $\rho\cong  \bigl( \begin{smallmatrix}\chi_1 & * \\ 0 & \chi_2\end{smallmatrix}\bigr )$ non-split, with 
  $\chi_1\chi_2^{-1}\neq \Eins, \omega$, in which case we let $\pi:= \Indu{P}{G}{\chi_1\otimes\chi_2\omega^{-1}}$, so that $\cV(\pi^{\vee})=\chi_1$. Let $\wP\twoheadrightarrow \pi^{\vee}$ 
  be a projective envelope of $\pi^{\vee}$ in $\dualcat(\OO)$, and let $\wE:=\End_{\dualcat(\OO)}(\wP)$.
  
  \begin{prop}\label{Nok} $\wP$ satisfies the hypotheses (N0), (N1) and (N2) of \S\ref{points}. Moreover, the functor $\cV$ induces an isomorphism of local rings between 
  $\wE$ and $R^{\psi}_{\rho}$.
  \end{prop}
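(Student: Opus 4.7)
The plan is to reduce the proposition to the main results of the author's earlier paper \cite{cmf}, where the structure of blocks in $\dualcat(\OO)$ and the action of Colmez's functor on them have been analysed in detail.

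First, I would establish the isomorphism $\cV: \wE \overset{\cong}{\to} R^{\psi}_{\rho}$ together with (N2). In the supersingular case (when $\rho$ is absolutely irreducible) the block $\mathfrak{B}$ of $\dualcat(\OO)$ containing $\pi^{\vee}$ has $\pi^{\vee}$ as its unique irreducible object, $\wP$ is a pro-generator of $\mathfrak{B}$, and $\cV$ restricted to $\mathfrak{B}$ induces an equivalence with the category of continuous representations of $G_{\Qp}$ on compact $R^{\psi}_{\rho}$-modules, under which $\wP$ corresponds to the universal deformation $\rho^{\un}$. In the generic reducible case, $\rho \cong \bigl(\begin{smallmatrix} \chi_1 & \ast \\ 0 & \chi_2 \end{smallmatrix}\bigr)$ is non-split with $\chi_1\chi_2^{-1} \neq \Eins, \omega$, and the block of $\pi^{\vee}$ contains also $(\pi^s)^{\vee}$ where $\pi^s = \Indu{P}{G}{\chi_2 \otimes \chi_1 \omega^{-1}}$; however, because $\cV(\pi^{\vee}) \cong \chi_1$ singles out the ordering $(\chi_1, \chi_2)$ which uniquely determines the non-split extension $\rho$, the projective envelope $\wP$ of $\pi^{\vee}$ alone satisfies $\cV(\wP) \cong \rho^{\un}$, and the induced map $\wE \to \End_{R^{\psi}_{\rho}\br{G_{\Qp}}}(\rho^{\un}) = R^{\psi}_{\rho}$ is an isomorphism. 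These assertions are essentially the main theorems of \cite{cmf} in the respective cases. In particular $\wE$ is commutative.

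Next I would verify (N1). Since $\wP$ is a projective envelope of $\pi^{\vee}$ in $\dualcat(\OO)$, its Pontryagin dual $\wP^{\vee}$ is an injective envelope of $\pi$ in $\Mod^{\lfin}_{G,\zeta}(\OO)$; in particular, $\soc_G \wP^{\vee} = \pi$ and the inclusion is essential. As $\SL_2(\Qp)$ is normal in $G$, the subspace $(\wP^{\vee})^{\SL_2(\Qp)}$ is a $G$-subrepresentation; if it were non-zero it would meet the socle $\pi$ non-trivially, forcing $\pi^{\SL_2(\Qp)} \neq 0$. But in both cases $\pi$ is irreducible and not a character, hence factors non-trivially through the determinant only if trivial on $\SL_2(\Qp)$; this contradicts the irreducibility of $\pi$, so $\pi^{\SL_2(\Qp)} = 0$.

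Finally for (N0), the isomorphism $\wE \cong R^{\psi}_{\rho}$ identifies $k\wtimes_{R^{\psi}_{\rho}} \wP$ with $\wP/\mathfrak{m}_{\wE}\wP$. Since $\wE$ is a complete local Noetherian $\OO$-algebra with residue field $k$ and $\wP$ is an indecomposable projective envelope of $\pi^{\vee}$, the quotient $\wP/\mathfrak{m}_{\wE}\wP$ has $\pi^{\vee}$ as its cosocle and is of finite length in $\dualcat(\OO)$; explicitly it is $\pi^{\vee}$ in the supersingular case, and an extension of $(\pi^s)^{\vee}$ by $\pi^{\vee}$ of length two in the principal series case, as read off from the structure of $\wP$ computed in \cite{cmf}. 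Since $\pi$ and $\pi^s$ are admissible smooth $G$-representations, their restrictions to $K$ are admissible, and dually their duals are finitely generated over $\OO\br{K}$; hence so is $k\wtimes_{R^{\psi}_{\rho}} \wP$. The principal obstacle is the identification $\wE \cong R^{\psi}_{\rho}$ together with $\cV(\wP) \cong \rho^{\un}$, which encodes a substantial part of the local $p$-adic Langlands correspondence for $\GL_2(\Qp)$ and is imported directly from \cite{cmf}; once this is granted, (N0) and (N1) are essentially formal.
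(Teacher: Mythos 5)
Your overall route is the same as the paper's: import the hard content ($\cV(\wP)\cong\rho^{\un}$, $\wE\cong R^{\psi}_{\rho}$) from \cite{cmf}, deduce (N2), get (N1) from the fact that $(\wP^{\vee})^{\SL_2(\Qp)}$ is a $G$-stable subspace meeting the essential socle $\pi$, and get (N0) from the finite length of $k\wtimes_{\wE}\wP$ in $\dualcat(\OO)$. The (N1) argument is verbatim the paper's, and your observation that (N0) follows formally from finite length plus admissibility of the irreducible constituents is sound.

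There is, however, one case-coverage error in your description of $k\wtimes_{\wE}\wP$. The generic reducible case of \S\ref{generic_case} only excludes $\chi_1\chi_2^{-1}=\Eins,\omega$, so it includes $\chi_1\chi_2^{-1}=\omega^{-1}$, i.e.\ $\rho\cong\bigl(\begin{smallmatrix}\chi & \ast\\ 0 & \chi\omega\end{smallmatrix}\bigr)$. In that case your claim that $k\wtimes_{\wE}\wP$ is ``an extension of $(\pi^s)^{\vee}$ by $\pi^{\vee}$ of length two'' is false: by \cite[Lem.10.16]{cmf} one has a non-split sequence
$0\rightarrow (\tau_1\otimes\chi\circ\det)^{\vee}\rightarrow k\wtimes_{\wE}\wP\rightarrow (\Indu{P}{G}{\chi\omega\otimes\chi\omega^{-1}})^{\vee}\rightarrow 0$,
where $\tau_1$ sits in a non-split sequence $0\rightarrow\Sp\rightarrow\tau_1\rightarrow\Eins^{\oplus 2}\rightarrow 0$, so the module has length four and its constituents include (twists of) $\Sp$ and $\Eins$ rather than just the two principal series. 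The paper's proof treats this as a separate third case precisely for this reason. Your finite-generation argument survives this correction unchanged (the constituents are still duals of admissible smooth representations), so the gap is in the explicit structural claim rather than in the method; but as written the case analysis is incomplete and the length-two assertion should be replaced by the citation of \cite[Lem.10.16]{cmf} in the case $\chi_1\chi_2^{-1}=\omega^{-1}$.
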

  \begin{proof} This follows from \cite{cmf}. The representation $k\wtimes_{\wE} \wP$ is a maximal quotient of $\wP$, which contains $\pi^{\vee}$ with multiplicity one.   
  Such representations were computed in \cite{cmf} and denoted by $Q$, see \cite[Rem.1.13]{cmf}. If $\rho$ is irreducible then $k\wtimes_{\wE}\wP \cong \pi^{\vee}$ by \cite[Prop. 6.1]{cmf}.
  If $\rho\cong \bigl( \begin{smallmatrix}\chi_1 & * \\ 0 & \chi_2\end{smallmatrix}\bigr )$, with $\chi_1\chi_2^{-1}\neq \Eins, \omega^{\pm 1}$ then we have an exact non-split sequence: 
  \begin{equation} \label{atome1}
  0\rightarrow (\Indu{P}{G}{\chi_2\otimes\chi_1\omega^{-1}})^{\vee}\rightarrow k\wtimes_{\wE} \wP\rightarrow (\Indu{P}{G}{\chi_1\otimes\chi_2\omega^{-1}})^{\vee}\rightarrow 0
 \end{equation} 
 by \cite[Prop.8.3]{cmf}. If  $\rho\cong \bigl( \begin{smallmatrix}\chi& * \\ 0 & \chi\omega\end{smallmatrix}\bigr )$ then by \cite[Lem.10.16]{cmf} we have an exact non-split sequence
  \begin{equation} \label{atome2}
  0\rightarrow (\tau_1\otimes\chi\circ \det)^{\vee}\rightarrow k\wtimes_{\wE} \wP\rightarrow (\Indu{P}{G}{\chi\omega \otimes\chi\omega^{-1}})^{\vee}\rightarrow 0,
 \end{equation}
 where $\tau_1$ is a smooth representation of $G$ constructed in \cite[\S 10.2, (180)]{cmf}. In particular, we have a non-split sequence 
 $0\rightarrow \Sp\rightarrow \tau_1\rightarrow \Eins^{\oplus 2}\rightarrow 0$. We see that in all three cases (N0) is satisfied.
 
 Since $\pi\hookrightarrow \wP^{\vee}$ is essential, and $\pi^{\SL_2(\Qp)}=0$, we deduce that $(\wP^{\vee})^{\SL_2(\Qp)}$ is zero, as it is stable under $G$. Hence, (N1) is satisfied. 
The assertion that (N2) is satisfied and that $\wE\cong R^{\psi}_{\rho}$ follows from Prop.6.3, Cor.8.7 and Thm.10.72, Cor.10.71 of \cite{cmf}.
 \end{proof}
 
 \begin{remar}\label{socle} If $\rho$ is absolutely irreducible and $\rho|_{I_{\Qp}}\cong (\omega_2^{r+1}\oplus \omega_2^{p(r+1)})\otimes \omega^m$ then 
 $\soc_K \pi\cong (\Sym^r k^2 \oplus \Sym^{p-1-r} k^2\otimes \det^{r})\otimes \det^m$, where $0\le r\le p-1$, $0\le m\le p-2$ and $\omega_2$ is the fundamental 
 character of Serre of niveau $2$, see \cite{breuil1}, \cite{breuil2}. If $\rho\cong \bigl (\begin{smallmatrix} \chi_1& *\\ 0 & \chi_2 \omega^{r+1} \end{smallmatrix}\bigr ) \otimes \omega^m$, 
 where $\chi_1, \chi_2$ are unramified and $\chi_1\neq\chi_2\omega^{r+1}$ then $\pi\cong  (\Indu{P}{G}{\chi_1\otimes \chi_2\omega^r})\otimes\omega^m \circ \det$. Hence, 
 $\soc_K \pi\cong \Sym^r k^2\otimes \det^m$ if $0<r<p-1$ and $\det^m \oplus \Sym^{p-1}k^2\otimes \det^m$, otherwise. In particular, $\soc_K \pi$ is multiplicity free.
  \end{remar}

\begin{remar} It follows from  \eqref{atome1}, \eqref{atome2} and \cite[3.2]{beegee} that $\soc_K \pi\cong \soc_K (k\wtimes_{\wE} \wP)^{\vee}$.
\end{remar}

 \begin{cor}\label{bok} Let $\Theta$ be a $K$-invariant $\OO$-lattice in $\sigma(\mathbf{w}, \tau)$ or $\sigma^{\mathrm{cr}}(\mathbf w, \tau)$. Then 
 $M(\Theta)$, defined in Definition \ref{M_Th} with $N=\wP$, is a Cohen-Macaulay $R^{\psi}_{\rho}$-module of dimension $2$. In particular, 
 $R^{\psi}_{\rho}/\ann M(\Theta)$ is equidimensional and all its associated primes are minimal. 
 \end{cor}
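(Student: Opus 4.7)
The plan is to apply Theorem \ref{B} to $R:=R^{\psi}_{\rho}$ and $N:=\wP$. By Proposition \ref{Nok} and the explicit structure theorems of \cite{cmf} in the generic cases (both irreducible and principal-series), we have $\wE\cong R^{\psi}_{\rho}$, this ring is formally smooth over $\OO$ of Krull dimension $4$ (hence regular, so Cohen-Macaulay), $\wP$ is $R$-flat, and by Corollary \ref{aok} moreover projective in $\Mod^{\pro}_K(\OO)$.

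The substantive step is to verify the inequality
$$\pdim_{\OO\br{P}}(k\wtimes_R \wP) + \max_{\sigma}\dim_R M(\sigma)\le \dim R = 4$$
required by Theorem \ref{B}, where the maximum runs over irreducible smooth $k$-representations of $K$. By Lemma \ref{dimmax} this maximum equals $\dim_R M(\Indu{P}{K}{\Eins})$; Frobenius reciprocity applied to Definition \ref{M_sigma} identifies $M(\Indu{P}{K}{\Eins})^{\vee}$ with the $P$-invariants $(\wP^{\vee})^{P}$, whose $R$-support dimension I expect to be at most $1$ using the description of $\wP^{\vee}$ in \cite{cmf} as an injective envelope of $\pi$ together with the $I_1$-structure of $\pi$. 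For the $\pdim$ term, flatness of $\wP$ over $R$ combined with its projectivity over $\OO\br{K}$ allows one to tensor a length-$4$ Koszul resolution of $k$ over the regular local ring $R$ with $\wP$, producing an $\OO\br{K}$-projective resolution of $k\wtimes_R\wP$; after killing a regular sequence projecting to a system of parameters of $M(\Indu{P}{K}{\Eins})$ and applying Proposition \ref{regproj} together with Corollary \ref{pdim}, the effective length should drop to at most $3$, yielding the required inequality.

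Once the hypotheses of Theorem \ref{B} are verified, it delivers the Cohen-Macaulayness of $M(\Theta)$ together with the equidimensionality and associated-primes statement for $R^{\psi}_{\rho}/\ann_R M(\Theta)$, and gives $\dim_R M(\Theta) = 1 + \dim_R M(\Indu{P}{K}{\Eins})$. To match this dimension with $2$, I would combine Propositions \ref{dim_eq} and \ref{nearly_done}: together they identify $\supp M(\Theta) \cap \MaxSpec R^{\psi}_{\rho}[1/p]$ with the locus of potentially semi-stable representations of type $(\mathbf w,\tau,\psi)$, which is one-dimensional inside the three-dimensional $\MaxSpec R^{\psi}_{\rho}[1/p]$; combined with the $\OO$-torsion freeness of $M(\Theta)$ (Lemma \ref{act1n}), this forces $\dim_R M(\Theta) = 2$.

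The main obstacle will be the careful verification of the inequality above—both the projective-dimension bound and the estimate $\dim_R M(\Indu{P}{K}{\Eins})\le 1$ require a nontrivial use of the structure of $\wP$ and $\wP^{\vee}$ developed in \cite{cmf}, and a precise tracking of how many steps the Koszul-type reduction requires to reach a projective $\OO\br{P}$-module.
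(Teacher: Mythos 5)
Your proposal routes the corollary through Theorem \ref{B}, which is the strategy the paper reserves for the \emph{non-generic} case (Corollary \ref{abok}); here the paper's proof is essentially two lines and rests on an ingredient you never invoke, namely Theorem \ref{iks}. That theorem produces $x\in \wE\cong R^{\psi}_{\rho}$ which is $\wP$-regular and such that $\wP/x\wP$ is a projective envelope of $(\soc_K\pi)^{\vee}$ in $\Mod^{\pro}_{K,\zeta}(\OO)$, hence a finitely generated projective $\OO\br{K}$-module. Lemma \ref{MCM} applied to the length-one regular sequence $(x)$ then gives at once that $M(\Theta)$ is Cohen--Macaulay of dimension $1+1=2$, and Proposition \ref{HS3} gives the equidimensionality and the statement about associated primes. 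Note also that Corollary \ref{aok}, which you do use for the projectivity of $\wP$ over $\OO\br{K}$, is itself deduced from Theorem \ref{iks} via Corollary \ref{proj}; so once you allow yourself that input, the detour through Theorem \ref{B} is both unnecessary and, as written, incomplete.

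The concrete gaps in your verification of the hypothesis of Theorem \ref{B} are the following. First, $R^{\psi}_{\rho}$ is \emph{not} formally smooth in every generic case: for $\rho\cong\bigl(\begin{smallmatrix}\chi & \ast\\ 0 & \chi\omega\end{smallmatrix}\bigr)$ (allowed, since $\chi_1\chi_2^{-1}=\omega^{-1}$) one has $H^2(G_{\Qp},\mathrm{ad}^0\rho)\neq 0$ and the ring is an obstructed quotient of $\OO\br{x_1,\dots,x_4}$; Cohen--Macaulayness survives (it is a complete intersection of dimension $4$, by the explicit description in \cite{cmf}), but not for the reason you give. Second, the bound $\dim_R M(\Indu{P}{K}{\Eins})\le 1$ is only asserted (``I expect''); establishing it without Theorem \ref{iks} requires an argument of the type of Proposition \ref{dim_M_sigma}, i.e.\ explicit control of $\Hom$'s into $(\wP/(\mm^2,\varpi)\wP)^{\vee}$ via Corollary \ref{fgG1}. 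Third, the Koszul argument only yields $\pdim_{\OO\br{P}}(k\wtimes_R\wP)\le 4=\dim R$, which is useless for the inequality; the proposed ``drop to $3$'' via Proposition \ref{regproj} and Corollary \ref{pdim} is circular, since those results presuppose exactly the regular sequence with projective quotient that Theorem \ref{iks} supplies. A genuine bound $\pdim_{\OO\br{I_1},\zeta}(k\wtimes_R\wP)\le 3$ amounts to the vanishing of $H^3(I_1/Z_1,\pi)$ and is the analogue of Proposition \ref{proj_dim}, not a formal consequence of flatness. Finally, deducing $\dim M(\Theta)=2$ from the potentially semi-stable locus being one-dimensional in $\MaxSpec R^{\psi}_{\rho}[1/p]$ is backwards: that dimension count is a consequence of this corollary (or of Kisin's results, which the paper is deliberately avoiding), not an available input.
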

 \begin{proof} Since (N0) holds by Proposition \ref{Nok}, $k\wtimes_{R^{\psi}_{\rho}} \wP$ is a finitely generated $\OO\br{K}$-module.
 The assertions follow from Lemma \ref{MCM} and Proposition \ref{HS3}.  We note that in the definition of a $p$-adic Hodge type $(\mathbf w, \tau, \psi)$ we require 
 that $\psi\varepsilon^{-(a+b)}|_{I_{\Qp}}=\det \tau$.  Using our conventions regarding local class field theory this is equivalent to the central character of $\Theta$ equal to $\zeta$. 
 In particular, $\Theta^d$ is an object of $\Mod^{\pro}_{K, \zeta}(\OO)$. 
  \end{proof}
 
 \begin{cor}\label{Ra} Let $\Theta$ be a $K$-invariant $\OO$-lattice in $\sigma(\mathbf{w}, \tau)$ (resp. $\sigma^{\mathrm{cr}}(\mathbf w, \tau)$) and 
 let $\mathfrak a$ be the 
  $R^{\psi}_{\rho}$-annihilator of $M(\Theta)$. Then $R^{\psi}_{\rho}/\mathfrak  a$  is equal to $R^{\psi}_{\rho}(\mathbf w, \tau)$ (resp. $R^{\psi, \mathrm{cr}}_{\rho}(\mathbf w, \tau)$).
  \end{cor}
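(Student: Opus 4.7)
The plan is to apply Theorem \ref{A} with $N = \wP$ and $V = \sigma(\mathbf w, \tau)$ (resp. $V = \sigma^{\mathrm{cr}}(\mathbf w, \tau)$), conclude that $R^{\psi}_{\rho}/\mathfrak a$ is reduced, and then identify this quotient with the deformation ring via Theorem \ref{exist_pst_def}. Proposition \ref{Nok} has already checked the hypotheses (N0), (N1), (N2) for $\wP$ and identifies $\wE \cong R^{\psi}_{\rho}$, so the setup of \S\ref{points} applies. A key preliminary observation is that the genericity of $\rho$ excludes the exceptional cases appearing in Proposition \ref{nearly_done}, Corollary \ref{c_ii_ok}, and Theorem \ref{exist_pst_def}: any reducible lift $\rho^{\mathrm{un}}_{\nn}$ of the shape $\bigl(\begin{smallmatrix}\delta\varepsilon & \ast \\ 0 & \delta\end{smallmatrix}\bigr)$ would force $\rho \cong \bigl(\begin{smallmatrix}\bar{\delta}\omega & \ast \\ 0 & \bar{\delta}\end{smallmatrix}\bigr)$, contradicting the hypothesis that $\rho$ is generic.

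Next I would verify the three hypotheses of Theorem \ref{A}. Hypothesis (a), the projectivity of $\wP$ in $\Mod^{\pro}_K(\OO)$, is Corollary \ref{aok}. Hypothesis (b), that $R^{\psi}_{\rho}/\mathfrak a$ is equidimensional with all associated primes minimal, follows from Corollary \ref{bok}, which gives that $M(\Theta)$ is Cohen-Macaulay of dimension $d = 2$. For hypothesis (c), I would take $\Sigma$ to be $\MaxSpec(R^{\psi}_{\rho}/\mathfrak a)[1/p]$ with the finite exceptional set of Corollary \ref{c_ii_ok} removed. Since $M(\Theta)$ is $\OO$-torsion free, its faithful quotient $R^{\psi}_{\rho}/\mathfrak a$ is $\OO$-torsion free as well, and, being equidimensional of dimension $2$, the remark following Theorem \ref{A} shows that $\Sigma$ is dense in $\supp M(\Theta)$. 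Condition (c)(i) holds on $\Sigma$: every $\nn \in \supp M(\Theta) \cap \MaxSpec R^{\psi}_{\rho}[1/p]$ satisfies $\Hom_K(V, \Pi(\kappa(\nn))) \neq 0$ by Proposition \ref{dim_eq}, so Proposition \ref{nearly_done} ensures that $\rho^{\mathrm{un}}_{\nn}$ is of type $(\mathbf w, \tau, \psi)$ and that the Hom space is one dimensional. Condition (c)(ii) holds on $\Sigma$ by construction, using Corollary \ref{c_ii_ok}.

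Theorem \ref{A} then delivers the reducedness of $R^{\psi}_{\rho}/\mathfrak a$. By Theorem \ref{exist_pst_def}, $R^{\psi}_{\rho}(\mathbf w, \tau)$ (resp. $R^{\psi, \mathrm{cr}}_{\rho}(\mathbf w, \tau)$) equals $R^{\psi}_{\rho}/\sqrt{\mathfrak a}$ in the present non-exceptional regime, and since $\mathfrak a$ is radical the two coincide with $R^{\psi}_{\rho}/\mathfrak a$, yielding the Corollary. The main conceptual difficulty, the bound on $\dim_{\kappa(\nn)} \Hom_K(V, \Pi(R_{\nn}/\nn^2))$ underlying condition (c)(ii), has been absorbed into Corollary \ref{c_ii_ok}, which itself rests on the (RED) hypothesis verified in Theorem \ref{RED_is_ok}; at this stage the present Corollary is essentially a bookkeeping assembly of those results.
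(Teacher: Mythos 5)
Your proof is correct and follows essentially the same route as the paper: the paper verifies the hypotheses of Proposition \ref{RHS4} (the reducedness criterion that Theorem \ref{A} packages) via Corollary \ref{bok}, Proposition \ref{nearly_done} and Corollary \ref{c_ii_ok}, and then concludes by Theorem \ref{exist_pst_def}, exactly as you do. Your additional remarks (that genericity rules out the exceptional cases, and the density/torsion-freeness check for $\Sigma$) are details the paper leaves implicit but are correctly supplied.
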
 
  \begin{proof} Corollary \ref{bok}, Proposition \ref{nearly_done}, Corollary \ref{c_ii_ok} imply that the conditions of Proposition \ref{RHS4} are satisfied.
  It follows from Proposition \ref{RHS4} that $R^{\psi}_{\rho}/\mathfrak a$ is reduced.  The assertion 
  follows from Corollary \ref{exist_pst_def}.
   \end{proof} 
  
  \begin{thm}\label{bm_gen} The ring $R^{\psi}_{\rho}(\mathbf w, \tau)$ is $\OO$-torsion free and of relative dimension $1$ over 
  $\OO$. Moreover, we have an equality of one dimensional cycles:
$$z_1(R^{\psi}_{\rho}(\mathbf w, \tau)/(\varpi))= \sum_{\sigma} m_{\sigma} z_{1}(M(\sigma)),$$
where the sum is taken over the set of isomorphism classes of smooth irreducible $k$-representations of $K$ with central character $\zeta$, 
$m_{\sigma}$ is the multiplicity with which $\sigma$ occurs as a subquotient of $\overline{\sigma(\mathbf w, \tau)}$ and $M(\sigma)$ 
is an $R^{\psi}_{\rho}$-module defined in Definition \ref{M_sigma} with $N=\wP$.  

Further, $M(\sigma)\neq 0$ if and only if $\sigma$ occurs in the $K$-socle of $\pi$. 
If $\Hom_K(\sigma, \pi)\neq 0$ then the Hilbert-Samuel multiplicity of $M(\sigma)$ is equal to $1$.

The same statement holds if we replace $R^{\psi}_{\rho}(\mathbf w, \tau)$  with $R^{\psi, \mathrm{cr}}_{\rho}(\mathbf w, \tau)$ and 
 $\sigma(\mathbf w, \tau)$ with $\sigma^{\mathrm{cr}}(\mathbf w, \tau)$.
 \end{thm}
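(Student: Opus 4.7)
The plan is to apply Theorem~\ref{A} with $N = \wP$ and $V$ equal to either $\sigma(\mathbf w, \tau)$ or $\sigma^{\mathrm{cr}}(\mathbf w, \tau)$, then combine its output with Corollary~\ref{Ra}. Proposition~\ref{Nok} supplies (N0), (N1), (N2) and the isomorphism $R^{\psi}_\rho \cong \wE$. Hypothesis (a) of Theorem~\ref{A} is Corollary~\ref{aok}, and (b) is Corollary~\ref{bok} (together with Proposition~\ref{HS3} applied to the Cohen--Macaulay module $M(\Theta)$). For (c), first observe that $R^{\psi}_\rho/\mathfrak a$ is $\OO$-torsion free (since the faithful module $M(\Theta)$ is) and equidimensional of dimension $2$, so the remark after Theorem~\ref{A} shows that a cofinite subset of $\MaxSpec (R^{\psi}_\rho/\mathfrak a)[1/p]$ is dense in $\supp M(\Theta)$. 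Take $\Sigma$ to be the intersection of this dense set with the cofinite subset of $\MaxSpec R^{\psi}_\rho[1/p]$ on which the conclusion of Corollary~\ref{c_ii_ok} holds; condition (c)(i) holds on all of $\Sigma$ by Proposition~\ref{nearly_done}, and (c)(ii) holds by construction.

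Theorem~\ref{A} then produces the reducedness of $R^{\psi}_\rho/\mathfrak a$, its dimension $d = 2$, and the cycle equality $z_1(R^{\psi}_\rho/(\varpi, \mathfrak a)) = \sum_\sigma m_\sigma\, z_1(M(\sigma))$. Corollary~\ref{Ra} identifies $R^{\psi}_\rho/\mathfrak a$ with $R^{\psi}_\rho(\mathbf w, \tau)$ (resp.\ $R^{\psi, \mathrm{cr}}_\rho(\mathbf w, \tau)$), which therefore inherits $\OO$-torsion freeness; since $\varpi$ is a nonzerodivisor in this $2$-dimensional ring, its relative dimension over $\OO$ equals $1$. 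The nonvanishing criterion for $M(\sigma)$ follows from Corollary~\ref{zero_lambda}: $M(\sigma) = 0$ precisely when $\Hom_K(\sigma, (k \wtimes_{R^{\psi}_\rho} \wP)^\vee) = 0$, and by the remark after Proposition~\ref{Nok} the $K$-socle of $(k \wtimes_{R^{\psi}_\rho} \wP)^\vee$ is $\soc_K \pi$, so $M(\sigma) \ne 0$ if and only if $\sigma$ appears in the $K$-socle of $\pi$.

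The hard step is the Hilbert--Samuel multiplicity claim. Assume $\Hom_K(\sigma, \pi) \ne 0$, so that $\sigma$ embeds into $\soc_K \pi$, which is multiplicity free by Remark~\ref{socle}. Proposition~\ref{disdual} combined with the socle identification above forces $\dim_k M(\sigma)/\mathfrak m M(\sigma) = 1$, whence Nakayama's lemma for compact modules yields $M(\sigma) \cong R^{\psi}_\rho/\mathfrak b$ for some ideal $\mathfrak b$; Theorem~\ref{B} simultaneously shows that $M(\sigma)$ is Cohen--Macaulay of dimension $1$, so $R^{\psi}_\rho/\mathfrak b$ is a $1$-dimensional CM local ring. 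The multiplicity-$1$ assertion is then equivalent to regularity of $R^{\psi}_\rho/\mathfrak b$; since $R^{\psi}_\rho$ is formally smooth of relative dimension $3$ over $\OO$ in the generic case, this reduces to the cotangent-space bound $\dim_k \mathfrak m/(\mathfrak b + \mathfrak m^2) \le 1$, which by another application of Proposition~\ref{disdual} (with $\md = R^{\psi}_\rho/\mathfrak m^2$) translates into the bound $\dim_k \Hom_K(\sigma, (\wP/\mathfrak m^2 \wP)^\vee) \le 2$. This inequality is the principal obstacle, and is the one point in the argument that genuinely uses fine structural information about $\wP$ modulo $\mathfrak m^2$ from \cite{cmf}. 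The crystalline case is identical, using the crystalline branches of Proposition~\ref{nearly_done} and Corollary~\ref{c_ii_ok} in place of their semi-stable counterparts.
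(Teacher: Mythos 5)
Your setup for the first two assertions is the same as the paper's: Theorem \ref{A} with $N=\wP$, hypotheses (a),(b) from Corollaries \ref{aok}, \ref{bok}, hypothesis (c) from Proposition \ref{nearly_done} and Corollary \ref{c_ii_ok}, and the identification via Corollary \ref{Ra}. Your treatment of the nonvanishing of $M(\sigma)$ (Proposition \ref{disdual} plus Nakayama plus the identification $\soc_K(k\wtimes_{\wE}\wP)^{\vee}\cong\soc_K\pi$) is also fine, though the paper routes it differently.

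The multiplicity-one claim, however, is not proved. You reduce it to the cotangent bound $\dim_k\Hom_K(\sigma,(\wP/\mm^2\wP)^{\vee})\le 2$ and then declare this "the principal obstacle" that "genuinely uses fine structural information about $\wP$ modulo $\mm^2$" --- i.e.\ you stop exactly where the real work begins, and nothing in the paper's cited inputs hands you that inequality. The paper's actual mechanism is Theorem \ref{iks}: there is an injective $x\in\wE\cong R^{\psi}_{\rho}$ with $\wP/x\wP$ a projective envelope of $(\soc_K\pi)^{\vee}$ in $\Mod^{\pro}_{K,\zeta}(\OO)$. Applying $\Hom^{\cont}_{\OO\br{K}}(\ast,\sigma^{\vee})^{\vee}$ to $0\to\wP\xrightarrow{x}\wP\to\wP/x\wP\to 0$ and using that $\soc_K\pi$ is multiplicity free gives $0\to M(\sigma)\xrightarrow{x}M(\sigma)\to k\to 0$, so $M(\sigma)\cong R^{\psi}_{\rho}/\mathfrak a\cong k\br{x}$ and the multiplicity is $1$ with no need for any bound on $\wP/\mm^2\wP$. (The same sequence also yields the CM statement for $M(\Theta)$ and $M(\sigma)$ via Lemma \ref{MCM}.) A secondary problem: you invoke Theorem \ref{B} to get that $M(\sigma)$ is Cohen--Macaulay of dimension $1$, but in the generic case the paper never verifies the hypotheses of Theorem \ref{B} --- in particular the flatness of $\wP$ over $R^{\psi}_{\rho}$ and the bound $\pdim_{\OO\br{P}}k\wtimes_{R^{\psi}_{\rho}}\wP+\max_{\sigma}\dim M(\sigma)\le\dim R^{\psi}_{\rho}$ are not established there (Theorem \ref{B} is the tool for the non-generic case, where Propositions \ref{dim_M_sigma} and \ref{proj_dim} supply exactly these inputs). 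So both the CM input and the multiplicity computation need to be rerouted through Theorem \ref{iks}.
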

 
 \begin{proof} The first two assertions follow from Theorem \ref{A}, and Corollary \ref{Ra}. We note that Corollaries \ref{aok}, \ref{bok} 
 imply that parts (a) and (b) of Theorem \ref{A} are satisfied. Proposition \ref{nearly_done},  Corollary \ref{c_ii_ok} imply that part  (c) of Theorem \ref{A} is satisfied, 
 with $\Sigma$ equal to $\MaxSpec R^{\psi}_{\rho}(\mathbf w, \tau)[1/p]$ (resp. $\MaxSpec R^{\psi, \mathrm{cr}}_{\rho}(\mathbf w, \tau)[1/p]$) minus finitely many points. 
 Let $\sigma$ be an irreducible smooth $k$-representation of $K$ with central character $\zeta$.  We apply $\Hom_{\OO\br{K}}^{\cont}(\ast, \sigma^{\vee})^{\vee}$ 
 to the exact sequence $0\rightarrow \wP\overset{x}{\rightarrow} \wP \rightarrow \wP/x\wP\rightarrow 0$, given by Theorem \ref{iks}, to obtain an exact 
 sequence of $R^{\psi}_{\rho}$-modules: 
 \begin{equation}\label{x_reg0}
 0\rightarrow M(\sigma)\overset{x}{\rightarrow} M(\sigma)\rightarrow \Hom_K(\wP/x \wP, \sigma^{\vee})^{\vee}\rightarrow 0
 \end{equation} 
 Since $\wP/x \wP$ is a projective envelope of $(\soc_K \pi)^{\vee}$ we have an isomorphism of $k$-vector spaces: 
 \begin{equation}\label{homs}
 \Hom_{\OO\br{K}}^{\cont}(\wP/x\wP, \sigma^{\vee})^{\vee}\cong \Hom_K(\sigma, \soc_K \pi)\cong \Hom_K(\sigma, \pi).
 \end{equation}
 It follows from \eqref{homs} that  $\Hom_K(\wP/x\wP, \sigma^{\vee})^{\vee}=0$ if and only if 
 $\Hom_K(\sigma, \pi)=0$, in which case Nakayama's lemma implies $M(\sigma)=0$.  
 Since $\soc_K \pi$ is multiplicity free by Remark \ref{socle}, if  $\Hom_K(\sigma, \pi)\neq 0$ then it is a one dimensional $k$-vector space. Then \eqref{x_reg0} and \eqref{homs} give an exact sequence:
 \begin{equation}\label{x_reg}
 0\rightarrow M(\sigma)\overset{x}{\rightarrow} M(\sigma)\rightarrow k \rightarrow 0. 
 \end{equation} 
 Hence, if we let $\mathfrak a$ be the  $R^{\psi}_{\rho}$-annihilator of $M(\sigma)$, then $M(\sigma)\cong R^{\psi}_{\rho}/\mathfrak a$, and
 \eqref{x_reg} implies that $R^{\psi}_{\rho}/\mathfrak a\cong k\br{x}$, and so the Hilbert-Samuel multiplicity of $M(\sigma)$ is $1$.
\end{proof}

\subsection{Non-generic case}\label{non_generic_case} We deal with the case $\rho\cong \bigl( \begin{smallmatrix}\chi\omega & * \\ 0 & \chi\end{smallmatrix}\bigr )$. We will recall 
a construction due to Colmez of $\beta\in \Mod^{\mathrm{l.fin}}_{G, \zeta}(k)$, such that the semi-simplification of $\beta$ is isomorphic to 
$ (\Sp\oplus \Eins\oplus \pi_{\alpha})\otimes \chi\circ \det,$ where $ \pi_{\alpha}:= \Indu{B}{G}{\omega\otimes\omega^{-1}}$,
and $\cV(\beta^{\vee})\cong \rho$.  We will then proceed as in \cite[\S5.7]{cmf} by modifying an argument of Kisin \cite{kisin} to show that $\cV$ induces an
isomorphism between the deformation problems of $\beta^{\vee}$ in $\dualcat(\OO)$ and of $\rho$ with a fixed determinant equal to $\zeta\varepsilon$. 
We will then show that the conditions of Theorem \ref{B} are satisfied with $N$ equal to the universal deformation of $\beta^{\vee}$. This will then allow us to proceed as in \S \ref{generic_case} to deduce an analog of Theorem \ref{bm_gen} in this case. Since the functor $\cV$ is compatible with twisting by characters, we
 may assume that $\chi$ is trivial so that $\rho\cong \bigl( \begin{smallmatrix}\omega & * \\ 0 & 1\end{smallmatrix}\bigr )$ and $\zeta$ is trivial modulo $\varpi$, 
 so that $\Mod^{\mathrm{l.fin}}_{G,\zeta}(k)=\Mod^{\mathrm{l.fin}}_{G/Z}(k)$. 
 
 For $\pi_1, \pi_2 \in  \Mod^{\mathrm{l.fin}}_{G/Z}(k)$ we let $e^1(\pi_1, \pi_2):=\dim_k \Ext^1_{G/Z}(\pi_1, \pi_2)$, 
 where the extensions are computed in $\Mod^{\mathrm{l.fin}}_{G/Z}(k)$. The following 
 dimensions are computed in Theorems 11.4 and 11.5 (ii) in \cite{ext2}, or alternatively see \cite[\S VII.4]{colmez} or \cite{ord2}:
\begin{equation}\label{modp1}
e^1(\Eins, \Eins)=0, \quad e^1(\Sp, \Eins)=1, \quad e^1(\pi_{\alpha}, \Eins)=1,
\end{equation}
\begin{equation}\label{modp2}
e^1(\Eins, \Sp)=2, \quad e^1(\Sp, \Sp)=0, \quad e^1(\pi_{\alpha}, \Sp)=0,
\end{equation}
\begin{equation}\label{modp3}
e^1(\Eins, \pi_{\alpha})=0, \quad e^1(\Sp, \pi_{\alpha})=1, \quad e^1(\pi_{\alpha}, \pi_{\alpha})=2.
\end{equation}
The dimensions of higher $\Ext$-groups are computed in \cite[\S 10.1]{cmf}.  Since $e^1(\pi_{\alpha}, \Eins)=1$, there exists a unique up to
 isomorphism non-split sequence:
\begin{equation}\label{define_kappa}
0\rightarrow \Eins\rightarrow \kappa\rightarrow \pi_{\alpha}\rightarrow 0.
\end{equation} 
 It is shown in \cite[VII.4.18]{colmez} that $\Ext^1_{G/Z}(\Eins, \Sp)\cong \Hom(\Qp^{\times}, k)$. We denote the extension corresponding to 
 $\phi\in\Hom(\Qp^{\times}, k)$ by $E_{\phi}$.

\begin{lem}\label{all_beta} Let $\beta\in \Mod^{\mathrm{l.fin}}_{G, \zeta}(k)$ be such that $\soc_G \beta\cong \Sp$ and the semi-simp\-li\-fi\-ca\-tion of $\beta$ as 
a $G$-representation is isomorphic to $\Sp\oplus \Eins\oplus \pi_{\alpha}$.  Then there exist a non-zero $\phi\in \Hom(\Qp^{\times}, k)$ and non-split exact sequences: 
\begin{equation}\label{beta_phi}
0\rightarrow E_{\phi}\rightarrow \beta\rightarrow \pi_{\alpha}\rightarrow 0,
\end{equation}
\begin{equation}\label{beta_Sp} 
0\rightarrow \Sp \rightarrow \beta\rightarrow \kappa\rightarrow 0.
\end{equation} 
Moreover, $\beta$ determines uniquely the one dimensional $k$-subspace spanned by $\phi$ and every non-zero $\phi$ arises in this way.
\end{lem}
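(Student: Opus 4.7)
The plan is to analyze $\beta$ by peeling it apart from the socle and from the cosocle, and then construct $\beta$ in reverse from $\phi$.

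First, I would study $\xi:=\beta/\Sp$, whose semi-simplification is $\Eins\oplus \pi_{\alpha}$. If $\pi_{\alpha}\hookrightarrow \xi$, then the preimage in $\beta$ would be an extension of $\pi_{\alpha}$ by $\Sp$, which splits because $e^1(\pi_{\alpha},\Sp)=0$ by \eqref{modp2}; this would force $\pi_{\alpha}\hookrightarrow \beta$, contradicting $\soc_G\beta\cong \Sp$. Hence $\soc_G \xi\cong \Eins$, so $\xi$ is an extension $0\to\Eins\to\xi\to\pi_{\alpha}\to 0$. This extension is non-split (else $\pi_{\alpha}\hookrightarrow \xi$), and since $e^1(\pi_{\alpha},\Eins)=1$ by \eqref{modp3}, we deduce $\xi\cong \kappa$, giving \eqref{beta_Sp}. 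The preimage $\beta_0\subset \beta$ of the canonical subrepresentation $\Eins=\soc_G \kappa\hookrightarrow \kappa$ then fits into $0\to \Sp\to \beta_0 \to \Eins\to 0$, which is classified by some $\phi\in \Ext^1_{G/Z}(\Eins,\Sp)\cong \Hom(\Qp^{\times},k)$, so $\beta_0\cong E_{\phi}$, and $\beta/\beta_0 \cong \kappa/\Eins\cong \pi_{\alpha}$ yields \eqref{beta_phi}. If $\phi=0$, then $\beta_0\cong \Sp\oplus \Eins$ gives $\Eins\hookrightarrow \beta$, contradicting the socle hypothesis, so $\phi\neq 0$.

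For uniqueness, $\beta_0$ is canonically determined by $\beta$ (as the preimage of the canonical $\soc_G \kappa$ under $\beta\twoheadrightarrow \kappa$); isomorphism classes of non-split extensions $E_{\phi}$ are parametrized by lines in $\Ext^1_{G/Z}(\Eins,\Sp)$, because $\Aut_G(\Sp)=\Aut_G(\Eins)=k^{\times}$ both act by scaling on $\phi$. Hence $\beta$ determines the line $[\phi]$.

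The main obstacle is the existence part: producing $\beta$ realizing a prescribed $[\phi]$. Apply $\Hom_{G/Z}(-,\Sp)$ to $0\to\Eins\to\kappa\to\pi_{\alpha}\to 0$. Using $\Hom(\Eins,\Sp)=\Hom(\pi_{\alpha},\Sp)=0$ and $\Ext^1(\pi_{\alpha},\Sp)=0$ from \eqref{modp2}, together with $\dim \Ext^1(\Eins,\Sp)=2$ from \eqref{modp2}, one obtains
\begin{equation*}
0\to \Ext^1_{G/Z}(\kappa,\Sp)\to \Ext^1_{G/Z}(\Eins,\Sp)\xrightarrow{\partial} \Ext^2_{G/Z}(\pi_{\alpha},\Sp),
\end{equation*}
where $\partial$ is the Yoneda product with the class $[\kappa]\in \Ext^1(\pi_{\alpha},\Eins)$, and where each isomorphism class of $\beta$ as in the lemma corresponds to a class in $\Ext^1(\kappa,\Sp)$ mapping to $[\phi]\in \Ext^1(\Eins,\Sp)$. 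Thus surjectivity of this map would show every non-zero $\phi$ arises. I would verify this either by a direct cup-product calculation using the explicit projective resolutions of \cite{ext2} to prove $\partial=0$, or, more cleanly, by appealing to Colmez's construction: each non-zero $\phi$ matches, via local class field theory and $\Ext^1_{G_{\Qp}}(\Eins,\omega)\cong \Hom(\Qp^{\times},k)$, a non-split extension $\rho_{\phi}\cong \bigl(\begin{smallmatrix} \omega & * \\ 0 & 1\end{smallmatrix}\bigr)$, and the atome automorphe associated to $\rho_{\phi}$ furnishes a $\beta$ with the required properties (whose associated $\phi$ is forced to be the given one by the uniqueness part).
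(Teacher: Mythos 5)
Your decomposition and uniqueness arguments coincide with the paper's: it likewise uses $e^1(\pi_{\alpha},\Sp)=0$ to see $\soc_G(\beta/\Sp)\cong\Eins$, identifies $\beta/\Sp$ with $\kappa$ via $e^1(\pi_{\alpha},\Eins)=1$, and extracts a non-split $E_{\phi}\subset\beta$ with $\phi$ determined up to scalar, exactly as you do; only the order of the two filtrations differs. The genuine divergence is in the existence step, which you rightly flag as the main obstacle but leave unfinished. Your route (a) in fact closes immediately: the obstruction group in your exact sequence is $\Ext^2_{G/Z}(\pi_{\alpha},\Sp)$, and this group vanishes outright --- $e^2(\pi_{\alpha},\Sp)=0$ is one of the computations in \cite[\S 10.1]{cmf} --- so $\partial=0$ with no cup-product computation and no appeal to Colmez's \textit{atome automorphe}. (Alternatively, the injection $\Ext^1_{G/Z}(\kappa,\Sp)\hookrightarrow\Ext^1_{G/Z}(\Eins,\Sp)$ your sequence provides is a map between two $2$-dimensional spaces, by \cite[Lem.10.18]{cmf} and \eqref{modp2}, hence surjective.) The paper's own existence argument is the transpose of yours: from the same two vanishings $e^1(\pi_{\alpha},\Sp)=e^2(\pi_{\alpha},\Sp)=0$ it deduces $e^1(\pi_{\alpha},E_{\phi})=e^1(\pi_{\alpha},\Eins)=1$ and takes $\beta$ to be the non-split extension of $\pi_{\alpha}$ by $E_{\phi}$, whereas you build $\beta$ as an extension of $\kappa$ by $\Sp$ lifting $[\phi]$; the inputs are identical. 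In either construction one should still record the (easy) check that the resulting $\beta$ has $G$-socle $\Sp$: $\Eins$ cannot embed because its preimage extension $E_{\phi}$ is non-split, and $\pi_{\alpha}$ cannot embed because $\Hom_G(\pi_{\alpha},\Sp)=\Hom_G(\pi_{\alpha},\kappa)=0$.
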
 
\begin{proof}  Since $e^1(\pi_{\alpha}, \Sp)=0$ and $\soc_G \beta\cong \Sp$, we deduce that 
$\soc_G (\beta/\Sp)\cong \Eins$, and hence $\beta$ contains a non-split extension of $\Sp$ by $\Eins$.
Since $\Ext^1_{G/Z}(\Eins, \Sp)\cong \Hom(\Qp^{\times}, k)$, every such extension is isomorphic to $E_{\phi}$ for 
a  non-zero $\phi$, which is uniquely determined up to a scalar. The assumption on the semi-simplification of $\beta$ implies that $\beta/E_{\phi}\cong \pi_{\alpha}$, 
which gives us \eqref{beta_phi}. Since $\soc_G( \beta/\Sp)\cong \Eins$ and the semi-simplification of $\beta/\Sp$ is equal 
to $\Eins\oplus \pi_{\alpha}$, we deduce that $\beta/\Sp$ is a non-split extension of $\Eins$ by $\pi_{\alpha}$. Since 
$e^1(\pi_{\alpha}, \Eins)=1$, it must be isomorphic to $\kappa$, which gives \eqref{beta_Sp}. 

Let $\phi\in \Hom(\Qp^{\times}, k)$ be non-zero and let $E_{\phi}$ be the corresponding extension. Since 
$e^1(\pi_{\alpha}, \Sp)=e^2(\pi_{\alpha}, \Sp)=0$ by \cite[\S10.1]{cmf}, we have $e^1(\pi_{\alpha}, E_{\phi})=e^1(\pi_{\alpha}, \Eins)=1$. 
This shows that for every non-zero $\phi$ there exists $\beta$, with $\soc_G \beta\cong \Sp$, and semi-simplification 
$\Sp\oplus\Eins\oplus \pi_{\alpha}$, such that $E_{\phi}$ is a subrepresentation of $\beta$.
\end{proof}

\begin{lem}\label{pi_alpha_beta} $e^1(\pi_{\alpha}, \beta)=0$.
\end{lem}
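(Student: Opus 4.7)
The strategy is a two-step d\'evissage using the exact sequences \eqref{beta_Sp} and \eqref{define_kappa}, reducing the vanishing to the computations of $\Ext^i_{G/Z}(\pi_{\alpha},\tau)$ for $\tau\in\{\Sp,\Eins,\pi_{\alpha}\}$ recorded in \eqref{modp1}--\eqref{modp3}, supplemented by the vanishing of higher Ext groups established in \cite[\S10.1]{cmf}.

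First I would apply $\Hom_{G/Z}(\pi_{\alpha},-)$ to \eqref{beta_Sp} to extract the segment
$$\Ext^1_{G/Z}(\pi_{\alpha},\Sp)\longrightarrow \Ext^1_{G/Z}(\pi_{\alpha},\beta)\longrightarrow \Ext^1_{G/Z}(\pi_{\alpha},\kappa)\longrightarrow \Ext^2_{G/Z}(\pi_{\alpha},\Sp).$$
By \eqref{modp2} the leftmost term vanishes, and one invokes \cite[\S10.1]{cmf} to obtain $\Ext^2_{G/Z}(\pi_{\alpha},\Sp)=0$. This identifies $\Ext^1_{G/Z}(\pi_{\alpha},\beta)$ with $\Ext^1_{G/Z}(\pi_{\alpha},\kappa)$, so it suffices to establish $\Ext^1_{G/Z}(\pi_{\alpha},\kappa)=0$.

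Next I would apply $\Hom_{G/Z}(\pi_{\alpha},-)$ to \eqref{define_kappa}. Since $\pi_{\alpha}$ is irreducible and neither isomorphic to $\Eins$ nor to the $G$-socle of $\kappa$, we have $\Hom_{G/Z}(\pi_{\alpha},\Eins)=\Hom_{G/Z}(\pi_{\alpha},\kappa)=0$, and the long exact sequence reads
$$0\to \Hom_{G/Z}(\pi_{\alpha},\pi_{\alpha})\overset{\partial}{\to} \Ext^1_{G/Z}(\pi_{\alpha},\Eins)\to \Ext^1_{G/Z}(\pi_{\alpha},\kappa)\to \Ext^1_{G/Z}(\pi_{\alpha},\pi_{\alpha})\overset{\partial'}{\to} \Ext^2_{G/Z}(\pi_{\alpha},\Eins).$$
The boundary $\partial$ sends the identity to the class of the extension $\kappa$, which is non-zero by construction and lies in the one-dimensional space $\Ext^1_{G/Z}(\pi_{\alpha},\Eins)$ by \eqref{modp1}; hence $\partial$ is surjective, which reduces the problem to showing that $\partial'$ is injective.

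The main obstacle is precisely the injectivity of $\partial'$. By \eqref{modp3} the source has dimension $2$, so I would compute $e^2(\pi_{\alpha},\Eins)$ explicitly from the minimal projective resolution of $\pi_{\alpha}^{\vee}$ in $\dualcat(k)$ constructed in \cite[\S10]{cmf} (concretely, via the projective envelope of $\pi_{\alpha}^{\vee}$ and its relation to the blocks studied there), and then identify $\partial'$ with the Yoneda pairing $[\,\cdot\,]\cup [\kappa]$. The goal is to check that the two independent self-extensions of $\pi_{\alpha}$ are sent to linearly independent classes in $\Ext^2_{G/Z}(\pi_{\alpha},\Eins)$; this should follow from tracking the terms of the projective resolution and the fact that $\kappa$ represents the unique non-trivial class in $\Ext^1_{G/Z}(\pi_{\alpha},\Eins)$, so that cupping with it intertwines the two layers of the resolution in a non-degenerate way.
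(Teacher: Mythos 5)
Your first step is exactly the paper's proof: apply $\Hom_{G/Z}(\pi_{\alpha}, \ast)$ to \eqref{beta_Sp} and use $e^1(\pi_{\alpha},\Sp)=0$ from \eqref{modp2} to reduce everything to the vanishing of $e^1(\pi_{\alpha},\kappa)$. (You do not even need $\Ext^2_{G/Z}(\pi_{\alpha},\Sp)=0$ for this: the vanishing of the left-hand term already gives an injection $\Ext^1_{G/Z}(\pi_{\alpha},\beta)\hookrightarrow\Ext^1_{G/Z}(\pi_{\alpha},\kappa)$, which suffices.) The paper stops there and quotes $e^1(\pi_{\alpha},\kappa)=0$ directly from \cite[\S 10.1, (194)]{cmf}.

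Where you diverge is in trying to establish $e^1(\pi_{\alpha},\kappa)=0$ yourself from \eqref{define_kappa}, and this is where the gap is. Your long exact sequence is correct, $\Hom_{G/Z}(\pi_{\alpha},\kappa)=0$ does hold (a copy of $\pi_{\alpha}$ inside $\kappa$ would split \eqref{define_kappa}), and the surjectivity of $\partial$ follows as you say from $e^1(\pi_{\alpha},\Eins)=1$. But this only reduces the claim to the injectivity of $\partial'\colon \Ext^1_{G/Z}(\pi_{\alpha},\pi_{\alpha})\to\Ext^2_{G/Z}(\pi_{\alpha},\Eins)$, and since $\Ext^1_{G/Z}(\pi_{\alpha},\kappa)\cong\Ker\partial'$, that injectivity is \emph{equivalent} to the statement you are trying to prove; at this point you have only restated the problem. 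The paragraph you offer in its place ("this should follow from tracking the terms of the projective resolution\dots") is a plan, not an argument: to execute it you must actually compute $e^2(\pi_{\alpha},\Eins)$ and verify that cup product with the class of $\kappa$ is injective on the two-dimensional space $\Ext^1_{G/Z}(\pi_{\alpha},\pi_{\alpha})$. That is precisely the kind of explicit computation with projective resolutions carried out in \cite[\S 10.1]{cmf} and \cite{ext2}, which is why the paper cites the result rather than rederiving it. As written, the crucial step is asserted, not proved.
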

\begin{proof} Since $e^1(\pi_{\alpha}, \Sp)=0$ and $e^1(\pi_{\alpha}, \kappa)=0$ by \cite[\S10.1, (194)]{cmf}, we deduce the assertion
by applying $\Hom_G(\pi_{\alpha}, \ast)$ to \eqref{beta_Sp}.
\end{proof}

\begin{lem}\label{kappa_beta} $e^1(\kappa, \beta)=1$.
\end{lem}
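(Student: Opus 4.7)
The plan is to compute $e^1(\kappa,\beta)$ by running two long exact sequences of Ext. First I would apply the functor $\Hom_G(-,\beta)$ to the defining sequence $0\to\Eins\to\kappa\to\pi_\alpha\to 0$ of \eqref{define_kappa}. Since $\soc_G\beta\cong\Sp$, both $\Hom_G(\Eins,\beta)$ and $\Hom_G(\pi_\alpha,\beta)$ vanish, and by Lemma \ref{pi_alpha_beta} we also have $\Ext^1_{G/Z}(\pi_\alpha,\beta)=0$. The long exact sequence therefore collapses to
\begin{equation*}
0\to \Ext^1_{G/Z}(\kappa,\beta)\to \Ext^1_{G/Z}(\Eins,\beta)\to \Ext^2_{G/Z}(\pi_\alpha,\beta).
\end{equation*}

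Next I would compute $e^1(\Eins,\beta)$ by applying $\Hom_G(\Eins,-)$ to the sequence \eqref{beta_Sp}, i.e.\ $0\to\Sp\to\beta\to\kappa\to 0$. The relevant inputs are $\Hom_G(\Eins,\Sp)=0$, $\Hom_G(\Eins,\kappa)=k$, and $e^1(\Eins,\Sp)=2$ from \eqref{modp2}. To finish this step I would first show $e^1(\Eins,\kappa)=0$: applying $\Hom_G(\Eins,-)$ to \eqref{define_kappa} and using $e^1(\Eins,\Eins)=0$ from \eqref{modp1} together with $\Hom_G(\Eins,\pi_\alpha)=0$ and $e^1(\Eins,\pi_\alpha)=0$ from \eqref{modp3} forces the desired vanishing. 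Plugging these dimensions into the long exact sequence yields $e^1(\Eins,\beta)=1$.

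Combining the two steps already gives $e^1(\kappa,\beta)\le 1$, so the only remaining point is to show that the boundary map $\Ext^1_{G/Z}(\Eins,\beta)\to\Ext^2_{G/Z}(\pi_\alpha,\beta)$ vanishes. I would achieve this by proving $\Ext^2_{G/Z}(\pi_\alpha,\beta)=0$. Applying $\Hom_G(\pi_\alpha,-)$ once more to $0\to\Sp\to\beta\to\kappa\to 0$, it suffices to check $\Ext^2_{G/Z}(\pi_\alpha,\Sp)=0$ and $\Ext^2_{G/Z}(\pi_\alpha,\kappa)=0$; for the latter I would run $\Hom_G(\pi_\alpha,-)$ on \eqref{define_kappa} and reduce to $\Ext^2_{G/Z}(\pi_\alpha,\Eins)$ and $\Ext^2_{G/Z}(\pi_\alpha,\pi_\alpha)$. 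All of these higher Ext groups are recorded in the computations of \cite[\S10.1]{cmf} for the block containing $\pi_\alpha$ and $\Sp$, from which the required vanishings can be read off directly.

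The main obstacle is the last step: getting the $\Ext^2$-vanishings needed to force the boundary map to be zero. Once those are in hand from \cite[\S10.1]{cmf}, the two long exact sequences combine cleanly to give $e^1(\kappa,\beta)=e^1(\Eins,\beta)=1$. I also note that a non-zero class in $\Ext^1_{G/Z}(\kappa,\beta)$ can be exhibited concretely by pushing out the extension \eqref{beta_Sp} along a non-trivial self-extension of $\Sp$ modulo the class already represented by $\beta$, which provides a useful sanity check that the dimension is at least one.
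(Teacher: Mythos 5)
Your upper bound is correct and takes a genuinely different route from the paper: applying $\Hom_{G/Z}(-,\beta)$ to \eqref{define_kappa} and then $\Hom_{G/Z}(\Eins,-)$ to \eqref{beta_Sp} does yield the exact sequence $0\to \Ext^1_{G/Z}(\kappa,\beta)\to \Ext^1_{G/Z}(\Eins,\beta)\to \Ext^2_{G/Z}(\pi_{\alpha},\beta)$ together with $e^1(\Eins,\kappa)=0$ and $e^1(\Eins,\beta)=2-1=1$, hence $e^1(\kappa,\beta)\le 1$.

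The gap is in the last step, and it is not just a missing reference. The vanishing $\Ext^2_{G/Z}(\pi_{\alpha},\Eins)=0$ that you propose to read off from \cite[\S 10.1]{cmf} is false. Indeed, apply $\Hom_{G/Z}(\pi_{\alpha},-)$ to \eqref{define_kappa}: since $e^1(\pi_{\alpha},\kappa)=0$ (the input to Lemma \ref{pi_alpha_beta}, from \cite[\S 10.1, (194)]{cmf}), the boundary map $\Ext^1_{G/Z}(\pi_{\alpha},\pi_{\alpha})\to \Ext^2_{G/Z}(\pi_{\alpha},\Eins)$ is injective, so $e^2(\pi_{\alpha},\Eins)\ge e^1(\pi_{\alpha},\pi_{\alpha})=2$ by \eqref{modp3}. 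So your route to killing the boundary map $\Ext^1_{G/Z}(\Eins,\beta)\to \Ext^2_{G/Z}(\pi_{\alpha},\beta)$ breaks down, and you are left with only the inequality $e^1(\kappa,\beta)\le 1$. Your closing ``sanity check'' does not repair this as written: $e^1(\Sp,\Sp)=0$ by \eqref{modp2}, so there is no non-trivial self-extension of $\Sp$ to push out along. What you presumably intend, and what does close the gap, is to push classes of $\Ext^1_{G/Z}(\kappa,\Sp)$ forward along the inclusion $\Sp\hookrightarrow\beta$ of \eqref{beta_Sp}. That is precisely the paper's one-step argument: applying $\Hom_{G/Z}(\kappa,-)$ to \eqref{beta_Sp} and using $\Hom_{G/Z}(\kappa,\beta)=0$, $\Hom_{G/Z}(\kappa,\kappa)=k$, together with $e^1(\kappa,\Sp)=2$ and $e^1(\kappa,\kappa)=0$ from \cite[Lem.10.18]{cmf}, gives $e^1(\kappa,\beta)=2-1=1$ directly, with no higher $\Ext$-groups needed. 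Either adopt that argument outright, or keep your upper bound and supply the lower bound by this pushout computation.
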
 
\begin{proof} Since $e^1(\kappa, \kappa)=0$ and $e^1(\kappa, \Sp)=2$ by \cite[Lem.10.18]{cmf}, the assertion follows from 
applying $\Hom_{G/Z}(\kappa, \ast)$ to \eqref{beta_Sp}.
\end{proof}

 Let $\beta$ be as in Lemma \ref{all_beta}, we will recall Colmez's description of $\cV(\beta^{\vee})$. Using  local class field theory and some basic group cohomology  we may identify $\Hom(\Qp^{\times}, k)\cong \Hom(G_{\Qp}, k)\cong H^1(G_{\Qp}, k)$. The cup product defines a non-de\-ge\-ne\-rate 
 pairing $H^1(G_{\Qp}, k)\times H^1(G_{\Qp}, k(\omega))\rightarrow H^2(G_{\Qp}, k(\omega))\cong k$. Since $\Hom(\Qp^{\times}, k)$ is a two dimensional $k$-vector 
 space and the pairing is non-degenerate, the subspace orthogonal to $\phi$ is one dimensional, and we denote it by $k\phi^{\perp}$. Since 
 $H^1(G_{\Qp}, k(\omega))\cong \Ext^1_{G_{\Qp}}(\Eins, \omega)$, $k\phi^{\perp}$ defines a non-split extension $0\rightarrow \omega\rightarrow \rho\rightarrow \Eins\rightarrow 0$. It is shown in  \cite[VII.4.24]{colmez} that $\VV(\beta)\cong \rho$, which implies that $\cV(\beta^{\vee})\cong \rho$. The space
 $\Hom(\Qp^{\times}, k)$ has a distinguished line of \textit{unramified} homomorphisms, i.e. homomorphisms, which are trivial on $\Zp^{\times}$, 
 $H^1(G_{\Qp}, k(\omega))$ contains the so called \textit{peu ramifi\'e} line, see \cite[2.4]{serre_duke}. One may show that the two lines are orthogonal to each other. 
 We will say that $\rho$ is \textit{peu ramifi\'e}, when the corresponding element in $H^1(G_{\Qp}, k(\omega))$ lies on the peu ramifi\'e line, otherwise we will say that $\rho$ is \textit{tr\`es ramifi\'e}.
 
From now on $\beta$ will always denote a representation satisfying the conditions of Lemma \ref{all_beta}, and $\rho=\cV(\beta^{\vee})$.  Since one may recover $\phi$ from $\rho$, Lemma \ref{all_beta} implies that $\rho$ determines $\beta$ uniquely. 

\begin{thm}\label{deform_beta} The universal deformation problem of $\beta^{\vee}$ in $\dualcat(\OO)$ is representable by a complete local noetherian ring $R_{\beta^{\vee}}$. The functor 
$\cV$ induces an isomorphism  between the deformation functor of $\beta^{\vee}$ and the deformation functor of $\rho$ with determinant $\varepsilon \zeta$, and hence an isomorphism $R_{\beta^{\vee}}\cong R^{\psi}_{\rho}$.
\end{thm}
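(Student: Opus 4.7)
The plan is to adapt the strategy of \cite[\S5.7]{cmf} (itself a modification of Kisin's argument \cite{kisin}) and show that $\cV$ induces an equivalence between the deformation functors of $\beta^\vee$ and of $\rho$ with fixed determinant $\psi=\varepsilon\zeta$.

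\textbf{Representability.} First I would establish $\End_G(\beta)=k$, which dually gives $\End_{\dualcat(\OO)}(\beta^\vee)=k$. Any $\phi\in\End_G(\beta)$ acts as a scalar $\lambda$ on the irreducible socle $\Sp$, so $\phi-\lambda$ factors through $\beta/\Sp\cong\kappa$ by \eqref{beta_Sp}. Applying $\Hom_G(\kappa,-)$ to \eqref{beta_Sp} and using that the connecting map $\Hom_G(\kappa,\kappa)\to \Ext^1_G(\kappa,\Sp)$ sends $\id$ to the nonzero class of \eqref{beta_Sp} (together with $\Hom_G(\kappa,\Sp)=0$) gives $\Hom_G(\kappa,\beta)=0$, forcing $\phi=\lambda\cdot\id$. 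Since $\beta^\vee$ has finite length in $\dualcat(\OO)$, it admits a projective envelope $\wP$, and the standard argument of \cite[\S3]{cmf} produces a pro-representing complete local noetherian $\OO$-algebra $R_{\beta^\vee}:=\End_{\dualcat(\OO)}(\wP)$ for the deformation functor.

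\textbf{The natural transformation.} Exactness of $\cV$ together with $\cV(\beta^\vee)\cong\rho$ imply that for every Artinian local $\OO$-algebra $A$ with residue field $k$ and every deformation $M$ of $\beta^\vee$ to $A$, the image $\cV(M)$ is a free rank-two $A$-module carrying a continuous $G_{\Qp}$-action with determinant $\psi$ (the latter from the central character $\zeta$ of $M^\vee$ and the twist by $\varepsilon\zeta$ built into the definition of $\cV$). This yields a natural transformation $\alpha\colon \Def_{\beta^\vee}\to\Def^\psi_\rho$, hence a map $\alpha^*\colon R^\psi_\rho\to R_{\beta^\vee}$.

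\textbf{Tangent space comparison and conclusion.} To prove $\alpha^*$ is an isomorphism I would show it induces an isomorphism on tangent spaces and then match Krull dimensions. The tangent space of $R_{\beta^\vee}$ is $\Ext^1_{G,\zeta}(\beta,\beta)$, which I would compute by filtering $\beta$ via \eqref{beta_Sp} and \eqref{beta_phi}, using the extension groups \eqref{modp1}--\eqref{modp3} and the higher $\Ext$-groups of \cite[\S10.1]{cmf}. The tangent space of $R^\psi_\rho$ is $H^1(G_{\Qp},\ad^0\rho)$, of known dimension by Tate local duality. Surjectivity of the tangent map will follow by dimension count; injectivity is where the work lies. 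Given a non-split self-extension $0\to\beta\to E\to\beta\to 0$ with $\cV(E)\cong\rho\oplus\rho$, one traces through the subquotient structure (the $\Sp$, $E_\phi$, $\kappa$ and $\pi_\alpha$ layers), exploits the compatibility of $\cV$ with extensions, and shows that the splitting on the Galois side forces $E$ itself to split. Granted this, Nakayama makes $\alpha^*$ surjective; to finish, one compares Krull dimensions, both sides being $\OO$-flat of the same dimension by \cite[\S10]{cmf}, which forces $\alpha^*$ to be an isomorphism.

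\textbf{Main obstacle.} The hardest step will be injectivity on tangent spaces, because $\cV$ kills scalar subquotients, leaving room \emph{a priori} for nontrivial self-extensions of $\beta^\vee$ to become trivial after $\cV$. Ruling this out requires a careful diagram chase through the layered structure of $\beta$, together with precise control on how $\kappa$, $E_\phi$ and their extensions behave under Colmez's functor; this is the technical heart of the argument and mirrors the analogous computation in \cite[\S10]{cmf}.
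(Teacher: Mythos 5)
Your opening sentence points at the right source (\cite[\S 5.7]{cmf}, Kisin's argument), and your verification that $\End_G(\beta)=k$ via the socle and the vanishing of $\Hom_G(\kappa,\beta)$ is correct; but the logical skeleton you then build has a genuine gap at its last step, and it routes the proof through exactly the difficulty that the paper's argument is designed to avoid. You work with $\alpha^*\colon R^{\psi}_{\rho}\to R_{\beta^{\vee}}$, plan to prove injectivity of the tangent map $\Ext^1_{G/Z}(\beta,\beta)\to H^1(G_{\Qp},\mathrm{ad}^0\rho)$ to get $\alpha^*$ surjective by Nakayama, and then propose to conclude by "comparing Krull dimensions, both sides being $\OO$-flat of the same dimension by \cite[\S 10]{cmf}". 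That final step is not available: a surjection $\OO\br{x_1,x_2,x_3}\cong R^{\psi}_{\rho}\twoheadrightarrow R_{\beta^{\vee}}$ is an isomorphism only if one already knows $\dim R_{\beta^{\vee}}=4$, and producing that lower bound on the deformation ring of $\beta^{\vee}$ is essentially the content of the theorem — there is nothing in \cite{cmf} to cite for it. (A surjection of complete local rings inducing an isomorphism on cotangent spaces need not be injective; $\OO\br{x}\twoheadrightarrow \OO\br{x}/(x^2)$ is the standard example.) In addition, the tangent-space injectivity itself is left as "the technical heart" rather than proved, and your claim that surjectivity of the tangent map "follows by dimension count" presupposes $\dim_k\Ext^1_{G/Z}(\beta,\beta)=3$, whereas \cite[Lem.10.19]{cmf} gives only the upper bound $3$.

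The paper runs the comparison in the opposite direction, which removes both problems. The Kisin-style argument of \cite[Prop.5.55]{cmf} produces a deformation of $\beta^{\vee}$ over $R^{\psi}_{\rho}$ whose image under $\cV$ is the universal deformation $\rho^{\un}$; its classifying map $g\colon R_{\beta^{\vee}}\to R^{\psi}_{\rho}$ satisfies $g\circ\alpha^*=\id$ and is therefore a surjection $R_{\beta^{\vee}}\twoheadrightarrow R^{\psi}_{\rho}$. Since $\dim_k\mm/(\mm^2+(\varpi))=\dim_k\Ext^1_{\dualcat(\OO)}(\beta^{\vee},\beta^{\vee})\le 3$, the source is a quotient of $\OO\br{y_1,y_2,y_3}$, and a surjection from that ring onto the four-dimensional domain $R^{\psi}_{\rho}\cong\OO\br{x_1,x_2,x_3}$ must be an isomorphism: here the regular ring is the \emph{target} of the surjection, so only the upper bound on $\Ext^1$ and the lifting step are needed, and the injectivity of the tangent map that you single out as hardest never has to be addressed (it comes out as a consequence). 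To repair your proposal, replace the tangent-space injectivity and the unjustified dimension comparison by the lifting argument. Separately, note that the paper establishes representability by Schlessinger's criterion directly from $\End_{\dualcat(\OO)}(\beta^{\vee})=k$ and the finiteness of $\Ext^1$; your alternative via $\End_{\dualcat(\OO)}(\wP)$ for a projective envelope $\wP$ would additionally require showing that this endomorphism ring is commutative and pro-represents the functor, which is not automatic.
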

\begin{proof} Since $\End_{\dualcat(\OO)}(\beta^{\vee})=k$ and $\Ext^1_{\dualcat(\OO)}(\beta^{\vee}, \beta^{\vee})\cong \Ext^1_{G/Z}(\beta, \beta)$, 
which has dimension at most $3$ as a $k$-vector space by \cite[Lem.10.19]{cmf}, one may show that the deformation functor of $\beta^{\vee}$ is representable by checking Schlessinger's criterion. Arguing as in the proof of \cite[Prop.5.55]{cmf}, we obtain a surjection $R_{\beta^{\vee}}\twoheadrightarrow R^{\psi}_{\rho}$. Let $\mm$ be the maximal ideal of $R_{\beta^{\vee}}$. Since $R^{\psi}_{\rho}\cong \OO\br{x_1, x_2, x_3}$ and $\dim_k \mm/(\mm^2+(\varpi))=\dim_k \Ext^1_{\dualcat(\OO)}(\beta^{\vee}, \beta^{\vee})\le 3$, the surjection is an isomorphism.
\end{proof}

\begin{defi} We let $N$ be the universal deformation of $\beta^{\vee}$ in $\dualcat(\OO)$.
\end{defi}
\begin{remar} Theorem \ref{deform_beta} allows to view $N$ as a compact $R^{\psi}_{\rho}$-module.
\end{remar}

\begin{lem}\label{N_is_ok} The conditions (N0), (N1) and (N2)  hold for $N$ equal to the universal deformation of $\beta^{\vee}$ in $\dualcat(\OO)$.
\end{lem}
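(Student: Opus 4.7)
My plan is to verify the three conditions in turn: (N0) and (N2) are nearly formal consequences of the universal property of $N$ together with Theorem~\ref{deform_beta}, while (N1) is the only substantive point and relies on understanding which irreducible objects of $\dualcat(\OO)$ can occur as quotients of $N$.

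\textbf{Conditions (N0) and (N2).} Since $N$ represents the deformation functor of $\beta^{\vee}$ and $R^{\psi}_{\rho}\cong R_{\beta^{\vee}}$ by Theorem~\ref{deform_beta}, reduction modulo the maximal ideal $\mm$ of $R^{\psi}_{\rho}$ gives a canonical identification $k\wtimes_{R^{\psi}_{\rho}} N\cong \beta^{\vee}$. By Lemma~\ref{all_beta} this object has three composition factors in $\dualcat(\OO)$, and admissibility of $\beta$ translates into $\beta^{\vee}$ being finitely generated over $\OO\br{K}$, which yields (N0). For (N2), Theorem~\ref{deform_beta} identifies the deformation functor of $\beta^{\vee}$ with the deformation functor of $\rho$ with determinant $\varepsilon\zeta$ through $\cV$; applying this isomorphism of functors to the universal objects on each side gives $\cV(N)\cong \rho^{\un}$ as $R^{\psi}_{\rho}\br{G_{\Qp}}$-modules.

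\textbf{Condition (N1).} Suppose for contradiction that $\Hom_{\SL_2(\Qp)}(\Eins,N^{\vee})\neq 0$. Since $\SL_2(\Qp)$ is normal in $G=\GL_2(\Qp)$, the space $(N^{\vee})^{\SL_2(\Qp)}$ is a nonzero $G$-subrepresentation of $N^{\vee}$. The category $\dualcat(\OO)$ is anti-equivalent to $\Mod^{\lfin}_{G,\zeta}(\OO)$, so $N^{\vee}$ is locally finite and admits a finite-length $G$-subrepresentation $W\subseteq N^{\vee}$ meeting this subspace non-trivially; then $W^{\SL_2(\Qp)}$ is a nonzero finite-length $G$-representation on which $\SL_2(\Qp)$ acts trivially, and therefore factors through the abelian quotient $G/\SL_2(\Qp)\cong \Qp^{\times}$ via $\det$. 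Its $G$-socle is hence a nonzero direct sum of smooth characters of the form $\chi=\eta\circ\det$ of $G$, and at least one such $\chi$ embeds into $\soc_G N^{\vee}$. By Pontryagin duality this produces a surjection $f\colon N\twoheadrightarrow \chi^{\vee}$ in $\dualcat(\OO)$. Since $\chi^{\vee}$ is irreducible in $\dualcat(\OO)$, we have $\End_{\dualcat(\OO)}(\chi^{\vee})=k$, so the $R^{\psi}_{\rho}$-action on $\chi^{\vee}$ inherited from $N$ via $f$ factors through the residue field; in particular $\mm$ annihilates $\chi^{\vee}$, so $f$ descends to a nonzero morphism $\beta^{\vee}\cong N/\mm N\twoheadrightarrow \chi^{\vee}$. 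Dualizing yields an embedding $\chi\hookrightarrow \beta$, forcing $\chi\subseteq \soc_G\beta=\Sp$; this is absurd, since $\Sp$ is infinite-dimensional while $\chi$ is one-dimensional.

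\textbf{Main obstacle.} The key observation, which is the only non-formal input, is that any morphism from $N$ to an irreducible object of $\dualcat(\OO)$ must kill $\mm N$, and hence factors through $N/\mm N\cong \beta^{\vee}$. Once this is in hand, (N1) is forced by the known socle of $\beta$, and (N0) and (N2) follow directly from the universal deformation formalism together with Theorem~\ref{deform_beta}.
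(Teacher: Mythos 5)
Your treatment of (N0) and (N2) matches the paper's, which simply cites $k\wtimes_{R^{\psi}_{\rho}}N\cong\beta^{\vee}$ and Theorem \ref{deform_beta}. Your argument for (N1), however, has a gap at the decisive step. Having produced a surjection $f\colon N\twoheadrightarrow\chi^{\vee}$ onto an irreducible object of $\dualcat(\OO)$, you assert that $\chi^{\vee}$ carries an $R^{\psi}_{\rho}$-action ``inherited from $N$ via $f$'' and that this action must factor through $k$ because $\End_{\dualcat(\OO)}(\chi^{\vee})=k$. But $\chi^{\vee}$ inherits no $R^{\psi}_{\rho}$-action from $N$ unless $\Ker f$ is $R^{\psi}_{\rho}$-stable, and a maximal subobject of $N$ in $\dualcat(\OO)$ need not be: for $x\in\mm$ the composite $f\circ x$ is a possibly nonzero map onto $\chi^{\vee}$ with a kernel different from $\Ker f$. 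So the assertion $\mm N\subseteq\Ker f$ --- which is exactly what you need in order to descend $f$ to $N/\mm N\cong\beta^{\vee}$ --- is unjustified as written. (As a toy illustration, a continuous $k$-linear surjection $k\br{x}\twoheadrightarrow k$ need not kill $x\,k\br{x}$; what saves you here is $G$-equivariance, and it has to be used explicitly.)

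The gap is repairable. The subspace $J:=(N^{\vee})^{\SL_2(\Qp)}$ is stable not only under $G$ but also under $R^{\psi}_{\rho}$, because $R^{\psi}_{\rho}$ acts on $N^{\vee}$ through $G$-equivariant maps; hence the corresponding quotient $J^{\vee}$ of $N$ is a compact $R^{\psi}_{\rho}$-module, and if $J\neq0$ then topological Nakayama gives $J^{\vee}/\mm J^{\vee}\neq0$, i.e.\ a nonzero $\SL_2(\Qp)$-trivial subrepresentation of $(N/\mm N)^{\vee}\cong\beta$, contradicting $\soc_G\beta\cong\Sp$. Alternatively one can use that $\Hom_G(\chi,N^{\vee})^{\vee}$ is a finitely generated $R^{\psi}_{\rho}$-module whose reduction mod $\mm$ is $\Hom_G(\chi,\beta)^{\vee}=0$, in the style of Corollaries \ref{fgG} and \ref{zero_lambda}. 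The paper avoids the issue altogether: it writes $N^{\vee}=\varinjlim(N/\mm^{i}N)^{\vee}$ and notes that, $N$ being a deformation of $\beta^{\vee}$ and hence $R^{\psi}_{\rho}$-flat, the graded pieces of this exhaustive filtration are finite direct sums of copies of $\beta$, so that $\Hom_{\SL_2(\Qp)}(\Eins,N^{\vee})=0$ follows from $\Hom_{\SL_2(\Qp)}(\Eins,\beta)=0$ and left exactness of $\Hom$. A minor further caveat: over a non-algebraically-closed $k$ the irreducible constituents of $W^{\SL_2(\Qp)}$ need not be one-dimensional and their endomorphism algebras may exceed $k$; this does not affect your final contradiction, which only needs finite-dimensionality against the infinite-dimensional $\Sp$, but it is another reason not to lean on $\End_{\dualcat(\OO)}(\chi^{\vee})=k$.
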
 
\begin{proof} Since $k\otimes_{R^{\psi}_{\rho}} N\cong \beta^{\vee}$, (N0) is satisfied. We have $\soc_G \beta\cong \Sp$, and hence 
$\Hom_{\SL_2(\Qp)}(\Eins, \beta)=0$. Since $N$ is a deformation of $\beta^{\vee}$, $\{ (N/\mm^i N)^{\vee}\}_i$ is an increasing exhaustive filtration of 
$N^{\vee}$ with graded pieces isomorphic to  finite direct sums of copies of $\beta$, which implies that $\Hom_{\SL_2(\Qp)}(\Eins, N^{\vee})=0$, so that (N1) is satisfied. Theorem \ref{deform_beta} implies that (N2) is satisfied. 
\end{proof}

\begin{defi} We let $\gamma:=(N/(\mm^2,\varpi) N)^{\vee}$, where $\mm$ is the maximal ideal of $R^{\psi}_{\rho}$.
\end{defi}

 \begin{lem}\label{formal_deform1} There exists an exact sequence in $\Mod^{\mathrm{l.fin}}_{G/Z}(k)$: 
 \begin{equation}\label{gamma}
 0\rightarrow \beta\rightarrow \gamma \rightarrow \beta^{\oplus 3}\rightarrow 0.
 \end{equation}
 Moreover, by applying $\Hom_{G/Z}(\beta, \ast)$ to \eqref{gamma} we obtain 
 isomorphisms: 
 \begin{equation}\label{iso_hom}
 \Hom_{G}(\beta, \beta)\cong \Hom_{G/Z}(\beta,\gamma), \quad \Hom_{G}(\beta, \gamma/\beta)\cong \Ext^1_{G/Z}(\beta, \beta).
 \end{equation}
 \end{lem}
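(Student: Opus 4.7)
My plan is to exhibit $\gamma^{\vee}=N/(\mm^2,\varpi)N$ as the universal first-order deformation of $\beta^{\vee}$ and to read off \eqref{gamma} from the tangent space of $R^{\psi}_{\rho}$; the isomorphisms \eqref{iso_hom} then follow by a dimension count in the induced long exact sequence.

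Set $V:=\mm/(\mm^2+\varpi R^{\psi}_{\rho})$, so that $R^{\psi}_{\rho}/(\mm^2,\varpi)=k\oplus V$ as $k$-algebras with $V\cdot V=0$. Theorem \ref{deform_beta}, combined with the equality $\dim_{k}\Ext^{1}_{\dualcat(\OO)}(\beta^{\vee},\beta^{\vee})=\dim_{k}\Ext^{1}_{G/Z}(\beta,\beta)=3$ provided by \cite[Lem.10.19]{cmf}, gives a canonical isomorphism $V^{*}\cong \Ext^{1}_{G/Z}(\beta,\beta)$, so $\dim_{k}V=3$.

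The first step is to construct a short exact sequence in $\dualcat(k)$
$$0\to V\otimes_{k}\beta^{\vee}\to \gamma^{\vee}\to \beta^{\vee}\to 0.$$
The quotient $\gamma^{\vee}\twoheadrightarrow \beta^{\vee}$ is reduction modulo $\mm$; its kernel $K=\mm\cdot\gamma^{\vee}$ is annihilated by $\mm$ and hence is a $k$-vector space with $G$-action. Multiplication by $V$ defines a $G$-equivariant surjection $V\otimes_{k}\beta^{\vee}\twoheadrightarrow K$ by Nakayama. For injectivity I would invoke the universal property of $N$: each nonzero $\xi\in V^{*}$ induces an $\OO$-algebra map $R^{\psi}_{\rho}/(\mm^{2},\varpi)\to k[\epsilon]$, $v\mapsto \xi(v)\epsilon$, and the pushforward $N_{\xi}:=k[\epsilon]\wtimes_{R^{\psi}_{\rho}}N$ is the first-order deformation of $\beta^{\vee}$ whose class under $V^{*}\cong \Ext^{1}_{\dualcat(\OO)}(\beta^{\vee},\beta^{\vee})$ is $\xi$, with $\epsilon N_{\xi}\cong \beta^{\vee}$ by flatness of the deformation. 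Tracking an element $\sum v_{i}\otimes b_{i}$ through $V\otimes_{k}\beta^{\vee}\to K\to \epsilon N_{\xi}\cong \beta^{\vee}$ produces $\sum \xi(v_{i})b_{i}$; if $\sum v_{i}\otimes b_{i}$ lay in the kernel of $V\otimes_{k}\beta^{\vee}\to K$ then this vanishes for every $\xi\in V^{*}$, forcing $\sum v_{i}\otimes b_{i}=0$. Dualizing gives \eqref{gamma}.

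For \eqref{iso_hom} I would apply $\Hom_{G/Z}(\beta,\ast)$ to \eqref{gamma}. First $\End_{G/Z}(\beta)=k$: any endomorphism $\phi$ acts as a scalar on the absolutely irreducible socle $\Sp$, and after subtracting the scalar its image would have socle inside $\soc_{G}(\beta/\Sp)=\Eins$, contradicting $\soc_{G}\beta=\Sp$ unless $\phi$ was itself a scalar. Thus $\Hom_{G/Z}(\beta,\beta)^{\oplus 3}$ and $\Ext^{1}_{G/Z}(\beta,\beta)$ both have $k$-dimension $3$, and it suffices to show the connecting map
$$\delta\colon \Hom_{G/Z}(\beta,\beta)^{\oplus 3}\to \Ext^{1}_{G/Z}(\beta,\beta)$$
is injective. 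But $\delta$ sends $(\lambda_{1},\lambda_{2},\lambda_{3})$ to $\sum \lambda_{i}c_{i}$, where $c_{i}$ is the pullback class of \eqref{gamma} along the $i$-th inclusion $\beta\hookrightarrow \beta^{\oplus 3}$; under the identifications of Step~1 the $c_{i}$ form the basis of $V^{*}\cong \Ext^{1}_{G/Z}(\beta,\beta)$ dual to a chosen basis of $V$, so $\delta$ is an isomorphism and both isomorphisms in \eqref{iso_hom} follow from exactness. The main obstacle is the identification of the kernel $K$ with $V\otimes_{k}\beta^{\vee}$: although $N$ is not a priori evidently $R^{\psi}_{\rho}$-flat, the universal property together with Theorem \ref{deform_beta} forces $K$ to have the expected dimension and $G$-module structure; everything else is a formal tangent-space computation.
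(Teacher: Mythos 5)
Your proposal is correct and follows essentially the same route as the paper: both arguments are the standard tangent-space computation, obtaining \eqref{gamma} from $0\rightarrow \overline{\mm}/\overline{\mm}^2\rightarrow \overline{R}/\overline{\mm}^2\rightarrow k\rightarrow 0$ and \eqref{iso_hom} by identifying the connecting map with the isomorphism $\Ext^1_{\dualcat(k)}(\beta^{\vee},\beta^{\vee})\cong(\overline{\mm}/\overline{\mm}^2)^{*}$ of deformation theory. The only real difference is that your hand-made injectivity argument (pairing with $V^{*}$ via the first-order deformations $N_{\xi}$) is unnecessary: $N$ is $R^{\psi}_{\rho}$-flat by the very definition of a deformation --- a fact your argument in any case uses implicitly through $\epsilon N_{\xi}\cong\beta^{\vee}$ --- so one may simply tensor the displayed sequence with $N$, as the paper does.
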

 \begin{proof} This is a formal consequence of deformation theory. Let $\overline{R}$, $\overline{\mm}$ and $\overline{N}$ denote 
 the images of $R^{\psi}_{\rho}$, the maximal ideal of $R^{\psi}_{\rho}$ and $N$ modulo $\varpi$. Since $N$ is a deformation of $\beta^{\vee}$ to $R^{\psi}_{\rho}$, 
 it is $R^{\psi}_{\rho}$-flat and $k\otimes_{R^{\psi}_{\rho}} N\cong \beta^{\vee}$. Thus tensoring the exact sequence 
 $0\rightarrow \overline{\mm}/\overline{\mm}^2\rightarrow \overline{R}/\overline{\mm}^2\rightarrow k \rightarrow 0$  with $N$ over $R^{\psi}_{\rho}$, we obtain an 
 exact sequence in $\dualcat(\OO)$: $ 0\rightarrow (\beta^{\vee})^{\oplus d} \rightarrow \overline{N}/\mm^2 \overline{N} \rightarrow \beta^{\vee}\rightarrow 0$, 
 where $d=\dim_k  \overline{\mm}/\overline{\mm}^2$. Since $R^{\psi}_{\rho}\cong \OO\br{x_1, x_2, x_3}$, $d=3$. We obtain \eqref{gamma} by dualizing this exact sequence.

The assertion of \eqref{iso_hom} follows by unraveling the isomorphism $\Ext^1_{\dualcat(k)}(\beta^{\vee}, \beta^{\vee})\cong (\overline{\mm}/\overline{\mm}^2)^*$, where $*$ denotes the  $k$-linear dual, \cite{mazur}.  Namely, any extension $0\rightarrow \beta^{\vee}\rightarrow E\rightarrow \beta^{\vee}\rightarrow 0$  in $\dualcat(k)$ can be viewed as a deformation of $\beta^{\vee}$ to $k[\epsilon]$, where $\epsilon^2=0$. The isomorphism classes of such extensions are in bijection with 
$\Hom(R^{\psi}_{\rho}, k[\epsilon])$, since $R^{\psi}_{\rho}$ is the universal deformation ring of $\beta^{\vee}$. 
If $\varphi\in \Hom(R^{\psi}_{\rho}, k[\epsilon])$  corresponds to $0\rightarrow \beta^{\vee}\rightarrow E\rightarrow \beta^{\vee}\rightarrow 0$, then 
$E\cong k[\epsilon]\otimes_{R^{\psi}_{\rho}, \varphi} N$. In particular, the extension is non-split if and only if $\varphi$ is surjective, and any non-split 
extension is a quotient of $\overline{N}/\mm^2\overline{N}$, as $\mm^2+(\varpi)$ is contained in the kernel of $\varphi$. Hence, the map $$\Hom_{\dualcat(k)}(\overline{\mm}/\overline{\mm}^2\otimes_{R^{\psi}_{\rho}} N, \beta^{\vee})\rightarrow \Ext^1_{\dualcat(k)}(\beta^{\vee}, \beta^{\vee})$$ is surjective, and hence an isomorphism for dimension reasons. Dually we obtain \eqref{iso_hom}.
\end{proof}
 
 \begin{lem}\label{formal_deform} For all  non-zero $G$-subrepresentations $\tau$ of $\beta$ there exists an exact sequence:
 \begin{equation}\label{dimension}
 0\rightarrow \Hom_{G}(\beta, \gamma)\rightarrow \Hom_{G}(\tau, \gamma)\rightarrow \Ext^1_{G/Z}(\beta/\tau, \beta)\rightarrow 0.
 \end{equation}
 \end{lem}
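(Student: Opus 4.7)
The plan is to apply $\Hom_G(-,\gamma)$ to the short exact sequence $0\to \tau\to\beta\to\beta/\tau\to 0$ and identify the resulting long exact sequence with the claim. By Lemma~\ref{all_beta} the only non-zero proper $G$-subrepresentations of $\beta$ are $\Sp$ and $E_\phi$ (the case $\tau = \beta$ being trivial), so one only needs to treat $\beta/\tau \in \{\kappa, \pi_\alpha\}$.

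First I would verify the vanishing $\Hom_G(\beta/\tau,\gamma) = 0$ by applying $\Hom_G(\beta/\tau,-)$ to \eqref{gamma}. Since $\soc_G\beta = \Sp$, and neither $\kappa$ (socle $\Eins$) nor $\pi_\alpha$ admits a non-zero map into $\beta$, both $\Hom_G(\beta/\tau,\beta)$ and $\Hom_G(\beta/\tau,\beta^{\oplus 3})$ vanish, forcing $\Hom_G(\beta/\tau,\gamma) = 0$. The same vanishings show that the natural map $a\colon \Ext^1_{G/Z}(\beta/\tau,\beta) \to \Ext^1_{G/Z}(\beta/\tau,\gamma)$ induced by $\beta \hookrightarrow \gamma$ is injective, with image equal to the kernel of the map $p_*\colon \Ext^1_{G/Z}(\beta/\tau,\gamma) \to \Ext^1_{G/Z}(\beta/\tau,\beta^{\oplus 3})$ induced by $\gamma \twoheadrightarrow \beta^{\oplus 3}$.

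What remains is to identify the image of the connecting map $\delta\colon \Hom_G(\tau,\gamma) \to \Ext^1_{G/Z}(\beta/\tau,\gamma)$ with $\mathrm{im}(a)$. The inclusion $\mathrm{im}(a) \subseteq \mathrm{im}(\delta)$ is immediate: for $[E] \in \Ext^1_{G/Z}(\beta/\tau,\beta)$, the pushout $E'$ of $E$ along $\beta \hookrightarrow \gamma$ admits the composite $\beta \to E \to E'$ as a lift of $\beta \twoheadrightarrow \beta/\tau$, so the pullback of $E'$ along $\beta \twoheadrightarrow \beta/\tau$ is split; hence $a([E])$ lies in $\ker(\Ext^1_{G/Z}(\beta/\tau,\gamma) \to \Ext^1_{G/Z}(\beta,\gamma))$, which by exactness of the original long exact sequence equals $\mathrm{im}(\delta)$.

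The main obstacle is the reverse inclusion $\mathrm{im}(\delta) \subseteq \mathrm{im}(a)$, equivalently $p_* \circ \delta = 0$. By naturality, this composition equals $\delta' \circ p_*$, where $\delta'\colon \Hom_G(\tau,\beta^{\oplus 3}) \to \Ext^1_{G/Z}(\beta/\tau,\beta^{\oplus 3})$ is the connecting map for $0\to \tau\to\beta\to\beta/\tau\to 0$. It therefore suffices to prove $\delta' = 0$, that is, the restriction $\Hom_G(\beta,\beta^{\oplus 3}) \to \Hom_G(\tau,\beta^{\oplus 3})$ is surjective; by additivity of $\Hom_G(-,\beta)$ this reduces to $\Hom_G(\beta,\beta) \xrightarrow{\sim} \Hom_G(\tau,\beta)$. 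Via the long exact sequence for $\Hom_G(-,\beta)$, this isomorphism follows from $\Hom_G(\beta/\tau,\beta) = 0$ combined with $\Ext^1_{G/Z}(\pi_\alpha,\beta) = 0$ (Lemma~\ref{pi_alpha_beta}) when $\tau = E_\phi$, and from the direct check that both $\Hom_G(\beta,\beta)$ and $\Hom_G(\Sp,\beta)$ are one-dimensional and the restriction map sends the identity of $\beta$ to the socle inclusion when $\tau = \Sp$.
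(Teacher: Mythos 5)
Your overall strategy --- the long exact sequence for $\Hom_G(-,\gamma)$ applied to $0\to\tau\to\beta\to\beta/\tau\to 0$, compared against \eqref{gamma} --- is essentially the transpose of the paper's diagram chase, and the part you call the main obstacle, $\mathrm{im}(\delta)\subseteq\mathrm{im}(a)$, is handled correctly. But the inclusion you call immediate, $\mathrm{im}(a)\subseteq\mathrm{im}(\delta)$, is where the real content of the lemma sits, and your justification of it is wrong. The composite $\beta\to E\to E'$ is \emph{not} a lift of $\beta\twoheadrightarrow\beta/\tau$: in the pushout $E'=(\gamma\oplus E)/\{(i(b),-j(b)):b\in\beta\}$ the image of $\beta\xrightarrow{j}E\to E'$ coincides with the image of $\beta$ inside $\gamma\subseteq E'$, so its composite with $E'\to\beta/\tau$ is zero, not the canonical surjection. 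What you actually need is that $q^*a([E])=i_*(q^*[E])$ vanishes in $\Ext^1_{G/Z}(\beta,\gamma)$, where $q:\beta\to\beta/\tau$ and $i:\beta\hookrightarrow\gamma$; since $q^*[E]$ is in general non-zero, this forces you to invoke the second isomorphism of \eqref{iso_hom}, i.e.\ the surjectivity of the connecting map $\Hom_G(\beta,\beta^{\oplus 3})\to\Ext^1_{G/Z}(\beta,\beta)$, which is precisely the statement that $i_*:\Ext^1_{G/Z}(\beta,\beta)\to\Ext^1_{G/Z}(\beta,\gamma)$ is the zero map.

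This is not a cosmetic omission: nowhere does your argument use that $\gamma$ is the universal first-order deformation (you only use the shape of \eqref{gamma} and the isomorphism $\Hom_G(\beta,\beta)\cong\Hom_G(\tau,\beta)$), and the lemma is false for a general extension of $\beta^{\oplus 3}$ by $\beta$. For instance, with $\gamma$ replaced by $\beta^{\oplus 4}$ your proof would go through verbatim, yet the cokernel of $\Hom_G(\beta,\beta^{\oplus 4})\to\Hom_G(\tau,\beta^{\oplus 4})$ is zero while $\Ext^1_{G/Z}(\kappa,\beta)$ is one-dimensional by Lemma \ref{kappa_beta}. Once \eqref{iso_hom} is inserted at this point, the proof closes up and becomes equivalent to the paper's.
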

 \begin{proof} Since $\beta$ is multiplicity free and its $G$-socle is irreducible, the natural map  $\Hom_G(\beta, \beta)\rightarrow \Hom_G(\tau, \beta)$ is an isomorphism and $\Hom_{G}(\beta/\tau, \beta)=0$. It follows from \eqref{gamma} that $\Hom_G(\beta/\tau, \gamma)=0$.
 We apply $\Hom_{G/Z}(\beta, \ast)\rightarrow\Hom_{G/Z}(\tau, \ast)$ to \eqref{gamma} to obtain a commutative diagram with exact rows:
 \begin{displaymath}
\xymatrix@1{ \, \Hom_{G}(\beta, \beta)\,\ar[d]^{\cong}\ar@{^(->}[r] & \Hom_{G}(\beta, \gamma )\ar[r]\ar@{^(->}[d] &\Hom_{G}(\beta, \beta^{\oplus 3})\ar@{->>}[r]^{\eqref{iso_hom}}\ar[d]^{\cong}&\Ext^1_{G/Z}(\beta,\beta)\ar[d]\\  \,\Hom_{G}(\tau, \beta)\,\ar@{^(->}[r] & \Hom_{G}(\tau, \gamma )\ar[r] & \Hom_{G}(\tau, \beta^{\oplus 3})\ar[r] & \Ext^1_{G/Z}(\tau,\beta)}
\end{displaymath}
It follows from a diagram chase that the cokernel of the second vertical arrow is equal to the kernel of the fourth vertical arrow.  Since $\Hom_G(\beta, \beta)\rightarrow \Hom_G(\tau, \beta)$ is an isomorphism, the latter is isomorphic to $\Ext^1_{G/Z}(\beta/\tau, \beta)$.
\end{proof} 

For every $\tau\in \Mod^{\mathrm{l.fin}}_{G/Z}(k)$ we let $\II(\tau):=\Hom_{I_1}(\Eins, \tau)\cong \Hom_G(\cIndu{I_1Z}{G}{\Eins}, \tau)$. Then $\II(\tau)$ is  naturally a right  $\HH:=\End_G(\cIndu{I_1Z}{G}{\Eins})$-module. In the following we will make extensive use of the following Theorem due to  Rachel Ollivier.

\begin{thm}[\cite{ollivier}]\label{rachel} If  $\tau\in \Mod^{\mathrm{l.fin}}_{G/Z}(k)$ is generated by its $I_1$-invariants then the natural 
map $\II(\tau)\otimes_{\HH} \cIndu{ZI_1}{G}{\Eins}\rightarrow \tau$ is an isomorphism and 
we have an isomorphism of functors $\Hom_G(\tau, \ast)\cong \Hom_{\HH}(\II(\tau), \II(\ast))$. 
\end{thm}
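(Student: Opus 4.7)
The plan is to realize the natural map of the theorem as the counit of a tensor-hom adjunction, reduce the isomorphism claim to exactness of the $I_1$-invariants functor, and then invoke the explicit classification of smooth irreducible $k$-representations of $G$ to verify the exactness. By Frobenius reciprocity, $\II(\sigma) \cong \Hom_G(\cIndu{ZI_1}{G}{\Eins}, \sigma)$ for every $\sigma$, so $\cIndu{ZI_1}{G}{\Eins}$ is naturally a $(G,\HH)$-bimodule, and the standard tensor-hom adjunction supplies a natural isomorphism
\begin{equation*}
\Hom_G(M \otimes_{\HH} \cIndu{ZI_1}{G}{\Eins},\, \sigma) \cong \Hom_{\HH}(M,\, \II(\sigma))
\end{equation*}
for any right $\HH$-module $M$ and any $\sigma \in \Mod^{\sm}_{G/Z}(k)$. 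Setting $M = \II(\tau)$ and sending the identity of $\II(\tau)$ across produces the evaluation map of the first assertion; granted this, the second assertion follows formally by composing with the adjunction.

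To show the evaluation map is an isomorphism when $\tau$ is generated by its $I_1$-invariants, I would present $\tau$ by a two-term exact sequence
\begin{equation*}
\bigoplus_J \cIndu{ZI_1}{G}{\Eins} \longrightarrow \bigoplus_I \cIndu{ZI_1}{G}{\Eins} \longrightarrow \tau \longrightarrow 0
\end{equation*}
using generating sets of $I_1$-fixed vectors at each stage, apply $\II$ to obtain a presentation of $\II(\tau)$ by free right $\HH$-modules, and then apply $-\otimes_{\HH}\cIndu{ZI_1}{G}{\Eins}$ together with the five-lemma to recover $\tau$ itself. Surjectivity of the evaluation map is immediate from the fact that $I_1$-invariants generate $\tau$; the nontrivial content is that the kernel of the surjection onto $\tau$ is itself generated by its $I_1$-invariants, i.e.\ that the subcategory of $\Mod^{\lfin}_{G/Z}(k)$ consisting of $I_1$-generated objects is closed under the syzygies arising in the resolution.

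The main obstacle is precisely this closure property, together with the exactness of $\II$ on $\Mod^{\lfin}_{G/Z}(k)$. For $\GL_2(\Qp)$ with $p \ge 5$ this is the core of Ollivier's theorem, and would have to be verified by going through the classification of smooth irreducible $k$-representations (supersingulars, characters, Steinberg, principal series), checking that each has an explicitly describable right $\HH$-module of pro-$p$ Iwahori invariants, and then analyzing the possible extensions between them to ensure that short exact sequences of $I_1$-generated representations remain exact after applying $\II$. The combinatorial heart of the argument lies in an explicit computation with the generators and relations of the affine pro-$p$ Iwahori-Hecke algebra of $\GL_2(\Qp)$; once exactness and closure under syzygies are established, the diagram chase in the previous paragraph closes the proof.
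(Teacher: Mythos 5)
This theorem is not proved in the paper at all: it is quoted from Ollivier's article, so the only ``proof'' the paper offers is the citation. Your formal skeleton is fine and standard: $-\otimes_{\HH}\cIndu{ZI_1}{G}{\Eins}$ is left adjoint to $\II$, the counit is the evaluation map, surjectivity of the counit is immediate from $I_1$-generation, and the $\Hom$-isomorphism follows formally once the counit is known to be bijective. The reduction via a two-term presentation and the five lemma is also sound \emph{modulo} the two inputs you flag.

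The genuine gap is that those two inputs --- (i) the kernel of a surjection $\bigoplus_I \cIndu{ZI_1}{G}{\Eins}\twoheadrightarrow\tau$ is again generated by its $I_1$-invariants, and (ii) $\II$ carries the presentation to an exact sequence of $\HH$-modules --- are not side conditions to be ``verified''; they are the entire content of Ollivier's theorem, and they are exactly what fails for $\GL_2(F)$ with $F\neq\Qp$. Your proposed verification (classify the irreducibles, compute their invariants, analyse extensions between them) is moreover not obviously adequate: objects of $\Mod^{\lfin}_{G/Z}(k)$ may have infinite length, and knowing $\II$ on irreducibles together with $\Ext^1$-groups does not formally yield either surjectivity of $\II(\bigoplus_I\cIndu{ZI_1}{G}{\Eins})\rightarrow\II(\tau)$ (which hinges on the vanishing of a connecting map into $H^1(I_1,\cdot)$ of the syzygy) or the closure property (i). The standard, and essentially Ollivier's, route is different: one proves by an explicit computation with the universal module (decomposing $\cIndu{ZI_1}{G}{\Eins}|_{I_1}$ via the tree and the characters of the finite torus, which is where the hypothesis $F=\Qp$ enters) that the unit $M\rightarrow (M\otimes_{\HH}\cIndu{ZI_1}{G}{\Eins})^{I_1}$ is an isomorphism for every $\HH$-module $M$; injectivity of the counit then follows because its kernel $K$ satisfies $K^{I_1}=0$ by left exactness of $\II$ and the triangle identity, hence $K=0$ since $I_1$ is pro-$p$ and we are in characteristic $p$. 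Without that computation, or some substitute for it, your argument does not close.
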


We let $\pi(0,1): = \frac{\cIndu{KZ}{G}{\Eins}}{T-1}$, where if we identify $\cIndu{KZ}{G}{\Eins}$ with the space of finitely supported functions from the set of vertices of the Bruhat-Tits tree to $k$, 
then $T$ is the endomorphism induced by sending a vertex to the sum of its neighbours.

\begin{lem}\label{I_beta} If $\rho$ is \textit{peu ramifi\'e} then $E_{\phi}\cong \pi(0,1)$ and $\II(\beta)\cong \II(\pi(0,1))$. If $\rho$ is \textit{tr\`es ramifi\'e} then $\II(\beta)\cong \II(\Sp)$.
\end{lem}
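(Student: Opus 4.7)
The plan has three parts: pinning down the extension class of $\pi(0,1)$, matching it with the \textit{peu ramifi\'e} condition on $\rho$, and computing $\II(\beta)$ as an $\HH$-module in each case.

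First I would identify $\pi(0,1)$ with $E_\phi$ for unramified $\phi$. As a non-split extension $0\to \Sp \to \pi(0,1) \to \Eins \to 0$, it defines a class in $\Ext^1_{G/Z}(\Eins,\Sp)\cong \Hom(\Qp^\times,k)$. The image of the spherical vector of $\cIndu{KZ}{G}{\Eins}$ is a $K$-fixed element of $\pi(0,1)$ mapping non-trivially to $\Eins^K$, so the extension splits upon restriction to $K$. Since under Colmez's identification the unramified line in $\Hom(\Qp^\times,k)$ is exactly the kernel of the restriction map $\Ext^1_{G/Z}(\Eins,\Sp)\to \Ext^1_{KZ/Z}(\Eins,\Sp)$, the class of $\pi(0,1)$ is a nonzero unramified homomorphism and $\pi(0,1)\cong E_\phi$ for any such $\phi$.

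Next, since $\rho=\cV(\beta^\vee)$ corresponds to the line $k\phi^\perp\subset H^1(G_{\Qp},k(\omega))$, and since the unramified line in $\Hom(\Qp^\times,k)$ and the \textit{peu ramifi\'e} line in $H^1(G_{\Qp},k(\omega))$ are mutually orthogonal under the cup product pairing, we have $\rho$ \textit{peu ramifi\'e} if and only if $\phi$ is unramified. Combined with the previous paragraph, this yields the first assertion of the lemma: $E_\phi\cong \pi(0,1)$ in the \textit{peu ramifi\'e} case.

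For the Hecke-module statements, I apply $(-)^{I_1}$ to \eqref{beta_phi} to obtain a left-exact sequence of right $\HH$-modules
\[
0\to \II(E_\phi)\to \II(\beta)\to \II(\pi_\alpha).
\]
A direct computation on $E_\phi$ shows that $\dim_k(E_\phi)^{I_1}=2$ in the \textit{peu ramifi\'e} case (the spherical vector of $\pi(0,1)\cong E_\phi$ supplies an $I_1$-invariant lift of the generator of $\Eins^{I_1}$) and $\dim_k(E_\phi)^{I_1}=1$ otherwise (the obstruction is precisely the non-vanishing of $\phi$ on $\Zp^\times$). It then suffices to prove that the rightmost map above is zero, for then $\II(\beta)\cong \II(E_\phi)$, giving $\II(\beta)\cong\II(\pi(0,1))$ in the \textit{peu ramifi\'e} case and $\II(\beta)\cong\II(\Sp)$ in the \textit{tr\`es ramifi\'e} case.

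The main obstacle is this last vanishing. I would prove it by a Hecke-module extension argument: any non-zero element of the image of $\II(\beta)\to\II(\pi_\alpha)$ would, by Ollivier's Theorem \ref{rachel} together with the known $\HH$-module structure of $\II(\pi_\alpha)$ and the $\Ext^1$ computations recalled in \eqref{modp1}--\eqref{modp3}, produce a non-zero $G$-map out of $\beta$ that forces a splitting of \eqref{beta_phi}, contradicting the non-splitness guaranteed by Lemma \ref{all_beta}. A more computational alternative is to analyse the long exact sequence for $I_1$-cohomology applied to \eqref{beta_phi} and show that the connecting map $\pi_\alpha^{I_1}\to H^1(I_1,E_\phi)$ is injective, using the explicit pro-$p$ structure of $I_1$ and the composition series of $E_\phi$. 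Either way, once this vanishing is established both cases of the lemma follow at once.
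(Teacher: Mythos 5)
Your overall strategy (reduce everything to $\II(E_\phi)$ and kill the map to $\II(\pi_\alpha)$) has the same shape as the paper's, and your identification of $\pi(0,1)$ with $E_\phi$ for unramified $\phi$ via the spherical vector is a reasonable variant of the paper's argument. Two points, however. First, the dichotomy you assert twice --- that the unramified line is \emph{exactly} the kernel of restriction to $K$, equivalently that $\dim_k\II(E_\phi)=1$ when $\phi$ is ramified --- is precisely the nontrivial input; the easy direction (the spherical vector splits $\pi(0,1)|_K$) does not give it. The paper extracts it by examining the action of $\bigl(\begin{smallmatrix}\Zp^\times&0\\0&1\end{smallmatrix}\bigr)$ on the $I_1$-coinvariants of $E_\phi^\vee$ via the explicit formula before \cite[VII.4.18]{colmez}; you should cite or reproduce that computation rather than calling it ``a direct computation.''

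Second, and this is the genuine gap: the vanishing of $\II(\beta)\to\II(\pi_\alpha)$, which you correctly flag as the main obstacle, is not established by either of your sketches. In sketch (a), a nonzero element of the image does not directly produce a $G$-map out of $\beta$: by Theorem \ref{rachel} and the simplicity of $\II(\pi_\alpha)$ you would only get surjectivity of $\II(\beta)\to\II(\pi_\alpha)$, and to contradict $\soc_G\beta\cong\Sp$ you would then need the resulting short exact sequence of $\HH$-modules to split, i.e. $\Ext^1_{\HH}(\II(\pi_\alpha),\II(E_\phi))=0$, which you have not verified and which is not among the $\Ext$-computations quoted in the paper. Sketch (b) is a computation you have not carried out. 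The paper closes this step by a different and much shorter route that you do not use at all: by \eqref{beta_Sp} and \eqref{define_kappa} the quotient map $\beta\to\pi_\alpha$ factors through $\kappa$, and $\II(\kappa)\cong\II(\Eins)$ by \cite[Lem.10.22]{cmf}, so $\II(\kappa)\to\II(\pi_\alpha)$ --- and hence $\II(\beta)\to\II(\pi_\alpha)$ --- is zero. You should either import that lemma or supply one of your two arguments in full.
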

\begin{proof} We will use \eqref{define_kappa}, \eqref{beta_phi} and \eqref{beta_phi} extensively in this proof. It is shown in \cite[Lem.10.22]{cmf} that $\II(\kappa)\cong \II(\Eins)$. This implies that $\II(\beta)\cong \II(E_{\phi})$. We have an exact 
sequence $0\rightarrow \II(\Sp)\rightarrow \II(E_{\phi})\rightarrow \II(\Eins)$. It follows by examining the action of $\bigl ( \begin{smallmatrix} \Zp^{\times} & 0 \\ 0 & 1\end{smallmatrix} \bigr)$ on the $I_1$-coinvariants of $E_{\phi}^{\vee}$, see the formula before Theorem VII.4.18 in \cite{colmez}, that if $\phi$ is ramified then the 
map $\II(E_{\phi})\rightarrow \II(\Eins)$ cannot be surjective. Hence, if $\rho$ is \textit{tr\`es ramifi\'e} then $\II(\beta)\cong \II(\Sp)$. 

Since there exists a non-split extension $0\rightarrow \Sp \rightarrow \pi(0,1)\rightarrow \Eins\rightarrow 0$ by \cite[Thm.30(2)(b)]{bl}, there exists $\phi\in \Hom(\Qp^{\times}, k)$ such that $\pi(0,1)\cong E_{\phi}$. Since 
$\pi(0,1)$ is a quotient of $\cIndu{KZ}{G}{\Eins}$, we deduce that $\II(\pi(0,1))\rightarrow \II(\Eins)$ is surjective. Moreover, it follows 
from Theorem \ref{rachel} that the sequence of $\HH$-modules $0\rightarrow \II(\Sp)\rightarrow \II(\pi(0,1))\rightarrow \II(\Eins)\rightarrow 0$ is non-split.
By the previous part, we deduce that $\phi$ cannot be ramified. Since the space of unramified homomorphisms is one dimensional we are done. 
\end{proof}

 \begin{prop}\label{invariants_gamma} If $\rho$ is \textit{tr\`es ramifi\'e} then $\II(\gamma)\cong \II(\Sp)^{\oplus 2}$. If $\rho$ is \textit{peu ramifi\'e} then we have an exact sequence of 
 $\HH$-modules:
 $0\rightarrow \II(\pi(0,1))\oplus \II(\Sp) \rightarrow \II(\gamma)\rightarrow  \II(\pi(0,1)).$
\end{prop}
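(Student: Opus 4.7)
The plan is to apply the left-exact functor $\II:=\Hom_{I_1}(\Eins, -)=(-)^{I_1}$ to the exact sequence of Lemma \ref{formal_deform1},
$$ 0 \to \beta \to \gamma \to \beta^{\oplus 3} \to 0,$$
obtaining a left-exact sequence of $\HH$-modules
$$ 0 \to \II(\beta) \to \II(\gamma) \to \II(\beta)^{\oplus 3}.$$
The identification of $\II(\gamma)$ then splits into two tasks: (a) producing enough embeddings of simple $\HH$-modules into $\II(\gamma)$ to force the asserted $\HH$-submodule, and (b) bounding the image of $\II(\gamma)$ in $\II(\beta)^{\oplus 3}$ from above.

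For (a), for each $G$-subrepresentation $\tau \subseteq \beta$ generated by its $I_1$-invariants, Ollivier's Theorem \ref{rachel} gives $\Hom_{\HH}(\II(\tau),\II(\gamma))\cong \Hom_G(\tau,\gamma)$, and the right side is computed by Lemma \ref{formal_deform} as $1+\dim_k \Ext^1_{G/Z}(\beta/\tau, \beta)$. The relevant candidates are $\tau=\Sp$ (whose quotient $\beta/\Sp\cong \kappa$ satisfies $e^1(\kappa,\beta)=1$ by Lemma \ref{kappa_beta}), giving $\dim \Hom_{\HH}(\II(\Sp),\II(\gamma))=2$; and in the \textit{peu ramifi\'e} case, $\tau=\pi(0,1)=E_\phi$ (which is generated by its $K$-invariants, hence by its $I_1$-invariants; here $\beta/\pi(0,1)\cong \pi_\alpha$ and $e^1(\pi_\alpha,\beta)=0$ by Lemma \ref{pi_alpha_beta}), giving $\dim \Hom_{\HH}(\II(\pi(0,1)),\II(\gamma))=1$. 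In the \textit{tr\`es ramifi\'e} case, $\II(\beta)\cong \II(\Sp)$ is a one dimensional simple $\HH$-module (Lemma \ref{I_beta}), and these two independent maps yield an embedding $\II(\Sp)^{\oplus 2}\hookrightarrow \II(\gamma)$. In the \textit{peu ramifi\'e} case, $\II(\beta)\cong \II(\pi(0,1))$ is a non-split extension of $\II(\Eins)$ by $\II(\Sp)$ (non-split because otherwise Theorem \ref{rachel} would split $\pi(0,1)$ itself as a $G$-representation); combining the unique embedding $\II(\pi(0,1))\hookrightarrow \II(\gamma)$ coming from $\II(\beta)\hookrightarrow \II(\gamma)$ with the second independent map $\II(\Sp)\to \II(\gamma)$, which cannot factor through $\II(\pi(0,1))$ for socle-multiplicity reasons, gives $\II(\pi(0,1))\oplus \II(\Sp)\hookrightarrow \II(\gamma)$.

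For (b), the crucial input is the universal nature of \eqref{gamma}: its extension class in $\Ext^1_{G/Z}(\beta^{\oplus 3},\beta)\cong V^*\otimes \Ext^1_{G/Z}(\beta,\beta)$, with $V=\mm/(\mm^2,\varpi)$, corresponds to the identity under the canonical isomorphism $V\cong \Ext^1_{G/Z}(\beta,\beta)^*$ induced by $R^{\psi}_{\rho}$ representing the deformation functor. Consequently the connecting homomorphism $\II(\beta)^{\oplus 3}\to \Ext^1_G(\cIndu{I_1Z}{G}{\Eins}, \beta)$ factors as the isomorphism $V\otimes \II(\beta)\cong \Ext^1_{G/Z}(\beta,\beta)\otimes \II(\beta)$ followed by the Yoneda pairing into $\Ext^1_G(\cIndu{I_1Z}{G}{\Eins},\beta)=H^1(I_1/Z_1,\beta)$. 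The image of $\II(\gamma)$ in $\II(\beta)^{\oplus 3}$ is precisely the kernel of this Yoneda pairing, and showing that this kernel has the expected dimension --- one in the tr\`es ramifi\'e case, and two (yielding a copy of $\II(\pi(0,1))$) in the peu ramifi\'e case --- completes the proof.

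The main obstacle is the explicit computation of the kernel of this Yoneda pairing, which amounts to understanding $H^1(I_1/Z_1,\beta)$ together with its $\HH$-module structure well enough to identify which linear combinations of basis elements of $V$ annihilate the $I_1$-invariants. In practice one can bypass a direct cohomological computation by bootstrapping: apply Lemma \ref{formal_deform} to further sub-representations (e.g.\ $\Sp$ inside $\pi(0,1)$ and $\pi(0,1)$ inside $\beta$) and combine with the dimension count $\dim \II(\gamma)\le \dim \II(\beta)+3\dim \II(\beta)$ to squeeze both cases to the claimed structure.
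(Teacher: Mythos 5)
Your first half — producing the submodule $\II(\Sp)^{\oplus 2}$ (resp. $\II(\pi(0,1))\oplus\II(\Sp)$) from Lemma \ref{formal_deform} applied to $\tau=\Sp$ and $\tau=\pi(0,1)$ together with Ollivier's theorem — is exactly the paper's argument and is correct. The gap is in your step (b), which is where the actual content of the proposition lies: you never establish the upper bound. The route you sketch (identifying the image of $\II(\gamma)$ in $\II(\beta)^{\oplus 3}$ as the kernel of a Yoneda pairing) is left explicitly uncomputed, and the proposed bypass does not work. The crude bound $\dim\II(\gamma)\le 4\dim_k\II(\beta)$ gives $4$ in the tr\`es ramifi\'e case (against the claimed $2$) and $8$ in the peu ramifi\'e case (against the claimed $\le 5$), and further applications of Lemma \ref{formal_deform} cannot close this: via Theorem \ref{rachel} they only compute $\Hom_{\HH}(\II(\tau),\II(\gamma))$, i.e.\ they control the \emph{socle} of $\II(\gamma)$, not its higher layers. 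Note also that in the tr\`es ramifi\'e case $\pi(0,1)$ is \emph{not} a subrepresentation of $\beta$ (there $E_\phi\not\cong\pi(0,1)$ by Lemma \ref{I_beta}), so one of your suggested extra inputs is unavailable precisely where you would need it.

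What is actually needed, and what the paper uses, are the $\Ext$-computations in the category of $\HH$-modules from \cite[Lem.5.23, 5.26]{cmf}. In the tr\`es ramifi\'e case all Jordan--H\"older factors of $\II(\gamma)$ are $\II(\Sp)$ (by the left-exact sequence, since $\II(\beta)\cong\II(\Sp)$), and $\Ext^1_{\HH}(\II(\Sp),\II(\Sp))=0$ forces $\II(\gamma)$ to be semisimple; the two-dimensional Hom-space then gives $\II(\gamma)\cong\II(\Sp)^{\oplus 2}$ on the nose — your embedding alone does not rule out extra copies of $\II(\Sp)$ sitting above the socle. In the peu ramifi\'e case one must show that the quotient $Q$ of $\II(\gamma)$ by $\II(\pi(0,1))\oplus\II(\Sp)$ embeds into $\II(\pi(0,1))$; the paper does this by showing $\Hom_{\HH}(\II(\Eins),Q)=0$ and $\dim\Hom_{\HH}(\II(\Sp),Q)\le 1$, using $\Ext^1_{\HH}(\II(\Eins),\II(\pi(0,1)))=0$ and $\dim\Ext^1_{\HH}(\II(\Sp),\II(\pi(0,1)))=1$ together with the already established fact that $\Hom_{\HH}(\II(\pi(0,1)),\II(\gamma))$ is one-dimensional. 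None of these inputs appear in your write-up, so as it stands the proof is incomplete in both cases.
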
 
\begin{proof} It follows from \eqref{dimension} with $\tau=\Sp$ and Lemma \ref{kappa_beta} that $\Hom_G(\Sp, \gamma)$ is 
$2$-dimensional. This together with Theorem \ref{rachel} implies that $\Hom_{\HH}(\II(\Sp), \II(\gamma))$ is $2$-dimensional.
Moreover, from \eqref{gamma} we obtain an exact sequence:
\begin{equation}\label{I_gamma}
0\rightarrow \II(\beta)\rightarrow \II(\gamma)\rightarrow \II(\beta)^{\oplus 3}.
\end{equation}
If $\rho$ is \textit{tr\`es ramifi\'e} then $\II(\beta)=\II(\Sp)$ by Lemma \ref{I_beta}. Since $\Ext^1_{\HH}(\II(\Sp), \II(\Sp))=0$ by \cite[Lem.5.26]{cmf}, we deduce from 
\eqref{I_gamma} that $\II(\gamma)$ is isomorphic to  a direct sum of copies of $\II(\Sp)$. Since $\Hom_{\HH}(\II(\Sp), \II(\gamma))$ is 
$2$-dimensional, we get $\II(\gamma)\cong \II(\Sp)^{\oplus 2}$.

If $\rho$ is \textit{peu ramifi\'e} then $\pi(0,1)$ is a subrepresentation of $\beta$. It follows from \eqref{dimension} 
with $\tau=\pi(0,1)$ and Lemma \ref{pi_alpha_beta} that $\Hom_{\HH}(\II(\pi(0,1)), \II(\gamma))$ is $1$-dimensional, and hence 
that $\II(\pi(0,1))$ is a submodule of $\II(\gamma)$. Since $\Hom_{\HH}(\II(\Sp), \II(\gamma))$ is $2$-dimensional, we deduce that
$\II(\pi(0,1))\oplus \II(\Sp)$ is a submodule of $\II(\gamma)$ and denote the quotient by $Q$.  Since $\II(\beta)=\II(\pi(0,1))$, it follows
from \eqref{I_gamma} that $Q$ is a submodule of $\II(\pi(0,1))^{\oplus 2} \oplus \II(\Eins)$. It is enough to show that 
$\Hom_{\HH}(\II(\Eins), Q)=0$ and $\Hom_{\HH}(\II(\Sp), Q)$ is at most one dimensional. 

Lemma 5.26 of \cite{cmf} implies that 
$\Ext^1_{\HH}(\II(\Eins), \II(\Eins))=0$ and $\Ext^1_{\HH}(\II(\Eins), \II(\Sp))$ is one dimensional. Since 
the extension $0\rightarrow \II(\Sp)\rightarrow \II(\pi(0,1))\rightarrow \II(\Eins)\rightarrow 0$ is non-split, its class is a basis vector of 
$\Ext^1_{\HH}(\II(\Eins), \II(\Sp))$.  By applying $\Hom_{\HH}(\II(\Eins), \ast)$ to this exact sequence we get that $\Ext^1_{\HH}(\II(\Eins), \II(\pi(0,1)))=0$.
Since $\Hom_G(\Eins, \beta)=0$, we have $\Hom_{\HH}(\II(\Eins), \II(\gamma))=0$. By applying $\Hom_{\HH}(\II(\Eins), \ast)$ to the exact sequence 
$0\rightarrow \II(\pi(0,1))\oplus \II(\Sp)\rightarrow \II(\gamma)\rightarrow Q\rightarrow 0$, we deduce that if $\Hom_{\HH}(\II(\Eins), Q)$ is non-zero, 
then $\II(\gamma)$ contains $\II(\pi(0,1))^{\oplus 2}$, which implies that $\Hom_{\HH}(\II(\pi(0,1)), \II(\gamma))$ is at least two dimensional, leading to 
a contradiction. Lemmas 5.23, 5.26 of \cite{cmf} imply that $\Ext^i_{\HH}(\II(\Sp),\II( \Sp))=0$ for all $i\ge 1$, and $\Ext^1_{\HH}(\II(\Sp), \II(\Eins))$ 
is one dimensional. This implies that $\Ext^1_{\HH}(\II(\Sp), \II(\pi(0,1))$ is one dimensional. By applying $\Hom_{\HH}(\II(\Sp), \ast)$ to 
$0\rightarrow \II(\pi(0,1))\oplus \II(\Sp)\rightarrow \II(\gamma)\rightarrow Q\rightarrow 0$, we deduce that 
that $\Hom_{\HH}(\II(\Sp), Q)$ is at most one dimensional.
\end{proof}

For all $\sigma\in \Irr_{K,\zeta}(k)$ we let $M(\sigma)$ be the $R^{\psi}_{\rho}$-module  defined in Definition \ref{M_sigma} with $N$ equal to the universal 
deformation of $\beta^{\vee}$.

\begin{prop}\label{dim_M_sigma} If  $\rho$ is \textit{tr\`es ramifi\'e}  then $M(\sigma)\neq 0$ if and only if $\sigma\cong \st\cong \Sym^{p-1} k^2$. If $\rho$ is \textit{peu ramifi\'e} then 
$M(\sigma)\neq 0$ if and only if either $\sigma=\Eins$ or $\sigma=\st$. In all cases the Krull dimension of $M(\sigma)$ is at most one.   
\end{prop}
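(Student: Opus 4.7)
The plan is to extract the result from the formalism of \S\ref{general} combined with the mod-$p$ Iwahori invariants computations in Lemma \ref{I_beta} and Proposition \ref{invariants_gamma}. The core input is Proposition \ref{disdual}: applied with $\md = k$, together with hypothesis (N0), it yields $k\wtimes_{R^{\psi}_{\rho}} M(\sigma) \cong \Hom_K(\sigma,\beta)^{\vee}$, so that by Corollary \ref{zero_lambda} and Nakayama the non-vanishing $M(\sigma)\neq 0$ is equivalent to $\Hom_K(\sigma,\beta)\neq 0$; applied with $\md = R^{\psi}_{\rho}/\mm^2$, and using that $\sigma$ is $\varpi$-torsion so any $K$-map into $(N/\mm^2 N)^{\vee}$ automatically lands in its $\varpi$-torsion part $\gamma$, it yields $M(\sigma)/\mm^2 M(\sigma)\cong \Hom_K(\sigma,\gamma)^{\vee}$.

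To classify the non-vanishing, I would first pass to $I_1$-invariants. The restriction map $\Hom_K(\sigma,\beta)\hookrightarrow \Hom_I(\sigma^{I_1},\II(\beta))$ is injective since $\sigma$ is irreducible and generated by $\sigma^{I_1}$, while $\sigma^{I_1}$ is one-dimensional with $T(\Fp)$-character determined by the weight $\sigma=\Sym^r k^2\otimes\det^m$. Lemma \ref{I_beta} identifies $\II(\beta)$ with $\II(\Sp)$ or $\II(\pi(0,1))$, both of which are concentrated on the trivial $T(\Fp)$-character (the former is standard; the latter follows from the exact sequence $0\to \II(\Sp)\to \II(\pi(0,1))\to \II(\Eins)\to 0$ used in the proof of Lemma \ref{I_beta}). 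This forces $\sigma\in \{\Eins,\st\}$. I would then compute each case using \eqref{beta_phi} and \eqref{beta_Sp}. For $\sigma=\st$: by Frobenius reciprocity $\Hom_K(\st,\pi_{\alpha})=0$ (the $K$-socle of $\pi_{\alpha}=\Indu{B}{G}{\omega\otimes\omega^{-1}}$ is neither $\Eins$ nor $\st$ for $p\ge 5$) and $\Hom_K(\st,\Eins)=0$, so the long exact sequences give $\Hom_K(\st,\beta)\cong \Hom_K(\st,\Sp)=k$ in both the \textit{tr\`es ramifi\'e} and \textit{peu ramifi\'e} cases. For $\sigma=\Eins$: $\Hom_K(\Eins,\beta)=\beta^K$ reduces, using $\pi_{\alpha}^K=\Sp^K=0$, to whether $E_{\phi}$ splits on $K$; in the \textit{tr\`es ramifi\'e} case ramification of $\phi$ ensures it does not, so $\beta^K=0$, while in the \textit{peu ramifi\'e} case $E_{\phi}=\pi(0,1)$ admits a one-dimensional $K$-fixed subspace spanned by the image of the characteristic function of $KZ$ in $\cIndu{KZ}{G}{\Eins}/(T-1)$, giving $\beta^K\cong k$.

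For the Krull dimension bound, cyclicity of $M(\sigma)$ (from $\dim_k k\otimes_R M(\sigma)\le 1$) combined with the embedding-dimension bound $\dim M(\sigma)\le \dim_k \Hom_K(\sigma,\gamma)-1$ (the $K$-analogue of the final inequality in Corollary \ref{fgG1}) reduces the problem to showing $\dim_k\Hom_K(\sigma,\gamma)\le 2$. In the \textit{tr\`es ramifi\'e} case this is immediate from $\II(\gamma)\cong \II(\Sp)^{\oplus 2}$ (Proposition \ref{invariants_gamma}) together with the $I_1$-invariants injection used above. The main obstacle will be the \textit{peu ramifi\'e} case, where $\II(\gamma)$ can a priori be up to five-dimensional so the crude bound via $I_1$-invariants is too weak. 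Here I would invoke Ollivier's Theorem \ref{rachel} to rewrite
\[
\Hom_K(\sigma,\gamma)\cong \Hom_{\HH}(\II(\cIndu{KZ}{G}{\sigma}),\II(\gamma)),
\]
and then apply $\Hom_{\HH}(\II(\cIndu{KZ}{G}{\sigma}),-)$ to the short exact sequence of Proposition \ref{invariants_gamma}, bounding each term by one using the structure of $\II(\cIndu{KZ}{G}{\sigma})$ as an $\HH$-module for $\sigma\in\{\Eins,\st\}$ and the relevant $\HH$-module Hom computations (in particular $\Hom_{\HH}(\II(\pi(0,1)),\II(\Sp))=0$) available from \cite{cmf}.
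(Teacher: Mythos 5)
Most of your argument is sound and runs parallel to the paper's: the identification of the non-vanishing $\sigma$ via $\II(\beta)$ (the paper reads off $\soc_K\beta$ from Lemma \ref{I_beta} directly, which is the same computation as your $T(\Fp)$-character argument), the \textit{tr\`es ramifi\'e} dimension bound via $\II(\gamma)\cong\II(\Sp)^{\oplus 2}$, and the bound for $M(\Eins)$ in the \textit{peu ramifi\'e} case all check out.

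The gap is the \textit{peu ramifi\'e} case with $\sigma=\st$. The inequality $\dim_k\Hom_K(\st,\gamma)\le 2$ that your strategy requires is false, so no refinement of the Hecke-module bookkeeping can deliver it. Concretely, by Theorem \ref{rachel} the submodule $\II(\pi(0,1))\oplus\II(\Sp)$ of $\II(\gamma)$ already contributes a two-dimensional subspace to $\Hom_{\HH}(\II(\cIndu{KZ}{G}{\st}),\II(\gamma))\cong\Hom_K(\st,\gamma)$, since $\Hom_K(\st,\pi(0,1))$ and $\Hom_K(\st,\Sp)$ are each one-dimensional; the fact you invoke, $\Hom_{\HH}(\II(\pi(0,1)),\II(\Sp))=0$, concerns maps \emph{out of} $\II(\pi(0,1))$ and does not suppress either contribution. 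The contribution of the quotient $Q$ does not vanish either: the paper shows (proof of Proposition \ref{endlich}, Theorem \ref{bm_non_gen}) that in the \textit{peu ramifi\'e} case $M(\st)$ is a cyclic module of Krull dimension one with Hilbert--Samuel multiplicity $2$, hence of embedding dimension at least $2$, so that $\dim_k\Hom_K(\st,\gamma)=\dim_k M(\st)/\mm^2 M(\st)\ge 3$. Thus "bounding each term by one" yields $3$, and the embedding-dimension bound of Corollary \ref{fgG1} applied to $\cIndu{KZ}{G}{\st}$ only gives $\dim M(\st)\le 2$.

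The paper circumvents this by not applying Corollary \ref{fgG1} to $\cIndu{KZ}{G}{\st}$ at all in this case. It uses the Schneider--Stuhler resolution \eqref{seq_Sp}, $0\to\cIndu{\mathfrak K}{G}{\Sp^{I_1}\otimes\delta}\to\cIndu{KZ}{G}{\st}\to\Sp\to 0$, and applies $\Hom_G(\ast,N^{\vee})^{\vee}$ to obtain \eqref{seq_Sp1}, exhibiting $M(\st)$ as an extension of $\Hom_G(\Sp,N^{\vee})^{\vee}$ by a quotient of $\Hom_G(\cIndu{\mathfrak K}{G}{\Eins},N^{\vee})^{\vee}$. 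For each of these two modules the relevant second-order invariant ($\Hom_G(\Sp,\gamma)$, resp. $\Hom_G(\cIndu{\mathfrak K}{G}{\Eins},\gamma)$, computed from Proposition \ref{invariants_gamma} together with the action of $\mathfrak K$ on $\II(\Eins)$ and $\II(\Sp)$) is at most two-dimensional, so Corollary \ref{fgG1} bounds each by Krull dimension one, and hence $M(\st)$ as well. You would need to incorporate some such d\'evissage of $\St$ to repair this step.
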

\begin{proof} Let us recall that $\beta\cong (k\otimes_{R^{\psi}_{\rho}} N)^{\vee}$ and $\gamma\cong (N/(\mm^2,\varpi) N)^{\vee}$. If $\rho$ is \textit{tr\`es ramifi\'e} 
then $\II(\beta)\cong \II(\Sp)$ and hence the $G$-representation generated by $I_1$-invariants  of $\beta$ is isomorphic to $\Sp$. This implies that 
$\soc_K \beta\cong \soc_K \Sp\cong \st$. It follows from Corollary \ref{zero_lambda} that $M(\sigma)=0$ if $\sigma\not\cong \st$, and from Proposition 
\ref{disdual} that $k\otimes_{R^{\psi}_{\rho}} M(\st)$ is a one dimensional $k$-vector space, which implies that $M(\st)\neq 0$. Since $\II(\gamma)\cong 
\II(\Sp)^{\oplus 2}$, we deduce by the same argument that $\Hom_K(\st, \gamma)\cong \Hom_G(\cIndu{K}{G}{\sigma}, \gamma)$ is a two dimensional $k$-vector space. Corollary \ref{fgG1} implies that $M(\st)$ is at most one dimensional.

If $\rho$ is \textit{peu ramifi\'e} then $\II(\beta)\cong \II(\pi(0,1))$ and so $\soc_K \beta\cong \soc_K \pi(0,1)\cong \Eins\oplus \st$. The same argument gives that
$M(\sigma)$ is zero if $\sigma\not\cong \Eins$ or $\sigma\not\cong \st$, and non-zero otherwise. Moreover, it follows from Proposition \ref{invariants_gamma}
that $\dim_k \Hom_K(\Eins, \gamma)\le 2$, which allows us to deduce that $M(\Eins)$ is at most one dimensional. Let $\mathfrak K$ be the $G$-normalizer 
of $I$. Then $\mathfrak K$ acts on $\II(\Eins)$ trivially and on $\II(\Sp)$ by the character $\delta: G\rightarrow k^{\times}$, $\delta(g)=(-1)^{v_p(\det g)}$, which is nontrivial as $p>2$. Now $\Hom_G(\cIndu{\mathfrak K}{G}{\Eins}, \beta)\cong \Hom_{\mathfrak K}(\Eins, \beta)\cong \Hom_{\mathfrak K}(\Eins, \pi(0,1))$ is a one dimensional $k$-vector space, and 
it follows from Proposition \ref{invariants_gamma} that $\Hom_G(\cIndu{\mathfrak K}{G}{\Eins}, \gamma)$ is at most two dimensional $k$-vector space. 
We deduce from Corollary \ref{fgG1} that $\Hom_G(\cIndu{\mathfrak K}{G}{\Eins}, N^{\vee})^{\vee}$ is a finitely generated $R^{\psi}_{\rho}$-module of Krull dimension at most one. Similarly, 
$\dim_k \Hom_G(\Sp, \beta)=1$, and $\dim_k \Hom_G(\Sp, \gamma)=2$ as explained in the proof of  Proposition \ref{invariants_gamma}. Hence, 
$\Hom_G(\Sp, N^{\vee})^{\vee}$ is a finitely generated $R^{\psi}_{\rho}$-module of Krull dimension at most one. There exists an exact sequence of $G$-representations: 
\begin{equation}\label{seq_Sp} 
0\rightarrow \cIndu{\mathfrak K}{G}{\Sp^{I_1}\otimes \delta}\rightarrow \cIndu{KZ}{G}{\Sp^{K_1}}\rightarrow \Sp \rightarrow 0
\end{equation} 
Since $\Sp^{K_1}\cong \st$ and $\mathfrak K$ acts on  $\Sp^{I_1}\otimes \delta$ by $\delta^2=\Eins$, by applying $\Hom_G(\ast, N^{\vee})^{\vee}$ to 
\eqref{seq_Sp}, we obtain an exact sequence of $R^{\psi}_{\rho}$-modules: 
\begin{equation}\label{seq_Sp1}
\Hom_G(\cIndu{\mathfrak K}{G}{\Eins}, N^{\vee})^{\vee}\rightarrow M(\st) \rightarrow \Hom_G(\Sp, N^{\vee})^{\vee} \rightarrow 0.
\end{equation} 
This allows us to deduce that $M(\st)$ is at most one dimensional.
 \end{proof}

\begin{prop}\label{proj_dim} $\pdim_{\OO\br{I_1}, \zeta} \beta^{\vee}=3$.
\end{prop}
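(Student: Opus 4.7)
The plan is to establish the projective dimension by combining a lower bound from Poincar\'e duality for the compact $p$-adic analytic group $I_1/Z_1$ (where $Z_1 := Z \cap I_1$) with the global dimension upper bound.

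First I would observe that, since $\beta$ is killed by $\varpi$ and $\zeta$ is trivial modulo $\varpi$ (by the normalisation $\chi = 1$), the object $\beta^\vee$ descends to a compact module over $k\br{I_1/Z_1}$. The group $I_1/Z_1$ has dimension $3$ as a $p$-adic analytic group (since $I_1$ has dimension $4$ and $Z_1$ has dimension $1$), and for $p \ge 5$ it is torsion-free. By Lazard's theorem, $k\br{I_1/Z_1}$ is then Auslander regular of global dimension $3$, giving the upper bound $\pdim \le 3$ at once.

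Next I would check that $\beta|_{I_1}$ is finite-dimensional over $k$. All three composition factors have finite-dimensional $K$-restrictions via the Iwasawa decomposition: $\Eins|_K$ is one-dimensional, $\Sp|_K$ has dimension $p$ (the mod $p$ Steinberg of $\GL_2(\Fp)$), and $\pi_\alpha|_K \cong \Indu{B \cap K}{K}{\omega \otimes \omega^{-1}}$ has dimension $p+1$.  Hence $\beta|_{I_1}$ has dimension at most $2p+2$.

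For the lower bound, I would invoke Poincar\'e duality: since $I_1/Z_1$ is a Poincar\'e duality pro-$p$ group of dimension $3$ with trivial orientation character,
\[
H_3(I_1/Z_1, \beta^\vee) \cong H^0(I_1/Z_1, \beta^\vee) = (\beta^\vee)^{I_1/Z_1} = (\beta_{I_1})^\vee.
\]
Since $\beta$ is nonzero, finite-dimensional over $k$, and $I_1$ acts through some finite $p$-group quotient $I_1/(I_1\cap K_n)$, the coinvariants $\beta_{I_1}$ are nonzero by the standard fact that a nonzero representation of a finite $p$-group in characteristic $p$ has nonzero coinvariants. Therefore $\Tor^{k\br{I_1/Z_1}}_3(k, \beta^\vee) \ne 0$, forcing $\pdim = 3$.

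The main obstacle is pinning down the Poincar\'e duality input cleanly: verifying the torsion-freeness of $I_1/Z_1$ for $p\ge 5$ and matching the notation $\pdim_{\OO\br{I_1},\zeta}$ to the computation over $k\br{I_1/Z_1}$ via the reduction $\zeta \equiv \mathbf{1} \pmod{\varpi}$. Once those conventions are aligned, the homological input is a soft consequence of Lazard theory plus the elementary fact that finite $p$-groups have nonzero coinvariants in characteristic $p$.
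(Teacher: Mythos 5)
Your argument has two fatal problems, both stemming from the same misconception. First, $\beta|_{I_1}$ is \emph{not} finite dimensional. The Iwasawa decomposition gives $\Sp|_K$ as (a quotient of) $\Indu{P\cap K}{K}{\Eins}$, which is the space of smooth functions on $\mathbb P^1(\Zp)$ --- an infinite dimensional space; you have confused the smooth Steinberg representation of $\GL_2(\Qp)$ restricted to $K$ with the Steinberg representation of the finite group $\GL_2(\Fp)$ (the latter is $\Sp^{K_1}$, not $\Sp|_K$). The same applies to $\pi_\alpha|_K$. Consequently your Poincar\'e duality step collapses: the identification $H_3(I_1/Z_1,\beta^{\vee})\cong (\beta_{I_1})^{\vee}$ with a nonzero right-hand side is false, and in fact $\wTor^3_{k\br{I_1/Z_1}}(\beta^{\vee},k)=0$ --- this vanishing (equivalently $H^3(I_1/Z_1,\beta)=0$) is precisely part of what the paper proves. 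Had your lower bound been correct, it would give $\pdim_{k\br{I_1/Z_1}}\beta^{\vee}=3$ and hence, as explained next, $\pdim_{\OO\br{I_1},\zeta}\beta^{\vee}=4$, contradicting the proposition.

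Second, your upper bound conflates the rings $k\br{I_1/Z_1}$ and $\OO\br{I_1/Z_1}$. Since $\beta^{\vee}$ is killed by $\varpi$ and $\varpi$ is regular on $\OO\br{I_1/Z_1}$, one has $\pdim_{\OO\br{I_1/Z_1}}\beta^{\vee}=\pdim_{k\br{I_1/Z_1}}\beta^{\vee}+1$, so Lazard's bound $\mathrm{gl.dim}\, k\br{I_1/Z_1}=3$ only yields $\pdim_{\OO\br{I_1},\zeta}\beta^{\vee}\le 4$, not $\le 3$. The actual content of the proposition is therefore the pair of statements $H^i(I_1/Z_1,\beta)=0$ for $i\ge 3$ (the nontrivial upper bound) and $H^2(I_1/Z_1,\beta)\neq 0$ (the lower bound, in degree $2$, not $3$). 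The paper obtains both from the exact sequence $0\rightarrow\Sp\rightarrow\beta\rightarrow\kappa\rightarrow 0$ together with the cohomology computations for $\Sp$ and $\kappa$ in \cite{cmf}; these rest on the specific structure of $\Sp$ and $\kappa$ as $K$-representations and are not soft consequences of Lazard theory.
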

\begin{proof} By $\pdim_{\OO\br{I_1}, \zeta}$ we mean the length of a minimal projective resolution in $\Mod^{\pro}_{I_1, \zeta}(\OO)$, with the understanding 
that projective objects have projective dimension $0$. Let $Z_1:=I_1\cap Z$ so that $Z_1\cong 1+p \Zp$. Since $p>2$ there exists a continuous character 
$\sqrt{\zeta}: Z_1\rightarrow \OO^{\times}$,  whose square is equal to $\zeta$. Twisting by $\sqrt{\zeta}\circ \det$ induces an equivalence of categories between  
$\Mod^{\pro}_{I_1, \zeta}(\OO)$ and $\Mod^{\pro}_{I_1/Z_1}(\OO)$.
Hence, we may assume that $\zeta$ is trivial, and have to show that $\pdim_{\OO\br{I_1/Z_1}} \beta^{\vee}=3$. Since $\varpi$ kills $\beta^{\vee}$ and 
is $\OO\br{I_1/Z_1}$-regular, we have $\pdim_{\OO\br{I_1/Z_1}} \beta^{\vee}= \pdim_{k\br{I_1/Z_1}} \beta^{\vee} +1$. It is enough to show that $\wTor^i_{k\br{I_1/Z_1}}(\beta^{\vee}, k)=0$ for $i\ge 3$ and 
$\wTor^2_{k\br{I_1/Z_1}}(\beta^{\vee}, k)\neq 0$, see the proof of \cite[\S 19, Lem.1]{matsumura}.  A Pontryagin dual of a minimal projective resolution of $\beta^{\vee}$ in $\Mod^{\pro}_{I_1/Z_1}(k)$ is a minimal injective resolution of $\beta$ in $\Mod^{\sm}_{I_1/Z_1}(k)$. 
Hence, it is enough to show that $H^i(I_1/Z_1, \beta)=0$ for $i\ge 3$ and $H^2(I_1/Z_1, \beta)\neq 0$. This maybe deduced from \eqref{beta_Sp}, since $H^i(I_1/Z_1,\Sp)=0$ for $i\ge 3$ by \cite[Prop.10.5]{cmf} and $H^i(I_1/Z_1, \kappa)=0$ for $i\ge 3$ and $H^2(I_1/Z_1, \kappa)\neq 0$,  by \cite[Lem.10.23]{cmf} via \cite[Lem.5.21]{cmf}.  
\end{proof}

\begin{cor}\label{abok} The conditions of Theorem \ref{B} are satisfied. In particular, the following hold:
\begin{itemize}
\item[(a)] $N$ is projective in $\Mod^{\pro}_{K,\zeta}(\OO)$;
\item[(b)] If $\Theta$ is  a $K$-invariant $\OO$-lattice in $\sigma(\mathbf{w}, \tau)$ or $\sigma^{\mathrm{cr}}(\mathbf w, \tau)$,  then 
 $R^{\psi}_{\rho}/\ann M(\Theta)$ is equidimensional and all its associated primes are minimal, where $M(\Theta)$ is  defined in Definition \ref{M_Th};
\item[(c)] $\max_{\sigma} \{\dim M(\sigma)\}=1$.
 \end{itemize}
 \end{cor}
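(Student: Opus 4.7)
The plan is to apply Theorem \ref{B} directly with $R = R^{\psi}_{\rho}$ and $N$ the universal deformation of $\beta^{\vee}$ in $\dualcat(\OO)$; the three itemized conclusions of the corollary will then be read off from the conclusions of Theorem \ref{B}.

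First I verify the background hypotheses of Theorem \ref{B}. By Theorem \ref{deform_beta} we have $R^{\psi}_{\rho}\cong \OO\br{x_1,x_2,x_3}$, a regular local ring, hence Cohen-Macaulay, of Krull dimension $4$. The module $N$ is $R^{\psi}_{\rho}$-flat as a universal deformation, and Lemma \ref{N_is_ok} shows that conditions (N0), (N1), (N2) hold and in particular $k\wtimes_{R^{\psi}_{\rho}}N\cong \beta^{\vee}$.

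Next I verify the key inequality \eqref{ineq_dim2}. The pro-$p$ Sylow $P$ of $K=\GL_2(\Zp)$ is the Iwahori pro-$p$ subgroup $I_1$, so by Proposition \ref{proj_dim},
$$\pdim_{\OO\br{P}} k\wtimes_R N \;=\; \pdim_{\OO\br{I_1},\,\zeta}\beta^{\vee}\;=\;3.$$
On the other hand, Proposition \ref{dim_M_sigma} gives the bound $\max_{\sigma}\dim_R M(\sigma)\le 1$, where the maximum runs over the irreducible smooth $k$-representations of $K$ with central character $\zeta$. Adding these yields
$$\pdim_{\OO\br{P}} k\wtimes_R N \;+\; \max_{\sigma}\dim_R M(\sigma)\;\le\;3+1\;=\;4\;=\;\dim R^{\psi}_{\rho},$$
so the hypothesis of Theorem \ref{B} holds.

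Finally, Theorem \ref{B} yields the three items. Its conclusion (o) upgrades the inequality above to an equality, which combined with the upper bound from Proposition \ref{dim_M_sigma} forces $\max_{\sigma}\dim_R M(\sigma)=1$; this is part (c). Conclusion (i) is precisely part (a). For part (b), conclusion (ii) says that $M(\Theta)$ is Cohen-Macaulay; since $M(\Theta)$ is a faithful finitely generated module over $R^{\psi}_{\rho}/\ann_{R^{\psi}_{\rho}} M(\Theta)$, Proposition \ref{HS3} implies that this quotient ring is equidimensional and that every associated prime is minimal, as required. There is no real obstacle at this step: the substantive work is already contained in Propositions \ref{proj_dim} and \ref{dim_M_sigma}, which together make the dimension inequality exactly tight.
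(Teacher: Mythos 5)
Your proposal follows the paper's proof of this corollary almost step for step: verify that $R^{\psi}_{\rho}\cong\OO\br{x_1,x_2,x_3}$ is Cohen--Macaulay of dimension $4$ and that $N$ is $R^{\psi}_{\rho}$-flat (being a deformation of $\beta^{\vee}$), establish the inequality \eqref{ineq_dim2} from Propositions \ref{proj_dim} and \ref{dim_M_sigma}, and read off the conclusions of Theorem \ref{B}; your derivation of (b) from the Cohen--Macaulayness of $M(\Theta)$ via Proposition \ref{HS3} is exactly how the paper itself deduces Theorem \ref{B}(iii).

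The one step you elide is the identification $\pdim_{\OO\br{P}}k\wtimes_R N=\pdim_{\OO\br{I_1},\zeta}\beta^{\vee}$. Taken literally over the full Iwasawa algebra $\OO\br{I_1}$ this is false: $\beta^{\vee}$ is killed by the regular central element of $\OO\br{Z_1}\subset\OO\br{I_1}$ cutting out the character $\zeta|_{Z_1}$, so $\pdim_{\OO\br{I_1}}\beta^{\vee}=1+\pdim_{\OO\br{I_1},\zeta}\beta^{\vee}=4$, and the inequality would read $4+1\le 4$ and fail. Theorem \ref{B} has to be applied with the fixed-central-character category $\Mod^{\pro}_{K,\zeta}(\OO)$ playing the role of $\Mod^{\pro}_K(\OO)$, so that the Iwasawa algebra relevant for the projective dimension is the one attached to $I_1/Z_1$ rather than to $I_1$. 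This is precisely why the paper's proof opens by twisting $N$ by $\chi\circ\det$ to reduce to trivial $\zeta$ (possible since $p>2$), after which $\Mod^{\pro}_{I_1,\zeta}(\OO)$ is identified with $\Mod^{\pro}_{I_1/Z_1}(\OO)$ and Proposition \ref{proj_dim} gives the projective dimension $3$ over $\OO\br{I_1/Z_1}$. With that reduction made explicit, your argument is the paper's argument.
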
 
 \begin{proof} Let $\chi:\Zp^{\times}\rightarrow \OO^{\times}$ be any character. The statement of the Corollary holds for $N$ if and only if it holds for
 $N\otimes\chi\circ \det$ with $\zeta$ replaced by $\zeta\chi^{-2}$. Hence, we may assume that $\zeta$ is trivial. Propositions \ref{dim_M_sigma} and 
 \ref{proj_dim} imply that $\pdim_{\OO\br{I_1/Z_1}} k\otimes_{R^{\psi}_{\rho}} N + \max_{\sigma} \{\dim M(\sigma)\}\le 4$. Since $R^{\psi}_{\rho}\cong 
 \OO\br{x_1, x_2, x_3}$, it is Cohen-Macaulay of dimension $4$. Moreover, $N$ is $R^{\psi}_{\rho}$-flat, since it is a deformation of $\beta^{\vee}$ to 
 $R^{\psi}_{\rho}$.
 \end{proof} 
 
 \begin{prop}\label{endlich} Let us assume that $\tau=\chi\oplus\chi$ and $\mathbf{w}=(a, a+1)$. Let $\Theta$ be a $K$-invariant $\OO$-lattice in $\sigma(\mathbf{w}, \tau)$ (resp. $\sigma^{\mathrm{cr}}(\mathbf w, \tau)$) and 
 let $\mathfrak a$ be the 
  $R^{\psi}_{\rho}$-annihilator of $M(\Theta)$. Then $R^{\psi}_{\rho}/\mathfrak  a$  is equal to $R^{\psi}_{\rho}(\mathbf w, \tau)$ (resp. $R^{\psi, \mathrm{cr}}_{\rho}(\mathbf w, \tau)$).
 \end{prop}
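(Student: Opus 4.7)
The plan is to run the same argument as in Corollary \ref{Ra}: invoke Theorem \ref{A} with $N$ the universal deformation of $\beta^{\vee}$ from \S\ref{non_generic_case} and $V$ equal to $\sigma(\mathbf w,\tau)$ (resp.\ $\sigma^{\mathrm{cr}}(\mathbf w,\tau)$). Hypotheses (a) and (b) of Theorem \ref{A} are supplied by Corollary \ref{abok}, and the relevant dimension $d$ equals $2$. What is left is to verify hypothesis (c) on a dense subset $\Sigma\subseteq \supp M(\Theta)\cap \MaxSpec R^{\psi}_{\rho}[1/p]$: once this is done, Theorem \ref{A} yields that $R^{\psi}_{\rho}/\mathfrak a$ is reduced, and the identification with $R^{\psi}_{\rho}(\mathbf w,\tau)$ (resp.\ $R^{\psi,\mathrm{cr}}_{\rho}(\mathbf w,\tau)$) follows from Theorem \ref{exist_pst_def}, combined with Fontaine--Laffaille theory in the crystalline subcase where existence was left open.

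For condition (c)(i) in the semi-stable case $V=\sigma(\mathbf w,\tau)$, Proposition \ref{nearly_done} applies directly: for $\nn\in\supp M(\Theta)$, Proposition \ref{dim_eq} ensures $\Hom_K(V,\Pi(\kappa(\nn)))\neq 0$, forcing $\rho^{\un}_{\nn}$ to be potentially semi-stable of type $(\mathbf w,\tau,\psi)$ and the dimension to equal one. In the crystalline case $V\cong \chi\circ\det\otimes\det^{a}$ is one-dimensional, and Proposition \ref{nearly_done} only handles part of the converse. One has to check directly that no semi-stable non-crystalline point lies in $\supp M(\Theta)$: at such a point $\Pi(\kappa(\nn))^{\alg}$ is a twist of $\St$ (plus possibly a character, which Lemma \ref{derive} then forbids), and $\Hom_K(\chi\circ\det\otimes\det^{a},\St\otimes\chi\varepsilon^{a}\circ\det)=\Hom_K(\Eins,\St)=0$ since $\St^{K}=0$. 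The remaining (crystalline) points are then covered by the crystalline statement of Proposition \ref{nearly_done}.

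Condition (c)(ii) is the crux. The hypotheses of Corollary \ref{c_ii_ok} are violated here, so we argue directly. At almost all $\nn\in\Sigma$, Proposition \ref{specialize} places $\Pi(\kappa(\nn))$ in case (ii)(b), since (ii)(c) is excluded by Lemma \ref{exclude3}, while (i) and (ii)(a) are impossible because $\rho$ is reducible with $\delta_{1}\delta_{2}^{-1}=\varepsilon^{-1}$. Thus $\Pi(\kappa(\nn))$ has socle $\hat{\St}\otimes\delta_{1}\circ\det$, a middle of at most two copies of $\delta_{1}\circ\det$, and quotient $(\Indu{P}{G}{\varepsilon\otimes\varepsilon^{-1}})_{\cont}\otimes\delta_{1}\circ\det$. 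Following the pattern of Corollary \ref{c_ii_ok}, one first computes $\dim_{\kappa(\nn)}\Hom^{\cont}_{G}(\Pi_{1},\Pi(R_{\nn}/\nn^{2}))=1$ by applying $\cV$ and Lemma \ref{hom_def}, using that $\dim\Ext^{1}_{G_{\Qp}}(\delta_{1},\delta_{2})=1$. One then adapts Lemma \ref{easy}: the locally algebraic vectors $\Pi^{\alg}$ are controlled by Lemma \ref{extalg}, and the bound $\dim\Hom_{K}(V,\Pi(R_{\nn}/\nn^{2}))\le 2$ reduces to showing that the subspace of $\Ext^{1}_{G,\zeta}(\Pi_{1},\Pi_{1})$ consisting of classes whose restriction to locally algebraic vectors is exact is at most one-dimensional---an analog of (RED) for the reducible Banach $\Pi_{1}$.

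The main obstacle is therefore the verification of this (RED)-type statement for $\Pi_{1}$. The proof of Theorem \ref{RED_is_ok} cannot be invoked directly, since $\Pi_{1}$ is itself a non-split extension of $\hat{\St}\otimes\delta_{1}\circ\det$ by up to two copies of $\delta_{1}\circ\det$, and the universal unitary completion argument used in the irreducible special-series case no longer applies verbatim. One must analyse $\Ext^{1}_{G,\zeta}(\Pi_{1},\Pi_{1})$ along the filtration $0\subset\hat{\St}\otimes\delta_{1}\circ\det\subset\Pi_{1}$ and match each piece against the corresponding filtration on $\Ext^{1}$ of the locally algebraic part $\Pi_{1}^{\alg}$, whose self-extensions are controlled via Lemma \ref{extalg} by $\Ext^{1}_{G/Z}(\Eins,\St)\cong\Hom(\Qp^{\times},L)$. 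The explicit Ext-computations recalled in \S\ref{non_generic_case} and the blockwise structure of $\Ban_{G,\zeta}^{\adm}(L)$ from \cite{cmf} should force the relevant subspace to be one-dimensional, thereby completing the verification of (c)(ii) and, via Theorem \ref{A}, the proof.
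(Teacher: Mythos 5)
Your proposal follows a genuinely different route from the paper, and its crucial step is missing. Everything hinges on the ``analog of (RED) for the reducible Banach $\Pi_1$'' at the ordinary points, and you do not prove it: you only assert that the Ext-computations ``should force'' the relevant subspace to be one-dimensional. This is exactly where every tool in the paper breaks down. Lemma \ref{easy} assumes $\Pi^{\alg}$ is irreducible; Theorem \ref{RED_is_ok} and \S\ref{semi_non_crys} treat only absolutely irreducible non-ordinary $\Pi$ whose locally algebraic vectors are irreducible. At the points relevant here $\Pi_1$ is itself a non-split extension of copies of $\delta_1\circ\det$ by $\widehat{\St}\otimes\delta_1\circ\det$ with reducible locally algebraic vectors, so the universal-unitary-completion trick that injects $\mathcal E$ into an Ext-group of locally algebraic (or locally analytic) representations is unavailable, and no substitute is supplied. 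A secondary gap: in the crystalline case your exclusion of semi-stable non-crystalline points from $\supp M(\Theta)$ via $\St^K=0$ only handles points where $\Pi(\kappa(\nn))$ is irreducible. At ordinary points $\Pi(\kappa(\nn))^{\alg}$ can be a non-split extension of a character by $\St$ (an unramified principal series), which does have $K$-fixed vectors, so Lemma \ref{derive} does not settle whether such a point lies in the support.

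The paper avoids condition (c)(ii) of Theorem \ref{A} in this excluded case altogether and argues by commutative algebra. After twisting so that $\Theta/\varpi\cong\st$ or $\Eins$, projectivity of $N$ gives $M(\Theta)/\varpi\cong M(\Theta/\varpi)$, which is cyclic; the computations of $\II(\beta)$ and $\II(\gamma)$ in Propositions \ref{invariants_gamma} and \ref{dim_M_sigma} bound $\dim_k M/\mm^2M$ by $2$ and hence bound the embedding dimension and, via \eqref{seq_Sp1}, the Hilbert--Samuel multiplicity. In the \textit{tr\`es ramifi\'e} case and the crystalline \textit{peu ramifi\'e} case this forces $R^{\psi}_{\rho}/\mathfrak a\cong\OO\br{x}$, which is reduced; one then concludes by Theorem \ref{exist_pst_def}, or in the crystalline case by noting that $\Spec R^{\psi}_{\rho}/\mathfrak a$ is irreducible, contains infinitely many crystalline points, and must therefore coincide with the Fontaine--Laffaille ring $\OO\br{x}$ --- which sidesteps your pointwise question at ordinary non-crystalline points. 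In the \textit{peu ramifi\'e} semi-stable case the multiplicity bound is only $2$, and the paper closes the argument by importing the computation of Breuil--M\'ezard \cite[Thm.5.3.1(i)]{bm} that the Hilbert--Samuel multiplicity of $R^{\psi}_{\rho}(\mathbf w,\tau)/(\varpi)$ equals $2$, combined with the Cohen--Macaulayness of $R^{\psi}_{\rho}/(\varpi,\mathfrak a)$ from Corollary \ref{mainproj}: a proper Cohen--Macaulay quotient would have strictly smaller multiplicity. If you want to salvage your approach, you must actually establish the (RED)-type bound for the reducible ordinary $\Pi_1$ (and determine the locally algebraic vectors at reducible non-crystalline points); as written, the proof is incomplete at its central step.
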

 \begin{proof} After twisting by a  character, we may assume that $\rho\cong  \bigl( \begin{smallmatrix}\omega & \ast\\ 0 & 1\end{smallmatrix}\bigr )$, 
 $\zeta$ is trivial, $\tau=\Eins\oplus \Eins$, $\mathbf{w}=(0,1)$. In this case $\sigma(\mathbf{w}, \tau)\cong \tilde{\st}$, where $\tilde{\st}$ is the Steinberg representation of $\GL_2(\Fp)$, and $\Theta/\varpi\cong \st$, and $\sigma^{\mathrm{cr}}(\mathbf{w}, \tau)$ is the trivial representation, in which case $\Theta/\varpi\cong \Eins$. Since $N$ is projective in $\Mod^{\pro}_{K,\zeta}(\OO)$ by Corollary \ref{abok}, Lemma \ref{reduce_M} implies that $M(\Theta)/\varpi\cong M(\Theta/\varpi)$ and hence $k\otimes_{R^{\psi}_{\rho}} M(\Theta)\cong  k\otimes_{R^{\psi}_{\rho}} M(\Theta/\varpi)$.
 
 Let us consider the semi-stable case first, so that $\Theta/\varpi\cong \st$. It is shown in the proof of Proposition \ref{dim_M_sigma} that 
 $k\otimes_{R^{\psi}_{\rho}} M(\st)$ is a one dimensional $k$-vector space. Nakayama's lemma implies that $M(\Theta)$ is a cyclic $R^{\psi}_{\rho}$-module, which yields isomorhisms of $R^{\psi}_{\rho}$-modules:
 $$M(\Theta)\cong R^{\psi}_{\rho}/\mathfrak a, \quad M(\st)\cong R^{\psi}_{\rho}/(\varpi, \mathfrak a).$$
Let $\mm$ be the maximal ideal of $R^{\psi}_{\rho}$. If $\rho$ is \textit{tr\`es ramifi\'e} then it is shown in the proof of Proposition \ref{dim_M_sigma} that $\dim_k M(\st)/ \mm^2 M(\st)\le2$. The above isomorphism implies that the embedding dimension of $R^{\psi}_{\rho}/(\varpi, \mathfrak a)$ is at most $1$. 
Since $M(\st)\neq 0$, Corollary \ref{mainproj} and Proposition \ref{dim_M_sigma} imply that the Krull dimension of $R^{\psi}_{\rho}/(\varpi, \mathfrak a)$ is also $1$. Thus $R^{\psi}_{\rho}/(\varpi, \mathfrak a)\cong k\br{x}$. Since $R^{\psi}_{\rho}/\mathfrak a $ is $\OO$-torsion free, we deduce that $R^{\psi}_{\rho}/\mathfrak a\cong \OO\br{x}$. In particular, 
$R^{\psi}_{\rho}/\mathfrak a$ is reduced, and Theorem \ref{exist_pst_def} implies that $R^{\psi}_{\rho}/\mathfrak a= R^{\psi}_{\rho}(\mathbf w, \tau)$.
 
 If $\rho$ is  \textit{peu ramifi\'e} then let $M$ be either the first or the third module in \eqref{seq_Sp1}. It is shown in the proof of of Proposition \ref{dim_M_sigma} that $\dim_k M/ \mm M=1$, so that $M$ is a cyclic $R^{\psi}_{\rho}$-module, and $\dim_k M/\mm^2M\le 2$. Hence, either $M$ is artinian, or the Krull dimension of $M$ is $1$ and 
 the Hilbert-Samuel multiplicity is $1$. We deduce from \eqref{seq_Sp1} that the Hilbert-Samuel multiplicity of $M(\st)$, and hence of $R^{\psi}_{\rho}/(\varpi, \mathfrak a)$, is at most  $2$. Theorem \ref{exist_pst_def} implies that $R^{\psi}_{\rho}(\mathbf w, \tau)$ is a quotient of $R^{\psi}_{\rho}/\mathfrak a$. 
 Corollary \ref{mainproj} implies that $R^{\psi}_{\rho}/(\varpi, \mathfrak a)$ is Cohen-Macaulay, and thus if the surjection $R^{\psi}_{\rho}/(\varpi, \mathfrak a)\twoheadrightarrow R^{\psi}_{\rho}(\mathbf w, \tau)/(\varpi)$ is not an isomorphism then $R^{\psi}_{\rho}(\mathbf w, \tau)/(\varpi)$ has to have a strictly smaller
 Hilbert-Samuel multiplicity. However,   Breuil-M\'ezard computed in \cite[Thm.5.3.1 (i)]{bm} that the Hilbert-Samuel multiplicity of $R^{\psi}_{\rho}(\mathbf w, \tau)/(\varpi)$ is $2$. Thus $R^{\psi}_{\rho}/(\varpi, \mathfrak a)= R^{\psi}_{\rho}(\mathbf w, \tau)/(\varpi)$, and since $R^{\psi}_{\rho}(\mathbf w, \tau)$ is $\OO$-flat, 
 $R^{\psi}_{\rho}/ \mathfrak a= R^{\psi}_{\rho}(\mathbf w, \tau)$. Let us note that as a byproduct we obtain that the Hilbert-Samuel multiplicity of $M(\st)$ is $2$.

We will deal with the crystalline case now, so that $\Theta/\varpi\cong \Eins$. If $\rho$ is \textit{tr\`es ramifi\'e} then it is shown in the proof of Proposition \ref{dim_M_sigma} that $M(\Eins)=0$. Hence, $M(\Theta)/\varpi=0$ and so $M(\Theta)=0$. On the other hand Fontaine-Laffaille theory shows that 
$R^{\psi, \mathrm{cr}}_{\rho}(\mathbf w, \tau)=0$. If $\rho$ is  \textit{peu ramifi\'e} then it is shown in the proof of Proposition \ref{dim_M_sigma} that $\dim_k M(\Eins)/\mm M(\Eins)=1$, and $\dim_k M(\Eins)/\mm^2 M(\Eins)\le 2$. Exactly the same argument as with $M(\st)$ in the \textit{tr\`es ramifi\'e} case shows that
$R^{\psi}_{\rho}/\mathfrak a \cong \OO\br{x}$ and the Hilbert-Samuel multiplicity of $M(\Eins)$ is $1$. Proposition \ref{nearly_done} implies that 
$\Spec R^{\psi}_{\rho}/\mathfrak a$ is a closed subset of 
$\Spec R^{\psi}_{\rho}(\mathbf w, \tau)$, which contains infinitely many potentially crystalline points, corresponding to representations 
$\bigl(\begin{smallmatrix} \delta_2 & \ast \\ 0 & \delta_1\end{smallmatrix}\bigr)$ with $\delta_2\delta_1^{-1}\neq \varepsilon$.
 Hence, $\Spec R^{\psi}_{\rho}/\mathfrak a$ 
is contained in $\Spec R^{\psi, \mathrm{cr}}_{\rho}(\mathbf w, \tau)$. Fontaine-Laffaille theory shows that
$R^{\psi, \mathrm{cr}}_{\rho}(\mathbf w, \tau)\cong \OO\br{x}$. Hence, $R^{\psi}_{\rho}/\mathfrak a=R^{\psi, \mathrm{cr}}_{\rho}(\mathbf w, \tau)$.
\end{proof}

  \begin{thm}\label{bm_non_gen} Let $\rho\cong  \bigl( \begin{smallmatrix}\chi\omega & * \\ 0 & \chi\end{smallmatrix}\bigr )$ and 
 we assume that $\rho$ is  non-split. The ring $R^{\psi}_{\rho}(\mathbf w, \tau)$ is $\OO$-torsion free and of relative dimension $1$ over 
  $\OO$. Moreover, we have an equality of one dimensional cycles:
\begin{equation}\label{sum_cycles_non}
z_1(R^{\psi}_{\rho}(\mathbf w, \tau)/(\varpi))= \sum_{\sigma} m_{\sigma} z_{1}(M(\sigma)),
\end{equation}
where the sum is taken over the set of isomorphism classes of smooth irreducible $k$-representations of $K$ with central character $\zeta$, 
$m_{\sigma}$ is the multiplicity with which $\sigma$ occurs as a subquotient of $\overline{\sigma(\mathbf w, \tau)}$ and $M(\sigma)$ 
is an $R^{\psi}_{\rho}$-module defined in Definition \ref{M_sigma} with $N$ equal to the universal deformation of $\beta^{\vee}$ in $\dualcat(\OO)$.  

The same statement holds if we replace $R^{\psi}_{\rho}(\mathbf w, \tau)$  with $R^{\psi, \mathrm{cr}}_{\rho}(\mathbf w, \tau)$ and 
 $\sigma(\mathbf w, \tau)$ with $\sigma^{\mathrm{cr}}(\mathbf w, \tau)$.
 
Further, $M(\sigma)\neq 0$ if and only if either $\sigma\cong \st\otimes \det^m$ or $\rho$ is \textit{peu ramifi\'e} and $\sigma\cong \det^m$, where $m$ is an integer such that $\chi|_{I_{\Qp}}=\omega^m$. 
If $M(\sigma)\neq 0$ then its Hilbert-Samuel multiplicity is $1$, except if $\rho$ is \textit{peu ramifi\'e} then the Hilbert-Samuel multiplicity of $M(\st\otimes \det^m)$ 
is $2$.
\end{thm}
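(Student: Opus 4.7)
The plan is to apply Theorem \ref{A} with $N$ equal to the universal deformation of $\beta^{\vee}$ in $\dualcat(\OO)$. By Lemma \ref{N_is_ok} the hypotheses (N0)--(N2) of \S\ref{points} hold, and Corollary \ref{abok} verifies conditions (a) and (b) of Theorem \ref{A} while also forcing $\dim M(\Theta) = 1 + \max_{\sigma} \dim M(\sigma) = 2$. Condition (c)(i) is supplied by Proposition \ref{nearly_done}, and for types $(\mathbf{w}, \tau)$ outside the exceptional Fontaine--Laffaille case where $\tau \cong \chi \oplus \chi$ and $b - a = 1$, condition (c)(ii) follows from Corollary \ref{c_ii_ok}. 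Thus in this generic range of types Theorem \ref{A} immediately produces the reduceness of $R^{\psi}_{\rho}/\ann M(\Theta)$, shows it has dimension two and is therefore $\OO$-torsion free of relative dimension one, and yields the cycle equality. Theorem \ref{exist_pst_def} then identifies this quotient with $R^{\psi}_{\rho}(\mathbf{w}, \tau)$ or $R^{\psi,\mathrm{cr}}_{\rho}(\mathbf{w}, \tau)$.

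Next I will handle the exceptional types $\mathbf{w} = (a, a+1)$, $\tau = \chi \oplus \chi$, which are precisely those where Corollary \ref{c_ii_ok} is inapplicable. Here Proposition \ref{endlich} already identifies $R^{\psi}_{\rho}/\ann M(\Theta)$ with the desired deformation ring and establishes that it is $\OO$-flat of relative dimension one. The cycle formula in this case follows by combining Proposition \ref{cycles_modp} (using the projectivity from Corollary \ref{abok}), which gives $z_1(M(\Theta)/(\varpi)) = \sum_{\sigma} m_{\sigma} z_1(M(\sigma))$, with Proposition \ref{get_to_R} (whose hypothesis still holds by Proposition \ref{nearly_done}), which gives $z_2(R^{\psi}_{\rho}/\ann M(\Theta)) = z_2(M(\Theta))$; reducing modulo the regular element $\varpi$ then concludes.

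For the nonvanishing and Hilbert--Samuel multiplicities of $M(\sigma)$, after twisting to normalize $\chi = \Eins$, the nonvanishing assertion is immediate from Proposition \ref{dim_M_sigma}. Since $z_1(M(\sigma))$ is manifestly independent of $(\mathbf{w}, \tau, \psi)$, I will compute it using the Fontaine--Laffaille type $\mathbf{w} = (0, 1)$, $\tau = \Eins \oplus \Eins$, in which for each relevant $\sigma$ exactly one $m_\sigma$ is nonzero. In the \tres ramifi\'e case, the proof of Proposition \ref{endlich} establishes $R^{\psi}_{\rho}((0,1), \Eins \oplus \Eins) \cong \OO\br{x}$, whose mod-$\varpi$ reduction is a regular one-dimensional local ring, forcing $z_1(M(\st))$ to have multiplicity one. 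In the \peu ramifi\'e crystalline case, the same proposition yields $R^{\psi,\mathrm{cr}}_{\rho}((0,1), \Eins \oplus \Eins) \cong \OO\br{x}$, producing multiplicity one for $z_1(M(\Eins))$. In the \peu ramifi\'e semi-stable case, the proof of Proposition \ref{endlich} invokes the explicit Breuil--M\'ezard calculation \cite[Thm.5.3.1(i)]{bm} that the Hilbert--Samuel multiplicity of $R^{\psi}_{\rho}((0,1), \Eins \oplus \Eins)/(\varpi)$ is $2$, which through the cycle equality translates into multiplicity two for $z_1(M(\st))$.

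The principal obstacle lies in this exceptional Fontaine--Laffaille regime: the uniform bound Corollary \ref{c_ii_ok} breaks down precisely at the \peu ramifi\'e semi-stable point, and the anomalous multiplicity $2$ has to be imported from the explicit computation in \cite{bm} rather than derived internally from the present local framework. Once this single input is accepted, everything else proceeds mechanically from the formalism of \S\ref{general} combined with the projectivity and Cohen--Macaulay assertions packaged in Corollary \ref{abok}.
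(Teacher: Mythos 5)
Your proposal is correct and follows the paper's own argument: the paper likewise deduces the first two assertions by running Theorem \ref{A} with Corollary \ref{abok} in place of Corollaries \ref{aok} and \ref{bok}, reads off the vanishing of $M(\sigma)$ from Proposition \ref{dim_M_sigma} after twisting by $\chi$, and takes the Hilbert--Samuel multiplicities from the proof of Proposition \ref{endlich}. Your explicit separate treatment of the exceptional type $\tau=\chi\oplus\chi$, $b-a=1$ (where Corollary \ref{c_ii_ok} is unavailable) via Proposition \ref{endlich} combined with Propositions \ref{cycles_modp} and \ref{get_to_R} is precisely what the paper's terse ``same as the proof of Theorem \ref{bm_gen}'' implicitly requires, so nothing is missing.
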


\begin{proof} The proof of the first two assertions is the same as the proof of Theorem \ref{bm_gen}, using Corollary \ref{abok} instead of Corollaries 
\ref{aok}, \ref{bok}. The statement about the vanishing of $M(\sigma)$ follows from Proposition \ref{dim_M_sigma} taking into account the twist by $\chi$.
The Hilbert-Samuel multiplicities of $M(\sigma)$, when $M(\sigma)\neq 0$, have been computed in the proof of Proposition \ref{endlich}.
\end{proof}

\section{Semi-stable, non-crystalline case}\label{semi_non_crys}
In this subsection we will prove the following result claimed in Theorem \ref{RED_is_ok}. We make no assumption on the prime $p$.

\begin{thm}\label{semi} Let $\Pi\in \Ban^{\adm}_{G, \zeta}(L)$ be absolutely irreducible and non-ordinary. 
If $\Pi^{\alg}\cong \pi\otimes W$, where $\pi$ is a smooth special series 
representation and $W$ is an irreducible algebraic representation then $\Pi$ satisfies (RED).
\end{thm}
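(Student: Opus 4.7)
The plan is to reduce (RED) to a cohomological statement about locally analytic representations. By Schneider--Teitelbaum, the functor $\Pi\mapsto \Pi^{\mathrm{an}}$ of passage to locally analytic vectors is exact and faithful on $\Ban^{\adm}_{G,\zeta}(L)$, with $(\Pi^{\mathrm{an}})^{\alg}=\Pi^{\alg}$ since algebraic vectors are a fortiori analytic. Hence any extension $0\to \Pi\to E\to \Pi\to 0$ gives rise to an extension $0\to \Pi^{\mathrm{an}}\to E^{\mathrm{an}}\to \Pi^{\mathrm{an}}\to 0$ in the category of admissible locally analytic representations, and the sequence $0\to \Pi^{\alg}\to E^{\alg}\to \Pi^{\alg}\to 0$ is exact if and only if taking $(\cdot)^{\alg}$ of the analytic extension yields a short exact sequence. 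I would therefore identify $\mathcal{E}$ with a subspace $\mathcal{E}^{\mathrm{an}}$ of $\Ext^1_{G,\zeta,\mathrm{an}}(\Pi^{\mathrm{an}},\Pi^{\mathrm{an}})$, and show the latter is at most one-dimensional.

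Next, I would invoke the explicit description of $\Pi^{\mathrm{an}}$ in the semi-stable non-crystalline case (due to Breuil, and completed by Colmez). Twisting by $\chi\circ\det$ we may assume $\pi\cong \mathrm{St}$, and with $W$ algebraic of highest weight $(a,b)$, $b-a\ge 2$ (forced by non-ordinarity, which rules out $W$ one-dimensional since otherwise $\Pi\cong \widehat{\mathrm{St}}\otimes \chi\circ\det$ would be ordinary). Then $\Pi^{\mathrm{an}}$ sits in a non-split extension
\begin{equation}\label{pan_structure}
0\to \mathrm{St}^{\mathrm{an}}\otimes W \to \Pi^{\mathrm{an}} \to \Sigma(\mathcal{L})\otimes W\to 0,
\end{equation}
where $\mathrm{St}^{\mathrm{an}}$ is the locally analytic Steinberg and $\Sigma(\mathcal{L})$ is an irreducible admissible locally analytic representation depending on the Fontaine--Mazur $\mathcal{L}$-invariant of $\cV(\Pi)$, and where the locally algebraic subobject of $\Pi^{\mathrm{an}}$ coincides with $\mathrm{St}\otimes W\subset \mathrm{St}^{\mathrm{an}}\otimes W$.

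Applying $\Hom_{G,\zeta,\mathrm{an}}(\Pi^{\mathrm{an}},-)$ and $\Hom_{G,\zeta,\mathrm{an}}(-,\Pi^{\mathrm{an}})$ to \eqref{pan_structure} yields a four-term filtration on $\Ext^1_{G,\zeta,\mathrm{an}}(\Pi^{\mathrm{an}},\Pi^{\mathrm{an}})$ whose graded pieces are controlled by $\Ext$-groups between $\mathrm{St}^{\mathrm{an}}\otimes W$ and $\Sigma(\mathcal{L})\otimes W$. Using an analogue of Lemma \ref{extalg} for locally analytic representations (tensoring with $W$ is an equivalence between appropriate blocks, as in Schraen), these reduce to $\Ext$-computations in the block containing $\mathrm{St}^{\mathrm{an}}$ and $\Sigma(\mathcal{L})$, which are known by work of Schraen and Kohlhaase: $\Ext^1(\Sigma(\mathcal{L}),\mathrm{St}^{\mathrm{an}})$, $\Ext^1(\mathrm{St}^{\mathrm{an}},\mathrm{St}^{\mathrm{an}})$, $\Ext^1(\Sigma(\mathcal{L}),\Sigma(\mathcal{L}))$ are small and explicit, and the extensions that preserve locally algebraic vectors are precisely those arising from deforming the $\mathcal{L}$-invariant (a one-parameter family), together possibly with central character twists, which are ruled out by the fixed central character $\zeta$.

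The main obstacle is cleanly isolating the subspace $\mathcal{E}^{\mathrm{an}}$: concretely, one must show that every self-extension of $\Pi^{\mathrm{an}}$ whose algebraic vectors form a short exact sequence is $\mathcal{L}$-infinitesimal, i.e.\ obtained by differentiating the family $\mathcal{L}\mapsto \Pi^{\mathrm{an}}_{\mathcal{L}}$. This requires verifying that extensions of $\mathrm{St}\otimes W$ by itself in the locally algebraic category are trivial modulo the ones coming from smooth self-extensions of $\mathrm{St}$ (controlled by Lemma \ref{extalg}), and then checking that the only nontrivial locally analytic self-extension of $\Pi^{\mathrm{an}}$ whose socle filtration deforms nontrivially is the one giving the derivative of $\mathcal{L}$. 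Once this is in place, dimension $\le 1$ for $\mathcal{E}$ follows, completing the proof of (RED).
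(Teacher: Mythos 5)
Your overall strategy --- pass to locally analytic vectors, use the known structure of $\Pi^{\ana}$ in the semi-stable non-crystalline case, and reduce to $\Ext$-computations for locally analytic representations --- is the strategy of the paper. But the proposal has a structural inaccuracy and, more importantly, leaves the decisive step unproved. First, the description of $\Pi^{\ana}$ is not correct: the quotient of $\Pi^{\ana}$ by the ``analytic Steinberg'' piece $\Sigma_1=\Sigma(k)$ is not an irreducible $\Sigma(\mathcal L)\otimes W$; it is a two-step extension whose graded pieces are $\Sm^n L^2$ and the irreducible analytic principal series $I^{\ana}(\delta_2^n|\alpha|)\otimes|\det|^{n/2}$, and the $\mathcal L$-invariant enters through the non-splitness of Breuil's $\Sigma=\Sigma(k,\mathcal L)$ over $\Sigma_1$, not as an irreducible constituent. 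Second, the injectivity of $\Ext^1_{G,\zeta}(\Pi,\Pi)\to\Ext^1(\Pi^{\ana},\Pi^{\ana})$ does not follow merely from exactness and faithfulness of $(\cdot)^{\ana}$ (an exact faithful functor need not be injective on $\Ext^1$). What the paper uses is that $\Pi$ is the \emph{universal unitary completion} of $\Sigma$ (Proposition \ref{uni1}): a splitting of the pullback of $E^{\ana}$ to $\Sigma$ extends by universality to a splitting of the Banach extension, so the composite $\Ext^1_{G,\zeta}(\Pi,\Pi)\to\Ext^1_{G,\zeta}(\Sigma,\Pi^{\ana})$ is injective.

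The real gap is the step you yourself flag as ``the main obstacle'': isolating $\mathcal E$ and bounding its dimension. Your proposal gestures at ``$\mathcal L$-infinitesimal deformations'' but supplies no mechanism. The paper's mechanism is concrete: (i) since $\epsilon\in\mathcal E$ gives an exact sequence of locally algebraic vectors, and $\St\otimes\Sm^n L^2$ has no self-extensions in the locally algebraic category with central character (via the Schneider--Stuhler resolution \eqref{resolution_steiberg} and Lemma \ref{extalg}), the sequence $E^{\alg}\cong\Pi^{\alg}\oplus\Pi^{\alg}$ splits; (ii) the resulting section $\St\otimes\Sm^n L^2\to E^{\ana}$ extends to $\Sigma_1$ because $B(k)$ is the universal unitary completion of $\St\otimes\Sm^{k-2}L^2$ and the completion map factors through $\Sigma(k)$ (Proposition \ref{uni2}); hence the image of $\epsilon$ in $\Ext^1_{G,\zeta}(\Sigma_1,\Pi^{\ana})$ vanishes and $\mathcal E$ injects into $\Ext^1_{G,\zeta}(\Sm^n L^2,\Pi^{\ana})$; (iii) the latter is computed explicitly to be one-dimensional, isomorphic to $\Ext^1_{G,\zeta}(\Sm^n L^2,\St\otimes\Sm^n L^2)$, via Frobenius reciprocity for $D(G)\otimes_{D(P)}(-)$ and $\Ext^1$ of locally analytic characters of $P$ (Lemmas \ref{cahara}, \ref{sympr}, \ref{st3}), in particular $\Ext^1_{G,\zeta}(\Sm^nL^2,\Sm^nL^2)=0$ and $\Ext^1_G(\Sm^nL^2,I^{\ana}(\delta_2^n|\alpha|))=0$. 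Without (i) and (ii) --- which rest on the two universal-completion facts your proposal does not identify --- the bound $\dim\mathcal E\le 1$ does not follow from the $\Ext$-computations alone.
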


To prove the Theorem, we may assume that 
$\Pi^{\alg}\cong \St\otimes\Sm^n L^2$, where $\St$ is the smooth Steinberg representation of $G$ and $\Sm^n L^2=\Sym^n L^2\otimes |\det|^{n/2}$
by twisting by a unitary character.

Let $\Rep^a_G(L)$ be the category of admissible, locally analytic representations of $G$ defined in \cite{adm}. If $\chi: T\rightarrow L^{\times}$ 
is a locally analytic character, we let $I^{\ana}(\chi)=(\Indu{P}{G}{\chi})_{\ana}$  be the space of locally analytic function 
$f:G\rightarrow L$, such that $f(bg)=\chi(b) f(g)$ for all 
$b\in P$ and $g\in G$, where $G$ acts on $I^{\ana}(\chi)$ by right translations. 
The representation $I^{\ana}(\chi)$ equipped with the subspace topology, inherited from $C^{\ana}(G, L)$, is an object of $\Rep^a_G(L)$.

Recall that $v\in \Pi$ is locally analytic if the orbit map $G\rightarrow \Pi$, $g\mapsto gv$ defines a locally analytic function. Let $\Pi^{\ana}$
denote the subset of locally analytic vectors in $\Pi$. 
 Colmez \cite{colmezihp}
and Liu-Xie-Zhang \cite{liu} (for $p>2$) have shown that there exists an exact non-split sequence in $\Rep^a_G(L)$:
\begin{equation}\label{locanvec}
0\rightarrow \Sigma\rightarrow \Pi^{\ana}\rightarrow I^{\ana}(\delta_2^n |\alpha|)\otimes|\det|^{n/2}\rightarrow 0
\end{equation}
where $\delta_1, \delta_2, \alpha: T\rightarrow L^{\times}$ are the characters $\delta_1(\bigl ( \begin{smallmatrix} a & 0 \\ 0 & d \end{smallmatrix}\bigr))= a$, 
$\delta_2(\bigl ( \begin{smallmatrix} a & 0 \\ 0 & d \end{smallmatrix}\bigr))= d$, $\alpha= \delta_1\delta_2^{-1}$ and $\Sigma$ is 
the representation defined by Breuil in \cite{L}. The $G$-socle of $\Sigma$ is isomorphic to $\St\otimes\Sm^n L^2$, the $G$-cosocle is isomorphic to
$\Sm^n L^2$, and we have exact non-split sequences in $\Rep^a_G(L)$:
$$0\rightarrow \Sigma_1\rightarrow \Sigma \rightarrow \Sm^n L^2\rightarrow 0,$$
$$0\rightarrow \St\otimes\Sm^n L^2\rightarrow \Sigma_1\rightarrow I^{\ana}(\delta_1^{n+1}\delta_2^{-1})\otimes|\det|^{n/2}\rightarrow 0.$$
In the notation of \cite{L}, $\Sigma=\Sigma(k, \mathcal L)$ and $\Sigma_1=\Sigma(k)$, where $k=n+2$. We refer to \cite[\S1]{pem} for 
facts on universal unitary completions. The following two propositions are  well known.  

\begin{prop}\label{uni1} The universal unitary completion of $\Sigma$ is isomorphic to $\Pi$.
\end{prop}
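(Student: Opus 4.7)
The plan is to construct mutually inverse continuous $G$-equivariant maps between $\widehat{\Sigma}$ and $\Pi$ using the universal properties of the two unitary completions in play. Recall that by \cite[2.2.1]{be} (which was already invoked in the proof of Proposition \ref{nearly_done}), the Banach space representation $\Pi$ is itself the universal unitary completion of $\Pi^{\alg} \cong \St \otimes \Sm^n L^2$; in particular the canonical image of $\Pi^{\alg}$ is dense in $\Pi$. Also, by inspection of the sequences preceding Proposition \ref{uni1}, $\Pi^{\alg}$ equals the $G$-socle of $\Sigma$, so $\Pi^{\alg}$ is a subrepresentation of $\Sigma$.

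For the first map, the chain of inclusions $\Sigma \hookrightarrow \Pi^{\ana} \hookrightarrow \Pi$ realizes $\Sigma$ as a $G$-stable subspace of the unitary Banach representation $\Pi$; the universal property of the unitary completion therefore produces a continuous $G$-equivariant map $\phi: \widehat{\Sigma} \to \Pi$ extending this inclusion. For the second map, composing $\Pi^{\alg} \hookrightarrow \Sigma$ with the canonical map $\Sigma \to \widehat{\Sigma}$ gives a $G$-equivariant map from $\Pi^{\alg}$ to the unitary Banach space $\widehat{\Sigma}$; by the universal property of $\Pi$ as the completion of $\Pi^{\alg}$ recalled above, this extends uniquely to a continuous $G$-equivariant map $\psi: \Pi \to \widehat{\Sigma}$.

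It then remains to verify that $\phi$ and $\psi$ are mutually inverse. By construction, the composition $\phi \circ \psi: \Pi \to \Pi$ restricts to the identity on $\Pi^{\alg}$; since $\Pi^{\alg}$ is dense in $\Pi$ and $\phi \circ \psi$ is continuous, $\phi \circ \psi = \mathrm{id}_\Pi$. Symmetrically, $\psi \circ \phi: \widehat{\Sigma} \to \widehat{\Sigma}$ restricts to the canonical inclusion $\Sigma \to \widehat{\Sigma}$, and the image of $\Sigma$ is dense in $\widehat{\Sigma}$ by the very definition of the universal completion, so $\psi \circ \phi = \mathrm{id}_{\widehat{\Sigma}}$.

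The only potential subtlety is ensuring that the universal unitary completion $\widehat{\Sigma}$ exists as a genuine non-zero Banach representation (rather than collapsing to zero because the relevant gauge seminorm is degenerate). This is automatic here: the very inclusion $\Sigma \hookrightarrow \Pi$ into an admissible unitary Banach representation bounds the gauge seminorm that defines $\widehat{\Sigma}$ from below by a non-zero $G$-invariant norm, so the completion is Hausdorff and non-zero, and contains the image of $\Sigma$ densely. Thus $\phi$ is an isomorphism, which is the claim of the proposition.
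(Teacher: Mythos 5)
Your construction of the map $\psi:\Pi\rightarrow\widehat{\Sigma}$ rests on the claim that $\Pi$ is the universal unitary completion of $\Pi^{\alg}\cong \St\otimes\Sm^n L^2$, and this claim is false; it is the essential gap in the argument. The result \cite[2.2.1]{be} that you invoke concerns locally algebraic \emph{principal series} (its conclusion is that the universal completion of such a representation is the corresponding continuous principal series), and in the proof of Proposition \ref{nearly_done} it is applied only in that situation. For the special series the universal unitary completion of $\St\otimes\Sm^{k-2}L^2$ is the representation denoted $B(k)$ by Breuil (see the proof of Proposition \ref{uni2}, which quotes \cite[Prop.4.6.1, 4.3.5(i)]{L} for exactly this), and $B(k)$ is \emph{not} isomorphic to $\Pi=B(k,\mathcal L)$: for every choice of $\mathcal L'$ the composite $\St\otimes\Sm^{k-2}L^2\hookrightarrow\Sigma(k,\mathcal L')\rightarrow B(k,\mathcal L')$ has dense image in the irreducible representation $B(k,\mathcal L')$, so $B(k)$ surjects onto each $B(k,\mathcal L')$, and since these are pairwise non-isomorphic (they correspond under $\cV$ to semi-stable Galois representations with distinct $\mathcal L$-invariants), $B(k)$ cannot be irreducible, let alone equal to a single $\Pi$. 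A useful sanity check: if your claim were true, the entire $\mathcal L$-invariant structure carried by $\Sigma=\Sigma(k,\mathcal L)$ would play no role in determining $\Pi$, which contradicts the whole point of Breuil's construction. Consequently there is no universal property producing $\psi$, and the second half of your argument collapses. (Your first map $\phi:\widehat{\Sigma}\rightarrow\Pi$ does exist and has dense image, but by itself this gives neither injectivity nor a closed image, since $\widehat{\Sigma}$ is not yet known to be admissible or topologically irreducible.)

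The paper's proof is of a completely different nature: it quotes \cite[Thm.0.4]{colmez1} to know that $\widetilde{B}(k,\mathcal L)$ is non-zero, irreducible and admissible, identifies it with $B(k,\mathcal L)$ via \cite[Cor.3.3.4]{brL}, and then observes that parts (i)--(iii) of \cite[Prop.4.4.4]{L} are thereby established, so that Breuil's proof of \cite[Prop.4.4.4(v)]{L} — which is where the identification of $B(k,\mathcal L)$ with the universal unitary completion of $\Sigma(k,\mathcal L)$ actually happens — goes through unconditionally. That identification genuinely uses the full representation $\Sigma$ (in particular the extension encoding $\mathcal L$), not merely its locally algebraic socle. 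If you want a proof not relying on these citations, you would have to reproduce that analysis; there is no shortcut through the universal property of $\Pi^{\alg}$.
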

\begin{proof} It is shown in \cite[Thm.0.4]{colmez1} that the unitary Banach space representation of $G$, denoted
by $\widetilde{B}(k, \mathcal L)$ in \cite[\S 3.3]{brL}, is non-zero, irreducible and admissible. It follows from 
\cite[Cor.3.3.4]{brL}  that $\widetilde{B}(k, \mathcal L)$ is isomorphic to the Banach space representation of $G$
denoted by $B(k, \mathcal L)$ in \cite{L}. Hence, parts (i), (ii) and (iii) of \cite[Prop.4.4.4]{L} hold and the assertion follows from
\cite[Prop.4.4.4(v)]{L}, the proof of which depends only on parts (i), (ii) and (iii), and not on the Conjecture 4.4.1 in \cite{L}. 
\end{proof}

We denote by $\mathcal L_G$ continuous, $L$-linear, $G$-equivariant homomorphisms.

\begin{prop}\label{uni2} The natural map $\mathcal L_G(\Sigma_1, \Pi_1)\rightarrow \mathcal L_G(\St\otimes\Sm^n L^2, \Pi_1)$ 
is surjective for every unitary Banach space representation $\Pi_1$ of $G$.
\end{prop}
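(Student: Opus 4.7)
The strategy is to extend any continuous $G$-equivariant map $\phi\colon \St\otimes\Sm^n L^2 \to \Pi_1$ to $\Sigma_1$ by factoring it through the absolutely irreducible Banach space representation $\Pi$, using Proposition \ref{uni1} and the fact that $\Pi$ is itself the universal unitary completion of its locally algebraic vectors.

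First, I would observe that by Proposition \ref{uni1} combined with the exact sequence \eqref{locanvec}, we have a chain of continuous $G$-equivariant injections
$$\Sigma_1 \hookrightarrow \Sigma \hookrightarrow \Pi^{\ana} \hookrightarrow \Pi,$$
and that, under this embedding, the socle $\St\otimes\Sm^n L^2$ of $\Sigma_1$ maps isomorphically onto $\Pi^{\alg}$. Consequently, if one can extend $\phi$ to a continuous $G$-equivariant map $\widehat{\phi}\colon \Pi \to \Pi_1$, then the composition
$$\Sigma_1 \hookrightarrow \Pi \xrightarrow{\widehat{\phi}} \Pi_1$$
will be the required extension of $\phi$ to $\Sigma_1$, which yields the surjectivity asserted in the proposition.

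Second, I would invoke the fact that $\Pi$ is the universal unitary completion of its locally algebraic vectors $\Pi^{\alg}\cong \St\otimes\Sm^n L^2$. This is the semi-stable non-crystalline instance of the $p$-adic Langlands correspondence for $\GL_2(\Qp)$ (via the work of Colmez \cite{colmez1} and Berger--Breuil \cite{bb}, underpinned by Breuil's construction of $\Sigma(k,\mathcal{L})$). Granted this identification, the universal property applied to $\phi \in \mathcal L_G(\Pi^{\alg},\Pi_1)$ produces the unique continuous $G$-equivariant extension $\widehat{\phi}\colon \Pi \to \Pi_1$, completing the argument as described above.

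The main obstacle is precisely the identification $\Pi=\widehat{\Pi^{\alg}}$ in the special series case, since unlike the principal series case (explicitly invoked in the proof of Theorem \ref{RED_is_ok}), the completion here must be understood via the explicit Breuil--Colmez construction. An alternative, more internal route would be to apply the universal property directly: the inclusion $\Pi^{\alg}\subset \Sigma$ induces a natural map $\widehat{\Pi^{\alg}} \to \widehat{\Sigma} = \Pi$ which has dense image (as $\Pi^{\alg}$ is dense in $\Pi$); one would then argue, using admissibility of both completions and the absolute irreducibility of $\Pi$, that this map is an isomorphism. Either way, the essential input is a universal-completion statement which is well-established in the $p$-adic Langlands literature.
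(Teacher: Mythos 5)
The central claim of your argument --- that $\Pi$ is the universal unitary completion of $\Pi^{\alg}\cong\St\otimes\Sm^n L^2$ --- is false in the semi-stable non-crystalline case, and this is precisely the subtlety that separates the special series from the principal series situation. The universal unitary completion of $\St\otimes\Sm^n L^2$ is the representation denoted $B(k)$ in \cite{L} (Propositions 4.6.1 and 4.3.5(i) there); it does not depend on the $\mathcal L$-invariant, whereas $\Pi\cong B(k,\mathcal L)$ does. For each $\mathcal L$ the composite $\St\otimes\Sm^n L^2\hookrightarrow\Sigma(k,\mathcal L)\rightarrow B(k,\mathcal L)$ induces, by universality, a map $B(k)\twoheadrightarrow B(k,\mathcal L)$ onto pairwise non-isomorphic irreducible targets, so $B(k)$ is not irreducible and in particular is not isomorphic to any single $\Pi$. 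Your ``alternative, more internal route'' only reproves this surjectivity (dense image plus irreducibility of the target gives surjectivity, not injectivity), so it does not yield the asserted isomorphism $\widehat{\Pi^{\alg}}\cong\Pi$. Worse, the intermediate statement you actually use --- that every $\phi\in\mathcal L_G(\Pi^{\alg},\Pi_1)$ extends to $\Pi$ --- is itself false: applying it to the canonical map $\Pi^{\alg}\rightarrow B(k)$ would exhibit $B(k)$ as a retract of the irreducible $\Pi$ and force $B(k)\cong\Pi$, a contradiction. Note that Proposition \ref{uni1} concerns the completion of the full locally analytic representation $\Sigma=\Sigma(k,\mathcal L)$, not of its locally algebraic socle; the two completions genuinely differ here.

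The repair is structurally close to your outline but must route through $B(k)$ rather than $\Pi$. By \cite[Prop.4.3.4]{L} the canonical map $\iota:\St\otimes\Sm^n L^2\rightarrow B(k)$ factors through $\Sigma_1=\Sigma(k)$ (consistently, $\Sigma_1$, unlike $\Sigma$, does not depend on $\mathcal L$). Given $\phi:\St\otimes\Sm^n L^2\rightarrow\Pi_1$, universality of $B(k)$ provides $\psi:B(k)\rightarrow\Pi_1$ with $\phi=\psi\circ\iota$, and the composite $\Sigma_1\rightarrow B(k)\xrightarrow{\psi}\Pi_1$ is the required extension. No identification of $B(k)$ with $\Pi$ is needed, and Proposition \ref{uni1} plays no role in this argument.
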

\begin{proof} Propositions 4.6.1 and 4.3.5(i) of \cite{L} imply that the Banach space representation 
denoted by $B(k)$ is the universal unitary completion of $\St\otimes\Sm^{k-2} L^2$. Moreover, it follows from
\cite[Prop.4.3.4]{L} that the natural map $\St\otimes\Sm^{k-2} L^2\rightarrow B(k)$ factors through
$\Sigma(k)\rightarrow B(k)$, where $\Sigma(k)$ is $\Sigma_1$ with $n=k-2$ in our notation. Since every $\phi:
\St\otimes\Sm^{k-2} L^2\rightarrow \Pi_1$ factors through $B(k)$ by the universality, this implies the assertion. 
\end{proof} 

Let $F$ be a finite extension of $\Qp$ contained in $L$ and let $H$ be a finite dimensional locally  $F$-analytic group.
Let $\Rep^a_H(L)$ be the category of admissible locally analytic $H$-representations on $L$-vector spaces in the sense 
of \cite[\S6]{adm}. Recall that sending $V$ to its continuous $L$-linear dual $V'_b$ equipped with the strong topology induces 
an anti-equivalence of categories between $\Rep^a_H(L)$ and $\mathcal C_H$, the category of coadmissible modules over the algebra 
$D(H)=D(H, L)$ of locally analytic distributions on $H$, \cite[Thm. 6.3]{adm}. These categories are abelian.
For $V$ and $W$ in $\Rep^a_H(L)$ we denote by 
$\Ext^i_H(V, W)$ the Yoneda $\Ext$-groups in the category $\Rep^a_H(L)$ and if the centre $Z(H)$ of $H$ acts on $V$ and $W$ by a 
 character $\zeta$, we denote by $\Ext^i_{H, \zeta}(V, W)$ the Yoneda $\Ext$-groups computed in the full subcategory 
of $\Rep^{a}_H(L)$ with objects those representations on which $Z(H)$ acts by $\zeta$. Note that,  if $\mathcal L_H(V, W)=0$
then $\Ext^1_H(V, W)$ and $\Ext^1_{H, \zeta}(V, W)$ coincide. Since $\Rep^a_H(L)$ is anti-equivalent to 
$\mathcal C_H$ passing to duals induces an isomorphism between $\Ext^i_H(V, W)$ and $\Ext^i_{\mathcal C_H}(W'_b, V'_b)$, the  
Yoneda $\Ext$-group in $\mathcal C_H$, \cite[Thm.1, p.121]{alghom}. For each $i\ge 0$ we obtain a map
\begin{equation}\label{lambda}
\lambda^i: \Ext^i_H(V,W)\cong \Ext^i_{\mathcal C_H}(W'_b, V'_b)\rightarrow \Ext^i_{D(H)}(W'_b, V'_b),
\end{equation}
where the last $\Ext$-group is computed in the category of abstract $D(H)$-modules. The discussion after Lemma 3.6 in \cite{adm}
implies that $\lambda^0$ is an isomorphism and hence $\lambda^1$ in an injection. 

\begin{lem}\label{lambda1} Let $V$, $W$ be in $\Rep^a_L(H)$ and suppose that $W'_b$ is a finitely generated $D(H_0)$-module for some compact open
subgroup $H_0$ of $H$ then $\lambda^1$ is an isomorphism. 
\end{lem}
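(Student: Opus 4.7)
The plan is to establish the surjectivity of $\lambda^1$; injectivity has already been noted. Given an abstract short exact sequence of $D(H)$-modules
\begin{equation*}
0 \to V'_b \to E \to W'_b \to 0,
\end{equation*}
the task reduces to upgrading $E$ to a coadmissible $D(H)$-module. Once that is done, the automatic continuity of $D(H_0)$-linear maps between coadmissibles (a standard consequence of the Fr\'echet--Stein structure of $D(H_0)$, since each Noetherian Banach layer $D_r(H_0)$ makes $D_r(H_0)$-linear maps between finitely generated modules continuous) places the whole sequence in $\mathcal C_H$ and produces the desired preimage class in $\Ext^1_{\mathcal C_H}(W'_b, V'_b)$.

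To exhibit $E$ as coadmissible I would use the finite generation hypothesis: pick generators $w_1, \ldots, w_n$ of $W'_b$ over $D(H_0)$, lift them to $e_1, \ldots, e_n \in E$, and form the $D(H_0)$-linear maps
\begin{equation*}
p\colon D(H_0)^n \to W'_b,\ \vec a \mapsto \textstyle\sum_i a_i w_i, \qquad q\colon V'_b \oplus D(H_0)^n \to E,\ (v, \vec a) \mapsto v + \textstyle\sum_i a_i e_i.
\end{equation*}
The map $p$ is a morphism in $\mathcal C_{H_0}$, while $q$ is an abstract surjection of $D(H_0)$-modules: for any $e \in E$, choose $\vec a$ with $p(\vec a) = \pi(e)$, and then $e - \sum_i a_i e_i$ lies in $V'_b$. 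A direct computation identifies $\ker q$ with the graph of the abstract $D(H_0)$-linear map $\psi\colon \ker p \to V'_b$, $\vec a \mapsto -\sum_i a_i e_i$, which is well-defined because $\sum_i a_i e_i$ has trivial image in $W'_b$ whenever $\vec a \in \ker p$.

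By automatic continuity, $\psi$ is a morphism in $\mathcal C_{H_0}$, so the associated closed embedding $\ker p \hookrightarrow V'_b \oplus D(H_0)^n$, $\vec a \mapsto (-\sum_i a_i e_i, \vec a)$, realises $\ker q$ as a closed coadmissible $D(H_0)$-submodule of the coadmissible module $V'_b \oplus D(H_0)^n$. Consequently $E \cong (V'_b \oplus D(H_0)^n)/\ker q$ is coadmissible over $D(H_0)$, and hence over $D(H)$, since coadmissibility of a $D(H)$-module is intrinsic to its restriction to any compact open subgroup. The main subtle point—and the main obstacle—is the appeal to automatic continuity for $\psi$: it is precisely the finite generation hypothesis on $W'_b$ that lets us measure the failure of $E \to W'_b$ to split by a map whose source $\ker p$ is coadmissible, and thereby brings the Fr\'echet--Stein machinery to bear on an otherwise purely algebraic extension.
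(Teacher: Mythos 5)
Your proof is correct and follows essentially the same route as the paper's: both lift generators of $W'_b$ to obtain a surjection $V'_b\oplus D(H_0)^n\twoheadrightarrow E$, identify its kernel with a coadmissible module via an (automatically continuous) $D(H_0)$-linear map out of $\ker p$, and conclude that $E$ is coadmissible from the stability of coadmissible modules under kernels and cokernels \cite[Cor.\ 3.4]{adm}. The only cosmetic difference is that you compute $\ker q$ directly as the graph of $\psi$, whereas the paper packages the same computation into a commutative diagram and the snake lemma.
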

\begin{proof} Let $0\rightarrow V'\rightarrow M\rightarrow W'\rightarrow 0$ be an exact sequence of abstract $D(H)$-modules. 
We have dropped the subscript $b$, since it only indicates the topology on $V'$. By lifting the generators of $W'$ to $M$
we obtain a surjection $D(H_0)^r\twoheadrightarrow W'$, which factors through $D(H_0)^r\rightarrow M \rightarrow W'$. 
We thus obtain a commutative diagram of $D(H_0)$-modules:
\begin{displaymath}
\xymatrix@1{0\ar[r]& N \ar[d]^-{\beta}\ar[r] & V'\oplus D(H_0)^r \ar@{->>}[d]^-{\gamma}\ar[r]& W'\ar[d]^-{=} \ar[r] & 0\\
0 \ar[r] & V'\ar[r] & M\ar[r]& W'\ar[r]& 0 }
\end{displaymath}
Recall that $D(H_0)$ is a Fr\'echet-Stein algebra by \cite[Thm.5.1]{adm}. A direct sum of two coadmissible $D(H_0)$-modules is 
coadmissible and the kernel and  cokernel  of a $D(H_0)$-linear map between coadmissible modules is again coadmissible, \cite[Cor. 3.4 (i), (ii)]{adm}.
Using this we deduce that $N$ is coadmissible, hence $\Ker \beta$ is coadmissible. Since $\Ker \beta\cong \Ker \gamma$ by the snake lemma, and 
$\gamma$ is surjective, we deduce that $M$ is coadmissible. Hence, the extension class lies in the image of $\lambda^1$. Since $\lambda^1$ is 
injective, it is an isomorphism.  
\end{proof}

\begin{remar} (i) If $\Pi$ is an admissible unitary $L$-Banach space representation of $H$ then the continuous $L$-linear dual of the locally 
$F$-analytic vectors in 
$\Pi$ is a finitely generated $D(H_0)$-module for all compact open subgroups $H_0$ of $H$, \cite[Thm.7.1(i)]{adm}, \cite[Prop.3.4]{tosch} or 
\cite[Prop.6.2.4]{em1}.

 (ii) If $W$ is in $\Rep^a_L(H)$ such that $W'$ is finitely generated over $D(H_0)$, then for every admissible subrepresentation 
$U$ of $W$, $U'$ is also finitely generated over $D(H_0)$, since it is a quotient of $W'$.
\end{remar}

Let $G=\GL_2(\Qp)$ and we note that if $\chi: T\rightarrow L^{\times}$ is a locally analytic character, then 
$I^{\ana}(\chi)'$ is finitely generated over $D(H_0)$ for any compact open subgroup $H_0$ of $G$, see \S5 in \cite{la}.
We say  that $\chi$ is locally constant or smooth 
if the kernel is an open subgroup of $T$. In this case, $I^{\sm}(\chi)$ denotes the space of locally constant functions $f:G\rightarrow L$ such that
$f(bg)=\chi(b) f(g)$ for all $b\in P$ and $g\in G$, with $G$-action by the right translations. If $n$ is a non-negative integer the representation 
$I^{\sm}(\chi)\otimes_L \Sym^n L^2$ equipped with the finest locally convex topology is an object of $\Rep^{a}_L(G)$. It follows from 
\cite[\S4]{ugf} that for every smooth character 
 $\chi_{\lc}: P\rightarrow L^{\times}$ and integer $n\ge 0$ we have an exact sequence in $\Rep^a_L(G)$:
\begin{equation}\label{laprinc}
0\rightarrow I^{\sm}(\chi_{\lc})\otimes \Sym^n L^2\rightarrow I^{\ana}(\chi_{\lc} \delta_2^n)\rightarrow I^{\ana}(\chi_{\lc} \delta_1^{n+1}\delta_2^{-1})
\rightarrow 0.
\end{equation}
The last term in \eqref{laprinc} is topologically irreducible by \cite[Thm.6.1]{la}, note that $P$ is the lower triangular 
matrices in \cite{la}.

The referee has pointed out that most of Lemmas \ref{cahara} and \ref{sympr} have been proved by Schraen in \cite[\S 4]{schraen} in a more general setting. 
We include our proof for the convenience of the reader. 

\begin{lem}\label{cahara} Let $\chi, \psi: T\rightarrow L^{\times}$ be locally analytic characters.
If $\psi\neq \chi$ and $\psi\neq \chi \alpha$ then $\Ext^1_P(\chi, \psi)=0$. Moreover, 
\begin{itemize}
\item[(i)] $\Ext^1_P(\chi, \chi)\cong \Hom^{\cont}(T,L)$ is $4$-dimensional;
\item[(ii)] $\Ext^1_{P, \zeta}(\chi, \chi)\cong  \Hom^{\cont}(T/Z,L)$ is $2$-dimensional;
\item[(iii)] $\Ext^1_P(\chi, \chi\alpha)$ is $1$-dimensional.
\end{itemize}
\end{lem}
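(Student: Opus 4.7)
The plan is to identify $\Ext^1_P(\chi, \psi)$ with the locally analytic cohomology $H^1(P, L_\eta)$ for $\eta := \psi \chi^{-1}$ viewed as a character of $P$ via $P \twoheadrightarrow T$, and to analyze locally analytic $1$-cocycles directly using the semidirect decomposition $P = T \ltimes U$. Given a cocycle $c : P \to L_\eta$ satisfying $c(gh) = c(g) + \eta(g) c(h)$, its restriction $c|_U$ is a continuous additive homomorphism $\Qp \to L$ (since $\eta|_U = 1$), hence of the form $u \mapsto \lambda u$ for a unique $\lambda \in L$; and $c|_T$ is a $1$-cocycle for the $T$-representation $L_\eta$. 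The crucial compatibility condition, obtained by evaluating $c(tut^{-1})$ in two ways using the relation $tut^{-1} = \alpha(t) u$, forces $\lambda(\alpha(t) - \eta(t)) = 0$ for every $t \in T$, so $\lambda = 0$ unless $\eta = \alpha$.

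The lemma then follows by case analysis on $\eta$, contingent on the auxiliary vanishing $H^1(T, L_\mu) = 0$ for every nontrivial locally analytic character $\mu$ of $T$ (see below). When $\eta = 1$, coboundaries vanish and the cocycle condition reduces to being an additive homomorphism $P \to L$, which factors through $P/[P, P] = T$; since $\Hom^{\cont}((\Qp^\times)^2, L)$ has dimension four, spanned by the two valuations and the two $p$-adic logarithms, this gives (i), and imposing $c|_Z = 0$ gives (ii). When $\eta = \alpha$, the parameter $\lambda$ contributes one independent class (coboundaries automatically vanish on $U$), and combined with $H^1(T, L_\alpha) = 0$ this yields the one-dimensional space in (iii). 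When $\eta \neq 1, \alpha$, we have $\lambda = 0$, so $c$ factors through $T$, and $H^1(T, L_\eta) = 0$ gives the first claim.

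The main work is the auxiliary vanishing $H^1(T, L_\mu) = 0$ for nontrivial locally analytic $\mu$. Using the decomposition $T \cong (\Qp^\times)^2 \cong \mathbb{Z}^2 \times F \times (1+p\Zp)^2$ (with $F$ a finite group of order prime to $p$) together with a K\"unneth-style argument, I will reduce to analyzing each factor separately. The discrete $\mathbb{Z}^2$ and finite $F$ cases are elementary cocycle calculations; on the pro-$p$ factor, if the derivative $d\mu$ is nonzero, a direct ODE argument shows that every $1$-cocycle is of the form $c(y) = b(\mu(y)-1)/d\mu$ for some $b \in L$, which is manifestly the coboundary of $-b/d\mu$, while if $d\mu = 0$ then $\mu$ is trivial on this factor. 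This is the only substantive step; as the referee noted, a version of this computation already appears in Schraen's paper \cite{schraen}, which one may cite to shorten the argument.
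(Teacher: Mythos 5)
Your argument is correct and is essentially the calculation the paper performs: the paper identifies $\Ext^1_P(\chi,\psi)$ with $H^1_{\ana}(P,\psi\chi^{-1})$ and, for $\chi\neq\psi$, reduces via the Hochschild--Serre spectral sequence of Kohlhaase (explicitly offering ``an easy calculation with cocycles'' as the alternative you carry out) to homomorphisms $\phi:U\to L$ with $\phi(tut^{-1})=\eta(t)\phi(u)$; comparing this with $\phi(\alpha(t)u)=\alpha(t)\phi(u)$ forces $\eta=\alpha$, exactly your relation $\lambda(\alpha(t)-\eta(t))=0$. The cases (i) and (ii) are handled identically in both proofs (homomorphisms $P\to L$ factor through $T$ since $U\subseteq[P,P]$, and $\Hom^{\cont}(\Qp^{\times},L)$ is spanned by the valuation and the logarithm). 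The one place where you do substantially more work than necessary is the auxiliary vanishing $H^1(T,L_{\mu})=0$ for $\mu\neq\Eins$: since $T$ is abelian, a cocycle satisfies $c(a)\bigl(1-\mu(b)\bigr)=c(b)\bigl(1-\mu(a)\bigr)$ for all $a,b$, so choosing $b_0$ with $\mu(b_0)\neq 1$ exhibits $c$ as the coboundary of $c(b_0)/(\mu(b_0)-1)$; this replaces the entire K\"unneth decomposition and the ODE analysis on the pro-$p$ factor, and avoids having to justify a K\"unneth formula in locally analytic cohomology. With that simplification your proof and the paper's coincide in all essentials.
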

\begin{proof} We have natural isomorphisms $\Ext^1_P(\chi, \psi)\cong H^1_{\ana}(P, \psi\chi^{-1})$, where $H^1_{\ana}$ is the locally analytic group
cohomology defined in \cite{jan}. If $\chi=\psi$ then this group 
is isomorphic to locally analytic group homomorphisms $\phi: P\rightarrow L$. Since $U$ is contained in the derived subgroup of $P$, every such 
$\phi$ factors through $P/U\cong T$. Since every locally continuous homomorphism $\phi: T\rightarrow L$ is already locally analytic, 
\cite[\S V.9, Thm.2]{lie},  we deduce (i). Part (ii) follows similarly by observing that 
$\Ext^1_{P, \zeta}(\chi, \psi)\cong H^1_{\ana}(P/Z, \psi\chi^{-1})$. Suppose that $\chi\neq \psi$, then the spectral sequence \cite[Thm.6.8]{jan} 
(or an easy calculation with cocycles) allows to identify $H^1_{\ana}(P, \psi\chi^{-1})$ with the group of locally analytic group 
homomorphism $\phi: U\rightarrow L$, such that $\phi(t^{-1}u t)= \chi\psi^{-1}(t) \phi(u)$ for all $t\in T$ and $u\in U$. This implies that 
the restriction of $\chi\psi^{-1}$ to $Z$ is trivial and, if we let $c=\phi(\bigl(\begin{smallmatrix}1 & 1 \\ 0 & 1\end{smallmatrix}\bigr))$, then 
$\phi(\bigl(\begin{smallmatrix}1& \lambda \\ 0 & 1\end{smallmatrix}\bigr))=
\chi^{-1}\psi(\bigl(\begin{smallmatrix}\lambda & 0 \\ 0 & 1\end{smallmatrix}\bigr))c $, for all $\lambda\in \Qp^{\times}$. 
Since $\phi$ is a continuous character, we have $\phi(\bigl(\begin{smallmatrix}1 & \lambda \\ 0 & 1\end{smallmatrix}\bigr))=
\lambda c$ for all $\lambda\in \Zp$. Hence, if $\phi\neq 0$ then 
$\chi^{-1}\psi(\bigl(\begin{smallmatrix}\lambda & 0 \\ 0 & 1\end{smallmatrix}\bigr))=\lambda$, for all $\lambda\in \Zp\setminus \{0\}$, 
which implies that $\chi^{-1}\psi=\alpha$, and the subspace is one dimensional. 
\end{proof} 

\begin{lem}\label{sympr} The following hold:
\begin{itemize} 
\item[(i)] $\Ext^1_G(\Sym^n L^2, I^{\ana}(\delta_2^n|\alpha|))=0$;
\item[(ii)]$\Ext^1_{G,\zeta}(\Sym^n L^2, I^{\ana}(\delta_2^n))$ is $2$-dimensional;
\item[(iii)] $\Ext^1_G(\Sym^n L^2, I^{\ana}(\delta_1^{n+1}\delta_2^{-1}))$ is $1$-dimensional.
\end{itemize}
\end{lem}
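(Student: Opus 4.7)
The three computations reduce to $\Ext$ calculations over the Borel $P$ via Frobenius reciprocity for locally analytic induction: for admissible locally analytic $V$ and a locally analytic character $\chi$ of $T$, one has
$$\Ext^i_G(V, I^{\ana}(\chi)) \cong \Ext^i_P(V|_P, \chi), \qquad i \le 1,$$
and similarly with fixed central character. This is a special case of Kohlhaase's adjunction between locally analytic parabolic induction and the Jacquet functor, and can also be verified directly in our situation using that $I^{\ana}(\chi)$ is coinduced and that the relevant derived functors of coinduction vanish in low degree on the specific module $\Sym^n L^2$. Restricted to $P$, the representation $V = \Sym^n L^2$ admits the unique filtration $0 = V_{n+1} \subset V_n \subset \cdots \subset V_0 = V$ with $V_i/V_{i+1} \cong \delta_1^i \delta_2^{n-i}$, whose socle is $V_n = \langle e_1^n\rangle \cong \delta_1^n$ and whose cosocle is $\delta_2^n$. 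Lemma \ref{cahara} then selects which composition factors $\eta = \delta_1^i\delta_2^{n-i}$ can contribute to $\Ext^1_P(-,\chi)$: only those with $\eta = \chi$ or $\eta\alpha = \chi$.

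For (i), $\chi = \delta_2^n|\alpha|$ involves the non-algebraic factor $|\alpha|$, so neither resonance condition is solvable among algebraic $\eta$. All $\Hom_P$ and $\Ext^1_P$ of composition factors into $\chi$ vanish, and an induction on the filtration using the long exact $\Ext$-sequences yields $\Ext^1_P(V, \chi) = 0$.

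For (iii), $\chi = \delta_1^{n+1}\delta_2^{-1}$: only the socle $V_n = \delta_1^n$ satisfies $\eta\alpha = \chi$, contributing $\Ext^1_P(V_n, \chi)$ of dimension one. The remaining composition factors occurring in $V/V_n$ have vanishing $\Hom_P(\cdot, \chi)$ and $\Ext^1_P(\cdot, \chi)$, so inductively $\Hom_P(V/V_n, \chi) = \Ext^1_P(V/V_n, \chi) = 0$, and the long exact sequence of $0 \to V_n \to V \to V/V_n \to 0$ gives $\Ext^1_P(V, \chi) \cong \Ext^1_P(V_n, \chi)$, which is $1$-dimensional.

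For (ii), fix the central character $\zeta : z \mapsto z^n$ and $\chi = \delta_2^n$. Two factors are resonant: the cosocle $V_0/V_1 = \delta_2^n$ contributes $\Ext^1_{P,\zeta}(\delta_2^n, \delta_2^n) = 2$-dim by Lemma \ref{cahara}(ii), while $V_1/V_2 = \delta_1\delta_2^{n-1}$ could a priori contribute $\Ext^1_P(\delta_1\delta_2^{n-1}, \delta_2^n) = 1$-dim by Lemma \ref{cahara}(iii). The cosocle projection identifies $\Hom_P(V, \delta_2^n) \cong \Hom_P(\delta_2^n, \delta_2^n)$, forcing $\Hom_P(V_1, \delta_2^n) = 0$, and the long exact sequence of $0 \to V_1 \to V \to \delta_2^n \to 0$ collapses to
$$0 \to \Ext^1_{P,\zeta}(\delta_2^n, \delta_2^n) \to \Ext^1_{P,\zeta}(V, \delta_2^n) \to \Ext^1_{P,\zeta}(V_1, \delta_2^n).$$
The main obstacle is to show the last arrow vanishes: the unique non-trivial extension of $V_1$ by $\delta_2^n$, which necessarily factors through $V_1 \twoheadrightarrow V_1/V_2 = \delta_1\delta_2^{n-1}$, does not lift to an extension of $V$ by $\delta_2^n$. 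I expect to verify this by a direct cocycle computation on $P$, showing that the obstruction to lifting lies in a nonzero class of $\Ext^2_{P,\zeta}(\delta_2^n, \delta_2^n)$: the $U$-action data coming from the graded piece $V_1/V_2$ and the desired extension class on $V_1$ produce incompatible normalizations on the new copy of $\delta_2^n$. Once this incompatibility is established, only the self-extension contribution survives and $\Ext^1_{P,\zeta}(V, \delta_2^n) = 2$-dim, completing the proof; parts (i) and (iii) are routine once the Frobenius reciprocity and Lemma \ref{cahara} are in place.
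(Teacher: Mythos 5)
Your overall strategy --- reduce to $\Ext^1_P(\Sym^n L^2,\chi)$ via Frobenius reciprocity at the level of distribution algebras (Kohlhaase), then run a d\'evissage along the $P$-stable filtration of $\Sym^n L^2$ using Lemma \ref{cahara} --- is the same as the paper's, and part (i) is fine. But your treatment of (ii) contains a genuine error followed by an unproven step. You claim that $V_1/V_2\cong\delta_1\delta_2^{n-1}$ is resonant with target $\delta_2^n$, i.e.\ that $\Ext^1_P(\delta_1\delta_2^{n-1},\delta_2^n)$ is one-dimensional by Lemma \ref{cahara}(iii). This contradicts your own stated criterion: with $\eta=\delta_1\delta_2^{n-1}$ one has $\eta\alpha=\delta_1^2\delta_2^{n-2}\neq\delta_2^n$, whereas $\delta_2^n=\eta\alpha^{-1}$, and Lemma \ref{cahara} then gives $\Ext^1_P(\eta,\delta_2^n)=0$. (The one-dimensional group is $\Ext^1_P(\chi,\chi\alpha)$, not $\Ext^1_P(\chi,\chi\alpha^{-1})$: the nonsplit extension has the $\alpha$-twist as its \emph{sub}, as in $0\to\delta_1\to L^2\to\delta_2\to0$.) Consequently the ``main obstacle'' you identify --- showing that an extension of $V_1$ by $\delta_2^n$ fails to lift, via a class in $\Ext^2_{P,\zeta}$ --- is spurious, and in any case you do not carry it out (``I expect to verify\ldots''); as written, (ii) is not proved. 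The correct resonance analysis gives $\Hom_P(V_1,\delta_2^n)=\Ext^1_{P}(V_1,\delta_2^n)=0$ outright, so the long exact sequence immediately yields $\Ext^1_{P,\zeta}(\Sym^nL^2,\delta_2^n)\cong\Ext^1_{P,\zeta}(\delta_2^n,\delta_2^n)$, which is exactly the paper's identification $\Ext^1_{P,\zeta}(\Sym^nL^2,\psi)\cong\Ext^1_{P,\zeta}((\Sym^nL^2)_U,\psi)$.

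There is a second, smaller gap in (iii). From $\Hom_P(V/V_n,\chi)=\Ext^1_P(V/V_n,\chi)=0$ the long exact sequence only yields an injection $\Ext^1_P(V,\chi)\hookrightarrow\Ext^1_P(V_n,\chi)$; surjectivity would require the connecting map to $\Ext^2_P(V/V_n,\chi)$ to vanish, and Lemma \ref{cahara} says nothing about $\Ext^2$. The paper closes this by producing a nonzero class in $\Ext^1_P(V,\chi)$: the restriction to $P$ of $0\to\delta_1^{n+1}\delta_2^{-1}\to\Sym^{n+1}L^2\otimes\delta_2^{-1}\to\Sym^nL^2\to0$ is nonsplit, so the injection into the one-dimensional target is onto. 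You should add this (or otherwise control the $\Ext^2$ term).
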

\begin{proof} Lemma \ref{lambda1} implies $\Ext^1_G(\Sym^n L^2, I^{\ana}(\psi))\cong \Ext^1_{D(G)}(I^{\ana}(\psi)', (\Sym^n L^2)')$. 
Lemma 8.1 in \cite{jan} says $I^{\ana}(\psi)'\cong D(G)\otimes_{D(P)} \psi^{-1}$ and the argument used in the proof of \cite[Thm.8.18]{jan}
shows that 
$$\Ext^1_{D(G)}(I^{\ana}(\psi)', (\Sym^n L^2)')\cong \Ext^1_{D(P)}(\psi^{-1}, (\Sym^n L^2)').$$
 Since both spaces are finite dimensional, 
by dualizing we obtain that  $$\Ext^1_{D(P)}(\psi^{-1}, (\Sym^n L^2)')\cong \Ext^1_P(\Sym^n L^2, \psi).$$   
Now $\Sym^n L^2$ has a $P$-invariant filtration with the graded pieces isomorphic to $\delta_1^{n-i} \delta_2^i$, $0\le i\le n$.
Lemma \ref{cahara} implies that  if $\psi=\delta_2^n |\alpha|$ then $\Ext^1_P(\Sym^n L^2, \psi)=0$; 
if $\psi=\delta_2^n$ then $\Ext^1_{P, \zeta}(\Sym^n L^2, \psi)\cong \Ext^1_{P, \zeta}( (\Sym^n L^2)_U, \psi)\cong \Ext^1_{P, \zeta}(\psi, \psi)$, which implies (ii);
if $\psi=\delta_1^{n+1}\delta_2^{-1}$ then we have an injection $\Ext^1_P(\Sym^n L^2, \psi)\hookrightarrow \Ext^1_P((\Sym^n L^2)^U, \psi)\cong 
\Ext^1_P(\psi\alpha^{-1}, \psi)$. The injection is an isomorphism, since the target is one dimensional and the source is non-zero, as it contains
 the  extension $0\rightarrow \psi\rightarrow \Sym^{n+1}L^2\otimes \delta_2^{-1}\rightarrow \Sym^n L^2\rightarrow 0$. 
\end{proof} 

\begin{lem}\label{st3} $\Ext^1_{G, \zeta}(\Sym^n L^2, \Sym^n L^2)=0$, $\Ext^1_G(\Sym^n L^2, \St\otimes \Sym^n L^2)$ is one dimensional. 
\end{lem}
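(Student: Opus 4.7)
The plan is to compute both $\Ext^1$-groups by first showing that every locally analytic extension appearing in these groups is in fact locally algebraic, and then invoking Lemma \ref{extalg} to reduce to a computation of $\Ext^1$ in smooth representations. Write $V = \Sym^n L^2$, which has central character $\zeta: z \mapsto z^n$.

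I would begin with the locally algebraic reduction. Let $E$ be an extension (in $\Rep^a_{G,\zeta}(L)$ for part (i), and in $\Rep^a_G(L)$ for part (ii)) of $V$ by $V$, respectively by $\St \otimes V$. In part (ii) the central character $\zeta$ is automatic on $E$: if $z \in Z$, the operator $z - \zeta(z)\cdot I$ descends to a $G$-equivariant map $V \to \St \otimes V$, and $\Hom_G(V, \St \otimes V) \cong \Hom_G(\Eins, \St) = 0$ by Lemma \ref{homalg}. Thus in both parts the derived $\mathfrak g$-action on $E$ satisfies that $Z(\mathfrak g)$ acts by a fixed scalar, so $E$ is an extension of $\mathfrak{sl}_2 \cong \mathfrak g/Z(\mathfrak g)$-modules (using $p > 2$). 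Whitehead's first lemma gives $H^1(\mathfrak{sl}_2, \End V) = 0$, and since $\St$ is trivial as an $\mathfrak{sl}_2$-module and the Chevalley--Eilenberg complex $\Hom_L(\Lambda^\bullet \mathfrak{sl}_2, -)$ consists of finite free functors, $H^1(\mathfrak{sl}_2, -)$ commutes with arbitrary direct sums; hence $H^1(\mathfrak{sl}_2, \St \otimes \End V) = 0$. So the $\mathfrak g$-module extension splits, and every vector $v \in E$ is $U(\mathfrak g)$-finite.

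Next I would argue that $U(\mathfrak g)$-finiteness plus local analyticity forces $v$ to be locally algebraic. On a small open subgroup $G_0 \subset G$ on which the orbit map is analytic and the exponential is a homeomorphism to $G_0$ from a neighborhood of $0 \in \mathfrak g$, one has $gv = \exp(X)v = \sum_n X^n v/n!$; this series lives in the finite-dimensional subspace $U(\mathfrak g)v$, so the $G_0$-action preserves $U(\mathfrak g)v$. Because $\GL_2$ is reductive and the weights occurring in $U(\mathfrak g)v$ are algebraic (they refine those of the algebraic pieces $V$ and $\St \otimes V$), this finite-dimensional $U(\mathfrak g)$-module integrates to an algebraic representation of the algebraic group, which agrees with the analytic $G_0$-action by uniqueness from the derivative. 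Hence $v$ is locally algebraic, and the extension $E$ lies in the category of locally algebraic representations.

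With this in hand, Lemma \ref{extalg} yields natural isomorphisms
\[
\Ext^1_{G,\zeta}(V, V) \cong \Ext^1_{G,\Eins}^{\mathrm{sm}}(\Eins, \Eins), \qquad
\Ext^1_G(V, \St \otimes V) \cong \Ext^1_G^{\mathrm{sm}}(\Eins, \St),
\]
where on the left we have Yoneda $\Ext$ in $\Rep^a_{G,\zeta}(L)$ respectively $\Rep^a_G(L)$, and on the right in smooth $G$-representations. The first right-hand side vanishes: a smooth homomorphism $G \to L$ factors through $\det: G \to \Qp^\times$, and the trivial-central-character constraint forces it to vanish on the image of $z \mapsto z^2$ in $\Qp^\times$; this image has finite index, and since $L$ is torsion-free the homomorphism must be $0$. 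The second right-hand side is $1$-dimensional, the non-trivial extension being the smooth principal series $(\Indu{P}{G}{|\cdot| \otimes |\cdot|^{-1}})_{\mathrm{sm}}$, which indeed has trivial central character and whose cosocle is $\Eins$ with kernel $\St$.

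The main obstacle is the first step, namely making precise the passage from an $\mathfrak g$-theoretic splitting to local algebraicity: one needs that $H^1(\mathfrak{sl}_2,-)$ commutes with direct sums when applied to the infinite-dimensional coefficient module $\St \otimes \End V$, and then that an $U(\mathfrak g)$-finite vector in an admissible locally analytic representation generates a locally algebraic subrepresentation. Both points are standard but warrant explicit justification in the write-up.
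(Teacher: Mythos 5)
Your argument is correct in outline but takes a genuinely different route from the paper's. The paper stays inside the locally analytic principal series: it combines the exact sequence \eqref{laprinc}, the computations of Lemma \ref{sympr}, and the non-vanishing of a connecting map (Lemma 2.4.2 of \cite{L}) to show that $\Ext^1_{G,\zeta}(\Sym^n L^2, I^{\sm}(\Eins)\otimes \Sym^n L^2)$ is one dimensional, and then runs a diagram chase against the smooth side, using the Schneider--Stuhler resolution \eqref{resolution_trivial} to produce the required non-split smooth extension. You instead prove a rigidity statement---every admissible locally analytic extension occurring in these two $\Ext$-groups is locally algebraic---by splitting the underlying $\mathfrak g$-module extension via Whitehead's lemma and integrating, and then invoke Lemma \ref{extalg}. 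Your route avoids \eqref{laprinc}, Lemma \ref{sympr} and Breuil's lemma entirely, at the cost of the integration step ($U(\mathfrak g)$-finiteness plus local analyticity plus algebraic weights implies local algebraicity), which is essentially the content of \cite{ugf} and its appendix \cite{lalg}. The reduction from $\mathfrak{gl}_2$ to $\mathfrak{sl}_2$ is the crucial point and you handle it correctly: without fixing the action of the centre one has $H^1(\mathfrak{gl}_2,\End V)\neq 0$ (infinitesimal deformation of the determinant character), which is exactly where non-locally-algebraic self-extensions such as the one attached to $\log\circ\det$ enter; your observation that in part (ii) the central character is automatic because $\Hom_G(\Sym^n L^2,\St\otimes\Sym^n L^2)=0$ is both necessary and correct. (The parenthetical ``using $p>2$'' is not needed: $\mathfrak{gl}_2=\mathfrak{sl}_2\oplus Z(\mathfrak g)$ holds because the Lie algebra lives over a field of characteristic zero, and indeed this section of the paper makes no assumption on $p$.)

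Three points must still be written out. First, the identification of the Yoneda $\Ext^1$ computed in $\Rep^a_G(L)$ with the $\Ext^1$ computed in the abstract category of locally algebraic representations: surjectivity is your main argument, while injectivity holds because the quotient $\Sym^n L^2$ is finite dimensional, so any abstract $G$-equivariant splitting $\Sym^n L^2\rightarrow E$ is automatically continuous; one also needs that a finite length locally algebraic representation equipped with its finest locally convex topology is an object of $\Rep^a_G(L)$, and that a continuous $G$-equivariant bijection between objects of $\Rep^a_G(L)$ is an isomorphism there. Second, you assert but do not prove the upper bound $\dim_L\Ext^1_{G/Z,\sm}(\Eins,\St)\le 1$; exhibiting $(\Indu{P}{G}{|\centerdot|\otimes|\centerdot|^{-1}})_{\sm}$ only gives the lower bound, and the paper obtains the upper bound from the two-term resolution \eqref{resolution_steiberg}-type argument applied to $\Eins$, i.e.\ from \eqref{resolution_trivial}; you should do the same. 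Third, the two technical points you flag at the end (commutation of $H^1(\mathfrak{sl}_2,-)$ with direct sums, and the integration of a $U(\mathfrak g)$-finite vector) are indeed the ones to justify; both are standard but should be referenced or proved.
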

\begin{proof} Lemma \ref{sympr}(i) and \eqref{laprinc} imply that $\Ext^1_{G}(\Sym^n L^2, I^{\sm}(|\alpha|)\otimes\Sym^n L^2)=0$. 
It follows from \cite{schst} that there exists a two term projective resolution of $\Eins$ in the category of smooth representations of $G/Z$:
\begin{equation}\label{resolution_trivial}
0\rightarrow \cIndu{\mathfrak K}{G}{ \delta}\rightarrow \cIndu{KZ}{G}{\Eins}\rightarrow \Eins \rightarrow 0,
\end{equation}
where $\delta: G\rightarrow L^{\times}$ is the character $\delta(g)=(-1)^{v_p(\det g)}$ and $\mathfrak K$ is the $G$-normalizer of $I$.
Using \eqref{resolution_trivial}  and Lemma \ref{extalg} we deduce that $\Ext^1_G(\Sym^n L^2, \St\otimes \Sym^n L^2)$ is one dimensional. 
Lemma 2.4.2 of \cite{L} implies that the natural map, induced by \eqref{laprinc}, 
$$\Ext^1_{G, \zeta}(\Sym^n L^2, I^{\ana}(\delta_2^n))\rightarrow \Ext^1_{G, \zeta}(\Sym^n L^2, I^{\ana}(\delta_1^{n+1} \delta_2^{-1}))$$ 
is non-zero. Since the target is $1$-dimensional and the source is $2$-dimensional by Lemma \ref{sympr}, we deduce that 
$\Ext^1_{G, \zeta}(\Sym^n L^2, I^{\sm}(\Eins)\otimes \Sym^n L^2)$ is one dimensional. We claim that the natural map
$$\Ext^1_{G, \zeta}(\Sym^n L^2, I^{\sm}(\Eins)\otimes \Sym^n L^2)\rightarrow \Ext^1_{G, \zeta}(\Sym^n L^2, \St\otimes \Sym^n L^2)$$
is non-zero. Since the source and the target are one dimensional, the claim implies that $\Ext^1_{G, \zeta}(\Sym^n L^2, \Sym^n L^2)$ is zero. 
To prove the claim,  it is enough to construct an extension $0\rightarrow I^{\sm}(\Eins)\rightarrow E\rightarrow \Eins\rightarrow 0$ in the category 
of smooth $G/Z$-representations, such that the sequence $0\rightarrow \St\rightarrow E/\Eins\rightarrow \Eins\rightarrow 0$ is non-split. 
It follows from \eqref{resolution_trivial} that $\Ext^2_{G/Z, \sm}(\Eins, \Eins)$ is zero. Hence, the natural map $\Ext^1_{G/Z, \sm}(\Eins, I^{\sm}(\Eins))\rightarrow \Ext^1_{G/Z, \sm}(\Eins, \St)$ 
is surjective, and the target is non-zero, as the sequence $0\rightarrow \St\rightarrow I^{\sm}(|\alpha|)\rightarrow \Eins\rightarrow 0$ is non-split.    
\end{proof}

\begin{proof}[Proof of Theorem \ref{semi}] We may assume that $\Pi^{\alg}\cong \St\otimes \Sm^n L^2$. Passing to locally analytic vectors 
induces an exact functor $\Ban^{\adm}_{G}(L)\rightarrow \Rep^a_G(L)$,  \cite[Thm.7.1]{adm}. This induces a map 
$\Ext^1_{G, \zeta}(\Pi, \Pi)\rightarrow \Ext^1_{G, \zeta}(\Pi^{\ana}, \Pi^{\ana})$. It follows from Proposition \ref{uni1} that the composition 
\begin{equation}\label{compose}
\Ext^1_{G, \zeta}(\Pi, \Pi)\rightarrow \Ext^1_{G, \zeta}(\Pi^{\ana}, \Pi^{\ana})\rightarrow \Ext^1_{G, \zeta}(\Sigma, \Pi^{\ana})
\end{equation}
is injective. Let $\epsilon$ be the class of an extension $0\rightarrow \Pi \rightarrow E\rightarrow \Pi\rightarrow 0$ in $\Ban^{\adm}_{G, \zeta}(L)$. 
If $\epsilon\in \mathcal E$ then we obtain an exact sequence $0\rightarrow \Pi^{\alg}\rightarrow E^{\alg}\rightarrow \Pi^{\alg}\rightarrow 0$. 
It follows from \cite{schst} that there exists a two term projective resolution of $\St$ in the category of smooth $G/Z$-representations:
\begin{equation}\label{resolution_steiberg}
0\rightarrow \cIndu{\mathfrak K}{G}{\St^{I_1}\otimes \delta}\rightarrow \cIndu{KZ}{G}{\St^{K_1}}\rightarrow \St \rightarrow 0,
\end{equation}
Since $\mathfrak K$ acts trivially on $\St^{I_1}\otimes \delta$ we deduce that there are no extensions of $\St$ by itself in the category of smooth $G/Z$-representations.
Lemma \ref{extalg} implies that there are no extensions of $\St\otimes \Sm^n L^2$ by itself in the category of locally algebraic representations 
with central character $\zeta$. Hence, $E^{\alg}\cong \Pi^{\alg}\oplus \Pi^{\alg}$. Proposition \ref{uni2} implies that the image of 
$\epsilon$ under the composition of \eqref{compose} with the natural map $\Ext^1_{G, \zeta}(\Sigma, \Pi^{\ana})\rightarrow \Ext^1_{G, \zeta}(\Sigma_1, \Pi^{\ana})$
is zero. Hence, \eqref{compose} induces an injection $\mathcal E\hookrightarrow \Ext^1_{G, \zeta}(\Sm^n L^2, \Pi^{\ana})$. It follows 
from \eqref{locanvec} and Lemmas \ref{sympr} and \ref{st3} that this group is isomorphic to
$\Ext^1_{G, \zeta}(\Sm^n L^2, \St\otimes \Sm^n L^2)$, which is one dimensional.
\end{proof}

\section{Extensions of Banach space representations}\label{ext_banach}

The aim of this section is to bound the dimensions of some Yoneda $\Ext^1$ groups computed in $\Ban^{\adm}_{G, \zeta}(L)$. The results are used in the  proof of Proposition \ref{specialize}. Let $\Pi_1, \Pi_2\in \{\Eins, \widehat{\St}, \Pi_{\alpha}\}$, where $\widehat{\St}$ is the universal unitary completion of the smooth Steinberg representation and $\Pi_{\alpha}:=(\Indu{P}{G}{\varepsilon \otimes\varepsilon^{-1}})_{\cont}$. Let $\Theta_1$, $\Theta_2$ be open bounded $G$-invariant lattices in $\Pi_1$ and $\Pi_2$, respectively. Then $\Theta_1/\varpi, \Theta_2/\varpi$ are in $\{ \Eins, \Sp, \pi_{\alpha}\}$. Proposition B.2 of \cite{hauseux} implies that 
 \begin{equation}\label{bound_dimension} \dim_L \Ext^1_{G, \zeta}(\Pi_1, \Pi_2)\le \dim_k \Ext^1_{G, \zeta}(\Theta_1/\varpi, \Theta_2/\varpi).
 \end{equation}

Let $e^1(\Pi_1, \Pi_2):=\dim_L \Ext^1_{G, \zeta}(\Pi_1, \Pi_2)$. Then \eqref{bound_dimension}, \eqref{modp1}, \eqref{modp2}, \eqref{modp3} give:
\begin{equation}
e^1(\Eins, \Eins)=0, \quad e^1(\widehat{\St}, \Eins)\le1, \quad e^1(\Pi_{\alpha}, \Eins)\le1,
\end{equation}
\begin{equation}
e^1(\Eins, \widehat{\St})\le 2, \quad e^1(\widehat{\St}, \widehat{\St})=0, \quad e^1(\Pi_{\alpha}, \widehat{\St})=0,
\end{equation}
\begin{equation}
e^1(\Eins, \Pi_{\alpha})=0, \quad e^1(\widehat{\St}, \Pi_{\alpha})\le1, \quad e^1(\Pi_{\alpha}, \Pi_{\alpha})\le2.
\end{equation}

\end{document}